\tikzset{snake it/.style={decorate, decoration=snake}}
\theoremstyle{plain}
\newtheorem{thm}{Theorem}[section]
\newtheorem{cor}[thm]{Corollary}
\newtheorem{lem}[thm]{Lemma}
\newtheorem{prop}[thm]{Proposition}
\newtheorem{conj}[thm]{Conjecture}
\newtheorem{question}[thm]{Question}
\theoremstyle{definition}
\newtheorem{defn}[thm]{Definition}
\theoremstyle{remark}
\newtheorem{rmk}[thm]{Remark}
\newcommand{\BA}{{\mathbb{A}}}
\newcommand{\BC}{{\mathbb{C}}}
\newcommand{\BG}{{\mathbb{G}}}
\newcommand{\BL}{{\mathbb{L}}}
\newcommand{\BN}{{\mathbb{N}}}
\newcommand{\BQ}{{\mathbb{Q}}}
\newcommand{\BZ}{{\mathbb{Z}}}
\newcommand{\CA}{{\mathcal A}}
\newcommand{\CC}{{\mathcal C}}
\newcommand{\CE}{{\mathcal E}}
\newcommand{\CF}{{\mathcal F}}
\newcommand{\CH}{{\mathcal H}}
\newcommand{\CK}{{\mathcal K}}
\newcommand{\CL}{{\mathcal L}}
\newcommand{\CM}{{\mathcal M}}
\newcommand{\CN}{{\mathcal N}}
\newcommand{\CO}{{\mathcal O}}
\newcommand{\BBC}{{\underline{\BC}}}
\newcommand{\heart}{\ensuremath\heartsuit}
\newcommand{\sslash}{\mathbin{/\mkern-6mu/}}
\DeclareFontFamily{OT1}{rsfs}{}
\DeclareFontShape{OT1}{rsfs}{n}{it}{<-> rsfs10}{}
\DeclareMathAlphabet{\curly}{OT1}{rsfs}{n}{it}
\begin{document}
\title[Endoscopic decompositions and the Hausel--Thaddeus conjecture]{Endoscopic decompositions and the Hausel--Thaddeus conjecture}
\date{\today}

\author[D. Maulik]{Davesh Maulik}
\address{Massachusetts Institute of Technology}
\email{maulik@mit.edu}

\author[J. Shen]{Junliang Shen}
\address{Massachusetts Institute of Technology}
\email{jlshen@mit.edu}
\address{Yale University}
\email{junliang.shen@yale.edu}

\begin{abstract}
We construct natural operators connecting the cohomology of the moduli spaces of stable Higgs bundles with different ranks and genera which, after numerical specialization, recover the topological mirror symmetry conjecture of Hausel--Thaddeus concerning $\mathrm{SL}_n$- and $\mathrm{PGL}_n$-Higgs bundles. This provides a complete description of the cohomology of the moduli space of stable $\mathrm{SL}_n$-Higgs bundles in terms of the tautological classes, and gives a new proof of the Hausel--Thaddeus conjecture, proven recently by Gr\"ochenig--Wyss--Ziegler via p-adic integration.

Our method is to relate the decomposition theorem for the Hitchin fibration, using vanishing cycle functors, to the decomposition theorem for the twisted Hitchin fibration whose supports are simpler.

%These operators yield a structural result which describes the cohomology of the moduli of $\mathrm{SL}_n$-stable Higgs bundles in terms of the cohomology of the moduli space of stable $\mathrm{GL}_r$-stable Higgs bundles, and give a new proof of the Hausel--Thaddeus conjecture proven recently by Gr\"ochenig--Wyss-Ziegler via p-adic integrations.
\end{abstract}

\maketitle

\setcounter{tocdepth}{1} 

\tableofcontents
\setcounter{section}{-1}

\section{Introduction}

\subsection{Overview}\label{Sec0.1}
 %The purpose of this paper is to study the cohomological structure of the moduli spae of Higgs bundles.
 Throughout, we work over the complex numbers $\BC$. Let $C$ be a nonsingular projective curve of genus $g \geq 2$. Let $n,d$ be integers with $n>0$ and $\mathrm{gcd}(n,d) = 1$. 
 
 The cohomology of the moduli space $\widetilde{\CN}_{n,d}$ of rank $n$ degree $d$ stable vector bundles on $C$ has been studied intensively for decades. By \cite{AB,Beau}, the cohomology $H^*(\widetilde{\CN}_{n,d}, \BC)$ is generated by the \emph{tautological classes} --- the K\"unneth factors of the Chern characters of a universal family. Relations between the tautological classes were explored in \cite{Kir, EK}. 
 
 A natural moduli space closely related to $\widetilde{\CN}_{n,d}$ is the moduli of stable $\mathrm{SL}_n$-bundles
\[
\CN_{n,L} \subset \widetilde{\CN}_{n,d}
\]
which parameterizes rank $n$ stable vector bundles with fixed determinant $L\in \mathrm{Pic}^d(C)$. The finite abelian group
\begin{equation*}
\Gamma = \mathrm{Pic}^0(C)[n]
\end{equation*}
acts on $\CN_{n,L}$ via tensor product, which induces a $\Gamma$-action on the cohomology $H^*(\CN_{n,L}, \BC)$. The $\Gamma$-invariant part $H^*(\CN_{n,L}, \BC)^\Gamma$ recovers the cohomology of the quotient $\CN_{n,L}/\Gamma$, which can be viewed as the moduli space of stable $\mathrm{PGL}_n$-bundles. The tautological classes associated with a universal family generate the $\Gamma$-invariant cohomology $H^*(\CN_{n,L}, \BC)^\Gamma$.

The following theorem by Harder--Narasimhan \cite{HN} shows that every class in  $H^*(\CN_{n,L}, \BC)$ is $\Gamma$-invariant.
 
 \begin{thm}[Harder--Narasimhan \cite{HN}]\label{HNThm}
 The $\Gamma$-action on $H^*(\CN_{n,L}, \BC)$ is trivial.
 \end{thm}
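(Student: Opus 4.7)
The plan is to exploit the fact that tensoring by line bundles realizes $\widetilde{\CN}_{n,d}$ as an étale $\Gamma$-quotient of $\mathrm{Pic}^0(C) \times \CN_{n,L}$, and then reduce the claim to a numerical comparison of Poincar\'e polynomials.

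First, I would consider the tensor product map
$$
\phi : \mathrm{Pic}^0(C) \times \CN_{n,L} \longrightarrow \widetilde{\CN}_{n,d}, \qquad (M, E) \longmapsto E \otimes M,
$$
and show that it is a finite étale Galois cover with group $\Gamma$, where $\gamma \in \Gamma$ acts on the source by $(M, E) \mapsto (M \otimes \gamma^{-1}, E \otimes \gamma)$. Surjectivity follows from the divisibility of the abelian variety $\mathrm{Pic}^0(C)$, while the identification of fibers with $\Gamma$-orbits is immediate from $\det(E \otimes M) = L \otimes M^{\otimes n}$; properness together with the dimension count $g + (n^2-1)(g-1) = n^2(g-1) + 1$ ensures that $\phi$ is finite étale of degree $|\Gamma| = n^{2g}$. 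Taking $\Gamma$-invariants then gives
$$
H^*(\widetilde{\CN}_{n,d}, \BC) \;\cong\; \bigl[H^*(\mathrm{Pic}^0(C), \BC) \otimes H^*(\CN_{n,L}, \BC)\bigr]^\Gamma.
$$
The key observation is that the induced $\Gamma$-action on $\mathrm{Pic}^0(C)$ is by translations, which are homotopic to the identity, so the action on $H^*(\mathrm{Pic}^0(C), \BC)$ is trivial. Consequently,
$$
H^*(\widetilde{\CN}_{n,d}, \BC) \;\cong\; H^*(\mathrm{Pic}^0(C), \BC) \otimes H^*(\CN_{n,L}, \BC)^\Gamma.
$$

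The theorem now reduces to verifying the numerical identity
$$
P_t(\widetilde{\CN}_{n,d}) \;=\; (1+t)^{2g} \cdot P_t(\CN_{n,L}),
$$
since $P_t(\mathrm{Pic}^0(C)) = (1+t)^{2g}$ and $\dim H^*(\CN_{n,L},\BC)^\Gamma \le \dim H^*(\CN_{n,L},\BC)$ with equality if and only if the $\Gamma$-action on $H^*(\CN_{n,L},\BC)$ is trivial. Both Poincar\'e polynomials above are accessible through point counting over finite fields combined with the Weil conjectures, via Siegel-type adelic mass formulas for $\mathrm{GL}_n$ and $\mathrm{SL}_n$ respectively. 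The main obstacle is the $\mathrm{SL}_n$ computation yielding $P_t(\CN_{n,L})$: its essential input is that the Tamagawa number of $\mathrm{SL}_n$ equals $1$, and the resulting arithmetic identity between adelic volumes is precisely the technical heart of the Harder--Narasimhan approach. Once this is in hand, the required polynomial identity reduces to a direct manipulation of the zeta function of $C$, completing the proof.
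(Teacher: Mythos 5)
Your reduction is correct and this is essentially how the result is obtained in the cited reference \cite{HN}, which the paper invokes without reproducing a proof. The \'etale Galois cover $\mathrm{Pic}^0(C)\times\CN_{n,L}\to\widetilde{\CN}_{n,d}$ with the diagonal (free) $\Gamma$-action, the triviality of the $\Gamma$-action on $H^*(\mathrm{Pic}^0(C),\BC)$ (translations are homotopic to the identity), and the consequent identity $H^*(\widetilde{\CN}_{n,d},\BC)\cong H^*(\mathrm{Pic}^0(C),\BC)\otimes H^*(\CN_{n,L},\BC)^\Gamma$ are all standard and correct, and they correctly reduce the theorem to the polynomial identity $P_t(\widetilde{\CN}_{n,d})=(1+t)^{2g}P_t(\CN_{n,L})$. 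You are also right that the arithmetic input behind that identity is the Harder--Narasimhan point-counting over $\mathbb{F}_q$ together with the Weil conjectures (the moduli spaces are smooth projective here since $\gcd(n,d)=1$, so the cohomology is pure and point counts determine Betti numbers), with $\tau(\mathrm{SL}_n)=1$ as the decisive adelic ingredient. The only nuance worth flagging is that Harder--Narasimhan's own derivation is slightly more direct than a post hoc comparison of two independently computed Poincar\'e polynomials: they compute the Lefschetz trace of $\gamma^\ast\circ\mathrm{Frob}^\ast$ on compactly supported cohomology of $\CN_{n,L}$ adelically, show it is independent of $\gamma\in\Gamma$, and then conclude triviality of each $\gamma^\ast$ by purity and character theory. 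Your version derives the same conclusion by summing over $\Gamma$ (i.e., comparing the two polynomials), which is logically equivalent given the cover decomposition you set up, and both routes rest on the same adelic mass formula; so your proposal is a faithful, if slightly repackaged, account of the original argument.
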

 
 As a consequence of Theorem \ref{HNThm}, we obtain immediately that the tautological classes generate the total cohomology $H^*(\CN_{n,L}, \BC)$.
 
The purpose of this paper is to study the $\Gamma$-action on the cohomology of the moduli space of stable $\mathrm{SL}_n$-Higgs bundles from the viewpoint of the Hausel-Thaddeus conjecture \cite{HT}.  We denote by $\CM_{n,L}$ the (coarse) moduli space parameterizing stable Higgs bundles 
 \[
 (\CE ,\theta: \CE \rightarrow \CE \otimes \Omega_C): \quad\mathrm{det}(\CE) \simeq L , \quad \mathrm{trace}(\theta) = 0
\]
on the curve $C$. It is a nonsingular quasi-projective variety admitting a natural hyper-K\"ahler structure \cite{Hit1,Nit}. As in the case of vector bundles, the group $\Gamma$ acts on $\CM_{n,L}$ via tensor product 
\[
\CL \cdot (\CE, \theta)  = (\CL \otimes \CE, \theta), \quad  \CL \in \Gamma.
\]
The induced $\Gamma$-action on $H^*(\CM_{n,L}, \BC)$ yields the following canonical decomposition
\begin{equation}\label{maindecomp}
H^*(\CM_{n,L}, \BC) = H^*(\CM_{n,L}/\Gamma, \BC) \oplus \bigoplus_{\kappa \neq 0} H^*(\CM_{n,L}, \BC)_\kappa
\end{equation}
where $\kappa \in \hat{\Gamma}=  \mathrm{Hom}(\Gamma, \BC^*)$ runs through all nontrivial characters of $\Gamma$, and $H^*(\CM_{n,L}, \BC)_\kappa$ denotes the $\kappa$-isotypic component. By \cite{Markman} (and the isomorphism \cite[(70)]{dCMS}), the tautological classes associated with a universal family of $\CM_{n,L}$ generate the $\Gamma$-invariant cohomology
\[
H^*(\CM_{n,L}/\Gamma, \BC)  = H^*(\CM_{n,L}, \BC)^\Gamma.
\]
However, contrary to Theorem \ref{HNThm}, the $\Gamma$-variant part of (\ref{maindecomp}) is nontrivial, and carries a rich structure, predicted by topological mirror symmetry \cite{HT}. 

%Hence a natural question is to understand ths structure of $H^*(\CM_{n,L}, \BC)_\kappa$ for $\kappa \neq 0$.

%In particular, the tautological classes associated with a universal family of $\CM_{n,L}$ fail to govern any piece $H^*(\CM_{n,L}, \BC)_\kappa$ for $\kappa \neq 0$.

In this paper, we focus on the structure of $H^*(\CM_{n,L}, \BC)_\kappa$ for $\kappa \neq 0$. We introduce natural operators which determine $H^*(\CM_{n,L}, \BC)_\kappa$ in terms of the cohomology of the moduli space of stable $\mathrm{GL}_r$-Higgs bundles on a certain curve for some $r\leq n$. These operators respect the perverse and the Hodge filtrations, and, upon specialization to Hodge polynomials, they recover the Hausel--Thaddeus conjecture \cite{HT}. In particular, this gives a new proof using perverse sheaves of the topological mirror symmetry conjecture of Hausel-Thaddeus, which was recently proven by \cite{GWZ} using p-adic integration.

\subsection{Hitchin fibrations}\label{Section0.1} 
The moduli space $\CM_{n,L}$ carries a Lagrangian fibration
\begin{equation}\label{Hitchin_map}
h: \CM_{n,L} \rightarrow \CA = \bigoplus_{i\geq 2} H^0(C, {\Omega_{C}^{\otimes i}})
\end{equation}
given by Hitchin's integrable system, which is now referred to as the \emph{Hitchin fibration}. The $\Gamma$-action on $\CM_{n,L}$ is fiberwise with respect to (\ref{Hitchin_map}). There are two types of moduli spaces closely related to the cohomological study of $\CM_{n,L}$ from the perspective of mirror symmetry \cite{HT} and representation theory \cite{Ngo0, Ngo}.

The moduli spaces of the first type are the fixed loci of an element $\gamma \in \Gamma$. For any $\gamma \in \Gamma$, we denote by $\CM_\gamma \subset \CM_{n,L}$ the $\gamma$-fixed subvariety, which maps to the Hitchin base via
\[
\CM_{\gamma}   \xrightarrow{h_\gamma} \CA_\gamma = \mathrm{Im}(h|_{\CM_\gamma}) \xhookrightarrow{i_\gamma} \CA.
\]
The $\Gamma$-action on $\CM_{n,L}$ induces a $\Gamma$-action on $\CM_{\gamma}$.

The second type of moduli spaces are associated with a cyclic Galois cover $\pi: C' \to C$ of the original curve given by $\gamma\in \Gamma$. We assume $\mathrm{deg}(\pi) = \mathrm{ord}(\gamma) = m$ and $n=mr$. Let $\CM_{r,L}(\pi)$ be the moduli space parameterizing rank $r$ stable Higgs bundles $(\CE, \theta)$ on $C'$ such that
\[
\mathrm{det}(\pi_*\CE) \simeq L, \quad \mathrm{trace}(\pi_*\theta) = 0.
\]
It admits a Hitchin fibration
\[
h_\pi: \CM_{r,L}(\pi) \rightarrow \CA(\pi)
\]
with a fiberwise $\Gamma$-action; see Section \ref{Sect1} for more details on these moduli spaces. From the viewpoint of representation theory, the moduli spaces $\CM_{r,L}(\pi)$ are related to the study of the corresponding endoscopic groups for $\mathrm{SL}_n$ over $C$ \cite{Ngo}. They are nonsingular but disconnected. The Galois group
\[
G_{\pi} = \mathrm{Aut}(\pi)\simeq \BZ/m\BZ
\]
acts on both the source $\CM_{r,L}(\pi)$ and the target $\CA(\pi)$, whose quotients recover $\CM_\gamma$ and $\CA_\gamma$ respectively. We denote by
\begin{equation*}
q_\CA : \CA(\pi) \rightarrow \CA_{\gamma} 
\end{equation*}
the quotient map of the base $\CA(\pi)$. We also consider the largest open subset $\CA(\pi)^* \subset \CA(\pi)$ upon which the $G_\pi$-action is free, and set $A_\gamma^* := \CA(\pi)^*/G_\pi \subset A_\gamma$.

\subsection{Endoscopic decompositions}
In order to understand the decomposition (\ref{maindecomp}) sheaf-theoretically, we consider the canonical decompositions of the direct image complexes 
\[
\mathrm{Rh}_* \BBC \in D_c^b(\CA), \quad \mathrm{Rh_\gamma}_* \BBC \in D_c^b(\CA_\gamma), \quad \mathrm{Rh_\pi}_* \BBC \in D_c^b(\CA(\pi))
\]
into eigen-subcomplexes with respect to the $\Gamma$-actions. We first clarify some notation before stating the main theorems. Throughout, we use $D^b_c(-)$ to denote the bounded derived category of constructible sheaves. We say that $\CK \xrightarrow{\simeq} \CK'$ is an \emph{isomorphism} for two objects in a derived category if it is a quasi-isomorphism between the complexes $\CK$ and $\CK'$. Given a complex with a $\Gamma$-action and a character $\kappa \in \hat{\Gamma}$, we denote by $(-)_\kappa$ the $\kappa$-isotypic component. We call $(-)_\mathrm{st} = (-)_{0\in \hat{\Gamma}}$ its \emph{stable part}, which is the subcomplex fixed by the $\Gamma$-action. The Weil pairing identifies canonically the group $\Gamma$ and its dual,
\begin{equation}\label{Weil} 
\hat{\Gamma} = \Gamma;
\end{equation}
see Section \ref{Weil_pair}. 

Our first result is the following theorem, which relates the stable part of the endoscopic cohomology with the pullback of the $\kappa$-isotypic contribution for $\mathrm{SL}_n$. This extends the endoscopic decomposition of \cite{Yun3} in the case of $\mathrm{SL}_n$ from the elliptic locus to a much larger open subset on the Hitchin base.

\begin{thm}[Theorem \ref{main0'}]\label{main0}
Let $\kappa \in \hat{\Gamma}$ and $\gamma \in \Gamma$ be identified by (\ref{Weil}), let $\pi:C'\to C$ be the cyclic Galois cover associated with $\gamma$, and let $d_\gamma = \mathrm{codim}_{\CA}(\CA_\gamma)$. There are isomorphisms in $D^b_c(\CA(\pi)^*)$ which are canonical up to scaling (see Definition \ref{definition}):
\begin{equation}\label{eqn_end}
 q_\CA^* \left( \mathrm{Rh_*} \BBC \right)_{\kappa}\Big{|}_{\CA(\pi)^*} \xrightarrow{\simeq} \left( \mathrm{Rh_\pi}_* \BBC \right)_{\kappa}\Big{|}_{\CA(\pi)^*}[-2d_\gamma]  \xrightarrow{\simeq} \left(\mathrm{Rh_{\pi}}_*\BBC\right)_{\mathrm{st}}\Big{|}_{\CA(\pi)^*}[-2d_\gamma]  
\end{equation}
with the first isomorphism $G_\pi$-equivariant. 
\end{thm}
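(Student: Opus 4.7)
The plan is to analyze both isomorphisms via the pushforward $\pi_*:\CM_{r,L}(\pi)\to\CM_{n,L}$, which factors as the $G_\pi$-quotient $\CM_{r,L}(\pi)\to\CM_\gamma$ followed by the closed embedding $i:\CM_\gamma\hookrightarrow\CM_{n,L}$. These maps intertwine the three Hitchin fibrations $h_\pi$, $h_\gamma$, $h$ via $q_\CA$ and $i_\gamma$. A key algebraic observation is that the subgroup $\langle\gamma\rangle\subset\Gamma$ acts trivially on $\CM_{r,L}(\pi)$, since $\pi^*\gamma\simeq\CO_{C'}$; hence the $\Gamma$-action on $\CM_{r,L}(\pi)$ descends to $\Gamma/\langle\gamma\rangle$, and by the Weil pairing (\ref{Weil}) the characters $\kappa$ identified with elements killing $\gamma$ are precisely the ones that can appear nontrivially in the endoscopic decomposition --- including the $\kappa$ identified with $\gamma$ itself.

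For the first isomorphism I would first show that $(\mathrm{R}h_*\BBC)_\kappa$ is cohomologically controlled by the $\gamma$-fixed locus. Over the Hitchin fibers, the $\gamma$-action is understood via its fixed geometry together with the behavior of vanishing cycles around the fixed locus; an equivariant-cohomology/localization analysis then identifies the $\kappa$-isotypic part with a pushforward from $\CM_\gamma$, with a shift by $[-2d_\gamma]$ coming from the Euler class of the (oriented, complex) normal bundle of $\CM_\gamma\subset\CM_{n,L}$. Pulling back along $q_\CA$ and applying proper base change to the $G_\pi$-quotient $\CM_{r,L}(\pi)\to\CM_\gamma$ (which is étale over $\CA(\pi)^*$) converts this into the $\kappa$-isotypic part of $\mathrm{R}h_{\pi,*}\BBC$; $G_\pi$-equivariance is tautological, inherited from the $G_\pi$-equivariant data throughout.

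For the second isomorphism, on $\CA(\pi)^*$ the $G_\pi$-action is free, so $\CM_{r,L}(\pi)|_{h_\pi^{-1}(\CA(\pi)^*)}$ is an étale $G_\pi$-cover of its quotient, and the generic Hitchin fibers of $h_\pi$ are (unions of torsors over) abelian varieties on which the residual $\Gamma/\langle\gamma\rangle$ acts by translation. Under fiberwise Fourier--Mukai, tensoring by a $\Gamma/\langle\gamma\rangle$-character is converted to translation by a section of the dual abelian scheme, so the $\kappa$-isotypic and stable parts become isomorphic up to such a translation; the existence of the required global section on the entirety of $\CA(\pi)^*$ (rather than on a smaller locus) is precisely what characterizes $\CA(\pi)^*$, and produces the desired canonical-up-to-scalar isomorphism.

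The main obstacle will be implementing the control of $(\mathrm{R}h_*\BBC)_\kappa$ on the full open subset $\CA(\pi)^*$ rather than only on the elliptic locus treated by \cite{Yun3}. Outside the elliptic locus, Hitchin fibers acquire singularities and a naive Smith-theoretic localization is unavailable; one must replace it by a vanishing-cycle argument (as advertised in the abstract) that passes through a twisted Hitchin fibration whose decomposition theorem has simpler supports, and then transports the information back to $h$ via the vanishing-cycle functor. Once the geometric isomorphisms are in place, the scaling indeterminacy in the statement is absorbed formally by the normalization in Definition \ref{definition}.
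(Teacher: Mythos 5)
Your treatment of the second isomorphism differs from the paper's but is at least plausible in spirit: the paper (Proposition \ref{prop2.8}) derives it by a purely combinatorial argument comparing the $\Omega$-invariant part decomposed along connected components of $\CM_{r,L}(\pi)$ against its decomposition into $\kappa'$-isotypic pieces for $\kappa'\in\langle\kappa\rangle$, with no Fourier--Mukai needed; your Fourier--Mukai/translation picture would have to be made precise but is not obviously wrong.

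The genuine gap is in your treatment of the first isomorphism. You propose to identify $(\mathrm{Rh}_*\BBC)_\kappa$ with a pushforward from the fixed locus $\CM_\gamma$ by an ``equivariant-cohomology/localization analysis'' with the shift $[-2d_\gamma]$ coming from the Euler class of the normal bundle of $\CM_\gamma\subset\CM_{n,L}$. No such localization theorem exists for a finite group action in $\BC$-coefficients: Atiyah--Bott-type localization applies to torus actions after inverting equivariant parameters, and Smith theory applies only in $\BZ/p$-coefficients; neither produces an isomorphism between a $\kappa$-isotypic component of $H^*(\CM_{n,L})$ and (shifted) cohomology of a fixed locus. Moreover, the degree bookkeeping fails: the complex codimension of $\CM_\gamma$ in $\CM_{n,L}$ is $2d_\gamma$, so any Gysin/Euler-class mechanism would produce a shift by $4d_\gamma$, not the $2d_\gamma$ the theorem requires. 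That the shift is half the naive Gysin shift is exactly the ``fermionic shift'' phenomenon of Hausel--Thaddeus, and it is not explained by a normal-bundle Euler class. The relation between $(\mathrm{Rh}_*\BBC)_\kappa$ and the endoscopic side is a nontrivial piece of geometry, not a formal consequence of the fixed-point structure; establishing it geometrically is the whole point of the paper.

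What the paper actually does for the first isomorphism has three ingredients you do not supply. First, an explicit cohomological correspondence $\Sigma=\overline{\mathrm{Graph}(g_u)}$ where $g_u$ is the pullback of line bundles on spectral curves along the normalization $u_a:C'_a\to C_a$ (Lemma \ref{lem3.3}, Corollary \ref{cor3.6}); this, not localization, is what produces a degree-$2d_\gamma$ map, and the fact that it is an isomorphism on $\kappa$-parts at a generic $a$ is Yun's computation \cite[Lemma 3.4.1]{Yun3}. Second, the support theorems for $h^D$ and $h_\pi^D$ (Theorems \ref{SuppThm1}, \ref{SuppThm2}, Corollary \ref{cor2.2}), valid when $\mathrm{deg}(D)>2g-2$, guarantee that both sides of the correspondence are intermediate extensions from the generic locus, so the generic isomorphism propagates; these also force the auxiliary requirement that $\mathrm{deg}(D)$ be \emph{even} (Lemma \ref{lem3.4}, Remark \ref{rmk3.12}). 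Third, for odd $\mathrm{deg}(D)$ or $D=K_C$, the paper realizes $\CM^{D-p}_{r,L}(\pi)$ as the critical locus of an explicit nondegenerate quadratic function $\mu_{\pi,\CM}$ on $\CM^D_{r,L}(\pi)$ factoring through the Hitchin base (Theorem \ref{thm4.4}), and applies the vanishing-cycle functor $\varphi_{\mu_{\pi,\CA}}$ to transport the already-constructed $c^{D}_\kappa$; the clean shift in that step comes from the nondegeneracy of $\mu_\pi$ (Lemma \ref{lem4.3}), not from vanishing cycles ``around $\CM_\gamma$''. Your last paragraph gestures at the existence of such a vanishing-cycle passage, which is encouraging, but without identifying the function, the critical-locus realization, or the correspondence $\Sigma$ on the elliptic locus, there is no construction to make precise, and the proposal does not yield the theorem.
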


In the equation (\ref{eqn_end}), the $G_\pi$-equivariant structure for the first term is given by the pullback map along the $G_\pi$-quotient $q_\CA: \CA(\pi)^* \to \CA_\gamma^*$. The $G_\pi$-equivariant structure for the second term is induced by the $G_\pi$-action on $\CM_{r,L}(\pi)$. 

The following theorem is a further extension of Theorem \ref{main0}, which provides a complete description of the $\kappa$-isotypic component of $\mathrm{Rh}_* \BBC$ in terms of the $\gamma$-fixed subvariety $\CM_\gamma \subset \CM_{n,L}$. 

%The Galois equivariance of the first map in Theorem \ref{main0} implies the following theorem, which provides a complete description of the $\kappa$-isotypic component of $\mathrm{Rh}_* \BBC$ in terms of the $\gamma$-fixed subvariety $\CM_\gamma \subset \CM_{n,L}$. 

%This is in fact predicted by the Hausel--Thaddeus conjecture \cite{HT}; see Section \ref{Sec0.5}.

\begin{thm}[Theorem \ref{thm3.2}]\label{main}
Let $\kappa \in \hat{\Gamma}$ and $\gamma \in \Gamma$ be identified by (\ref{Weil}), and let $d_\gamma = \mathrm{codim}_{\CA}(\CA_\gamma)$. We have an isomorphism
\begin{equation}\label{eqn_ck}
c_\kappa: \left(\mathrm{Rh}_* \BBC\right)_\kappa \xrightarrow{\simeq} {i_\gamma}_*\left(\mathrm{Rh_\gamma}_* \BBC\right)_\kappa [-2d_\gamma] \in D_c^b(\CA)
\end{equation}
which is canonical up to scaling.
\end{thm}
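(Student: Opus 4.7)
The plan is to upgrade the open-locus identification of Theorem \ref{main0} to a global statement on $\CA$ using three ingredients: (a) an endoscopic support result showing $(\mathrm{Rh}_*\BBC)_\kappa$ is supported on $\CA_\gamma$; (b) a Galois descent along the \'etale cover $q_\CA : \CA(\pi)^* \to \CA_\gamma^*$ that constructs $c_\kappa$ over the open locus $\CA_\gamma^*$; and (c) an IC-sheaf extension argument that propagates $c_\kappa$ to all of $\CA_\gamma$.

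For (a), since $\CM_{n,L}$ is smooth and $h$ is proper, the BBD decomposition theorem writes $\mathrm{Rh}_*\BBC$ as a finite direct sum of shifted IC sheaves compatibly with the $\Gamma$-action, so $(\mathrm{Rh}_*\BBC)_\kappa$ is itself a direct sum of shifted IC sheaves on closed subvarieties of $\CA$. One then has to show each simple summand is supported inside $\CA_\gamma$; I would deduce this from the freeness of the $\gamma$-action on Hitchin fibers over $\CA \setminus \CA_\gamma$, combined with Theorem \ref{main0} on the free locus, which forces stalks of $(\mathrm{Rh}_*\BBC)_\kappa$ outside $\CA_\gamma$ to vanish.

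For (b), on $\CA_\gamma^*$ the map $q_\CA$ is an \'etale $G_\pi$-cover, and by construction $\CM_\gamma = \CM_{r,L}(\pi)/G_\pi$ with $h_\gamma$ induced from $h_\pi$. Galois descent gives
\[
(\mathrm{Rh}_*\BBC)_\kappa\big|_{\CA_\gamma^*} \simeq \bigl(q_{\CA*}\, q_\CA^*(\mathrm{Rh}_*\BBC)_\kappa|_{\CA(\pi)^*}\bigr)^{G_\pi},
\]
and proper base change yields $\bigl(q_{\CA*}\,\mathrm{Rh_\pi}_*\BBC\bigr)^{G_\pi}|_{\CA_\gamma^*} \simeq \mathrm{Rh_\gamma}_*\BBC|_{\CA_\gamma^*}$. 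Substituting the $G_\pi$-equivariant first isomorphism of Theorem \ref{main0} inside the $G_\pi$-invariants, and using that taking $\kappa$-isotypic components commutes with taking $G_\pi$-invariants since the two group actions commute, yields
\[
(\mathrm{Rh}_*\BBC)_\kappa\big|_{\CA_\gamma^*} \simeq (\mathrm{Rh_\gamma}_*\BBC)_\kappa\big|_{\CA_\gamma^*}[-2d_\gamma].
\]
For (c), by (a) both sides of (\ref{eqn_ck}) are direct sums of shifted IC sheaves on subvarieties of $\CA_\gamma$, so morphisms between them are determined by their restriction to any dense open such as $\CA_\gamma^*$. Hence the isomorphism from (b) extends uniquely — up to a scaling on each simple summand — to all of $\CA_\gamma$, and pushforward by $i_\gamma$ gives $c_\kappa$ on $\CA$.

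The principal obstacle is (a), the endoscopic support theorem for $(\mathrm{Rh}_*\BBC)_\kappa$. This is a global statement in the spirit of Ng\^o's support theorem and does not follow formally from Theorem \ref{main0}, which only controls the free locus. I expect it to be handled via the authors' announced strategy: compare, using vanishing cycle functors, the decomposition theorem for the Hitchin fibration with that of a twisted Hitchin fibration — whose simple summands have supports that can be listed directly — and transfer the resulting support constraints back to $h$.
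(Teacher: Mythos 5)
Your plan in outline is reasonable, and steps (b) and (c) are in the spirit of what the paper does in the twisted case, but step (a) --- the support theorem for $(\mathrm{Rh}_*\BBC)_\kappa$ --- has a genuine gap, and the paper does not in fact prove it for $D=K_C$.

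First, the justification you sketch for (a) is wrong. Free action of $\gamma$ on a compact space $X$ does not force $H^*(X,\BC)_\kappa=0$ for any nontrivial character: take $X=\BZ/m\BZ$ with $\gamma$ acting by cyclic shift; the action is free, yet $H^0(X)$ is the regular representation, so every isotypic component is one-dimensional. Vanishing of the $\kappa$-part over $\CA\setminus\CA_\gamma$ is a genuinely nontrivial input, depending on Ng\^o's analysis of the $\pi_0$ of the Prym group scheme and the decomposition theorem, not a corollary of freeness. Nor does Theorem~\ref{main0} help here: it concerns $\CA(\pi)^*$, which lies \emph{over} $\CA_\gamma$, and says nothing about fibers outside $\CA_\gamma$.

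Second, even granting a support theorem that places all simple constituents inside $\CA_\gamma$, your step (c) needs more: you need to know the \emph{only} support is $\CA_\gamma$ itself (i.e., full support of both sides as complexes on $\CA_\gamma$). An isomorphism on $\CA_\gamma^*$ uniquely determines a morphism of direct sums of IC sheaves only if every simple summand has support meeting $\CA_\gamma^*$ densely; a priori there could be summands supported on the boundary $\CA_\gamma\setminus\CA_\gamma^*$, and these are invisible on $\CA_\gamma^*$. In the paper this is exactly why Corollary~\ref{cor2.2} and Theorem~\ref{SuppThm2}(b) (with Proposition~\ref{prop2.8}) are invoked: for $\deg(D)>2g-2$ they identify $\CA_\gamma^D$ as the unique support on both sides, so the extension is forced. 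But this is a stronger statement than (a) as you state it.

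Third, and most importantly, for $D=K_C$ the support theorem is genuinely unknown (the paper explicitly notes the supports of the $K_C$-Hitchin fibration are ``mysterious,'' citing \cite{Supp}), and the paper does \emph{not} prove it. Instead it takes a different route: it proves the isomorphism $c^D_\kappa$ directly for effective $D$ with $\deg(D)$ even and $>2g-2$ (where both de Cataldo's and Ng\^o's support theorems are available, and the isomorphism is built via the cohomological correspondence $[\Sigma]_\#$ and the change-of-determinant isomorphism $\phi_{\CN_0}$). Then, using the fact that $\CM^{D-p}_{r,L}(\pi)$ is the critical locus of a function $\mu_{\pi,\CM}$ on $\CM^D_{r,L}(\pi)$ with $\varphi_{\mu_{\pi,\CM}}\BBC$ a shifted constant sheaf, it applies the vanishing-cycle functor $\varphi_{\mu_{\CA}}$ to the already-constructed isomorphism $c^{D_p}_\kappa$ (resp.\ $c^{K_{p,q}}_\kappa$) and verifies that this yields the isomorphism for $D$ (resp.\ $K_C$). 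Since vanishing cycles preserve isomorphisms, no support control for $K_C$ is needed at any point. So the vanishing-cycle technology is used to transport the isomorphism itself, not to establish supports and then extend, which is the opposite logical direction from what you propose.
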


The construction of the operator
\[
c_\kappa: \left(\mathrm{Rh}_* \BBC\right)_\kappa \xrightarrow{\simeq} {i_\gamma}_*\left(\mathrm{Rh_\gamma}_* \BBC\right)_\kappa [-2d_\gamma] \in D_c^b(\CA)
\]
realizing the isomorphism of Theorem \ref{main} is the main theme of this paper. It is of a geometric nature, given by a combination of algebraic correspondences and vanishing cycle functors. Since it induces a correspondence between the $\kappa$-part of the cohomology of an $\mathrm{SL}_n$-Hitchin fiber and the $\kappa$-part of the cohomology of the corresponding endoscopic Hitchin fiber, we call Theorems \ref{main0} and \ref{main} the \emph{endoscopic decomposition} associated with $\mathrm{SL}_n$ and the character $\kappa$. A major difference between Theorem \ref{main} and the work of Ng\^o \cite{Ngo} and Yun \cite{Yun3} is that they mainly work with $D$-twisted Hitchin fibrations with $\mathrm{deg}(D)>2g-2$ or with just the elliptic locus of the $K_C$-twisted Hitchin fibration, while we are interested in entire space in the latter setting. The structure of the supports of the direct image complexes is much more complicated in the $K_C$-case over the total Hitchin base; see \cite{Supp}.

In the following, we give some applications of Theorems \ref{main}.

\subsection{Structure of the cohomology of $\CM_{n,L}$}

Let $\kappa \in \hat{\Gamma}$ and $\gamma \in \Gamma$ be identified by (\ref{Weil}). Let $\pi: C' \to C$ be the degree $m$ cyclic Galois cover associated with $\gamma$. Assume $n=mr$. We denote by $\widetilde{\CM}'_{r,d}$ the moduli space of stable ($\mathrm{GL}_r$-)Higgs bundles  
\[
(\CE, \theta): \quad \mathrm{rank}(\CE) = r, \quad  \mathrm{deg}(\CE) =d
\]
on the curve $C'$.

Recall the decomposition (\ref{maindecomp}). The following theorem is a structural result for $H^*(\CM_{n,L}, \BC)_\kappa$. 

\begin{thm}[Theorem \ref{thm5.4}] \label{thm0.4}
The operator (\ref{eqn_ck}) induces a surjective morphism
\[
\mathfrak{p}_\kappa: H^{i}(\widetilde{\CM}'_{r,d}, \BC) \twoheadrightarrow H^{i+2d_\gamma}(\CM_{n,L}, \BC)_\kappa.
\]
Moreover, let $P_kH^*(\widetilde{\CM}'_{r,d}, \BC)$ and $P_kH^{*}(\CM_{n,L}, \BC)_\kappa$ be the perverse filtrations defined via the Hitchin fibrations, then
\[
\mathfrak{p}_\kappa \left( P_k H^i(\widetilde{\CM}'_{r,d}, \BC)\right) = P_{k+d_\gamma}H^{i+2d_\gamma}( \CM_{n,L}, \BC)_{\kappa}.
\]
\end{thm}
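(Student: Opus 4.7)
The plan is to use the isomorphism $c_\kappa$ of Theorem~\ref{main} to reduce the claim to constructing a surjection $H^{i}(\widetilde{\CM}'_{r,d}, \BC) \twoheadrightarrow H^{i}(\CM_\gamma, \BC)_\kappa$ compatible with the perverse filtration, and then to factor the latter through a K\"unneth-type restriction composed with the endoscopic identification of Theorem~\ref{main0}.

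For the K\"unneth step, the norm--trace morphism $\widetilde{\CM}'_{r,d} \to T^*\Pic^d(C)$, $(\CE,\theta)\mapsto (\det\pi_*\CE,\, \mathrm{tr}\,\pi_*\theta)$, realises $\CM_{r,L}(\pi)$ as the fiber over $(L,0)$. The connected group $T^*\Pic^0(C)$ acts on the total space via $\pi^*$-tensor together with addition of scalar Higgs fields, and translates the base by $n$-th powers with stabiliser $\Gamma$; topologically this exhibits $\widetilde{\CM}'_{r,d}$ as a twisted product $T^*\Pic^d(C) \times_\Gamma \CM_{r,L}(\pi)$. Since translations act trivially on the cohomology of an abelian variety, we obtain
\[
H^*(\widetilde{\CM}'_{r,d}, \BC) \simeq H^*(T^*\Pic^d(C), \BC) \otimes H^*(\CM_{r,L}(\pi), \BC)_{\mathrm{st}},
\]
so that restriction $\iota^*$ to the fiber surjects onto the stable part. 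For the endoscopic step, Theorem~\ref{main0} on $\CA(\pi)^*$ identifies $(\mathrm{Rh_\pi}_*\BBC)_{\mathrm{st}} \simeq (\mathrm{Rh_\pi}_*\BBC)_\kappa$ with no further shift, and the $G_\pi$-quotient $\CM_{r,L}(\pi) \to \CM_\gamma$ identifies $H^*(\CM_\gamma, \BC) = H^*(\CM_{r,L}(\pi), \BC)^{G_\pi}$; combining these and globalising via Theorem~\ref{main} yields a surjection $H^*(\CM_{r,L}(\pi), \BC)_{\mathrm{st}} \twoheadrightarrow H^*(\CM_\gamma, \BC)_\kappa$, and composing with $c_\kappa$ completes the construction of $\mathfrak{p}_\kappa$.

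For the perverse filtration, under the standard Hitchin-normalised convention (where ${}^p\tau_{\leq a}$ is applied after the shift by $[\dim\CA]$), the K\"unneth factor $H^*(T^*\Pic^d(C))$ contributes only in perverse degree equal to cohomological degree, so $\iota^*$ preserves $P_k$ on the nose; the endoscopic identification and the $G_\pi$-averaging similarly preserve $P_k$; and the derived shift $[-2d_\gamma]$ of $c_\kappa$, combined with the codimension $d_\gamma = \dim\CA - \dim\CA_\gamma$, contributes a perverse shift of $d_\gamma$. Tracing through gives $\mathfrak{p}_\kappa(P_k H^i) = P_{k+d_\gamma} H^{i+2d_\gamma}(\CM_{n,L}, \BC)_\kappa$. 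The main obstacle will be the globalisation in the endoscopic step: Theorem~\ref{main0} is stated only over $\CA(\pi)^*$, and the IC-sheaf supports of the stable and $\kappa$-isotypic parts of $\mathrm{Rh_\pi}_*\BBC$ may differ over the complement; extending the identification to a global surjection on cohomology relies essentially on the vanishing-cycle construction underlying Theorem~\ref{main} to supply the compatible global structure.
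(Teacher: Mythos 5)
Your overall strategy mirrors the paper's: a K\"unneth-type decomposition relating $H^*(\widetilde{\CM}'_{r,d})$ to the stable part of $H^*(\CM_{r,L}(\pi))$, an identification of the stable part with the $\kappa$-part, passage to $G_\pi$-invariants to reach $H^*(\CM_\gamma)_\kappa$, and finally the sheaf-theoretic isomorphism $c_\kappa$ with its shift $[-2d_\gamma]$. The perverse-filtration bookkeeping you outline is also consistent with what the paper does.

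However, the ``main obstacle'' you flag at the end is a genuine gap in the proposal, and the way you suggest closing it does not work. You invoke the second isomorphism of Theorem~\ref{main0} to identify $(\mathrm{Rh_\pi}_*\BBC)_{\mathrm{st}}$ with $(\mathrm{Rh_\pi}_*\BBC)_\kappa$, but that theorem is stated only over $\CA(\pi)^*$; over the complement the restriction map on cohomology need not see the whole complex, and ``globalising via Theorem~\ref{main}'' does not help because $c_\kappa$ lives on $\CA$ and relates the $\mathrm{SL}_n$ complex to the $\gamma$-fixed locus --- it says nothing about the internal structure of $\mathrm{Rh_\pi}_*\BBC$ over $\CA(\pi)\smallsetminus\CA(\pi)^*$. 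The ingredient you are missing is Proposition~\ref{prop2.8}, which the paper proves \emph{independently} of any support theorem: by studying the transitive action of the cyclic group $\pi_0(\mathrm{Prym}(C'/C)) = \Gamma/\Omega$ on the connected components $M_1,\dots,M_m$ of $\CM_{r,L}(\pi)$ and the induced isomorphisms on the $\Omega$-invariant summands of $\mathrm{Rh}_{i*}\BBC$, one obtains a canonical (up to scaling) isomorphism $(\mathrm{Rh_\pi}_*\BBC)_{\kappa_1} \simeq (\mathrm{Rh_\pi}_*\BBC)_{\kappa_2}$ for any $\kappa_1,\kappa_2\in\langle\kappa\rangle$, valid over the entire base $\CA(\pi)$ and for arbitrary $D$ (including $D=K_C$). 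This is precisely what makes the middle steps of (\ref{104}) sheaf-theoretic over the full base, so each of them is automatically compatible with the perverse truncations. Once you substitute Proposition~\ref{prop2.8} for the restricted Theorem~\ref{main0}, your argument closes and becomes the paper's proof.

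One smaller imprecision: what you call ``restriction $\iota^*$ to the fiber'' agrees with the paper's $\mathfrak{p}_1$ only after one checks that the positive-degree factors of $H^*(T^*\Pic^d(C))$ pull back to zero on the fiber, so that the restriction really is the projection to $H^0\otimes H^*(\CM_{r,L}(\pi))_{\mathrm{st}}$. The paper sidesteps this by defining $\mathfrak{p}_1$ directly as a K\"unneth projection and proving compatibility with the perverse filtrations in Proposition~\ref{prop5.1} via the sheaf-level morphism $\BBC\to\widetilde{q}_*\BBC$, which is cleaner than arguing on cohomology.
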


We refer to \cite{dCM0, dCHM1} for perverse filtrations; see also Section \ref{Sec5.1} for a brief review. 

If $\kappa =0$, we have $\pi = \mathrm{id}: C \xrightarrow{\simeq} C$. The operator $\mathfrak{p}_{\kappa=0}$ in this special case recovers the restriction map
\[
j^*: H^{i}(\widetilde{\CM}_{n,d}, \BC) \twoheadrightarrow H^{i}(\CM_{n,L}, \BC)^\Gamma
\]
associated with the embedding $j: \CM_{n,L} \hookrightarrow \widetilde{\CM}_{n,d}=\widetilde{\CM}'_{r,d}$. 

By Markman's theorem \cite{Markman}, the cohomology $H^*(\widetilde{\CM}'_{r,d}, \BC)$ is generated by the tautological classes associated with a universal family on $\widetilde{\CM}'_{r,d}$. Hence Theorem \ref{thm0.4} shows that each isotypic component $H^{*}( \CM_{n,L}, \BC)_{\kappa}$ for $\kappa \neq 0$ is governed by the tautological classes of a \emph{different} moduli space of Higgs bundles through the operator $\mathfrak{p}_\kappa$. More discussions concerning Theorem \ref{thm0.4} and the P=W conjecture \cite{dCHM1} are given in Section \ref{Section5}.

\subsection{The Hausel--Thaddeus conjecture}\label{Sec0.5} 

In \cite{HT}, Hausel and Thaddeus showed that the moduli spaces of stable $\mathrm{SL}_n$- and $\mathrm{PGL}_n$-Higgs bundles are mirror partners in the sense of the Strominger--Yau--Zaslow mirror symmetry. As a consequence, these two moduli spaces should have identical Hodge numbers. 

As explained in \cite{HT}, the moduli space of degree $d$ stable $\mathrm{PGL}_n$-Higgs bundles can be realized as the quotient $\CM_{n,L}/\Gamma$ which is naturally a Deligne--Mumford stack. Therefore, Hausel--Thaddeus conjectured that, for any two line bundles $L,L'$ with
\[
\mathrm{deg}(L)=d, ~~~\mathrm{deg}(L') =d', \quad \mathrm{gcd}(d,n) = \mathrm{gcd}(d',n) =1,
\]
the Hodge numbers of $\CM_{n,L}$ are the same as the stringy Hodge numbers of the stack $[\CM_{n,L'}/\Gamma]$ (twisted by a certain gerbe $\alpha$):
\begin{equation}\label{HTC}
h^{i,j}(\CM_{n,L}) = h_\mathrm{st}^{i,j}([\CM_{n,L'}/\Gamma], \alpha);
\end{equation}
see \cite[Section 4]{HT} for precise definitions of the gerbe $\alpha$ and the stringy Hodge numbers. Later, Hausel further conjectured a refinement of (\ref{HTC}): the Hodge numbers of $H^*(\CM_{n,L}, \BC)_\kappa$ coincide with the  Hodge numbers of the gerby sector $[(\CM_{n,L'})_\gamma/\Gamma]$. Here $(\CM_{n,L'})_\gamma \subset \CM_{n,L'}$ is the $\gamma$-fixed subvariety, and $\kappa$ and $\gamma$ are matched via (\ref{Weil}). We refer to \cite[Conjecture 4.5]{Survey} concerning Hodge numbers and \cite[Conjecture 5.9]{Survey} for a further refinement involving perverse filtrations.

The following theorem is a direct consequence of Theorem \ref{main} which proves the Hausel--Thaddeus conjecture and a refinement of it; see the formulation (1.2.2) and the appendix in \cite{LW}  for an explanation of how the right-hand side is equivalent to the gerby description above.

\begin{thm}\label{thm0.5}
Assume that $e$ is a multiplicative inverse of $d'$ modulo $n$. Let $\gamma$ and $\kappa$ be matched via (\ref{Weil}). The following identity holds in the Grothendieck group of complex Hodge structures $K_0(\mathrm{HS})$:
\begin{equation}\label{eqnthm0.5}
[P_kH^i(\CM_{n,L}, \BC)_\kappa] = [P_{k-d_\gamma}H^{i-2d_\gamma}((\CM_{n,L'})_\gamma, \BC)_{de\kappa}(-d_\gamma)].
\end{equation}
Here $(\bullet)$ stands for the Tate twist\footnote{Recall that for $V\in K_0(\mathrm{HS})$, we have $h^{i,j}(V(\bullet))= h^{i+\bullet,j+\bullet}(V)$.}, and each piece of the perverse filtrations admits a natural Hodge structure by the theory of mixed Hodge modules. In particular, for any $q\in \BZ$ coprime to $n$, we have
\begin{equation}\label{eqnthm_new}
[P_kH^i(\CM_{n,L}, \BC)] = \sum_{\gamma\in \Gamma} [P_{k-d_\gamma}H^{i-2d_\gamma}((\CM_{n,L'})_\gamma, \BC)_{q\kappa}(-d_\gamma)] \in K_0(\mathrm{HS}).
\end{equation}
\end{thm}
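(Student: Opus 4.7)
The plan is to deduce Theorem \ref{thm0.5} from Theorem \ref{main} by identifying the right-hand side of (\ref{eqn_ck}) applied to $\CM_{n,L}$ with an analogous expression for $\CM_{n,L'}$.

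\textbf{Applying Theorem \ref{main}.} I first apply Theorem \ref{main} to $\CM_{n,L}$ and the character $\kappa$. The isomorphism $c_\kappa$ in $D^b_c(\CA)$ is constructed from algebraic correspondences and vanishing cycle functors, both of which carry mixed Hodge module structures, so $c_\kappa$ upgrades canonically to an isomorphism in $D^b(\mathrm{MHM}(\CA))$. Taking hypercohomology and tracking the perverse $t$-structure (noting that $i_{\gamma *}[-2d_\gamma]$ shifts perverse degree by $d_\gamma$ since $\mathrm{codim}_\CA\CA_\gamma=d_\gamma$) yields
\begin{equation*}
[P_kH^i(\CM_{n,L},\BC)_\kappa] = [P_{k-d_\gamma}H^{i-2d_\gamma}((\CM_{n,L})_\gamma,\BC)_\kappa(-d_\gamma)].
\end{equation*}

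\textbf{Transferring to $L'$.} Next I compare $(\CM_{n,L})_\gamma$ with $(\CM_{n,L'})_\gamma$. By the endoscopic description of Section \ref{Section0.1}, these are the $G_\pi$-quotients of $\CM_{r,L}(\pi)$ and $\CM_{r,L'}(\pi)$ respectively. A suitably chosen line bundle $M\in\Pic(C')$ — picked via the projection formula so that $\det(\pi_*(\CE\otimes M))=L'$ whenever $\det(\pi_*\CE)=L$ — defines an isomorphism $\CM_{r,L}(\pi)\xrightarrow{\sim}\CM_{r,L'}(\pi)$ intertwining the Hitchin fibrations. The $\Gamma$-action on these moduli spaces is by tensoring with $\pi^*\CL$ for $\CL\in\Gamma\subset\Pic^0(C)[n]$, and the tensor-by-$M$ map commutes with this action only up to a character of $\Gamma$. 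A direct Weil-pairing computation identifies this character as $\kappa\mapsto de\kappa$, where the factor $de\equiv d/d'\pmod n$ encodes the degree mismatch between $L$ and $L'$. Passing to $G_\pi$-quotients and taking cohomology gives
\begin{equation*}
[P_jH^k((\CM_{n,L})_\gamma,\BC)_\kappa] = [P_jH^k((\CM_{n,L'})_\gamma,\BC)_{de\kappa}],
\end{equation*}
which combined with the previous display establishes (\ref{eqnthm0.5}).

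\textbf{From (\ref{eqnthm0.5}) to (\ref{eqnthm_new}).} Summing (\ref{eqnthm0.5}) over $\kappa\in\hat\Gamma$ (equivalently $\gamma\in\Gamma$ via (\ref{Weil})) collapses the left-hand side to $[P_kH^i(\CM_{n,L},\BC)]$ by the orthogonality decomposition (\ref{maindecomp}). On the right, for any $a\in(\BZ/n)^\times$ the map $\gamma\mapsto a\gamma$ is a bijection on $\Gamma$, and since $\gcd(a,\mathrm{ord}(\gamma))=1$ the fixed subvariety $(\CM_{n,L'})_{a\gamma}$ coincides with $(\CM_{n,L'})_\gamma$. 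Reindexing the sum by substituting $\gamma\mapsto a\gamma$ with $a=q(de)^{-1}\pmod n$ replaces the specific coefficient $de$ with any $q$ coprime to $n$, yielding (\ref{eqnthm_new}).

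The principal obstacle is the character computation in the second step: pinning down that the non-equivariance of the tensor-by-$M$ isomorphism is measured precisely by $\kappa\mapsto de\kappa$ requires a careful Weil-pairing calculation, which is essentially the content of the gerbe comparison carried out in \cite[\S1.2.2 and Appendix]{LW}. Once that character is identified, the remainder is bookkeeping with perverse shifts and Tate twists.
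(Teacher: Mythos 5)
Your first step (applying Theorem \ref{main}) and the final reindexing step are fine, but the crucial middle step — passing from $(\CM_{n,L})_\gamma$ with character $\kappa$ to $(\CM_{n,L'})_\gamma$ with character $de\kappa$ — has a genuine gap. You propose an isomorphism $\CM_{r,L}(\pi)\xrightarrow{\sim}\CM_{r,L'}(\pi)$ by tensoring with a line bundle $M\in\Pic(C')$ chosen so that $\det(\pi_*(\CE\otimes M))=L'$. But for $M\in\Pic(C')$, $\deg\bigl(\det(\pi_*(\CE\otimes M))\bigr)=\deg\bigl(\det(\pi_*\CE)\bigr)+r\deg M$, so the degree of the determinant can only be changed by multiples of $r$. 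Since $d$ and $d'$ are both coprime to $n=mr$ but need not be congruent mod $r$ (e.g.\ $n=6$, $r=3$, $d=1$, $d'=5$), no such $M$ exists in general, and the proposed isomorphism of moduli spaces does not exist. Consequently the "Weil-pairing computation" that is supposed to produce the factor $de$ never gets off the ground.

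The paper's route avoids this problem by never directly comparing $L$ with $L'$. Instead it uses a two-stage transfer whose second stage rests on Proposition \ref{Prop2.11}, which is the real content you are missing. Stage one: since $\deg(L)\equiv\deg(L'^{\otimes de})\pmod n$, there is an honest isomorphism $\CM_{n,L}\simeq\CM_{n,L'^{\otimes de}}$ by tensoring with some $N\in\Pic(C)$ (here the degree shift is a multiple of $n$, so this always works, and it is $\Gamma$-equivariant, so the character does not change). One then applies the endoscopic isomorphism \eqref{ck^D} to $\CM_{n,L'^{\otimes de}}$. Stage two: Proposition \ref{Prop2.11} identifies $\bigl(\mathrm{Rh}_{\pi,L'^{\otimes de}*}\BBC\bigr)_{\kappa}$ with $\bigl(\mathrm{Rh}_{\pi,L'*}\BBC\bigr)_{de\kappa}$ as $G_\pi$-equivariant objects. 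Its proof is \emph{not} a tensor-by-line-bundle argument and is not a Weil-pairing computation in the sense you describe: it uses the $G_\pi$-equivariant isogeny $\mathrm{mult}_{q}:\mathrm{Prym}^{d}(C'/C)\to\mathrm{Prym}^{dq}(C'/C)$, $\CL\mapsto\CL^{\otimes q}$, which is not an isomorphism of moduli spaces but does induce a $G_\pi$-equivariant identification of $\pi_0$'s (because $\gcd(q,m)=1$), together with the support theorem (Theorem \ref{SuppThm2} and its $D=K_C$ extension via Remark \ref{rmk4.8}) to upgrade this to a sheaf-theoretic statement. This combination of the multiplication isogeny and the full-support property is precisely what your proposal elides; citing the gerbe comparison in \cite{LW} does not supply a proof within the present framework.
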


\begin{rmk}
By the last paragraph of \cite[Proof of Proposition 8.2]{HT}, the shift 
\[
d_\gamma =  \mathrm{codim}_{\CA}(\CA_\gamma) = \frac{1}{2} \mathrm{codim}_{\CM_{n,L}}(\CM_{\gamma})
\]
in (\ref{eqnthm0.5}) coincides with the ``fermionic shift" $F(\gamma)$ in the formulation of Hausel and Thaddeus.
\end{rmk}

A refined version of the Hausel--Thaddeus conjecture was previously proven by Gr\"ochenig--Wyss--Ziegler \cite[Theorem 7.24]{GWZ} by p-adic integration, and was generalized by Loeser--Wyss \cite[Remark 5.3.4]{LW} by motivic integration. Note that our refined version (\ref{eqnthm0.5}) is slightly different from the versions of \cite{GWZ, LW}, since the right-hand side of (\ref{eqnthm0.5}) depends on the degree of $L$ while the corresponding term in \cite{GWZ, LW} is independent of this degree.  Instead, our refined version is closer to the conjecture formulated by Hausel in \cite[Conjectures 4.5 and 5.9]{Survey}. Motivated by the Hausel--Thaddeus conjecture, connections between the moduli of Higgs bundles and the $\gamma$-fixed locus with $\gamma \in \Gamma$ were discussed in \cite{Unramified} via Fourier--Mukai transform.

%Running through all $\gamma \in \Gamma$, Theorem \ref{main} implies the following \emph{canonical} decomposition up to scaling,
%\begin{equation}
%    \oplus_\kappa c_{\kappa}:  \mathrm{Rh}_* \BC  \xrightarrow{\simeq}  \left(\mathrm{Rh}_* \BC\right)^\Gamma \oplus \left( \bigoplus_{\gamma} {i_\gamma}_*\left(\mathrm{Rh_\gamma}_* \BC\right)_\kappa [-2d_\gamma] \right) \in D_c^b(\CA).
%\end{equation}
%Discuss its connection to the others (Ngo, Hausel-Thaddeus, GWZ)...

\subsection{Idea of the proof}
Our approach proceeds in two steps.
We first show analogs of Theorems \ref{main0} and \ref{main} for the moduli space $\CM^D_{n,L}$ of $D$-twisted $\mathrm{SL}_n$-Higgs bundles with $\mathrm{deg}(D)$ even and greater than $2g-2$ (see Section \ref{Sect1} for precise definitions).  As mentioned earlier, one expects this case to be simpler than the original setting, due to work of Chaudouard--Laumon \cite{CL} and de Cataldo \cite{dC_SL}, which determines the supports appearing in the decomposition theorem for the twisted Hitchin map.
After proving the corresponding support theorem for endoscopic moduli, we study the endoscopic decomposition of Ng\^{o} \cite{Ngo} and Yun \cite{Yun3} over the elliptic locus, and extend it over the full twisted Hitchin base.

Unfortunately, this approach is not sufficient when $D = K_C$, since the supports of the Hitchin map remain mysterious \cite{Supp}.  Moreover, although $\CM_{n,L}$ embeds inside $\CM^D_{n,L}$ for certain effective divisor $D$ with $\mathrm{deg}(D) >2g-2$, we cannot simply pull back equation \eqref{eqn_ck}.  

Instead, we realize $\CM_{n,L}$ as the critical locus of a regular function 
\begin{equation}\label{eqn_mu}
\mu_{\pi,\CM}: \CM^D_{n,L} \to \BA^1;
\end{equation}
see Theorem \ref{thm4.4}. This allows us to express the cohomology of $\CM_{n,L}$ as the vanishing cohomology of (\ref{eqn_mu}). In addition, since the function $\mu_{\pi,\CM}$ factors through the Hitchin base, we can use the vanishing cycles functor to relate the decomposition theorem for $\CM_{n,L}$ in terms of that for $\CM^D_{n,L}$.  By applying this technique to the twisted version of equation \eqref{eqn_ck}, we obtain the full result.

\subsection{Relation to other work}
As discussed in Section \ref{Sec0.5}, the Hausel--Thaddeus conjecture as well as its refinements has been proven by Gr\"ochenig--Wyss--Ziegler \cite{GWZ} via p-adic integration. Using similar approach, they also gave a new proof of Ng\^o's geometric stabilization theorem \cite{GWZ2} which plays a crucial role in Ng\^o's proof of the fundamental lemma of the Langlands program \cite{Ngo}. Our approach goes along the inverse direction --- we prove the Hausel--Thaddeus conjecture by extending Ng\^o's method \cite{Ngo,Yun3} in the proof the geometric stabilization theorem via perverse sheaves and the support theorems. This carries out the proposal of Hausel in \cite[Section 5.4]{Survey}. A benefit of the sheaf-theoretic approach is that it allows us to construct concrete geometric operators which realize the Hausel--Thaddeus conjecture (Theorems \ref{main0} and \ref{main}) and which provide better understanding of $H^*(\CM_{n,L}, \BC)_\kappa$ for each non-trivial $\kappa$. 

%In the case of $D$-twisted Higgs bundles, our result (Theorem \ref{thm3.2}) removes a technical condition of \cite[Theorems 7.21]{GWZ} on the parity of $\mathrm{deg}(D)$.  

\subsection{Acknowledgement}
We are grateful to Mark Andrea de Cataldo, Michael Gr\"ochenig, and Zhiwei Yun for helpful discussions; we are grateful to Elsa Maneval and Dimitri Wyss for helpful correspondences that led us to discover a mistake in the previous version. We also thank the anonymous referee for careful reading and numerous useful suggestions. D.M. would like to thank Chris and Martin Kratt for providing logistical support.  J.S. was supported by NSF DMS-2000726 and DMS-2134315.

\section{Hitchin-type moduli spaces}\label{Sect1}
Throughout, we work over the complex numbers $\BC$. In this section, we fix the curve $C$ of genus $g \geq 2$, the rank $n$, and the line bundle $L \in \mathrm{Pic}^d(C)$ which serves as the determinant of the Higgs bundles as in Section \ref{Sec0.1}. We study several Hitchin-type moduli spaces relevant to Theorems \ref{main0} and \ref{main}.

\subsection{$D$-Higgs bundles}
For our purpose, it is important to consider generalized Higgs bundles $(\CE, \theta)$ with the Higgs field $\theta$ twisted by a divisor $D$ not necessarily the canonical divisor $K_C$. Such flexibility also plays a crucial role in the proof of the fundamental lemma \cite{Ngo0, Ngo}.

Let $D$ either be an effective divisor of degree $\mathrm{deg}(D) > 2g-2$ or $D= K_C$. A $D$-Higgs bundle is a pair $(\CE, \theta)$ where $\CE$ is a vector bundle and $\theta$ is a $D$-twisted Higgs field 
\[
\theta: \CE \rightarrow \CE \otimes \CO_C(D).
\]
We denote by $\mathrm{char}(\theta)$ the tuple of the coefficients for the  characteristic polynomial associated with $(\CE, \theta)$,
\[
\mathrm{char}(\theta) = (a_1, \dots, a_n), \quad a_i =\mathrm{trace}(\wedge^i \theta) \in H^0(C, \CO_C(iD)).
\]

Parallel to the case of $K_C$-Higgs bundles, the stability condition for $D$-Higgs bundles is with respect to the slope $\mu(\CE) = \mathrm{deg}(\CE)/\mathrm{rank}(\CE)$. By \cite{Nit}, there is a nonsingular quasi-projective moduli space $\widetilde{\CM}_{n,d}^D$ parameterizing stable $D$-Higgs bundles of rank $n$, degree $d$, with the Hitchin map
\begin{equation}\label{GL_Hit}
\widetilde{h}^D : \widetilde{\CM}_{n,d}^D \rightarrow \widetilde{\CA}^D = \oplus_{i=1}^n H^0(C, \CO_C(iD)), \quad (\CE, \theta) \mapsto \mathrm{char}(\theta),
\end{equation}
which is proper and surjective. 

The moduli space of stable $\mathrm{SL}_n$ $D$-Higgs bundles is defined to be the subvariety
\begin{equation}\label{SLn_space}
\CM^D_{n,L} = \{(\CE, \theta) \in \widetilde{\CM}_{n,d}^D; \quad \mathrm{det}(\CE) \simeq L, \quad \mathrm{trace}(\theta) = 0  \} \subset \widetilde{\CM}_{n,d}^D.
\end{equation}
It is nonsingular and irreducible by \cite[Section 2.1]{dC_SL}, which has a Hitchin map induced from the Hitchin map of the ambient space (\ref{GL_Hit}),
\begin{equation}\label{SLn_Hit}
h^D: \CM^D_{n,L} \rightarrow \CA^D = \oplus_{i=2}^n H^0(C, \CO_C(iD)).
\end{equation}
It is clear that the variety $\CM^D_{n,L}$ is the fiber over the closed point $(L,0)$ of the smooth map
\begin{equation}\label{eqn9}
q: \widetilde{\CM}_{r,d}^{D} \rightarrow  \widetilde{\CM}_{1,d}^{D} =\mathrm{Pic}^d(C) \times H^0(C, \CO_C(D)), \quad (\CE, \theta) \mapsto (\mathrm{det}(\CE), \mathrm{trace}(\theta)).
\end{equation}

A major difference between the $D = K_C$ case and the case of $\mathrm{deg}(D)>2g-2$ is that, the Hitchin fibration for either $\mathrm{GL}_n$ or $\mathrm{SL}_n$ is \emph{Lagrangian} with respect to the canonical hyper-K\"ahler structure for $D = K_C$, while the dimension of the Hitchin base is always larger than the dimension of a fiber in the case of $\mathrm{deg(D)}>2g-2$. In view of the support theorems (Section \ref{Section2.2}), such a difference will influence substantially the study of the topology of Hitchin fibrations.

From now on, all $D$-Higgs bundles will be uniformly called \emph{Higgs bundles} for convenience.

\subsection{Relative Hitchin moduli spaces}\label{Sec1.2}
In this section, we study the relative Hitchin moduli space associated with a Galois cover $\pi: C' \to C$. This parameterizes \emph{stable} Higgs bundles with respect to the endoscopic group of $\mathrm{SL}_n$ over $C$ attached to a character $\kappa \in \hat{\Gamma}$; see \cite{Ngo0, Ngo}. 

Let $C'$ be a nonsingular curve with a cyclic Galois cover 
\[
\pi: C' \to C
\]
whose Galois group is $G_\pi \cong \mathbb{Z}/m\BZ$. We denote the divisor $\pi^*D$ by $D'$. For an element $\sigma \in H^0(C', \CO_{C'}(D'))$, the pushforward along $\pi$ gives an element
\[
\pi_* \sigma \in H^0(C, \pi_* \CO_{C'}(D')).
\]
The trace of $\pi_*\sigma$ recovers its projection to the direct summand component 
\[
\mathrm{trace}(\pi_*\sigma) \in H^0(C, \CO_C(D)) \subset H^0(C, \pi_* \CO_{C'}(D')).
\]
The moduli space $\widetilde{\CM}_{r,d}^{D'}(C')$ of rank $r$ degree $d$ stable Higgs bundles on $C'$ admits a map
\begin{equation} \label{eqn8}
q_\pi: \widetilde{\CM}_{r,d}^{D'}(C') \rightarrow  \widetilde{\CM}_{1,d}^{D}(C)  
\end{equation}
which is the composition of (\ref{eqn9}) for the curve $C'$ and the pushforward
\begin{equation}\label{eqn10}
\pi_*: \widetilde{\CM}_{1,d}^{D'}(C') \rightarrow  \widetilde{\CM}_{1,d}^D(C), \quad (\CL, \sigma) \mapsto (\mathrm{det}(\pi_*\CL), \mathrm{trace}(\pi_\ast \sigma)).\footnote{When we wish to be specific regarding the dependence of the moduli spaces on the underlying curve $C$ or $C'$, we add $(C)$ or $(C')$ after the corresponding moduli spaces.}% \in \mathrm{Pic}^d(C) \times H^0(C, \CO_C(D))
\end{equation}
Since both (\ref{eqn9}) and (\ref{eqn10}) are smooth, the composition $q_\pi$ is also smooth.

%Consider the closed point
%\[
%(L,0) \in \mathrm{Pic}^d(C) \times H^0(C, \CO_C(D)) = \widetilde{\CM}_{1,d}^D(C). 
%\]
We define the relative Hitchin moduli space of rank $r$ and degree $d$ associated with $\pi$ as the subvariety of $\widetilde{\CM}_{r,d}^{D'}(C')$ given as a fiber of (\ref{eqn8}):
\[
\CM_{r,L}^D(\pi) = q_\pi ^{-1}(L,0) \subset \widetilde{\CM}_{r,d}^{D'}(C').
\]
The variety $\CM_{r,L}^D(\pi)$, which recovers the $\mathrm{SL}_n$-Hitchin moduli space (\ref{SLn_space}) when $\pi = \mathrm{id}$, is nonsingular due to the smoothness of $q_\pi$.

Next we describe the Hitchin fibration associated with $\CM_{r,L}^D(\pi)$ which generalizes (\ref{SLn_Hit}). Recall from (\ref{GL_Hit}) the $\mathrm{GL}_r$-Hitchin fibration $h^{D'}: \widetilde{\CM}_{r,d}^{D'}(C') \to \widetilde{\CA}^{D'}(C')$ associated with the curve $C'$. The restriction of $h^{D'}$ to $\CM_{r,L}^D(\pi)$ induces the Hitchin map 
\begin{equation}\label{eqn_h_pi}
h_\pi^D: \CM_{r,L}^D(\pi) \to \CA^D(\pi)
\end{equation}
which fits into the commutative diagram
\begin{equation}\label{diag12}
\begin{tikzcd}
\CM_{r,L}^D(\pi) \arrow[r, hook] \arrow[d, "h_\pi^D"]
& \widetilde{\CM}_{r,d}^{D'}(C') \arrow[d, "h^{D'}"] \\
\CA^D(\pi) \arrow[r, hook]
& \widetilde{\CA}^{D'}(C').
\end{tikzcd}
\end{equation}
The Hitchin base $\CA^D(\pi)$ can be concretely described as 
\begin{equation}\label{base1}
\CA^D(\pi) = H^0(C', \CO_{C'}(D'))_{\mathrm{var}} \oplus \left(\oplus_{i=2}^r H^0(C', \CO_{C'}(iD'))\right)
\end{equation}
where $H^0(C', \CO_{C'}(D'))_{\mathrm{var}}$ is the variant part with respect to the natural Galois group $G_{\pi}$-action induced by the $G_\pi$-action on $C'$; see \cite[Section 5]{HP}. Since the line bundles $\CO_{C'}(iD')$ have canonical $G_{\pi}$-linearizations, there is a natural $G_\pi$-action on the Hitchin base (\ref{base1}).

\begin{prop}\label{prop1.1} We have the following properties.
\begin{enumerate}
    \item[(a)] The moduli space $\CM_{r,L}^D(\pi)$ is a disjoint union of $m$ nonsingular isomorphic components
    \begin{equation}\label{M_i_decomp}
    \CM_{r,L}^D(\pi) = \bigsqcup_{i=1}^m M_i.
    \end{equation}
    \item[(b)] The restrictions of $h_\pi^D$ to all components $h_i: M_i \to \CA^D(\pi)$ are $\CA^D(\pi)$-isomorphic. More precisely, for each pair $1 \leq i,j \leq m$, there exists an isomorphism $\phi_{ij}: M_i \xrightarrow{\simeq} M_j$ induced by tensoring with a line bundle $\CL_{ij} \in \Gamma$ satisfying the commutative diagram 
    \begin{equation}\label{eq13}
    \begin{tikzcd}[column sep=small]
    M_i \arrow[dr, "h_i"] \arrow[rr, "\phi_{ij}"] & & M_j \arrow[dl, "h_j"] \\
       & \CA^D(\pi).  & 
\end{tikzcd}
\end{equation}
%   \item[(c)] The natural $G_\pi$-action on $\CM_{r,L}^D(\pi)$ is transitive on the set of $m$ components.
\end{enumerate}
\end{prop}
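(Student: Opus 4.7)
My approach is to derive both parts from the structure of the fiber of the determinant map $\CE \mapsto \det(\pi_*\CE)$, using classical Prym theory, and then to exhibit the isomorphisms $\phi_{ij}$ via a natural $\Gamma$-action on $\CM_{r,L}^D(\pi)$ given by twisting with pullbacks of line bundles.

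Smoothness of $\CM_{r,L}^D(\pi) = q_\pi^{-1}(L,0)$ is immediate from smoothness of $q_\pi$, itself the composition of the smooth morphisms (\ref{eqn9}) (applied to $C'$) and (\ref{eqn10}). For the component count, I consider the determinant morphism
\[
\delta : \CM_{r,L}^D(\pi) \to F_L := \{\CN \in \Pic^{d'}(C') : \det(\pi_*\CN) \cong L \},
\]
where $d'$ is the unique degree compatible with $\deg L = d$. The target $F_L$ is a torsor for $\ker(\mathrm{Nm})$, with $\mathrm{Nm}: \Pic^0(C') \to \Pic^0(C)$ the norm map; from the exact sequence $0 \to K \to H_1(C',\BZ) \to H_1(C,\BZ) \to \BZ/m\BZ \to 0$ attached to the cyclic Galois cover $\pi$, one extracts the short exact sequence $0 \to P \to \ker(\mathrm{Nm}) \to \BZ/m\BZ \to 0$ with $P$ the Prym variety, so $\pi_0(\ker\mathrm{Nm}) \cong \BZ/m\BZ$. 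Combined with the standard connectedness of the moduli of stable Higgs bundles of fixed determinant, this forces $\CM_{r,L}^D(\pi)$ to decompose into exactly $m$ nonsingular components.

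For (b), I introduce the $\Gamma$-action $\CL \cdot (\CE, \theta) = (\CE \otimes \pi^*\CL, \theta)$. The projection formula gives $\det(\pi_*(\CE \otimes \pi^*\CL)) = \det(\pi_*\CE) \otimes \CL^n \cong L$ since $\CL^n \cong \CO_C$ for $\CL \in \Gamma$, and $\mathrm{trace}(\pi_*\theta)$ is manifestly invariant; since tensoring by a line bundle preserves the characteristic polynomial of the Higgs field, the action also commutes with $h_\pi^D$. The induced action on $\pi_0(\CM_{r,L}^D(\pi)) \cong \BZ/m\BZ$ is the homomorphism $\Gamma \to \BZ/m\BZ$, $\CL \mapsto [\pi^*(\CL^r)]$; granting its surjectivity, I define $\phi_{ij}$ by tensoring with a chosen $\CL_{ij} \in \Gamma$ whose class sends $M_i$ to $M_j$, and commutativity of (\ref{eq13}) is automatic. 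The main technical obstacle is verifying this surjectivity: my plan is to use that $\Pic^0(C)[m] \subset \Gamma$ and that $\CL \mapsto \CL^r$ is surjective from $\Gamma$ onto $\Pic^0(C)[m]$ by divisibility of $\Pic^0(C)$, then identify the image of $\pi^*|_{\Pic^0(C)[m]}$ in $\pi_0(\ker\mathrm{Nm})$ via the Weil pairing duality (\ref{Weil}), with kernel exactly $\langle \gamma \rangle$, producing a surjection onto $\BZ/m\BZ$.
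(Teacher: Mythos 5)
Your overall strategy matches the paper's for part (a): both arguments reduce the component count to the degree~$d$ Prym variety, which has $m$ components, using that the fibers of the determinant--trace map are connected. Two small but real imprecisions. First, the fiber of your $\delta$ over $\CN \in F_L$ is not literally a moduli space of fixed-determinant Higgs bundles: it consists of $(\CE,\theta)$ with $\det(\CE)\cong\CN$ and $\mathrm{trace}(\theta) \in H^0(C',\CO_{C'}(D'))_{\mathrm{var}}$, i.e.\ it fibers over an affine space with $\mathrm{SL}_r$-Higgs moduli as fibers; the cleanest version uses the full determinant--trace map $q$ as the paper does, so that the genuine $\mathrm{SL}_r$ connectedness applies fiberwise and the remaining base factor $H^0(C',\CO_{C'}(D'))_{\mathrm{var}}$ is visibly connected. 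Second, and more seriously, in part (b) the step ``with kernel exactly $\langle\gamma\rangle$, producing a surjection onto $\BZ/m\BZ$'' does not go through as written. The kernel of $\pi^*$ on $\mathrm{Pic}^0(C)[m]$ is indeed $\langle\gamma\rangle$, but that alone gives no control on the composite $\mathrm{Pic}^0(C)[m]\xrightarrow{\pi^*}\mathrm{Prym}(C'/C)\to\pi_0(\mathrm{Prym}(C'/C))$, whose kernel is much larger than $\langle\gamma\rangle$ (it has index $m$ in a group of order $m^{2g}$). The missing input is the canonical identification $\pi_0(\mathrm{Prym}(C'/C))\cong\widehat{K}$ with $K=\langle\gamma\rangle$ (this is \cite[Theorem 1.1(1)]{HP}, which the paper uses in the proof of Proposition~\ref{prop1.2}); with that identification, the composite becomes the Weil-pairing dual of the inclusion $K\hookrightarrow\mathrm{Pic}^0(C)[m]$, and non-degeneracy of the Weil pairing turns injectivity of the inclusion into surjectivity of the dual. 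So your plan can be completed, but the duality step is the crux rather than the kernel computation.

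For part (b) your packaging is genuinely different from the paper's: you work with the full moduli space and compute the induced $\Gamma$-action on $\pi_0(\CM_{r,L}^D(\pi))$ directly, whereas the paper restricts to a general point $a\in\CA^D(\pi)$, observes that the Hitchin fiber there is a fiber of $g_{a*}:\mathrm{Pic}^d(C'_a)\to\mathrm{Pic}^d(C)$, and cites the transitivity of $\Gamma$ on its components \cite[Lemmas 2.1, 2.2]{HP}. The paper's route is shorter because it outsources the surjectivity; yours is more self-contained and makes the map $\CL\mapsto[\pi^*\CL^{\otimes r}]$ explicit, which is exactly the homomorphism (\ref{eqnn16}) the paper later needs for its character analysis. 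Both ultimately rest on the same structural input about Prym component groups from \cite{HP}.
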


\begin{proof}
%We first prove (a) and (b).

Recall that $\CM_{r,L}^D(\pi)$ is the fiber of 
\[
q_\pi = (\pi_*) \circ q
\]
(see (\ref{eqn8})) over the point $(L,0) \in \widetilde{\CM}_{1,d}^{D}(C)$. The map $q$ is surjective and smooth, whose fibers are isomorphic to the moduli of stable $\mathrm{SL}_r$-Higgs bundles of degree $d$ on the curve $C'$. In particular, each fiber of $q$ is nonsingular and irreducible. 

%To prove (a) and (b) for a fiber of $q_\pi$, it suffices to treat the fibers of (\ref{eqn10}). 

The morphism $\pi_*$ given in (\ref{eqn10}) respects the product structures
\begin{equation}\label{prod_str}
\pi_*: \mathrm{Pic}^d(C') \times H^0(C', \CO_{C'}(D'))\rightarrow  \mathrm{Pic}^d(C) \times H^0(C, \CO_C(D)),
\end{equation}
where the morphism between the second factors form a trivial affine bundle. For the first factors of (\ref{prod_str}), a fiber of $\pi_*: \mathrm{Pic}^d(C') \rightarrow  \mathrm{Pic}^d(C)$ is isomorphic to the degree $d$ Prym variety associated with the Galois cover $\pi: C' \to C$, which is the disjoint union of $m$ isomorphic abelian varieties; see \cite[Section 7]{HT}. Hence the moduli space $\CM_{r,L}^D(\pi)$ has $m$ nonsingular connected components. 

Assume $n = mr$. Tensoring with a line bundle $\CL \in \Gamma = \mathrm{Pic}^0(C)[n]$ induces an $\CA^D(\pi)$-automorphism 
\begin{equation}\label{phi_L}
\phi_\CL: \CM_{r,L}^D(\pi) \xrightarrow{\simeq} \CM_{r,L}^D(\pi),  \quad \phi_\CL (\CE, \theta) = (\CL\otimes \CE , \theta).
\end{equation}
%which preserves the Hitchin map
%\[h_\pi^D(\CE, \theta)=  h_\pi^D(\CL\otimes \CE , \theta) =  \mathrm{char}(\theta) \in \CA^D(\pi).\]
Moreover, for a general point $a \in \CA^D(\pi)$ corresponding to a degree $n=rm$ spectral cover
\[
g_a: C'_a \to C' \xrightarrow{\pi} C,
\]
the fiber ${(h_\pi^D)}^{-1}(a)$ is identical to a fiber of the morphism
\[
{g_a}_*: \mathrm{Pic}^d(C'_a) \rightarrow \mathrm{Pic}^d(C), \quad \CL \mapsto \mathrm{det}({g_a}_* \CL),
\]
where $\Gamma$ acts transitively on the set of its connected components (\emph{c.f.} \cite[Lemmas 2.1 and 2,2]{HP}). This ensures that $\Gamma$ acts transitively on $\{M_i\}_{1\leq i\leq m}$. In particular, for any pair $1\leq i,j \leq m$, there exits a line bundle $\CL_{ij}\in \Gamma$ such that the isomorphism $\phi_{ij} = \phi_{\CL_{ij}}$ given in (\ref{phi_L}) satisfies the commutative diagram (\ref{eq13}). (a) and (b) are proved.
\end{proof}

\subsection{Weil pairing and cyclic covers}\label{Weil_pair}
Recall from Section \ref{Sec0.1} that the group $\Gamma = \mathrm{Pic}^0(C)[n]$ acts on the $\mathrm{SL}_n$-moduli space $\CM^D_{n,L}$ via tensor product. For $\gamma \in \Gamma$, the $\gamma$-fixed subvariety $\CM^D_\gamma \subset \CM^D_{n,L}$ carries an induced Hitchin map
\[
\CM^D_{\gamma}   \xrightarrow{h^D_\gamma} \CA^D_\gamma = \mathrm{Im}(h^D|_{\CM_\gamma}) \xhookrightarrow{i^D_\gamma} \CA^D.
\]
As indicated by Theorem \ref{main}, the cohomology of $\CM^D_{\gamma}$ is related to a $\kappa$-isotypic component of the cohomology of $\CM^D_{n,L}$ with respect to the $\Gamma$-action. 

In order to describe this connection, we need a  correspondence (\ref{Weil}) between an element $\gamma \in \Gamma$ and a character $\kappa \in \hat{\Gamma}$, which we review in the following. 

Let $\mu_n \subset \BC^*$ denote the finite group of the $n$-th roots of unity. We have the \emph{Weil pairing} on the group of $n$-torsion points of $\mathrm{Pic}^0(C)$, 
\[
\langle~~,~~\rangle_\Gamma: \Gamma \times \Gamma \rightarrow \mu_n.
\]
Under the identification 
\[
\mathrm{Pic}^0(C)[n] = H_1(C, \BZ/n\BZ),
\]
the Weil pairing coincide with the intersection pairing on $H_1(C, \BZ/n\BZ)$. In particular, it is non-degenerate, which induces a character
\[
\kappa = \langle \gamma, ~~ \rangle_\Gamma: \Gamma \rightarrow \mu_n \subset \BC^*
\]
for each $\gamma \in \Gamma$. This gives the identification (\ref{Weil}). 

We also note that an element $\gamma \in \Gamma$ naturally corresponds to a cyclic Galois cover of $C$ whose degree is the order of $\gamma \in \Gamma$. In fact, for fixed $\gamma \in \Gamma$, we denote by $L_\gamma$ the $n$-torsion line bundle associated with $\gamma$, and let $m$ be its order which divides $n$. Taking the $m$-th roots of unity fiberwise inside the total space of $L_{\gamma}$ with respect to the zero section $C$, we obtain a cyclic Galois cover 
\[
\pi: C' \rightarrow C
\]
with the Galois group $G_{\pi} \simeq \mathbb{Z}/m\BZ$. Conversely, every degree $m$ \'etale cyclic Galois cover arises this way.

\subsection{Characters}
In this section we give a concrete description for the character $\kappa$ in terms of the Prym variety associated with the corresponding Galois cover $\pi: C' \to C$.

As before, we assume that $\pi:C' \to C$ is a degree $m$ cyclic Galois cover associated with $\kappa \in \hat{\Gamma}$ as in Section \ref{Weil_pair}, and we assume that $n = mr$. The character $\kappa \in \hat{\Gamma}$ matches with $\gamma \in \Gamma$ via (\ref{Weil}). We consider the Prym variety
\begin{equation}\label{Prym1}
\mathrm{Prym}(C'/C) = \mathrm{Ker}\left( \mathrm{Nm}: \mathrm{Pic}^0(C') \to \mathrm{Pic}^0(C) \right)
\end{equation}
with $\mathrm{Nm}$ the norm, and the component group 
\[
\pi_0(\mathrm{Prym}(C'/C))\simeq \BZ/m\BZ.
\]
For an $n$-torsion line bundle $\CL \in \Gamma$, we have
\[
\mathrm{Nm}(\pi^*\CL) = \CL^{\otimes m} \in \mathrm{Pic}^0(C)[r].
\]
In particular, the line bundle $\pi^*\CL^{\otimes r}\in \mathrm{Pic}^0(C')$ represents a point in $\mathrm{Prym}(C'/C))$, which yields a natural group homomorphism
\begin{equation}\label{eqnn16}
  \Gamma  \rightarrow \pi_0(\mathrm{Prym}(C'/C)), \quad \CL  \mapsto [\pi^*\CL^{\otimes r}] \in \pi_0(\mathrm{Prym}(C'/C)).
\end{equation}

The morphism (\ref{eqnn16}) admits a factorization
\[
\Gamma =\mathrm{Pic}^0(C)[n] \xrightarrow{[r]} \mathrm{Pic}^0(C)[m] \rightarrow \pi_0(\mathrm{Prym}(C'/C)).
\]
The first map above is the multiplication by $r$ which is clearly surjective. The second map sends $\CL \in \mathrm{Pic}^0(C)[m]$ to the equivalent class of the line bundle $\pi^*\CL \in \mathrm{Prym}(C'/C)$, and its surjectivity is given by the proof of  \cite[Theorem 1.1 (2)]{HP}. Hence (\ref{eqnn16}) is surjective.

Recall that $\gamma \in \Gamma$ is of order $m$, so
\[
\gamma \in \mathrm{Pic}^0(C)[m] \subset \Gamma.
\]
The following lemma is obtained by viewing the Weil pairing on $\mathrm{Pic}^0(C)[i]$ for any $i \in \BN$, via Poincar\'{e} duality, as the intersection pairing 
\[
H_1(C, \BZ/i\BZ) \times H_1(C, \BZ/i\BZ) \rightarrow \BZ/i\BZ
 \simeq \mu_i.\]

\begin{lem}\label{lem1.2}
Assume $\gamma' \in \Gamma$. Let $\langle ~~, ~~\rangle_{\mathrm{Pic}^0(C)[m]}$ denotes the Weil pairing on $\mathrm{Pic}^0(C)[m]$, and we view $r\gamma'$ naturally as an element in $\mathrm{Pic}^0(C)[m]$. We have 
\[
\langle \gamma, \gamma'\rangle_\Gamma = \langle \gamma, r\gamma'\rangle_{\mathrm{Pic}^0(C)[m]}.
\]
\end{lem}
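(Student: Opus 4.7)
The plan is to translate the claimed identity into a computation with the topological intersection pairing on $H_1(C, \BZ/i\BZ)$, as the hint preceding the statement already suggests. I will fix the relevant identifications: the Kummer sequence $1 \to \mu_i \to \CO_C^* \xrightarrow{(\cdot)^i} \CO_C^* \to 1$ gives $\mathrm{Pic}^0(C)[i] \cong H^1(C, \mu_i)$, and the choice $\zeta_n = e^{2\pi\sqrt{-1}/n}$, together with $\zeta_m := \zeta_n^r$, identifies $\mu_i \cong \BZ/i\BZ$ for $i = n, m$. Poincar\'{e} duality on $C$ then gives $H^1(C, \BZ/i\BZ) \cong H_1(C, \BZ/i\BZ)$, under which the Weil pairing becomes the mod-$i$ topological intersection pairing valued in $\BZ/i\BZ \cong \mu_i$.

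Next I will track how the natural maps between $\mathrm{Pic}^0(C)[n]$ and $\mathrm{Pic}^0(C)[m]$ behave topologically. Since $\zeta_m = \zeta_n^r$, the inclusion $\mu_m \hookrightarrow \mu_n$ corresponds to $\BZ/m\BZ \xrightarrow{\cdot r} \BZ/n\BZ$, so the tautological inclusion $\mathrm{Pic}^0(C)[m] \hookrightarrow \mathrm{Pic}^0(C)[n]$ is realized on $H_1$ by multiplication by $r$ on the coefficient group. Dually, the $r$-th power map $\mu_n \twoheadrightarrow \mu_m$ corresponds to reduction $\BZ/n\BZ \twoheadrightarrow \BZ/m\BZ$ modulo $m$, so the multiplication-by-$r$ map $[r]\colon \mathrm{Pic}^0(C)[n] \to \mathrm{Pic}^0(C)[m]$ is induced on $H_1$ by mod-$m$ reduction of coefficients.

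With these translations in hand, the lemma reduces to a short chain-level computation. Since $H_1(C, \BZ)$ is free, I will choose an integer lift $\tilde{\eta} \in H_1(C, \BZ)$ of the class representing $\gamma \in \mathrm{Pic}^0(C)[m]$ in $H_1(C, \BZ/m\BZ)$, and an integer lift $\tilde{\gamma}' \in H_1(C, \BZ)$ representing $\gamma' \in \mathrm{Pic}^0(C)[n]$ modulo $n$. Then $\gamma$, viewed in $\mathrm{Pic}^0(C)[n]$, is represented by $r\tilde{\eta} \bmod n$, so
\[
\langle \gamma, \gamma'\rangle_\Gamma \,=\, \zeta_n^{\,r(\tilde{\eta}\cdot\tilde{\gamma}')} \,=\, \zeta_m^{\,\tilde{\eta}\cdot\tilde{\gamma}'} \in \mu_n,
\]
while $r\gamma' \in \mathrm{Pic}^0(C)[m]$ corresponds to $\tilde{\gamma}' \bmod m$, giving $\langle \gamma, r\gamma'\rangle_{\mathrm{Pic}^0(C)[m]} = \zeta_m^{\,\tilde{\eta}\cdot\tilde{\gamma}'} \in \mu_m \subset \mu_n$. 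The two match.

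The main obstacle is purely bookkeeping: coordinating the identifications $\mu_i \cong \BZ/i\BZ$ with the compatible choice $\zeta_m = \zeta_n^r$, so that the extra factor of $r$ appears in the right places on both sides. Once these conventions are pinned down, the proof is essentially formal.
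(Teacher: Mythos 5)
Your argument is correct and fills in precisely the computation the paper leaves implicit: the paper states the lemma ``is obtained by viewing the Weil pairing on $\mathrm{Pic}^0(C)[i]$, via Poincar\'e duality, as the intersection pairing on $H_1(C,\BZ/i\BZ)$'' and gives no further proof. Your translation of the two natural maps (the inclusion $\mathrm{Pic}^0(C)[m]\hookrightarrow\mathrm{Pic}^0(C)[n]$ becoming multiplication by $r$ on coefficients, and $[r]\colon\mathrm{Pic}^0(C)[n]\to\mathrm{Pic}^0(C)[m]$ becoming mod-$m$ reduction, both compatible with $\zeta_m=\zeta_n^r$) and the resulting chain-level identity $\zeta_n^{r(\tilde\eta\cdot\tilde\gamma')}=\zeta_m^{\tilde\eta\cdot\tilde\gamma'}$ are exactly the verification the paper is pointing to.
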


\begin{prop}\label{prop1.2}
The character $\kappa \in \hat{\Gamma}$ (corresponding to $\pi:C' \to C$) factors through a character of $\pi_0(\mathrm{Prym}(C'/C))$ of order $m$ via the morphism (\ref{eqnn16}), 
\[
\kappa: \Gamma \rightarrow \pi_0(\mathrm{Prym}(C'/C))\left( \simeq \BZ/m\BZ  \right) \to \BC^*.
\]
\end{prop}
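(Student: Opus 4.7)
The plan is to apply Lemma~\ref{lem1.2} to recast $\kappa$ as a Weil pairing on $\mathrm{Pic}^0(C)[m]$, and then identify the kernel of the natural surjection $\mathrm{Pic}^0(C)[m] \twoheadrightarrow \pi_0(\mathrm{Prym}(C'/C))$ that factors~(\ref{eqnn16}) with the orthogonal complement $\gamma^{\perp}$ under this pairing.

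First, I would use Lemma~\ref{lem1.2} to write, for every $\gamma' \in \Gamma$,
\[
\kappa(\gamma') \;=\; \langle \gamma, \gamma'\rangle_\Gamma \;=\; \langle \gamma, r\gamma'\rangle_{\mathrm{Pic}^0(C)[m]}.
\]
Thus $\kappa$ factors as $\Gamma \xrightarrow{[r]} \mathrm{Pic}^0(C)[m] \xrightarrow{\chi} \mu_m \subset \BC^*$, where $\chi(\mathcal{L}') := \langle \gamma, \mathcal{L}'\rangle_{\mathrm{Pic}^0(C)[m]}$. Since the Weil pairing is non-degenerate and $\gamma$ has order exactly $m$, the character $\chi$ has order exactly $m$, and its kernel $\gamma^{\perp} \subset \mathrm{Pic}^0(C)[m]$ has index $m$, i.e.\ order $m^{2g-1}$.

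Proposition~\ref{prop1.2} now reduces to checking that $\chi$ descends along the surjection $\pi^*: \mathrm{Pic}^0(C)[m] \twoheadrightarrow \pi_0(\mathrm{Prym}(C'/C))$. Since the target is cyclic of order $m$ (by the paragraph preceding the proposition, using \cite{HP}), this kernel too has order $m^{2g-1}$, and it suffices to prove one inclusion,
\[
\ker\!\bigl(\mathrm{Pic}^0(C)[m] \xrightarrow{\pi^*} \pi_0(\mathrm{Prym}(C'/C))\bigr) \;\subseteq\; \gamma^{\perp}.
\]
Writing $\mathrm{Nm} = \det(\pi_*-)$ for the norm map, the strategy is to exploit the decomposition $\mathrm{Pic}^0(C') \sim \pi^*\mathrm{Pic}^0(C) + \mathrm{Prym}^0(C'/C)$ together with the identities $\mathrm{Nm}\circ\pi^* = [m]$ and its counterpart on $\mathrm{Pic}^0(C')/\mathrm{Prym}^0(C'/C)$, in order to identify the left-hand kernel with $\{\mathcal{L}' \in \mathrm{Pic}^0(C)[m] : \pi^*\mathcal{L}' \in \mathrm{Prym}^0(C'/C)\}$. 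Topologically, under Poincar\'e duality $\mathrm{Pic}^0(C)[m] \cong H_1(C,\BZ/m\BZ)$, the element $\gamma$ is dual to the cohomology class in $H^1(C,\BZ/m\BZ)$ defining $\pi$; the relation $\langle\gamma,\mathcal{L}'\rangle = 1$ then expresses that a cycle representative of $\mathcal{L}'$ admits a lift to $C'$, which on the Prym side places $\pi^*\mathcal{L}'$ in the identity component.

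Once this inclusion is established, equality of the kernels follows by order comparison, and $\chi$ descends to an isomorphism $\bar\chi: \pi_0(\mathrm{Prym}(C'/C)) \xrightarrow{\simeq} \mu_m$ of cyclic groups of order $m$, which is the content of Proposition~\ref{prop1.2}. I expect the main obstacle to lie in the kernel identification above---reconciling the abelian-variety-theoretic description of $\pi_0(\mathrm{Prym})$ with the algebraic Weil pairing. This should be carried out either by a direct computation using the Hochschild--Serre spectral sequence for the $G_\pi$-cover $\pi:C'\to C$, or by unpacking the proof of \cite[Theorem~1.1(2)]{HP}.
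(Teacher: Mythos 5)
Your strategy coincides with the paper's: invoke Lemma~\ref{lem1.2} to rewrite $\kappa$ as a Weil pairing on $\mathrm{Pic}^0(C)[m]$, then descend the resulting character along $\mathrm{Pic}^0(C)[m]\twoheadrightarrow\pi_0(\mathrm{Prym}(C'/C))$. Your order-counting reduction is clean and valid: since $\chi$ is surjective onto $\mu_m$ (non-degeneracy of the Weil pairing plus $\mathrm{ord}(\gamma)=m$) and $\pi_0(\mathrm{Prym}(C'/C))\cong\BZ/m\BZ$, both kernels have index $m$, so one inclusion forces equality.

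The gap is exactly where you flag it: the inclusion $\gamma^{\perp}\subseteq\ker\bigl(\mathrm{Pic}^0(C)[m]\to\pi_0(\mathrm{Prym}(C'/C))\bigr)$ (or its reverse) is only given a heuristic, not a proof. The sentence ``a cycle representative of $\mathcal{L}'$ admits a lift to $C'$, which on the Prym side places $\pi^*\mathcal{L}'$ in the identity component'' conflates two things: that a loop $c$ with $\langle\gamma,c\rangle=0$ lifts to a loop in $C'$ is standard covering-space theory, but why that implies $\pi^*\mathcal{L}'$ lands in the \emph{identity component} of $\mathrm{Prym}(C'/C)$---rather than merely in $\mathrm{Prym}(C'/C)$---is the substantive claim, and it requires an actual identification of $\pi_0(\mathrm{Prym}(C'/C))$. (Also a minor inconsistency: you announce the inclusion $\ker\subseteq\gamma^{\perp}$ but then sketch the opposite direction; by order counting either suffices, so this is harmless.)

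The paper closes this gap by citing the canonical isomorphism $\pi_0(\mathrm{Prym}(C'/C))\cong\hat{K}$, where $K=\ker(\pi^*\colon\mathrm{Pic}^0(C)\to\mathrm{Pic}^0(C'))=\langle\gamma\rangle$, from \cite[Theorem~1.1(1)]{HP} (not part (2), which is used elsewhere for surjectivity of (\ref{eqnn16})). Crucially, this isomorphism comes with the explicit formula that $[\pi^*\mathcal{L}]\in\pi_0(\mathrm{Prym}(C'/C))$ corresponds to the character $\gamma\mapsto\langle\gamma,\mathcal{L}\rangle_{\mathrm{Pic}^0(C)[m]}$ of $K$; that formula is precisely what makes the kernel of $\mathcal{L}\mapsto[\pi^*\mathcal{L}]$ equal to $\gamma^{\perp}$ on the nose, with no need for a separate order count. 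So the missing ingredient in your argument is this explicit form of the HP isomorphism---once you import it, your proof and the paper's become essentially identical, and the topological picture you sketch is indeed the intuition behind HP's result.
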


\begin{proof}
Recall from Section \ref{Weil_pair} that the character $\kappa$ is given by the Weil pairing $\langle \gamma, ~~~\rangle_\Gamma$ where $\gamma \in \Gamma$ corresponds to $\pi$. Since $\gamma$ is of order $m$, \emph{i.e.,}
\[
\gamma \in \mathrm{Pic}^0(C)[m] \subset  \Gamma,
\]
Lemma \ref{lem1.2} implies for any element $\gamma' \in \Gamma$ that
\begin{equation}\label{eeqn17}
\kappa(\gamma') = \langle \gamma, \gamma'\rangle_\Gamma = \langle \gamma, r\gamma'\rangle_{\mathrm{Pic}^0(C)[m]}.
\end{equation}

We consider the subgroup
\[
K = \mathrm{Ker}\left\{ \pi^*: \mathrm{Pic}^0(C) \to \mathrm{Pic}^0(C') \right\} \subset \mathrm{Pic}^0(C).
\]
It is the cyclic finite subgroup $\langle \gamma \rangle$ of $\mathrm{Pic}^0(C)$ generated by $\gamma$. Since $\gamma$ is of order $m$, we have $K \subset \mathrm{Pic}^0(C)[m]$. By the proof of \cite[Theorem 1.1 (1)]{HP}, there is a canonical isomorphism
\begin{equation}\label{KKK}
\pi_0(\mathrm{Prym}(C'/C)) = \hat{K},
\end{equation}
which, for any $\CL\in \mathrm{Pic}^0(C)[m]$, identifies $[\pi^*\CL]\in\pi_0(\mathrm{Prym}(C'/C))$ with the character of $K$ sending the generator $\gamma \in K$ to 
\[
\langle \gamma, \CL\rangle_{\mathrm{Pic}^0(C)[m]} \in \mu_m \subset \BC^*.
\]
Comparing with (\ref{eeqn17}), this implies that $\kappa: \Gamma \to \BC^*$ is the composition of (\ref{eqnn16}) and the order $m$ character of $\pi_0(\mathrm{Prym}(C'/C))$ given by
\[
\gamma \in K = \mathrm{Hom}({\pi_0(\mathrm{Prym}(C'/C))}, \BC^*).
\]
Here we dualize (\ref{KKK}) in the last identity. This completes the proof.
\end{proof}

Since ${\pi_0(\mathrm{Prym}(C'/C))}\cong \BZ/m\BZ$, its character group is also cyclic. We obtain the following corollary of Proposition \ref{prop1.2}. 

\begin{cor}\label{cor1.3}
A character $\rho: \Gamma \to \BC^*$ lies in the cyclic subgroup $\langle \kappa \rangle \subset \hat{\Gamma}$ if and only if $\rho$ is induced by a character of $\pi_0(\mathrm{Prym}(C'/C))$ factoring through (\ref{eqnn16}).
\end{cor}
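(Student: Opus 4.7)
The plan is to deduce the corollary directly from Proposition \ref{prop1.2} by a counting / cyclic-group argument, after identifying characters of $\Gamma$ factoring through (\ref{eqnn16}) with the full character group of $\pi_0(\mathrm{Prym}(C'/C))$.

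First I would observe that since the homomorphism (\ref{eqnn16}),
\[
\Gamma \twoheadrightarrow \pi_0(\mathrm{Prym}(C'/C)),
\]
is surjective, pullback of characters along it yields an injective group homomorphism
\[
\widehat{\pi_0(\mathrm{Prym}(C'/C))} \hookrightarrow \hat{\Gamma}
\]
whose image $H \subset \hat{\Gamma}$ is, by definition, exactly the set of characters of $\Gamma$ that factor through (\ref{eqnn16}). Using the isomorphism $\pi_0(\mathrm{Prym}(C'/C))\cong \BZ/m\BZ$ (recalled right before Proposition \ref{prop1.2}), the subgroup $H$ is cyclic of order $m$.

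Next I would invoke Proposition \ref{prop1.2}: it asserts both that $\kappa$ factors through (\ref{eqnn16}) — i.e.\ $\kappa \in H$ — and that $\kappa$, viewed through this factorization, has order exactly $m$. Therefore $\langle \kappa \rangle \subset H$ is a cyclic subgroup of order $m$ inside a cyclic group of order $m$, forcing the equality $\langle \kappa \rangle = H$. This is precisely the statement of the corollary: a character $\rho$ lies in $\langle \kappa \rangle$ if and only if $\rho \in H$, that is, if and only if $\rho$ factors through (\ref{eqnn16}) as a character of $\pi_0(\mathrm{Prym}(C'/C))$.

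There is no real obstacle here; the content of the corollary has been absorbed into Proposition \ref{prop1.2} and into the cyclicity of $\pi_0(\mathrm{Prym}(C'/C))$. The only thing to verify carefully is that the map on character groups induced by (\ref{eqnn16}) is indeed injective with image consisting of exactly those $\rho \in \hat{\Gamma}$ which descend through (\ref{eqnn16}), which is automatic from surjectivity of (\ref{eqnn16}).
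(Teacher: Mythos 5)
Your proof is correct and follows essentially the same line as the paper: the paper states the corollary as an immediate consequence of Proposition~\ref{prop1.2} together with the cyclicity of $\pi_0(\mathrm{Prym}(C'/C))\cong\BZ/m\BZ$, and your argument simply spells out that the order-$m$ character $\kappa$ must generate the order-$m$ cyclic subgroup $H\subset\hat\Gamma$ of characters factoring through (\ref{eqnn16}).
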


%\begin{rmk}\label{rmk1.5}
%By the discussion above (in particular see (\ref{KKK})), the composition
%\[
%K = \langle \gamma \rangle \hookrightarrow \Gamma \xrightarrow{(\ref{eqnn16})} \pi_0(\mathrm{Prym}(C'/C))
%\]
%is an isomorphism of the cyclic group $\BZ/m\BZ$.
%\end{rmk}

Now we consider the kernel of the morphism (\ref{eqnn16}), which we denote by
\begin{equation}\label{Omega}
\Omega \subset \Gamma.
\end{equation}
The subgroup $\Omega$ acts on the moduli space $\CM^D_{r,L}(\pi)$ via tensor product.

\begin{lem}\label{lem1.5}
The $\Omega$-action on $\CM^D_{r,L}(\pi)$ preserves each connected component $M_i$ of (\ref{M_i_decomp}).
\end{lem}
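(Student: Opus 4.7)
The plan is to trace the connected components of $\CM^D_{r,L}(\pi)$ through the fibration used in the proof of Proposition \ref{prop1.1} and then to identify how the $\Gamma$-action permutes them. Recall that $\CM^D_{r,L}(\pi)$ is the fiber over $(L,0)$ of the smooth morphism $q_\pi = \pi_* \circ q$. The fibers of $q$ are moduli of stable $\mathrm{SL}_r$-Higgs bundles on $C'$, which are irreducible, so $\pi_0(\CM^D_{r,L}(\pi))$ is canonically identified with $\pi_0$ of the fiber over $L$ of
\[
\pi_*: \mathrm{Pic}^d(C') \to \mathrm{Pic}^{d''}(C), \quad \CL' \mapsto \mathrm{det}(\pi_*\CL').
\]
This fiber is a torsor under $\mathrm{Prym}(C'/C)$ (see (\ref{Prym1})), so its components are in natural bijection with $\pi_0(\mathrm{Prym}(C'/C)) \simeq \BZ/m\BZ$.

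Next I would compute the effect of the automorphism $\phi_\CL$ from (\ref{phi_L}) on this torsor. Since $\phi_\CL(\CE,\theta) = (\pi^*\CL \otimes \CE, \theta)$, the induced transformation on the $\mathrm{det}$-coordinate is translation by $\pi^*\CL^{\otimes r} \in \mathrm{Pic}^0(C')$, and this element lies in $\mathrm{Prym}(C'/C)$ because $\mathrm{det}(\pi_*(\pi^*\CL^{\otimes r})) = \CL^{\otimes n} \otimes \mathrm{det}(\pi_*\CO_{C'}) = \mathrm{det}(\pi_*\CO_{C'})$, using $\CL^{\otimes n} = \CO_C$. Passing to $\pi_0$, the element $\CL$ therefore acts on $\pi_0(\CM^D_{r,L}(\pi))$ by translation by $[\pi^*\CL^{\otimes r}] \in \pi_0(\mathrm{Prym}(C'/C))$, which is exactly the image of $\CL$ under the homomorphism (\ref{eqnn16}).

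The conclusion then reads off immediately: $\phi_\CL$ preserves each component $M_i$ if and only if $[\pi^*\CL^{\otimes r}] = 0$ in $\pi_0(\mathrm{Prym}(C'/C))$, which is precisely the defining condition $\CL \in \Omega$. The only step requiring genuine care is the identification of the fiber of $\pi_*$ over $L$ as a $\mathrm{Prym}(C'/C)$-torsor together with the compatibility of the $\Gamma$-action with this torsor structure; both are already implicit in Section \ref{Sec1.2} and in part (a) of Proposition \ref{prop1.1}, so I do not anticipate any essential obstacle.
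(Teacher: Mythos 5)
Your argument is correct and follows the same route as the paper: both proofs reduce the component structure of $\CM^D_{r,L}(\pi)$ through the $\Gamma$-equivariant, connected-fiber map $q$ to the degree-$d$ Prym torsor, observe that the $\Gamma$-action there descends to translation by $[\pi^*\CL^{\otimes r}]$ on $\pi_0(\mathrm{Prym}(C'/C))$, and then invoke the definition of $\Omega$ as the kernel of the homomorphism (\ref{eqnn16}). The only difference is that you spell out the translation-by-$\pi^*\CL^{\otimes r}$ computation that the paper encapsulates in the phrase ``by definition the group $\Omega$ acts trivially on $\pi_0(\mathrm{Prym}(C'/C))$''; this is a legitimate unpacking, not a change of strategy.
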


\begin{proof} 
Recall that $\CM_{r,L}^D(\pi)$ is a fiber of 
\[
q_\pi = (\pi_*) \circ q: \widetilde{\CM}_{r,d}^{D'}(C') \xrightarrow{q} \widetilde{\CM}_{1,d}^{D'}(C') \xrightarrow{\pi_*} \widetilde{\CM}_{1,d}^{D}(C).
\]
The fibers of the first map are connected. The second map respects the product structure (\ref{prod_str}). Both the maps $q$ and $\pi_*$ are $\Gamma$-equivariant. Hence the $m$ connected components of (\ref{M_i_decomp}) matches the connected components of the degree $d$ Prym variety
\[
\mathrm{Prym}^d(C'/C)= (\pi_*)^{-1}(L),\quad  \pi_*: \mathrm{Pic}^d(C') \to \mathrm{Pic}^d(C).
\]
Since $\mathrm{Prym}^d(C'/C)$ is a torsor of $\mathrm{Prym}(C'/C)$ (defined in (\ref{Prym1})), and by definition the group $\Omega$ acts trivially on the component group $\pi_0(\mathrm{Prym}(C'/C))$, we obtain that the action of $\Omega$ preserves each connected component of $\mathrm{Prym}^d(C'/C)$. The proposition then follows from the fact that the restriction of $q$ to $\CM^D_{r,d}(\pi)$
\[q|_{(\pi_*)^{-1}(L,0)}: \CM^D_{r,L}(\pi) \to (\pi_*)^{-1}(L,0) = \mathrm{Prym}^d(C'/C)\times H^0(C', \CO_{C'}(D'))_{\mathrm{var}}
\]
is $\Omega$-equivariant.
\end{proof}

\subsection{Endoscopic moduli spaces and $\gamma$-fixed loci}\label{Section1.3}
In this section, we connect the $\gamma$-fixed subvariety $\CM^D_\gamma$ to the relative Hitchin moduli spaces introduced in Section \ref{Sec1.2}. 

%Although these endoscopic moduli spaces have representation theoretic descriptions (see \cite{Ngo0, Ngo}), we use their geometric models given by the relative Hitchin moduli spaces following Section \ref{Sec1.2}.

We fix $\gamma \in \Gamma$ of order $m$. Let $\pi: C' \rightarrow C$ be the cyclic Galois cover with the Galois group $G_{\pi} \simeq \mathbb{Z}/m\BZ$ corresponding to $\gamma$ as in Section \ref{Weil_pair}. Assume $n=mr$. We consider the relative Hitchin moduli space $\CM^D_{r,L}(\pi)$ with the Hitchin fibration
\begin{equation}\label{h^D_H}
h_{\pi}^D:\CM^D_{r,L}(\pi) \rightarrow  \CA^D(\pi).
\end{equation}

%\begin{rmk}
%The subscript $``H_\gamma"$ stands for the endoscopic group associated with $\mathrm{SL}_n$ and $\kappa$, the character corresponding to $\gamma$. The moduli space $\CM^D_{H_\gamma}$ parameterizes $H_\gamma$-Higgs bundles on $C$ with respect to certain stability condition; see \cite{Ngo}.
%\end{rmk}

By \cite[Proposition 7.1]{HT}, the Galois group $G_{\pi}$ acts freely on the relative moduli space $\CM^D_{r,L}(\pi)$ whose quotient recovers the $\gamma$-fixed subvariety $\CM^D_\gamma$. The group $G_{\pi}$ also acts on the base $\CA^D(\pi)$ with the Hitchin map (\ref{h^D_H}) $G_{\pi}$-equivariant. In summary, we have the following commutative diagrams
\begin{equation}\label{diagram111}
\begin{tikzcd}
\CM^D_{r,L}(\pi) \arrow[r, "q_\CM"] \arrow[d, "h^D_{\pi}"]
& \CM_\gamma^D \arrow[r, hook] \arrow[d, "h^D_\gamma"] & \CM^D_{n,L} \arrow[d, "h^D"] \\
\CA^D{(\pi)} \arrow[r, "q_\CA"]
& \CA^D_\gamma \arrow[r, hook, "i_\gamma"] & \CA^D
\end{tikzcd}
\end{equation}
where $q_\CM$ and $q_\CA$ are the quotient maps with respect to the natural actions of the Galois group $G_{\pi}$. Let $\CA^D(\pi)^*$ be the largest open subset of $\CA^D(\pi)$ upon which the $G_\pi$-action is free. Then the left diagram is Cartesian after restricting to 
\[
 \CA^D(\pi)^* \xrightarrow{~~~q_\CA~~~}  {\CA_\gamma^{D}}^*:= \CA^D(\pi)^*/G_\pi.
\]

\begin{rmk}
We see from the description (\ref{base1}) that $\CA^D(\pi)^*$ is non-empty. In fact, it suffices to find vectors in the affine space $\CA^D(\pi)$ that are not fixed by any nontrivial element of the cyclic group $G_\pi$. We consider the pushforward of the structure sheaf $\CO_{C'}$ along the Galois cover $\pi: C' \to C$, which admits a splitting $\pi_* \CO_{C'} = \oplus_\chi L_\chi$ where $\chi$ runs through all characters of $G_\pi$ and $L_\chi$ is a degree 0 line bundle corresponding to $\chi$. The projection formula yields 
\[
\pi_* \CO_{C'}(iD') = \pi_* \pi^* \CO_C(iD) = \bigoplus_{\chi} \CO_C(iD)\otimes L_\chi,\quad \forall i \geq 1.
\]
In particular, for \emph{any} character $\chi$ of $G_\pi$, we have
\begin{equation}\label{claim}
H^0(C',\CO_{C'}(iD'))_\chi=H^0(C, \CO_{C}(iD)\otimes L_\chi) \neq 0
\end{equation}
where we used the Riemann--Roch formula. The non-emptiness of $\CA^D(\pi)^*$ follows from (\ref{claim}) and (\ref{base1}).
\end{rmk}

\begin{lem}\label{lemma1.6}
The direct image complex 
\[
\mathrm{Rh^D_{\pi}}_* \BBC \in D^b_c(\CA^D(\pi))
\]
is $G_{\pi}$-equivariant, and we have a canonical isomorphism 
\begin{equation*}\label{eqn12}
\left( {q_\CA}_* {\mathrm{Rh}^D_{\pi}}_* \BBC \right)^{G_\pi} \xrightarrow{\simeq} {\mathrm{Rh}^D_\gamma}_* \BBC \in D^b_c(\CA^D_\gamma).
\end{equation*}
\end{lem}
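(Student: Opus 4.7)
The plan is to split the statement into two pieces: equivariance and the invariants identification, and then bootstrap both from the commutative diagram
\[
\begin{tikzcd}
\CM^D_{r,L}(\pi) \arrow[r, "q_\CM"] \arrow[d, "h^D_\pi"] & \CM^D_\gamma \arrow[d, "h^D_\gamma"] \\
\CA^D(\pi) \arrow[r, "q_\CA"] & \CA^D_\gamma
\end{tikzcd}
\]
and the fact that $q_\CM$ is a finite étale Galois cover (since $G_\pi$ acts freely on $\CM^D_{r,L}(\pi)$ by \cite[Prop.~7.1]{HT}).

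First, I would establish equivariance. Because $h^D_\pi$ is $G_\pi$-equivariant with respect to the $G_\pi$-actions on source and target, and the constant sheaf $\BBC$ on $\CM^D_{r,L}(\pi)$ carries the tautological $G_\pi$-equivariant structure, the direct image $\mathrm{R}h^D_{\pi*} \BBC$ inherits a canonical $G_\pi$-equivariant structure in $D^b_c(\CA^D(\pi))$.

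Next, I would compute $(q_{\CM*}\BBC)^{G_\pi}$. Since $G_\pi$ acts freely on $\CM^D_{r,L}(\pi)$ with quotient $\CM^D_\gamma$, the map $q_\CM$ is a finite étale Galois cover with Galois group $G_\pi$. Hence $q_{\CM*}\BBC$ is a local system on $\CM^D_\gamma$ whose fiber is the regular representation of $G_\pi$, and its subsheaf of $G_\pi$-invariants is canonically identified with the constant sheaf $\BBC$ on $\CM^D_\gamma$.

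Finally, combining the two pieces: by the commutativity $h^D_\gamma \circ q_\CM = q_\CA \circ h^D_\pi$ and functoriality of derived pushforward,
\[
{q_\CA}_* \mathrm{R}h^D_{\pi*}\BBC \;=\; \mathrm{R}(q_\CA \circ h^D_\pi)_*\BBC \;=\; \mathrm{R}(h^D_\gamma \circ q_\CM)_*\BBC \;=\; \mathrm{R}h^D_{\gamma*}\, q_{\CM*}\BBC.
\]
Because $G_\pi$ is a finite group and we work with $\BC$-coefficients, the $G_\pi$-invariants functor is exact (realized by the averaging idempotent $\tfrac{1}{|G_\pi|}\sum_{g\in G_\pi} g$), so it commutes with the pushforward $\mathrm{R}h^D_{\gamma*}$. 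Applying it to both sides yields
\[
\bigl({q_\CA}_* \mathrm{R}h^D_{\pi*}\BBC\bigr)^{G_\pi} \;\simeq\; \mathrm{R}h^D_{\gamma*}\bigl(q_{\CM*}\BBC\bigr)^{G_\pi} \;\simeq\; \mathrm{R}h^D_{\gamma*}\BBC,
\]
which is the desired canonical isomorphism. The only subtle point to double-check is the exactness of $(-)^{G_\pi}$ at the derived level, which is automatic in characteristic zero via the Reynolds projector; none of the argument requires the $G_\pi$-action on $\CA^D(\pi)$ to be free, so the open locus $\CA^D(\pi)^*$ does not enter the proof.
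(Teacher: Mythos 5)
Your proof is correct and follows essentially the same route as the paper: both identify $(q_{\CM*}\BBC)^{G_\pi}\simeq \BBC$ on $\CM^D_\gamma$ and then commute the invariants functor past $\mathrm{R}h^D_{\gamma*}$ using the commutative square $q_\CA\circ h^D_\pi=h^D_\gamma\circ q_\CM$. Your added detail (the étale Galois cover picture, exactness of $(-)^{G_\pi}$ via the Reynolds projector) makes the same argument slightly more explicit; note the key sheaf identification $(q_{\CM*}\BBC)^{G_\pi}\simeq\BBC$ actually holds in characteristic zero without freeness, so that hypothesis is not essential here, though it is of course true in this setting.
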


\begin{proof}
Since the trivial local system on $\CM^D_{r,L}(\pi)$ is $G_{\pi}$-equivariant, the pushforward ${q_\CM}_* \BBC$ along the quotient map $q_\CM$ admits a natural $G_\pi$-action with a canonical isomorphism
\begin{equation}\label{eqn13}
\left( {q_\CM}_* \BBC \right)^{G_{\pi}} \xrightarrow{\simeq} \BBC \in D^b_c(\CM^D_\gamma).
\end{equation}
The map $h^D_\pi$ is $G_{\pi}$-equivalent, therefore we obtain in $D^b_c(\CA^D_\gamma)$ that
\[
\left({q_\CA}_* \mathrm{Rh^D_\pi}_* \BBC \right)^{G_{\pi}}   =\left(\mathrm{Rh^D_\gamma}_* {q_\CM}_* \BBC \right)^{G_{\pi}}
 = \mathrm{Rh^D_\gamma}_*\left( {q_\CM}_* \BBC \right)^{G_{\pi}} \xrightarrow{\simeq} \mathrm{Rh^D_\gamma}_*\BBC
\]
where the last isomorphism is induced by (\ref{eqn13}). 
\end{proof}

\begin{comment}
For two elements $\gamma, \gamma' \in \Gamma$, if $\gamma$ lies in the cyclic subgroup $\langle \gamma' \rangle$ generated by $\gamma'$, then the Galois cover $\pi_{\gamma'}: C_{\gamma'} \to C$ factor through $\pi_\gamma: C_\gamma \to C$,
\[
\pi_{\gamma'}: C_{\gamma'} \xrightarrow{p} C_{\gamma} \xrightarrow{\pi_\gamma}  C
\]
with $p$ also a Galois cover. Assume the degrees of $p$, $\pi_\gamma$, and $\pi_{\gamma'}$ are $m_p$, $m_\gamma$, and $m_{\gamma'}$ respectively with $r_pr_\gamma = r_{\gamma'}$. The pushforward of any rank $r$ stable Higgs bundle $(\CE, \theta)$ on $C_{\gamma'}$ along $p$ gives a rank $rm_p$ stable Higgs bundle on $C_{\gamma}$. Hence we have the following commutative diagram  
\end{comment}

\section{Support theorems for Hitchin moduli spaces}
\subsection{Supports}

Let $f: X\to Y$ be a proper morphism between nonsingular quasi-projective varieties. The decomposition theorem by Bernstein, Beilinson, Deligne, and Gabber \cite{BBD} implies that the direct image complex $\mathrm{Rf}_\ast \BBC$ is (non-canonically) isomorphic to a direct sum of shifted simple perverse sheaves,
\begin{equation}\label{bbd}
\mathrm{Rf}_* \BBC \simeq \bigoplus_{Z_i} \mathrm{IC}_{Z_i}(L_i)[d_i] \in D^b_c(Y),
\end{equation}
where $d_i \in \BZ$, $Z_i \subset Y$ is an irreducible subvariety, and $L_i$ is a local system on an open subset $U_i$ of $Z_i$. Every $Z_i$ above is called a \emph{support} of $f: X\to Y$. We say that a direct summand $\CF$ of the object
\[
\mathrm{Rf}_* \BBC = \CF \oplus \CF'
\]
has \emph{full support} if each perverse constituent $ \mathrm{IC}_{Z_i}(L_i)[d_i]$ of $\CF$ has support $Z_i = Y$.

In this section, we analyze the supports of various Hitchin fibrations introduced in Section 1 when $\mathrm{deg}(D)>2g-2$.

%We say that $f: X \to Y$ has \emph{full supports} if each $Z_i$ in the righthand side of (\ref{bbd}) is $Y$.

 %The socle of $\mathrm{Rf}_*\BC$, denoted by $\mathrm{Socle}(\mathrm{Rf}_*\BC)$, is the finite subset of Y of generic points ηS of the supports S of Rhn∗Ql.

\subsection{Support theorems}\label{Section2.2}
For the Hitchin fibration $h^D$ (resp. $h^D_\pi$), we define the elliptic locus of the Hitchin bases $\CA^D$ (resp. $\CA^D(\pi)$), denoted by $\CA^{D,\mathrm{ell}}$ (resp. $\CA^{D,\mathrm{ell}}(\pi))$, to be the open subset consisting of integral spectral curves.

 Following the methods of Ng\^{o} \cite{Ngo} and Chaudouard--Laumon \cite{CL}, de Cataldo showed in \cite{dC_SL} that all the supports for the $\mathrm{SL}_n$-Hitchin fibration (\ref{SLn_Hit}) are governed by the elliptic locus $\CA^{D,\mathrm{ell}} \subset \CA^D$ when $\mathrm{deg}(D)>2g-2$. 

\begin{thm} [{\cite[Theorem 1.0.2]{dC_SL}}] \label{SuppThm1}
Let $D$ be an effective divisor on $C$ of degree $\mathrm{deg}(D) >2g-2$. Then the generic points of the supports of 
\[
h^D: \CM^D_{n,L} \to \CA^D
\]
are contained in $\CA^{D,\mathrm{ell}} \subset \CA^D$.
\end{thm}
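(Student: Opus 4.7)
The plan is to reduce the $\mathrm{SL}_n$ statement to the analogous assertion for the $\mathrm{GL}_n$ Hitchin fibration $\widetilde{h}^D: \widetilde{\CM}^D_{n,d} \to \widetilde{\CA}^D$, which is the theorem of Chaudouard--Laumon \cite{CL}: when $\mathrm{deg}(D)>2g-2$, every support of $\widetilde{h}^D$ has its generic point in the elliptic locus $\widetilde{\CA}^{D,\mathrm{ell}}$. That result itself is an instance of Ng\^o's support inequality for $\delta$-regular weak abelian fibrations \cite{Ngo}, whose $\delta$-regularity hypothesis is verified precisely because a $D$-twisted Hitchin base with $\mathrm{deg}(D)>2g-2$ is strictly larger than a fiber, producing the required codimension estimate on the strata where the Prym part of the spectral Picard jumps. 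I would quote this as a black box.

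To transfer the statement to $\mathrm{SL}_n$, I would exploit the smooth morphism
\[
q: \widetilde{\CM}^D_{n,d} \to \mathrm{Pic}^d(C) \times H^0(C,\CO_C(D)), \quad (\CE,\theta) \mapsto (\mathrm{det}(\CE), \mathrm{trace}(\theta))
\]
of (\ref{eqn9}), whose fiber over $(L,0)$ is $\CM^D_{n,L}$. At the level of Hitchin bases, this corresponds to the product decomposition $\widetilde{\CA}^D = H^0(C,\CO_C(D)) \oplus \CA^D$ via the trace, with $\CA^D$ the fiber over $0$. Smoothness of $q$, together with properness of both Hitchin maps and the $\mathrm{Pic}^0(C)$-equivariance coming from tensoring by line bundles, yields a clean relation: after quotienting by the $\mathrm{Pic}^0(C)$-translation action, $\mathrm{Rh}^D_*\BBC$ is identified, up to a cohomological shift equal to $\dim\mathrm{Pic}^d(C) + \dim H^0(C,\CO_C(D))$, with the restriction of $\mathrm{R}\widetilde{h}^D_*\BBC$ to the slice $\CA^D \subset \widetilde{\CA}^D$. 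Consequently, every support of $h^D$ appears as an irreducible component of $\widetilde{Z} \cap \CA^D$ for some support $\widetilde{Z}$ of $\widetilde{h}^D$. Since integrality of a spectral curve depends only on the characteristic polynomial and is unaffected by shifting the trace, one has the equality $\widetilde{Z} \cap \CA^D \cap \widetilde{\CA}^{D,\mathrm{ell}} = \widetilde{Z} \cap \CA^{D,\mathrm{ell}}$, and combining this with the $\mathrm{GL}_n$ support theorem together with the density of $\CA^{D,\mathrm{ell}}$ in $\CA^D$ gives the conclusion.

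The main obstacle is ruling out that the slicing operation from $\widetilde{\CA}^D$ to $\CA^D$ creates new, smaller ``exotic'' supports whose generic points escape $\CA^{D,\mathrm{ell}}$. In general, restriction of a semisimple perverse sheaf to a smooth subvariety need not preserve the simple-perverse decomposition, and spurious components can appear. The remedy is to work $\mathrm{Pic}^0(C)$-equivariantly: the $\mathrm{Pic}^0(C)$-action exhibits $\widetilde{\CM}^D_{n,d}$ essentially as a Picard torsor over a base built from $\CM^D_{n,L}$ (with the $\Gamma$-quotient accounting for finite ambiguity), so the $\mathrm{Pic}^0(C)$-isotypic decomposition of $\mathrm{R}\widetilde{h}^D_*\BBC$ isolates exactly the direct summand that descends to $h^D$. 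Tracking the perverse shift under this descent and combining with the $\mathrm{GL}_n$ support theorem then gives the statement. This $\mathrm{Pic}^0(C)$-equivariant bookkeeping -- rather than any new input on Ng\^o's support inequality -- is the heart of the argument in \cite{dC_SL}.
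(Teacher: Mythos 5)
This statement is not proven in the paper at all: it is cited verbatim from de Cataldo \cite[Theorem~1.0.2]{dC_SL} as an input, so there is no ``paper's own proof'' here. Your blind proof, however, has a genuine structural gap, and your approach is not the one de Cataldo uses (nor the one this paper follows when establishing the analogous endoscopic statement in Theorem~\ref{SuppThm2}).

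The fatal step is the claim that ``every support of $h^D$ appears as an irreducible component of $\widetilde{Z} \cap \CA^D$ for some support $\widetilde{Z}$ of $\widetilde{h}^D$.'' By Chaudouard--Laumon, the \emph{only} support of $\widetilde{h}^D$ is $\widetilde{\CA}^D$ itself, so your claim would force the only support of $h^D$ to be $\CA^D$. But this is precisely false: as Corollary~\ref{cor2.2} of the paper explains (and as is central to everything that follows), the $\mathrm{SL}_n$ direct image $\mathrm{Rh^D_*}\BBC$ decomposes into $\kappa$-isotypic pieces under $\Gamma$, and for each nontrivial $\kappa$ the corresponding piece has a proper support $\CA^D_\gamma \subsetneq \CA^D$. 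These supports are genuinely new and are invisible to $\mathrm{GL}_n$: the torsor relation between $\widetilde{\CM}^D_{n,d}$ and $\CM^D_{n,L}$ (used in (\ref{97}) and Proposition~\ref{prop5.1}) shows that $H^*(\widetilde{\CM}^D_{n,d})$ only sees the $\Gamma$-\emph{invariant} part of $H^*(\CM^D_{n,L})$, tensored with Picard cohomology. The $\mathrm{Pic}^0(C)$-equivariant bookkeeping you propose as a remedy is exactly what demonstrates this loss of information, rather than fixing it: the isotypic decomposition you invoke isolates the stable summand, not the whole complex. So slicing and restricting $\mathrm{R}\widetilde{h}^D_*\BBC$ cannot produce the $\CA^D_\gamma$-supported constituents of $\mathrm{Rh^D_*}\BBC$, and the reduction collapses.

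The route de Cataldo actually takes (and which the present paper mimics for the relative Hitchin fibration in Propositions~\ref{prop2.6} and~\ref{prop2.7} and the proof of Theorem~\ref{SuppThm2}(a)) is to treat the $\mathrm{SL}_n$ Hitchin fibration \emph{directly} as a weak abelian fibration, with the Prym group scheme replacing $\mathrm{Pic}^0_\CC$, and to verify $\delta$-regularity for it. The transfer from $\mathrm{GL}_n$ to $\mathrm{SL}_n$ happens at the level of the $\delta$-inequality (not the direct image complex), using the product structure of the spectral group schemes along the trace direction, which makes the relevant dimension estimates insensitive to passing to the trace-free sub-Hitchin base. Ng\^o's support inequality is then applied to the $\mathrm{SL}_n$ fibration itself, which constrains \emph{all} supports, including the $\CA^D_\gamma$. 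This is the point your slicing argument misses.
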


Now we consider the $\Gamma$-action on the moduli space $\CM^D_{n,L}$. This action is fiberwise with respect to the Hitchin map $h^D$, which induces a canonical decomposition of the direct image complex 
\begin{equation}\label{kappa_decomp}
\mathrm{Rh^D_*} \BBC = \bigoplus_{\kappa \in \hat{\Gamma}} \left(\mathrm{Rh^D_*} \BBC \right)_\kappa \in D^b_c(\CA^D);
\end{equation}
see \cite[Lemma 3.2.5]{LauNgo}. We define the \emph{stable part} $\left(\mathrm{Rh^D_*} \BBC \right)_{\mathrm{st}}$ as the component in (\ref{kappa_decomp}) corresponding to the trivial character $\kappa = 0 \in \hat{\Gamma}$.

Combining Theorem \ref{SuppThm1} with Ng\^o's support theorems over the elliptic loci \cite[Theorems 7.8.3 and 7.8.5]{Ngo}, we obtain the following complete description of the supports for every $\kappa$-part of (\ref{kappa_decomp}).

\begin{cor}\label{cor2.2}
The only support of $\left(\mathrm{Rh^D_*} \BBC \right)_\kappa$ is $\CA^D_\gamma$ where $\gamma$ corresponds to $\kappa$ via (\ref{Weil}).
\end{cor}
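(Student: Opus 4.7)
The plan is to deduce the corollary by assembling the two results cited immediately above: de Cataldo's support theorem for the $\mathrm{SL}_n$-Hitchin fibration (Theorem \ref{SuppThm1}) and Ng\^o's endoscopic support theorems over the elliptic locus. First, Theorem \ref{SuppThm1} guarantees that every support of the full Hitchin fibration $h^D$ has its generic point in the elliptic locus $\CA^{D,\mathrm{ell}} \subset \CA^D$. Since the canonical decomposition (\ref{kappa_decomp}) takes place in $D^b_c(\CA^D)$ and is respected by the perverse filtration, this localization passes to every isotypic summand: every support of $(\mathrm{Rh^D_*}\BBC)_\kappa$ has its generic point in $\CA^{D,\mathrm{ell}}$. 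Consequently, the collection of supports of $(\mathrm{Rh^D_*}\BBC)_\kappa$ is determined by its restriction to the elliptic locus, followed by Zariski closure in $\CA^D$.

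Next, I would apply Ng\^o's endoscopic support theorems \cite[Theorems 7.8.3 and 7.8.5]{Ngo} in the $\mathrm{SL}_n$ form used by Yun in \cite{Yun3}. Over the elliptic locus these theorems decompose $\mathrm{Rh^D_*}\BBC$ according to characters of the component groups of the regular centralizer group scheme, and identify the support of the $\kappa$-isotypic part with the corresponding endoscopic locus. Under the Weil pairing identification of Section \ref{Weil_pair}, which matches $\kappa \in \hat{\Gamma}$ with $\gamma \in \Gamma$, this endoscopic locus is exactly $\CA^D_\gamma \cap \CA^{D,\mathrm{ell}}$. Taking closures in $\CA^D$ recovers $\CA^D_\gamma$ by the irreducibility of $\CA^D_\gamma$ (the complement of the elliptic locus in $\CA^D_\gamma$ has strictly smaller dimension), which gives the desired identification.

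The main subtlety is not in the geometry but in checking the compatibility of two different ways of labeling the isotypic components: Ng\^o's natural indexing by characters of component groups of regular centralizers over the elliptic locus, versus this paper's indexing by $\hat{\Gamma}$ via the Weil pairing. For $\mathrm{SL}_n$ this matching is standard and is the content of the formalism set up in \cite{Ngo, Yun3}, where the $\kappa$-endoscopic locus for $\mathrm{SL}_n$ is shown to be $\CA^D_\gamma$. Once this dictionary is invoked, the corollary is immediate: Theorem \ref{SuppThm1} confines all supports of $(\mathrm{Rh^D_*}\BBC)_\kappa$ to closures of subvarieties of $\CA^{D,\mathrm{ell}}$, and Ng\^o's theorem identifies these over the elliptic locus uniquely as $\CA^D_\gamma \cap \CA^{D,\mathrm{ell}}$, so the only support is $\CA^D_\gamma$.
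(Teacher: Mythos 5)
Your proof is correct and follows the same route the paper intends (which it leaves implicit, merely citing the combination of Theorem~\ref{SuppThm1} with Ng\^o's \cite[Theorems 7.8.3 and 7.8.5]{Ngo}): de Cataldo's support theorem confines all supports to closures of elliptic subvarieties, and Ng\^o's endoscopic support theorem pins down the $\kappa$-part over the elliptic locus as $\CA^D_\gamma \cap \CA^{D,\mathrm{ell}}$. Your explicit remarks on passing the support constraint to each isotypic summand and on matching the two indexing conventions are exactly the bookkeeping the paper is silently delegating to the reader.
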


The trivial character $0 \in \hat{\Gamma}$ corresponds to $0 \in \Gamma$, and $\CA^D_{0} = \CA^D$. Hence as a special case of Corollary \ref{cor2.2}, the stable part $\left(\mathrm{Rh^D_*} \BC \right)_{\mathrm{st}}$ has full support $\CA^D$. 

We fix a character $\kappa\in \hat{\Gamma}$ of order $m$. Let $\pi: C' \to C$ be the cyclic Galois cover associated with $\kappa$ as in Section \ref{Weil_pair}. Assume $n = mr$. By the projection formula, we have 
\[
\mathrm{det}\left(\pi_*(\pi^*\CL \otimes \CE)\right) = \mathrm{det}\left( \CL \otimes \pi_*\CE \right) =  \CL^{\otimes mr} \otimes \mathrm{det}(\pi_*\CE) =\mathrm{det}(\pi_*\CE)
\]
for $\CL \in \Gamma =\mathrm{Pic}^0(C)[n]$ and $\CE$ a rank $r$ vector bundle on $C'$.
Therefore, the group $\Gamma$ acts on the moduli space $\CM^D_{r,L}(\pi)$ via tensor product
\[
\CL \cdot (\CE , \theta) = (\pi^*\CL \otimes \CE, \theta),\quad \CL \in \Gamma.
\]
We have similarly a $\kappa$-decomposition as (\ref{kappa_decomp}) for the Hitchin fibration (\ref{eqn_h_pi}) associated with $\CM^D_{r,L}(\pi)$,
\[
\mathrm{Rh_\pi^D}_* \BBC = \bigoplus_{\kappa \in \hat{\Gamma}} \left(\mathrm{Rh_\pi^D}_* \BBC \right)_\kappa \in D^b_c(\CA^D(\pi)).
\]

The main result of this section is to prove a support theorem for the Hitchin map (\ref{eqn_h_pi}) associated with $\pi: C' \to C$. 

%We follow the same path as in the $\mathrm{SL}_n$ case. 

\begin{thm}\label{SuppThm2}
Let $D$ be an effective divisor on $C$ of degree $\mathrm{deg}(D) >2g-2$. Assume that the degree $m$ Galois cover $\pi:C' \to C$ is associated with $\kappa \in \hat{\Gamma}$ via (\ref{Weil}). Assume $n = mr$. Then we have the following concerning the supports of the Hitchin map $h_\pi^D: \CM_{r,L}^D(\pi) \to \CA^D(\pi)$.
\begin{enumerate}
    \item[(a)] The generic points of the supports of ${\mathrm{Rh^D_\pi}}_* \BBC$ are contained in the elliptic locus $\CA^{D,\mathrm{ell}}(\pi)$.
    \item[(b)] The stable part $\left( \mathrm{Rh_\pi^D}_* \BC \right)_\mathrm{st}$ has full support $\CA^D(\pi)$.
\end{enumerate}
\end{thm}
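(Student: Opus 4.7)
The plan is to adapt the support theorem strategy of Ng\^o \cite{Ngo}, Chaudouard--Laumon \cite{CL}, and de Cataldo \cite{dC_SL} to the relative Hitchin fibration $h^D_\pi$, exploiting the fact that $\pi : C' \to C$ is an \'etale cyclic cover of curves in order to import the numerical estimates from the ordinary $\mathrm{GL}_r/\mathrm{SL}_r$ setting on $C'$.

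For part (a), I would first invoke Proposition \ref{prop1.1} to reduce the question to a single component $M_i \subset \CM^D_{r,L}(\pi)$; since all $m$ components are isomorphic as $\CA^D(\pi)$-varieties, a support theorem for one gives one for $\CM^D_{r,L}(\pi)$. Over the elliptic locus $\CA^{D,\mathrm{ell}}(\pi)$ where the spectral curve $C'_a \to C'$ is integral, I would realize $M_i$ as a torsor for a smooth commutative group scheme $P^D_\pi \to \CA^{D,\mathrm{ell}}(\pi)$ obtained from the relative compactified Jacobian of $C'_a \to C'$ by imposing the constraints that the pushforward under $\pi$ preserves determinant and trace, thereby exhibiting $h^D_\pi$ as a weak abelian fibration in Ng\^o's sense. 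The $\delta$-regularity hypothesis is what must be verified. Since $\pi$ is \'etale of degree $m$, Riemann--Hurwitz gives $2g'-2 = m(2g-2)$, and $\deg(D') = m\deg(D) > 2g'-2$, which matches the hypothesis used by Chaudouard--Laumon on $C'$. With the codimension estimates for the $\delta \geq k$ strata of $\CA^D(\pi) \subset \widetilde{\CA}^{D'}(C')$ in hand, Ng\^o's support theorem delivers conclusion (a).

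For part (b), I would combine (a) with the endoscopic support theorem of \cite{Ngo} over the elliptic locus: there, the stable part $\bigl(\mathrm{R}h^D_\pi{}_*\BBC\bigr)_{\mathrm{st}}\big|_{\CA^{D,\mathrm{ell}}(\pi)}$ has its unique support equal to the whole of $\CA^{D,\mathrm{ell}}(\pi)$. Combined with part (a), which asserts that every global support $Z$ of $\bigl(\mathrm{R}h^D_\pi{}_*\BBC\bigr)_{\mathrm{st}}$ equals $\overline{Z \cap \CA^{D,\mathrm{ell}}(\pi)}$, this forces $Z = \CA^D(\pi)$, giving (b).

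The main obstacle I expect is the construction and $\delta$-regularity analysis of the symmetry group scheme $P^D_\pi$. Unlike the pure $\mathrm{SL}_r$-case on $C'$, here the Higgs field is allowed a nonzero variant trace in $H^0(C',\CO_{C'}(D'))_{\mathrm{var}}$, and the relevant symmetry is a hybrid object: the subgroup scheme of the Picard of the spectral curve whose pushforward to $C$ preserves both determinant and trace. Controlling the codimension of the $\delta \geq k$ strata inside the linear subspace $\CA^D(\pi) \subset \widetilde{\CA}^{D'}(C')$, rather than inside the ambient full Hitchin base, is the technical heart of the argument and is exactly where the inequality $\deg(D) > 2g-2$ (transferred to $C'$ by the \'etale condition) is consumed.
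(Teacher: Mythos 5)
Your outline traces the same route as the paper for both parts, and the high-level deductions are sound, but the argument defers the two load-bearing inputs. For (a), invoking Ng\^o's support inequality requires more than the Riemann--Hurwitz observation $\deg(D')=m\deg(D)>2g'-2$: you need a multi-variable $\delta$-inequality that bounds $d^{\mathrm{ab}}_Z(\mathrm{Pic}^0_\CC)$ by the codimension of $Z$ \emph{inside the linear subspace} $\CA^D(\pi)\subset\widetilde{\CA}^{D'}(C')$ rather than inside the ambient $\mathrm{GL}_r$-base. The reason this restriction costs nothing is a product-structure fact (from de Cataldo's $\mathrm{SL}_n$ paper, his formula~(74)): the family of spectral-curve group schemes is constant along the $H^0(C,\CO_C(D))$-summand one removes from $\widetilde{\CA}^{D'}(C')$ to carve out $\CA^D(\pi)$, so the Chaudouard--Laumon estimates for $\mathrm{GL}_r$ on $C'$ descend unchanged. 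Until that is established the ``codimension estimates'' you invoke are unproved. Also, a terminological point: the acting group scheme is the identity component of the kernel of the norm map $\mathrm{Pic}^0_\CC\times_{\widetilde{\CA}^{D'}}\CA^D(\pi)\to \mathrm{Pic}^0(C)\times\CA^D(\pi)$, a (nonproper) Prym-type group scheme, not a relative compactified Jacobian; the compactified Jacobian is the Hitchin \emph{fiber}, not the symmetry group, and the torsor structure holds only on the regular locus $\CM^{\mathrm{reg}}_a$.

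For (b), citing an ``endoscopic support theorem over the elliptic locus'' as a black box skips the actual content. By Ng\^o's Propositions 7.2.2--7.2.3, full support of the stable part over $\CA^{D,\mathrm{ell}}(\pi)$ reduces to showing that $\bigl(\mathrm{R}^{2d_{h^D_\pi}}h^{\mathrm{ell}}_\pi{}_*\BBC\bigr)_{\mathrm{st}}$ is the rank-one trivial local system. Since $\CM^{\mathrm{reg}}_a$ is a $P_a$-torsor, the fiber of the top-degree sheaf is the regular representation of $\pi_0(P_a)$, whose invariants are one-dimensional. But the stable part is the $\Gamma$-invariants, not the $\pi_0(P_a)$-invariants; the two agree precisely because the pullback map $\Gamma=\mathrm{Pic}^0(C)[n]\to\pi_0(P_a)$ is \emph{surjective}, which is a Hausel--Pauly result on component groups of Prym varieties of spectral covers. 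Without that surjectivity the $\Gamma$-invariants could be strictly larger, and the argument would fail. Ng\^o's statements are for the endoscopic group $\mathrm{SL}_n$-Hitchin fibration, which is not literally the moduli space $\CM^D_{r,L}(\pi)$ with its pushforward determinant-and-trace conditions, so this surjectivity should be checked in the present setup rather than quoted.
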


The validity of support theorems (Theorems \ref{SuppThm1} and \ref{SuppThm2}) are the main ingredients in the proof of the endoscopic decomposition in the twisted case of $\mathrm{deg}(D) >2g-2$ when $\mathrm{deg}(D)$ is even; see the proof of Theorem \ref{main1}.

\subsection{Weak abelian fibrations} \label{Sec2.3}We recall the notion of weak abelian fibration introduced in \cite{Ngo}, which is modelled on the properties of Hitchin fibrations. 

We follow closely the exposition of \cite[Section 2.6]{dC_SL}. Let $A$ be an irreducible nonsingular variety. Assume that 
\[
h: M \to A, \quad g: P \to A
\]
are morphisms of the same relative dimensions $\mathrm{dim}(h) = \mathrm{dim}(g)$ satisfying the following properties:
\begin{enumerate}
    \item[(a)] The map $g: P\to A$ is a smooth commutative group scheme with connected fibers;
    \item[(b)] The map $h$ is proper, and $M$ is nonsingular;
    \item[(c)] The group scheme $P$ acts on $M$ fiberwise with affine stabilizers for every geometric point of $M$.
\end{enumerate}

We call a triple $(M,A,P)$ as above a \emph{weak abelian fibration}, if the Tate module 
\[
T_{\overline{\BQ}_l}(P) = \mathrm{R^{2d_g-1}g}_!\overline{\BQ}_l(d_g)
\]
as an $l$-adic sheaf is polarizable; see \cite[Section 2.6]{dC_SL} the paragraph following Lemma 2.6.2. 

Over a closed point $a \in A$, we consider the Chevalley decomposition for the restricted group scheme $P_a$,
\[
0 \to P_a^{\mathrm{aff}} \to P_a \to  P_a^{\mathrm{ab}} \to 0,
\]
where $P_a^{\mathrm{aff}}$ is the maximal connected affine linear subgroup of $P_a$ and $P_a^{\mathrm{ab}}$ is an abelian variety. 

%We define 
%\[
%d^{\mathrm{aff}}_a(P) = \mathrm{dim}(P_a^{\mathrm{aff}}), \quad d^{\mathrm{ab}}_a(P)=\mathrm{dim} (P_a^{\mathrm{ab}}).
%\]

We recall in the following the support inequality of Ng\^o \cite{Ngo}. 

\begin{thm}[{{\cite[Theorem 7.2.2]{Ngo}}}] \label{supp_ineq}
Let (M,A,P) be a weak abelian fibration. Assume the irreducible subvariety $Z \subset A$ is a support of $h: M \to A$, then 
\begin{equation}\label{supp_in}
\mathrm{dim}(h) -\mathrm{dim}(A) +\mathrm{dim}(Z) \geq d_Z^{\mathrm{ab}}(P).
\end{equation}
Here $d_Z^{\mathrm{ab}}(P)=\mathrm{dim} (P_a^{\mathrm{ab}})$ with $a \in Z$ a general point. 
\end{thm}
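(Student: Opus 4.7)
The plan is to reproduce Ngô's original support-inequality argument, whose three pillars are a cap-product action of the Tate module of $P$ on $\mathrm{R}h_\ast \overline{\BQ}_l$, a reduction of that action to the abelian quotient $P^{\mathrm{ab}}$ along each support, and a Hard-Lefschetz-type conclusion powered by polarizability.

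\textbf{Step 1 (cap product).} I would first build a graded morphism
\[
\Lambda^\bullet T_{\overline{\BQ}_l}(P) \otimes \mathrm{R}h_\ast \overline{\BQ}_l \longrightarrow \mathrm{R}h_\ast \overline{\BQ}_l[-\bullet](-\bullet)
\]
as follows. Since $P$ acts on $M$ over $A$, the action map $P\times_A M \to M$ and the zero section induce, on each geometric point $a$, the usual cap-product of $H_\ast(P_a)$ on $H^\ast(M_a)$. Globalising via proper base change for $h$ (which is proper by hypothesis (b)) produces the map above. Taking top exterior powers, one gets a morphism of sheaves of degree $-2d$ where $d$ is the appropriate exterior degree.

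\textbf{Step 2 (factorisation through the abelian part).} Localising at the generic point $\eta$ of a support $Z$, I want to show the induced action on the stalk of any simple perverse summand $\mathrm{IC}_Z(L)[d]$ of $\mathrm{R}h_\ast \overline{\BQ}_l$ factors through the abelian quotient $T_{\overline{\BQ}_l}(P_\eta^{\mathrm{ab}})$. The kernel $T_{\overline{\BQ}_l}(P_\eta^{\mathrm{aff}})$ is built out of classes coming from tori and unipotent groups in the Chevalley decomposition, and their cap-product action on any simple perverse summand is forced to be nilpotent, hence zero by simplicity. The surviving action therefore lives in an exterior algebra of dimension $2 d_Z^{\mathrm{ab}}(P)$, so only the exterior powers up to $\Lambda^{2 d_Z^{\mathrm{ab}}}$ can act non-trivially.

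\textbf{Step 3 (polarizability yields non-triviality of the top class).} The polarizability of $T_{\overline{\BQ}_l}(P)$ endows $T_{\overline{\BQ}_l}(P_\eta^{\mathrm{ab}})$ with a Weil-type alternating form whose top exterior power $\omega \in \Lambda^{2 d_Z^{\mathrm{ab}}} T_{\overline{\BQ}_l}(P_\eta^{\mathrm{ab}})$ is non-zero. Because $M$ is smooth, $\mathrm{R}h_\ast \overline{\BQ}_l$ is Verdier self-dual up to shift; transporting Hard Lefschetz from the cohomology of the generic abelian orbit of $P$ into the BBD decomposition shows that cap product with $\omega$ is injective on the stalk of $\mathrm{IC}_Z(L)[d]$ at $\eta$. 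In other words, the stalk of $\mathrm{R}h_\ast \overline{\BQ}_l$ at $\eta$ necessarily occupies a cohomological range of width at least $2 d_Z^{\mathrm{ab}}(P)$.

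\textbf{Step 4 (degree count).} By proper base change the stalk $(\mathrm{R}h_\ast\overline{\BQ}_l)_\eta$ is $H^\ast(M_\eta,\overline{\BQ}_l)$, which vanishes outside $[0, 2\dim h]$. On the other hand, perversity of $\mathrm{R}h_\ast \overline{\BQ}_l[\dim M]$ on a smooth $M$ forces every simple summand whose support is $Z$ to live in cohomological degrees of width at most $2(\dim h - \mathrm{codim}_A Z) = 2(\dim h - \dim A + \dim Z)$ at the generic point of $Z$. Combining this width bound with the lower bound $2 d_Z^{\mathrm{ab}}(P)$ obtained in Step 3 yields $d_Z^{\mathrm{ab}}(P) \le \dim h - \dim A + \dim Z$, which is the desired inequality. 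The main obstacle is Step 2, where one must genuinely verify that $T(P^{\mathrm{aff}})$ acts trivially on every simple perverse summand supported on $Z$; this is the delicate $\delta$-regularity and ``homogeneity'' point of Ngô's argument and cannot be skipped, while Step 3 then relies on using polarizability in a precise enough form to produce non-zero top power rather than just a non-degenerate bilinear form.
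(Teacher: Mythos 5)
First, a point of orientation: the paper does not prove this statement at all. It is imported verbatim from Ng\^o and used as a black box --- the citation [Ngo, Theorem 7.2.2] \emph{is} the proof --- so there is no argument in the paper to compare yours against. What you have written is an attempted reconstruction of Ng\^o's original argument. The skeleton you describe (cap product by the homology of $P$ on $\mathrm{Rh}_*\BBC$, isolation of the abelian part, a polarizability-driven Lefschetz injectivity, and a degree count combining Verdier self-duality of $\mathrm{Rh}_*\BBC[\dim M]$ with the vanishing of fibre cohomology above degree $2\dim(h)$) is indeed the architecture of Ng\^o's proof, and your Step 4 arithmetic --- occurrence interval of width at most $2(\dim(h)-\dim(A)+\dim(Z))$ versus at least $2d_Z^{\mathrm{ab}}(P)$ --- is exactly how the inequality falls out.

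However, Step 2 as justified is wrong, and it is precisely the step you flag as the delicate one. The Tate module of the affine part does not act by zero under cap product: already for $P_a=\mathbb{G}_m$ acting on $M_a=\mathbb{G}_m$ by translation, the cap product $H_1(\mathbb{G}_m)\otimes H^1(\mathbb{G}_m)\to H^0(\mathbb{G}_m)$ is the (nonzero) duality pairing, so the toric part of $T_{\overline{\BQ}_l}(P)$ genuinely acts. Moreover the argument ``nilpotent, hence zero by simplicity'' does not parse: cap product with a degree-one homology class is a map ${}^p\mathcal{H}^i\to{}^p\mathcal{H}^{i-1}$ between \emph{different} perverse cohomology sheaves, so it is never an endomorphism of a single simple summand, and simplicity gives you nothing. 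What Ng\^o actually does is retain the full $\bigwedge^\bullet T_{\overline{\BQ}_l}(P)$-module structure on $\bigoplus_i{}^p\mathcal{H}^i$ and exploit the polarization --- an alternating form on $T_{\overline{\BQ}_l}(P)$ whose fibrewise radical is exactly $T_{\overline{\BQ}_l}(P^{\mathrm{aff}})$, hence nondegenerate on $T_{\overline{\BQ}_l}(P^{\mathrm{ab}})$ --- to run the Hard-Lefschetz/exterior-algebra argument so that only the abelian quotient is ``seen''; making this precise at the generic point of $Z$ requires stratifying the base by the behaviour of the Chevalley decomposition, and this is the real content of the theorem. Your Step 3 also needs restating: the total stalk of $\mathrm{Rh}_*\BBC$ trivially has cohomological width $2\dim(h)\ge 2d_Z^{\mathrm{ab}}(P)$, so the assertion with content is that the interval of perverse degrees in which summands supported on $Z$ occur has length at least $2d_Z^{\mathrm{ab}}(P)$. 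As an outline of Ng\^o's proof your plan identifies the right ingredients, but Steps 2 and 3 are where the proof actually lives, and as written they do not close.
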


\subsection{Hitchin fibrations} Following \cite{CL, dC_SL}, we show that the Hitchin fibration 
\begin{equation}\label{eqn(18)}
h_\pi^D: \CM_{r,L}^D(\pi) \to \CA^D(\pi)
\end{equation}
associated with $\pi: C' \to C$ admits the structure of a weak abelian fibration. 

Recall the commutative diagram (\ref{diag12}). The $\mathrm{GL}_r$-Hitchin base $\widetilde{\CA}^{D'}(C')$ parameterizes spectral curves in the total space $V(D')$ of the line bundle $\CO_{C'}(D')$. We assume that
\begin{equation}\label{CC}
\CC \to \widetilde{\CA}^{D'}(C')
\end{equation}
is the universal spectral curve. Let $g_\CC: \mathrm{Pic}^0_\CC \to \widetilde{\CA}^{D'}(C')$ be the relative degree 0 Picard scheme associated with (\ref{CC}), which acts on $\widetilde{\CM}^{D'}_{r,d}(C')$ via tensor product. The following result is obtained in \cite{CL}; see also \cite[Section 3]{dC_SL} for a detailed review.

\begin{prop}[\cite{CL}]
The triple 
\begin{equation} \label{GL_weak}
\left(\widetilde{\CM}^{D'}_{r,d}(C'), \widetilde{\CA}^{D'}(C'), \mathrm{Pic}^0_\CC \right), \quad \widetilde{h}^{D'}:\widetilde{\CM}^{D'}_{r,d}(C') \rightarrow \widetilde{\CA}^{D'}(C'), \quad g_\CC: \mathrm{Pic}^0_\CC \to \widetilde{\CA}^{D'}(C')
\end{equation}
is a weak abelian fibration.
\end{prop}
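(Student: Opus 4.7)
The plan is to verify each clause of the definition of weak abelian fibration in Section \ref{Sec2.3} by invoking the Beauville--Narasimhan--Ramanan spectral correspondence. For $a \in \widetilde{\CA}^{D'}(C')$, the spectral curve $\CC_a \subset V(D')$ is a finite cover of $C'$ of degree $r$ sitting inside a smooth surface, and BNR identifies the fiber $(\widetilde{h}^{D'})^{-1}(a)$ with (a union of connected components of) the moduli of rank-$1$ torsion-free sheaves of the appropriate degree on $\CC_a$. Under this identification the $\mathrm{Pic}^0_\CC$-action becomes the natural tensor-product action of $\mathrm{Pic}^0(\CC_a)$, which is visibly fiberwise over $\widetilde{\CA}^{D'}(C')$.

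Properness of $\widetilde{h}^{D'}$ and non-singularity of $\widetilde{\CM}^{D'}_{r,d}(C')$ (condition (b)) are standard in the stable regime $\gcd(r,d)=1$ for $\deg(D')>2g(C')-2$, via Nitsure's GIT construction and the unobstructed deformation theory of stable Higgs bundles in the twisted setting. For condition (a), the universal spectral curve $\CC \to \widetilde{\CA}^{D'}(C')$ is a flat family of planar, hence Cohen--Macaulay, curves, so its relative Picard functor admits a smooth open subscheme $\mathrm{Pic}^0_\CC$ with geometrically connected fibers given by the generalized Jacobians. A direct spectral computation identifies the relative dimension of $g_\CC$ with the arithmetic genus $p_a(\CC_a)$, which equals the fiber dimension of $\widetilde{h}^{D'}$, securing the equality $\dim(\widetilde{h}^{D'})=\dim(g_\CC)$.

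For (c), the stabilizer in $\mathrm{Pic}^0(\CC_a)$ of a rank-$1$ torsion-free sheaf $\CF$ on $\CC_a$ under the tensor action is an affine group scheme: its Chevalley decomposition has trivial abelian part, since tensoring by a generic degree-$0$ line bundle changes the isomorphism class of $\CF$, so the stabilizer is an extension of a finite group by a linear one. The remaining item is polarizability of the Tate module $T_{\overline{\BQ}_\ell}(\mathrm{Pic}^0_\CC)$. Over the elliptic open subset $\widetilde{\CA}^{D',\mathrm{ell}}(C')$ the spectral curves are integral, their Jacobians are abelian schemes, and the canonical principal polarization induced by the theta divisor yields a non-degenerate alternating Weil pairing on the Tate module.

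The main obstacle is extending this polarization uniformly over the whole Hitchin base, across loci where spectral curves become non-integral. This is the technical content of the Chaudouard--Laumon analysis \cite{CL}: exploiting the $\delta$-regularity of the Hitchin base in the range $\deg(D')>2g(C')-2$ together with the autoduality of compactified Picard schemes for families of planar Cohen--Macaulay curves, one constructs a global non-degenerate alternating pairing on $T_{\overline{\BQ}_\ell}(\mathrm{Pic}^0_\CC)$ that restricts to the Weil pairing on the elliptic locus. This is where the twisting hypothesis $\deg(D')>2g(C')-2$ plays an essential role: it ensures the $\delta$-regularity needed to promote fiberwise polarizations over the elliptic locus to a family polarization over the entire base.
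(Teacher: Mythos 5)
The paper does not actually prove this proposition: it is stated as a result of Chaudouard--Laumon, with \cite[Section 3]{dC_SL} cited for a detailed review, so there is no internal argument to compare yours against. Your outline is the standard one from those references --- BNR correspondence, smoothness and properness from Nitsure's construction, the dimension count via the arithmetic genus of the spectral curve, affineness of stabilizers, and polarizability of the Tate module --- and since you too ultimately defer the substantive content to \cite{CL}, your write-up is structurally consistent with how the paper handles the statement.

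Two of your steps, however, are weaker than you present them and would not survive as standalone arguments. First, for the affine-stabilizer condition, the claim that ``tensoring by a generic degree-$0$ line bundle changes the isomorphism class of $\CF$'' does not rule out a positive-dimensional abelian subvariety of $\mathrm{Pic}^0(\CC_a)$ stabilizing $\CF$: genericity in the ambient group says nothing about genericity inside such a subvariety. The correct mechanism for reduced spectral curves is that $\CF$ is invertible over a dense open subset of its support, so the stabilizer lies in the kernel of restriction to that open set, which is affine; for non-reduced spectral curves, which do occur in $\widetilde{\CA}^{D'}(C')$, even this fails as stated and one needs the more careful analysis of \cite{CL}. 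Second, your description of polarizability misstates the mechanism. Polarizability in the sense used here is a fiberwise condition, verified for every $a$ by the Tate--Weil pairing on $H^1(\CC_a)$, whose restriction to the abelian quotient $(\mathrm{Pic}^{0}_{\CC,a})^{\mathrm{ab}}$ --- the Jacobian of the normalization of the reduced spectral curve --- is non-degenerate; this is exactly the argument the paper reuses in its Proposition \ref{prop2.6} for the endoscopic case. No extension of a polarization from the elliptic locus is required, and neither $\delta$-regularity nor autoduality of compactified Picard schemes enters at this point; $\delta$-regularity is used later, in the support inequality of Proposition \ref{prop2.7}, not in establishing the weak abelian fibration structure.
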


In order to study (\ref{eqn(18)}), we ``fix the determinant" after pushing forward along $\pi: C' \to C$. Since the relative Hitchin moduli space $\CM_{r,L}^D(\pi)$ is a closed fiber of (\ref{eqn8}), we consider the morphism of $\CA^D(\pi)$-group schemes
\[
N_\pi: \mathrm{Pic}^0_\CC \times_{\widetilde{\CA}^{D'}(C')}\CA^D(\pi) \to \mathrm{Pic}^0(C)\times \CA^D(\pi)
\]
given by composition of the $\CA^D(\pi)$-morphisms
\[
\mathrm{Pic}^0_\CC \times_{\widetilde{\CA}^{D'}(C')}A^D(\pi) \to \mathrm{Pic}^0(C') \times \CA^D(\pi) \rightarrow \mathrm{Pic}^0(C) \times \CA^D(\pi).
\]
Here the first map is the restriction of the norm map $N_p$ (\cite[(42)]{dC_SL}) to $A^D(\pi)$, and the second map is 
\[
(\CL , a) \mapsto (\mathrm{det}(\pi_*\CL), a).
\]
By the discussion of \cite[Section 4.1]{dC_SL}, the map $N_\pi$ is smooth. We let 
\begin{equation}\label{Prym}
g_\pi: P \rightarrow \CA^D(\pi) 
\end{equation}
be the kernel of $N_\pi$, and we let
the group scheme $g^0_\pi: P^0 \to \CA^D(\pi)$ be the identity component of $P$. The nonsingular group schemes $P$ and $P^0$ act on $\CM^D_{r,L}(\pi)$ induced by the $\mathrm{Pic}^0_\CC$-action on $\widetilde{\CM}^{D'}_{r,d}(C')$.

\begin{prop}\label{prop2.6}
The triple
\begin{equation}\label{End_weak}
    \left( \CM^D_{r,L}(\pi), A^D(\pi), P^0 \right), \quad h_\pi^D:  \CM^D_{r,L}(\pi)\to A^D(\pi), \quad g^0_\pi: P^0 \to A^D(\pi)
\end{equation}
is a weak abelian fibration.
\end{prop}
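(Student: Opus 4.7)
The plan is to inherit the weak abelian fibration structure from the ambient $\mathrm{GL}_r$-Hitchin fibration (\ref{GL_weak}) for the curve $C'$, restricted to $\CA^D(\pi) \subset \widetilde{\CA}^{D'}(C')$ and cut down by the kernel $P$ of the norm-type map $N_\pi$. I will check the three defining properties (a)--(c) in turn, and then address polarizability of the Tate module.

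For (a), the base change $\mathrm{Pic}^0_\CC \times_{\widetilde{\CA}^{D'}(C')} \CA^D(\pi)$ is a smooth commutative $\CA^D(\pi)$-group scheme with connected fibers by (\ref{GL_weak}) and flat base change. Since $N_\pi$ is smooth (as noted just before (\ref{Prym})), its kernel $P$ is smooth over $\CA^D(\pi)$, being the pullback of a smooth morphism along the identity section; passing to the identity component $P^0$ then preserves smoothness and yields connected fibers by construction.

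For (b), the map $h^D_\pi$ is the restriction of the proper Hitchin map $\widetilde{h}^{D'}$ to the closed subvariety $\CM^D_{r,L}(\pi) \subset \widetilde{\CM}^{D'}_{r,d}(C')$ via (\ref{diag12}), hence proper, and nonsingularity of $\CM^D_{r,L}(\pi)$ is Proposition \ref{prop1.1}(a). For (c), the fiberwise tensor-product action of $\mathrm{Pic}^0_\CC$ on $\widetilde{\CM}^{D'}_{r,d}(C')$ restricts, by the very definition of $P = \ker N_\pi$, to a fiberwise action of $P$ and hence $P^0$ on $\CM^D_{r,L}(\pi)$, since elements of $P$ preserve both the determinant and trace conditions cutting out $\CM^D_{r,L}(\pi) = q_\pi^{-1}(L,0)$. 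Stabilizers for the $P^0$-action are closed subgroup schemes of the corresponding $\mathrm{Pic}^0_\CC$-stabilizers, which are affine by (\ref{GL_weak}), so affineness is inherited.

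For polarizability of the Tate module $T_{\overline{\BQ}_l}(P^0)$, the strategy is to restrict the polarization on $T_{\overline{\BQ}_l}(\mathrm{Pic}^0_\CC|_{\CA^D(\pi)})$ coming from (\ref{GL_weak}). Over the elliptic locus $\CA^{D,\mathrm{ell}}(\pi)$, the norm-type map $N_\pi$ realizes, up to isogeny, a splitting of $\mathrm{Pic}^0_\CC|_{\CA^D(\pi)}$ as the product of $P^0$ with the constant part $\mathrm{Pic}^0(C) \times \CA^D(\pi)$, exhibiting $P^0$ as an isogeny factor of a polarizable abelian scheme over a dense open. The inherited pairing remains non-degenerate on the sub-Tate module and extends to all of $\CA^D(\pi)$ by the local constancy properties of $l$-adic Tate sheaves. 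The main obstacle I anticipate is this last point: verifying non-degeneracy of the restricted polarization on $T_{\overline{\BQ}_l}(P^0)$, for which one must check compatibility of the isogeny splitting above with the principal polarization on $\mathrm{Pic}^0_\CC$. The remainder of the argument is a routine transposition of de Cataldo's treatment of $\mathrm{SL}_n$ in \cite[Section 4]{dC_SL} from the trivial cover to the cyclic Galois cover $\pi: C' \to C$.
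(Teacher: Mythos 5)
Your verification of properties (a)--(c) is sound and follows the same line as the paper: smoothness of $P$ (hence of $P^0$) is inherited from the smoothness of $N_\pi$; properness of $h^D_\pi$ and nonsingularity of $\CM^D_{r,L}(\pi)$ come from Proposition~\ref{prop1.1}(a) and the closed embedding of diagram~(\ref{diag12}); and affine stabilizers follow from the containment of $P^0$-stabilizers inside $\mathrm{Pic}^0_\CC$-stabilizers.

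The polarizability step, however, has a genuine gap. Your proposed mechanism --- produce an isogeny splitting of $\mathrm{Pic}^0_\CC|_{\CA^D(\pi)}$ over the elliptic locus, conclude polarizability of $P^0$ there, and then ``extend to all of $\CA^D(\pi)$ by the local constancy properties of $l$-adic Tate sheaves'' --- does not go through. First, even over $\CA^{D,\mathrm{ell}}(\pi)$ the spectral curves may be singular (merely integral), so $\mathrm{Pic}^0_{\CC,a}$ is not an abelian variety and the isogeny splitting you describe is not available pointwise. Second, and more seriously, $T_{\overline{\BQ}_l}(P^0)$ is only a constructible $l$-adic sheaf on $\CA^D(\pi)$, not a local system: the fibers of $P^0$ acquire affine parts as one leaves the smooth-spectral-curve locus. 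Polarizability in the sense of \cite[Section~2.6]{dC_SL} is a \emph{pointwise} requirement --- the Tate--Weil pairing must restrict to a non-degenerate pairing on the Tate module of the abelian part $P^{\mathrm{ab}}_a$ for \emph{every} geometric point $a$ --- and this cannot be deduced by spreading out from a dense open.

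The actual argument, which the paper imports from \cite[Theorem~4.7.2]{dC_SL}, works directly at each closed point $a\in\CA^D(\pi)$: one shows that $T_{\overline{\BQ}_l}(P^{\mathrm{ab}}_a)$ sits inside $T_{\overline{\BQ}_l}(\mathrm{Pic}^{0,\mathrm{ab}}_{\CC,a})$ as an \emph{orthogonal} direct summand for the Tate--Weil pairing, whence the restricted pairing is non-degenerate. You correctly flag the need to check ``compatibility of the isogeny splitting with the principal polarization'' and defer to de Cataldo, but the pointwise orthogonal-summand argument, not a dense-open extension, is what makes the claim true; as written, your proposal would need to be replaced by that argument rather than completed.
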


\begin{proof}
The weak abelian fibration structure for (\ref{End_weak}) is essentially inherited from that for $(\ref{GL_weak})$. The proof is parallel to \cite[Section 4]{dC_SL}. Here we summarize some necessary minor modifications.

It is clear that (a) and (c) in Section \ref{Sec2.3} follow from the construction. We need to verify (b), and show that the Tate module associated with $g_\pi: P \to \CA^D(\pi)$ is polarizable.
\subsubsection*{(i) Affine stabilizers.} For a closed point in $\CM^D_{r,L}(\pi) \subset \widetilde{\CM}^{D'}_{r,d}(C')$, its stabilizer with respect to the $P^0$-action on $\CM^D_{r,L}(\pi)$ is a subgroup of the corresponding stabilizer with respect the $\mathrm{Pic}^0_\CC$-action on $\widetilde{\CM}^{D'}_{r,d}(C')$, whose affineness follows from the fact that (\ref{GL_weak}) is a weak abelian fibration.
\subsubsection*{(ii) Polarizability of the Tate module.}
This follows from the proof of \cite[Theorem 4.7.2]{dC_SL}. In fact, for a closed point $a\in \CA^D(\pi)$, the Tate module of the abelian part $P^{\mathrm{ab}}_a$ is an orthogonal direct summand component of $T_{\overline{\BQ}_l}(\mathrm{Pic}^{0,\mathrm{ab}}_{\CC,a})$ with respect to the non-degenerate Tate-Weil pairing on $T_{\overline{\BQ}_l}(\mathrm{Pic}^{0,\mathrm{ab}}_{\CC,a})$. Hence the restriction of the Tate-Weil pairing on $T_{\overline{\BQ}_l}(\mathrm{Pic}^{0,\mathrm{ab}}_{\CC,a})$ to $T_{\overline{\BQ}_l}(P^\mathrm{ab}_a)$ is non-degenerate.
\end{proof}

By \cite[Section 9]{CL} (see also \cite[Section 5.2]{dC_SL}), the $\mathrm{GL}_r$-Hitchin base admits a stratification
\[
\widetilde{\CA}^{D'}(C') = \bigsqcup_{\underline{m}, \underline{n}}\widetilde{\CA}_{\underline{m}, \underline{n}}
\]
with $\underline{m} = (m_1, m_2, \dots, m_s)$, $\underline{n}=(n_1, n_2, \dots, n_s)$ satisfying 
\begin{enumerate}
    \item[(a)] $n_i \geq n_{i+1}$ for any $i$;
    \item[(b)] $m_i \geq m_{i+1}$ whenever $n_i = n_{i+1}$;
    \item[(c)] $\sum_{i=1}^s m_in_i = r$.
\end{enumerate}
Each $\widetilde{\CA}_{\underline{m}, \underline{n}}$ is a locally closed subset formed by spectral curves of the topological type $(\underline{m}, \underline{n})$:
\begin{equation}\label{strat}
\widetilde{\CA}_{\underline{m}, \underline{n}} = \left\{ E \subset V(D'): ~~~ E = \sum_i m_iE_i, ~~~ E_i \subset V(D')\right\}
\end{equation}
where $V(D')$ is the total space of $\CO_{C'}(D')$ and $E_i$ is an integral spectral curve of degree $n_i$ over $C'$. The stratification (\ref{strat}) induces a stratification on $\CA^D(\pi) \subset \CA^{D'}(C')$,
\[
\CA^D(\pi) = \bigsqcup_{\underline{m}, \underline{n}}{\CA}(\pi)_{\underline{m}, \underline{n}}, \quad {\CA}(\pi)_{\underline{m}, \underline{n}} = \CA^D(\pi) \cap \widetilde{\CA}_{\underline{m}, \underline{n}}.
\]
%We denote by $\widetilde{h}_n(C)$ the $\mathrm{GL}_n$-Hitchin fibration (\ref{GL_Hit}) to emphasize the dependence on the rank $n$. 

We have the following multi-variable inequality.

\begin{prop}[\emph{c.f.} {\cite[Corollary 5.4.4]{dC_SL}}]\label{prop2.7}
Let $Z \subset \CA^D(\pi)$ be an irreducible subvariety whose general points are of the type $(\underline{m}, \underline{n})$. Then we have
\begin{equation}\label{delta_ineq}
d^{\mathrm{ab}}_Z\left(\mathrm{Pic}^0_\CC\right) \geq \sum_i \left( {d_{\widetilde{h}^{D'}_{n_i}(C')}}- d_{\widetilde{\CA}^{D'}_{n_i}(C')} \right) + \mathrm{dim}(Z)+ \left(\mathrm{deg}(D)-g+1\right).
\end{equation}
Here ${d_{\widetilde{h}^{D'}_{n_i}(C')}}$ and $d_{\widetilde{\CA}^{D'}_{n_i}(C')}$ are the dimensions of a fiber and the base respectively of the $\mathrm{GL}_{n_i}$-Hitchin fibration (\ref{GL_Hit}) associated with the curve $C'$ and the divisor $D'=\pi^*D$, and \[
d^{\mathrm{ab}}_Z\left(\mathrm{Pic}^0_\CC\right) = \mathrm{dim}\left((\mathrm{Pic}^{0}_{\CC,a})^\mathrm{ab}\right)
\]
with $a \in Z$ a general point.
\end{prop}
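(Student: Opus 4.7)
The plan is to reduce the inequality to the analogous $\mathrm{GL}_r$-statement of de Cataldo \cite[Corollary 5.4.4]{dC_SL} on the cover $C'$, by enlarging $Z$ along a natural group action that supplies exactly the codimension of the embedding $\CA^D(\pi) \hookrightarrow \widetilde{\CA}^{D'}(C')$. From the description (\ref{base1}), this codimension equals $h^0(C, \CO_C(D)) = \mathrm{deg}(D) - g + 1$, which is the extra term in (\ref{delta_ineq}) beyond the $\mathrm{GL}_r$-bound. So the goal is to produce an irreducible subvariety $\tilde Z \subset \widetilde{\CA}^{D'}(C')$ of the same stratum type $(\underline{m},\underline{n})$ with $d^{\mathrm{ab}}_{\tilde Z} = d^{\mathrm{ab}}_Z$ and $\dim \tilde Z = \dim Z + (\mathrm{deg}(D) - g + 1)$, and then quote the $\mathrm{GL}_r$-version on $\tilde Z$.

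The action I will use comes from the subgroup $T := H^0(C', \CO_{C'}(D'))_{\mathrm{inv}} \cong H^0(C, \CO_C(D))$, which acts on the total space $V(D') = \mathrm{Tot}(\CO_{C'}(D'))$ by fiberwise translation $y \mapsto y + t$ and hence on $\widetilde{\CA}^{D'}(C')$ by sending a characteristic polynomial $P(y)$ to $P(y - t)$. Because this is a bundle automorphism of $V(D')$, it maps each spectral curve isomorphically to its translate, and therefore preserves the stratum type $\widetilde{\CA}_{\underline{m}, \underline{n}}$ together with the geometric genera of all irreducible components; in particular, $d^{\mathrm{ab}}_a(\mathrm{Pic}^0_\CC)$ depends only on the $T$-orbit of $a$. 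A direct expansion shows that under $\tau_t$ the first coordinate transforms as $a_1 \mapsto a_1 + r t$, while the other coordinates transform polynomially in $t$ and the lower $a_j$.

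I then set $\tilde Z := T \cdot Z \subset \widetilde{\CA}^{D'}(C')$. By the previous paragraph, $\tilde Z$ is irreducible, of the same stratum type, and satisfies $d^{\mathrm{ab}}_{\tilde Z}(\mathrm{Pic}^0_\CC) = d^{\mathrm{ab}}_Z(\mathrm{Pic}^0_\CC)$. The decisive step is that the orbit map $T \times Z \to \tilde Z$ is injective, so $\dim \tilde Z = \dim Z + \dim T = \dim Z + (\mathrm{deg}(D) - g + 1)$: if $\tau_t(a) = \tau_{t'}(a')$ for $a, a' \in Z \subset \CA^D(\pi)$ and $t, t' \in T$, comparing first coordinates gives $r(t - t') = a_1' - a_1 \in H^0(C', \CO_{C'}(D'))_{\mathrm{var}}$; but $r(t - t')$ lies in the complementary invariant subspace $H^0(C', \CO_{C'}(D'))_{\mathrm{inv}}$, which forces $t = t'$ and hence $a = a'$. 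This injectivity is the heart of the argument: it is precisely what distinguishes the present bound from the $\mathrm{GL}$-type bound by the additive constant $\mathrm{deg}(D) - g + 1$, and it is the only nontrivial structural step that needs checking.

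Finally I apply \cite[Corollary 5.4.4]{dC_SL}, the $\mathrm{GL}_r$-analogue of the present statement on $C'$, to the irreducible subvariety $\tilde Z \subset \widetilde{\CA}^{D'}(C')$ of stratum type $(\underline{m}, \underline{n})$, obtaining
\[
d^{\mathrm{ab}}_{\tilde Z}(\mathrm{Pic}^0_\CC) \geq \sum_i \left( d_{\widetilde{h}^{D'}_{n_i}(C')} - d_{\widetilde{\CA}^{D'}_{n_i}(C')} \right) + \dim \tilde Z.
\]
Substituting the identities $d^{\mathrm{ab}}_{\tilde Z} = d^{\mathrm{ab}}_Z$ and $\dim \tilde Z = \dim Z + (\mathrm{deg}(D) - g + 1)$ yields (\ref{delta_ineq}).
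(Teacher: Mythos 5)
Your proof is correct and follows essentially the same route as the paper: both arguments exploit the product decomposition $\widetilde{\CA}^{D'}(C') = \CA^D(\pi)\oplus H^0(C,\CO_C(D))$ --- which your fiberwise translation action realizes and which is exactly de Cataldo's product structure invoked in the paper --- to transfer the $\mathrm{GL}_r$ multi-variable $\delta$-inequality on $(C',D')$ to the sub-Hitchin base $\CA^D(\pi)$. The only difference is presentational: you carry out the saturation $\tilde Z = T\cdot Z$, the injectivity of the orbit map, and the resulting dimension count explicitly, whereas the paper delegates precisely this step to the last paragraph of de Cataldo's proof of his Corollary 5.4.4.
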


\begin{proof}
When $\pi =\mathrm{id}$, the subspace
\begin{equation}\label{extra1}
\CA^D(\mathrm{id}) = \{\mathrm{char}(\theta) \in \widetilde{\CA}^D: ~~~\mathrm{trace}(\theta)=0\} \subset \widetilde{\CA}^D
\end{equation}
coincides with the $\mathrm{SL}_n$-Hitchin base, and the inequality (\ref{delta_ineq}) is the $\delta$-inequality for $\mathrm{SL}_n$ proven in \cite[Corollary 5.4.4 (76)]{dC_SL}. As explained in the last paragraph of \cite[Proof of Corollary 5.4.4]{dC_SL}, the main ingredient in the proof of enhancing the $\delta$-inequality for $\mathrm{GL}_n$ to that for $\mathrm{SL}_n$ is \cite[Theorem 5.4.2]{dC_SL}, which asserts that the restriction of the $\delta$-regular weak abelian fibrations to their elliptic loci in the sub-Hitchin base (\ref{extra1}) remain $\delta$-regular. This follows from the product structure \cite[(74)]{dC_SL} for the group schemes associated with the spectral curves, which says that the variations of the group schemes associated with the spectral curves are trivial along the $H^0(C, \CO_C(D))$-direction.

Now for a general $\pi: C' \to C$ with $D' =\pi^*D'$ as we consider here, we have the canonical decomposition for the Hitchin base
\begin{equation}\label{extra2}
\CA^D(\pi) \oplus H = \widetilde{\CA}^{D'}
\end{equation}
with $H = H^0(C, \CO_C(D))$ a direct summand component of $H^0(C', \CO_{C'}(D'))$. Applying the product structure \cite[(74)]{dC_SL} to the curve $C'$ and the divisor $D'$, we obtain an analogous product structure for (\ref{extra2}), that the variations of the group schemes associated with the spectral curves are trivial along the $H$-direction. Hence (\ref{delta_ineq}) holds in the relative case $\pi: C' \to C$ by the same reason as for $\mathrm{SL}_n$.
\end{proof}

\subsection{Proof of Theorem \ref{SuppThm2} (a)}
The proof of Theorem \ref{SuppThm2} (a) is parallel to the proofs of the main theorems in \cite{CL, dC_SL}, which we provide in the following for the reader's convenience. The crucial ingredient is to combine Ng\^{o}'s support inequality (\ref{supp_in}) and the multi-variable inequality (\ref{delta_ineq}).  

We assume $Z \subset \CA^D(\pi)$ is an irreducible support of (\ref{eqn(18)}) whose general points have types $(\underline{m}, \underline{n})$. By Theorem \ref{supp_ineq}, Proposition \ref{prop2.6}, and Proposition \ref{prop2.7}, we have
\begin{equation*}
\begin{split}
\mathrm{dim}({h^D_\pi}) - &  \mathrm{dim}({A^D(\pi)})+ \mathrm{dim}(Z)  \geq d^{\mathrm{ab}}_Z\left(\mathrm{Pic}^0_\CC\right) - g \\
& \geq \sum_i \left( {d_{\widetilde{h}^{D'}_{n_i}(C')}}- d_{\widetilde{\CA}^{D'}_{n_i}(C')} \right) + \mathrm{dim}(Z) +(\mathrm{deg}(D)-g+1)-g
\end{split}
\end{equation*}
where we use $d^{\mathrm{ab}}(P^0_a) = d^{\mathrm{ab}}(\mathrm{Pic}^0_{\CC,a})-g$. Hence 
\begin{equation}\label{eqn25}
\mathrm{dim}({h^D_\pi}) - \mathrm{dim}({\CA^D(\pi)}) \geq \sum_{i=1}^s \left( {d_{\widetilde{h}^{D'}_{n_i}(C')}}- d_{\widetilde{\CA}^{D'}_{n_i}(C')} \right)+(\mathrm{deg}(D)-2g+1).
\end{equation}
Here we recall that $s$ is the number of  irreducible components of the spectral curve. We now apply the dimension formulas for $\mathrm{GL}_*$-Hitchin fibrations \cite[Section 6.1]{dC_SL} to compute both sides of (\ref{eqn25}). The left-hand side is equal to:
\begin{align*}
 \mathrm{LHS} & =  \left({d_{\widetilde{h}^{D'}_{r}(C')}}- d_{\widetilde{\CA}^{D'}_{r}(C')} \right)  + \left( \mathrm{dim}H^0(C, \CO_C(D))-g\right)\\
 & = \left(-r\mathrm{deg}(D') + 2r(g'-1)+1\right) +\left(\mathrm{deg}(D)-2g+1\right)
\end{align*}
where $g'$ is the genus of $C'$ and we used the last formula of \cite[(77)]{dC_SL}. Similarly for the right-hand side we have:

\begin{align*}
 \mathrm{RHS} & = \sum_{i=1}^s\left(-n_i\mathrm{deg}(D') + 2n_i(g'-1)+1\right) +\left(\mathrm{deg}(D)-2g+1\right) \\
 & = \left( (-\sum_{i=1}^sn_i)\left((\mathrm{deg}(D')-2(g'-1)\right)+s \right) +\left(\mathrm{deg}(D)-2g+1\right).
 \end{align*}
In particular, (\ref{eqn25}) implies that 
\begin{equation}\label{eqn26}
1-s \geq  \left(r-\sum_in_i \right)\left(\mathrm{deg}(D')-(2g'-2)\right).
\end{equation}
Since
\[
\mathrm{deg}(D')-(2g'-2) = \mathrm{deg}(\pi) (\mathrm{deg}(D)-(2g-2))>0
\]
by the assumption on $D$, the inequality (\ref{eqn26}) forces that $s=1$ and $m_1=1$. This implies that the generic point of $Z$ lies in $\CA^{D,\mathrm{ell}}(\pi)$, which completes the proof of Theorem \ref{SuppThm2} (a). \qed

\subsection{Proof of Theorem \ref{SuppThm2} (b)} Due to Theorem \ref{SuppThm2} (a), it suffices to prove (b) over the elliptic locus with respect to the restricted Hitchin map
\begin{equation}\label{ell_h}
h^{\mathrm{ell}}_\pi = h_\pi^D|_{\CA^{D,\mathrm{ell}}(\pi)}: \CM^{D,\mathrm{ell}}_{r,L}(\pi) \to \CA^{D,\mathrm{ell}}(\pi).
\end{equation}
%which essentially follow from \cite{Ngo}. Here we provide a geometric proofs for convenience.

Recall the group scheme (\ref{Prym}) of the relative Prym variety. By the support theorem \cite[Propositions 7.2.2 and 7.2.3]{Ngo}, we only need to show that the constructible sheaf of the top degree cohomology 
\begin{equation}\label{st_sh}
    \left(\mathrm{R^{2d_{h_\pi^D}}h^{\mathrm{ell}}_\pi}_* \BBC\right)_\mathrm{st},\quad  d_{h_\pi^D} = \mathrm{dim}(h_\pi^D)
\end{equation}
is the trivial local system on the elliptic locus $\CA^{D,\mathrm{ell}}(\pi)$.

It is clear that (\ref{st_sh}) contains the trivial local system given by the sum of point classes for all irreducible components of the fibers of (\ref{ell_h}). Hence it suffices to prove that the stable part of the degree $2d_{h_\pi^D}$ cohomology group is 1-dimensional for each fiber of (\ref{ell_h}).

Assume $a \in \CA^{D,\mathrm{ell}}(\pi)$. Let $C'_a$ be the corresponding integral spectral curve with the spectral cover
\begin{equation}\label{eqn32}
C'_a \xrightarrow{p_a} C' \xrightarrow{\pi} C.
\end{equation}
The Hitchin fiber $\CM^{D}_{r,d}(\pi)_a$ contains a Zariski dense open subset
\[
\CM^{\mathrm{reg}}_a \subset \CM^{D}_{r,L}(\pi)_a 
\]
parameterizing line bundles on the spectral curve $C'_a$, which is a torsor of the group scheme $P_a$. Hence  $\CM^{D}_{r,L}(\pi)_a$ has $|\pi_0(P_a)|$ irreducible components.

We fix a base point in $x\in \CM_a^{\mathrm{reg}}$. Since $\CM_a^{\mathrm{reg}}$ is a torsor of the group scheme $P_a$, the base point $x$ trivializes the torsor and therefore the $\pi_0(P_a)$-action on $x$ yields an isomorphism
\[
[x]: \pi_0(\CM^{\mathrm{reg}}_a)  \xrightarrow{\simeq} \pi_0(P_a),
\]
which further induces
\begin{equation}\label{eqn29}
H^{2d_{h_\pi^D}}\left(\CM^{D}_{r,L}(\pi)_a, \BC \right) = \bigoplus_{v\in \pi_0(P_a)} \BC v.
\end{equation}
The action of $P_a$ on the cohomology $H^*\left(\CM^{D}_{r,L}(\pi)_a, \BC \right)$ factors through the group $\pi_0(P_a)$ of connected components, which acts naturally on the right-hand side of (\ref{eqn29}). In particular, the action of the discrete subgroup $\Gamma \subset P_a$ on (\ref{eqn29}) factors through the natural action of $\pi_0(P_a)$. By the proof of \cite[Theorem 1.1 (2)]{HP}, the morphism
\[
\Gamma = \mathrm{Pic}^0(C)[n]  \twoheadrightarrow  \pi_0(P_a)
\]
induced by the pullback $p_a^*\circ\pi^*$ along (\ref{eqn32}) is a surjection. Therefore we obtain that
\begin{equation*}
H^{2d_{h_\pi^D}}\left(\CM^{D}_{r,L}(\pi)_a, \BC \right)_\mathrm{st} = H^{2d_{h_\pi^D}}\left(\CM^{D}_{r,L}(\pi)_a, \BC \right)^\Gamma \subseteq H^{2d_{h_\pi^D}}\left(\CM^{D}_{r,L}(\pi)_a, \BC \right)^{\pi_0(P_a)} = \BC,
\end{equation*}
where the last equality is given by the $\pi_0(P_a)$-equivariant isomorphism (\ref{eqn29}). This implies
\begin{equation}\label{EMG}
\left(\mathrm{R^{2d_{h_\pi^D}}h^{\mathrm{ell}}_\pi}_* \BBC\right)_\mathrm{st} = \BBC
\end{equation}
and completes the proof of (b). \qed

\begin{rmk}
The vector space
\[
H^{2d_{h_\pi^D}}\left(\CM^{D}_{r,L}(\pi)_a, \BC \right)_\mathrm{st} = H^{2d_{h_\pi^D}}\left(\CM^{D}_{r,L}(\pi)_a, \BC \right)^\Gamma 
\]
may fail to be 1-dimensional when $a \in \CA^D(\pi) \setminus \CA^{D,\mathrm{ell}}(\pi)$. In particular, the constructible sheaf $\left(\mathrm{R^{2d_{h_\pi^D}}h^D_\pi}_* \BBC\right)_\mathrm{st}$ 
is not a rank 1 local system over the total Hitchin base $\CA^D(\pi)$. Hence the proof of Theorem \ref{SuppThm2} (b) relies heavily on the support theorem --- Theorem \ref{SuppThm2} (a).
\end{rmk}

\subsection{Transfer from the $\kappa$-part to the stable part}\label{section2.7}
In Section \ref{section2.7}, we assume that $D$ is an effective divisor of degree $\mathrm{deg}(D) >2g-2$ or $D= K_C$. Our main purpose is to prove Proposition \ref{prop2.8} which allows us to transfer naturally from the $\kappa$-part to the stable part of the complex $\mathrm{Rh_\pi^D}_* \BBC$. This extends \cite[Proposition 2.3.2]{Yun3} to the total Hitchin base for certain endoscopic Hitchin moduli spaces associated with $\mathrm{SL}_n$. We note that this transfer does not 
rely on the support theorem.

Recall the decomposition (\ref{M_i_decomp}) of Proposition \ref{prop1.1} (a). By Lemma \ref{lem1.5}, the group $\Omega$ (introduced in (\ref{Omega})) acts on each direct image complex ${\mathrm{Rh}_i}_* \BBC$, and we consider its $\Omega$-invariant part
\[
({\mathrm{Rh}_i}_* \BBC)^\Omega \in D_c^b(\CA^D(\pi)).
\]
For any pair $1 \leq i,j \leq m$, the isomorphism of Proposition \ref{prop1.1} (b):
\[
\phi_{ij} = \phi_{\CL_{ij}}: M_i \xrightarrow{\simeq} M_j, \quad (\CE, \phi) \mapsto (\CE \otimes \CL_{ij}, \theta)
\]
induced by a line bundle $\CL_{ij} \in \Gamma$ yields an isomorphism
\begin{equation*}
\phi_{ij}^*: {\mathrm{Rh}_j}_* \BBC \xrightarrow{\simeq} {\mathrm{Rh}_i}_* \BBC.
\end{equation*}
It preserves the $\Omega$-invariant parts, 
\begin{equation}\label{add2}
\phi_{ij}^*:  ({\mathrm{Rh}_j}_* \BBC)^\Omega  \xrightarrow{\simeq} ({\mathrm{Rh}_i}_* \BBC)^\Omega.
\end{equation}
We note that the isomorphism (\ref{add2}) only depends on the class of the line bundle $\CL_{ij} \in \Gamma$ in the quotient group 
\[
\pi_0(\mathrm{Prym}(C'/C)) = \Gamma / \Omega.
\]
Hence the $\Gamma$-action on $\bigoplus_{j=1}^m ({\mathrm{Rh}_j}_* \BBC)^\Omega$ passes through $\pi_0(\mathrm{Prym}(C'/C))$. Since the group $\Omega$ preserves each component $M_i$, it follows from Proposition \ref{prop1.1} (b) that the elements of the cyclic group $\pi_0(\mathrm{Prym}(C'/C)) = \Gamma/\Omega$ acts transitively on the set $\{M_i\}_{i=1}^m$. We may view $\pi_0(\mathrm{Prym}(C'/C))$ as the group of connected components of $\CM^D_{r,L}(\pi)$. For any fixed $1\leq i_0 \leq m$, the isomorphisms (\ref{add2}) yields a canonical $\pi_0(\mathrm{Prym}(C'/C))$-equivariant isomorphism
\begin{equation}\label{eqn41}
\bigoplus_{j=1}^m ({\mathrm{Rh}_j}_* \BBC)^\Omega = (\mathrm{Rh_{i_0}}_*\BBC)^\Omega \otimes \left(\bigoplus_{v\in \pi_0(\mathrm{Prym}(C'/C))} \BC v \right)
\end{equation}
where the action on the right-hand side is the natural one.

Before stating Proposition \ref{prop2.8}, we introduce the following definition for convenience.

\begin{defn}\label{definition}
Let $X$ be an algebraic variety, and let $\CF_1, \CF_2 \in D^b_c(X)$ be two objects. We say that two morphisms
\[
f_1: \CF_1 \to \CF_2, \quad f_2: \CF_1 \to \CF_2
\]
are \emph{equivalent up to scaling}, if there exists $\lambda \in \BC^*$ such that $f_1 = \lambda f_2$. We say that there is an isomorphism between two objects $\CF_1$ and $\CF_2$:
\[
f: \CF_1 \to \CF_2
\]
which is \emph{canonical up to scaling}, if our construction induces a set of isomorphisms $f_i: \CF_1 \xrightarrow{\simeq} \CF_2$ which are all equivalent up to scaling.
\end{defn}

\begin{prop}\label{prop2.8}
Let $D$ be an effective divisor on $C$ of degree $\mathrm{deg}(D) >2g-2$ or $D= K_C$. Assume that $\pi:C' \to C$ is the degree $m$ Galois cover associated with $\gamma \in \Gamma$, which corresponds to the character $\kappa \in \hat{\Gamma}$ via (\ref{Weil}). Assume $n = mr$. Then for any two elements $\kappa_1, \kappa_2$ in the cyclic group $\langle \kappa \rangle \subset \hat{\Gamma}$ generated by $\kappa$, there is an isomorphism for the corresponding isotypic components:
    \begin{equation}\label{pp2.8}
    \left( \mathrm{Rh_\pi^D}_* \BBC \right)_{\kappa_1} = \left( \mathrm{Rh_\pi^D}_* \BBC \right)_{\kappa_2}
    \end{equation}
which is canonical up to scaling. In particular, (\ref{pp2.8}) induces an isomorphism which is canonical up to scaling:
\[
\left( \mathrm{Rh_\pi^D}_* \BBC \right)_{\kappa} = \left( \mathrm{Rh_\pi^D}_* \BBC \right)_{\mathrm{st}}.
\]
\end{prop}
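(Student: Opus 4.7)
The plan is to reduce the statement to the canonical decomposition of the regular representation of the cyclic group $\pi_0(\mathrm{Prym}(C'/C))$, exploiting the structural result (\ref{eqn41}) and the characterization of $\langle \kappa \rangle$ given in Corollary \ref{cor1.3}.

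First, I would observe that by Corollary \ref{cor1.3}, every character $\kappa' \in \langle \kappa \rangle \subset \hat{\Gamma}$ factors through the quotient $\Gamma \twoheadrightarrow \Gamma/\Omega = \pi_0(\mathrm{Prym}(C'/C))$, and is in particular trivial on $\Omega$. Consequently, the $\kappa'$-isotypic component $\left(\mathrm{R}h^D_{\pi*}\BBC\right)_{\kappa'}$ is contained in the $\Omega$-invariant subcomplex which, using Proposition \ref{prop1.1}(a) and Lemma \ref{lem1.5}, decomposes as $\bigoplus_{j=1}^m (\mathrm{R}h_{j*}\BBC)^{\Omega}$.

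Next, fixing $1 \leq i_0 \leq m$, I would invoke (\ref{eqn41}) to obtain a $\pi_0(\mathrm{Prym}(C'/C))$-equivariant identification
\[
\left(\mathrm{R}h^D_{\pi*}\BBC\right)^{\Omega} \;\simeq\; \left(\mathrm{R}h_{i_0 *}\BBC\right)^{\Omega} \otimes \BC[\pi_0(\mathrm{Prym}(C'/C))],
\]
where the cyclic group $\pi_0(\mathrm{Prym}(C'/C)) = \Gamma/\Omega$ of order $m$ acts via the regular representation on the second factor and trivially on the first. Since the regular representation of a finite cyclic group splits canonically into its characters, each appearing with multiplicity one, and since by Corollary \ref{cor1.3} the character group of $\pi_0(\mathrm{Prym}(C'/C))$ coincides with $\langle \kappa \rangle \subset \hat{\Gamma}$, this yields for each $\kappa' \in \langle \kappa \rangle$ an identification
\[
\left(\mathrm{R}h^D_{\pi*}\BBC\right)_{\kappa'} \;\simeq\; \left(\mathrm{R}h_{i_0 *}\BBC\right)^{\Omega} \otimes \BC_{\kappa'},
\]
where $\BC_{\kappa'}$ denotes the $1$-dimensional $\kappa'$-isotypic subspace of $\BC[\pi_0(\mathrm{Prym}(C'/C))]$.

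Finally, for any two characters $\kappa_1, \kappa_2 \in \langle \kappa \rangle$, choosing a nonzero vector in each of $\BC_{\kappa_1}$ and $\BC_{\kappa_2}$ defines an isomorphism
\[
\left(\mathrm{R}h^D_{\pi*}\BBC\right)_{\kappa_1} \xrightarrow{\simeq} \left(\mathrm{R}h_{i_0 *}\BBC\right)^{\Omega} \xrightarrow{\simeq} \left(\mathrm{R}h^D_{\pi*}\BBC\right)_{\kappa_2},
\]
which depends on the choices only up to a nonzero scalar, and is therefore canonical up to scaling in the sense of Definition \ref{definition}. Taking $\kappa_1 = \kappa$ and $\kappa_2 = 0 \in \langle \kappa \rangle$ yields the final isomorphism comparing the $\kappa$-part with the stable part. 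The main point requiring care is verifying that the identification of $\langle \kappa \rangle$ with $\widehat{\pi_0(\mathrm{Prym}(C'/C))}$ supplied by Corollary \ref{cor1.3} is compatible with the $\pi_0(\mathrm{Prym}(C'/C))$-equivariance of (\ref{eqn41}); this is immediate since both structures originate from the $\Gamma$-action on $\CM^D_{r,L}(\pi)$ factoring through the quotient $\Gamma/\Omega$.
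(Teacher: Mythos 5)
Your proposal is correct and mirrors the paper's own proof essentially step for step: both pass to $\Omega$-invariants, combine the decomposition by connected components with the decomposition by characters in $\langle\kappa\rangle$ (via Corollary~\ref{cor1.3}), identify the result with $(\mathrm{Rh}_{i_0*}\BBC)^{\Omega}$ tensored with the regular representation of $\pi_0(\mathrm{Prym}(C'/C))$ via~(\ref{eqn41}), and isolate the $1$-dimensional isotypic components to obtain a scalar-canonical isomorphism. The paper's argument for independence from the base component $i_0$ is spelled out a bit more explicitly (different choices act by an element of $\Gamma/\Omega$, hence by a scalar after passing to the $\kappa'$-part), but your closing remark addresses the same point.
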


\begin{proof}
We consider the $\Omega$-invariant part 
\begin{equation}\label{eqnn42}
\left( \mathrm{Rh_\pi^D}_* \BBC \right)^\Omega \in D_c^b(\CA^D(\pi))
\end{equation}
of the direct image complex $\mathrm{Rh_\pi^D}_* \BC $. On one hand, the group $\Omega$ acts on each complex ${\mathrm{Rh}_i}_* \BC$ and we have
\begin{equation}\label{eqnn43}
\left( \mathrm{Rh_\pi^D}_* \BBC \right)^\Omega = \bigoplus_{j=1}^m ({\mathrm{Rh}_j}_* \BBC)^\Omega.
\end{equation}
On the other hand, by Corollary \ref{cor1.3}, an isotypic component $\left( \mathrm{Rh_\pi^D}_* \BBC \right)_{\kappa'}$ contributes to (\ref{eqnn42}) if and only if $\kappa'$ lies in $\langle \kappa \rangle$. Hence
\begin{equation}\label{eqnn44}
\left( \mathrm{Rh_\pi^D}_* \BBC \right)^\Omega = \bigoplus_{\kappa' \in \langle \kappa \rangle} \left( \mathrm{Rh_\pi^D}_* \BBC \right)_{\kappa'}.
\end{equation}
Combining (\ref{eqn41}), (\ref{eqnn43}), and (\ref{eqnn44}), we obtain a natural $\pi_0(\mathrm{Prym}(C'/C))$-equivariant isomorphism
\[
\bigoplus_{\kappa' \in \langle \kappa \rangle} \left( \mathrm{Rh_\pi^D}_* \BBC \right)_{\kappa'}  =  (\mathrm{Rh_{i_0}}_*\BBC)^\Omega \otimes \left(\bigoplus_{v\in \pi_0(\mathrm{Prym}(C'/C))} \BC v \right).
\]
In particular, if we take the $\kappa'$-parts on both sides, 
since the $\kappa'$-part of the regular representation is one-dimensional, this yields a natural isomorphism up to scaling
\begin{equation}\label{eqn45}
f_{i_0,\kappa'}: \left( \mathrm{Rh_\pi^D}_* \BBC \right)_{\kappa'} \xrightarrow{\simeq}  (\mathrm{Rh_{i_0}}_*\BBC)^\Omega. 
\end{equation}
This gives an isomorphism (\ref{pp2.8}) up to scaling, which, \emph{a priori}, still depends on the choice of $1\leq i_0 \leq m$. 

Different choices of $i_0$ influence the isomorphism (\ref{pp2.8}) via the action of an element 
\[
g\in \Gamma/\Omega = \pi_0(\mathrm{Prym}(C'/C))
\] 
on both objects of (\ref{eqn45}). After isolating the $\kappa'$-isotypic component, we conclude this only changes the isomorphism (\ref{pp2.8}) by a possible scalar ambiguity.
\end{proof}

\begin{comment}
If we choose another connected component indexed by $j_0$ in (\ref{eqn41}), the corresponding isomorphism (\ref{eqn45}) concerning $h_{j_0}: M_{j_0} \rightarrow \CA^D(\pi)$ then becomes the composition
\[
f_{j_0,\kappa'}: \left( \mathrm{Rh_\pi^D}_* \BC \right)_{\kappa'} \xrightarrow[\simeq]{f_{i_0,\kappa'}}  (\mathrm{Rh_{i_0}}_*\BC)^\Omega  \xrightarrow[\simeq]{\phi_{i_0j_0}} (\mathrm{Rh_{j_0}}_*\BC)^\Omega
\]
where $\phi_{i_0j_0}$ is induced by a unique element of $\pi_0(\mathrm{Prym}(C'/C))$. In particular, for $\kappa_1, \kappa_2 \in \langle\kappa \rangle$, the isomoprhism (\ref{pp2.8}) 
\[
f_{i_0, \kappa_2}^{-1}\circ f_{i_0, \kappa_1}: \left( \mathrm{Rh_\pi^D}_* \BC \right)_{\kappa_1} \xrightarrow{\simeq} (\mathrm{Rh_{i_0}}_*\BC)^\Omega \xrightarrow{\simeq}  \left( \mathrm{Rh_\pi^D}_* \BC \right)_{\kappa_2}
\]
does not depend on the choice of $i_0$, due to the fact that
\[
f_{j_0, \kappa_2}^{-1}\circ f_{j_0, \kappa_1}= \left( f_{i_0,\kappa_2}^{-1}\circ \phi_{i_0j_0}^{-1}\right)\circ \left(\phi_{i_0j_0} \circ f_{i_0, \kappa_1}\right)=f_{i_0, \kappa_2}^{-1}\circ f_{i_0, \kappa_1}. \qedhere
\]
\end{comment}

\subsection{Changing the degree}
Assume $\mathrm{deg}(D) >2g-2$. As another application of Ng\^o's support theorem, we analyze the $G_\pi$-equivariant complex 
\begin{equation}\label{321}
\left(\mathrm{Rh_\pi^D}_* \BBC \right)_{\kappa} \in D_c^b(\CA^D(\pi))
\end{equation}
when the degree of the line bundle $L \in \mathrm{Pic}^d(C)$ is changed.

Due to Theorem \ref{SuppThm2} (b) and Proposition \ref{prop2.8}, the object (\ref{321}) has full support $\CA^D(\pi)$. Hence (\ref{321}) is completely determined by its restriction to the open subset $U^{\mathrm{sm}} \subset \CA^D(\pi)$ where the spectral curves are nonsingular.

Ng\^o's analysis of supports for direct image complexes \cite[Section 7]{Ngo}  works for each $\kappa$-part (see \cite[Proposition 7.2.3]{Ngo}). In particular, as a corollary of the ``freeness" \cite[Proposition 7.4.10]{Ngo}, the isomorphism class of the restriction of (\ref{321}) to $U^\mathrm{sm}$ is determined by the group scheme $P^0|_{U^\mathrm{sm}}$ of (\ref{End_weak}) and the constructible sheaf
\begin{equation}\label{kappash}
\left(\mathrm{R^{2d_{h_\pi^D}}h^D_\pi}_* \BBC\right)_{\kappa}\Big{|}_{U^{\mathrm{sm}}}  \in \mathrm{Sh}_c(U^{\mathrm{sm}})
\end{equation}
which are both equipped with the $G_\pi$-actions. See \cite[Appendix]{dCRS} for a precise form expressing (\ref{321}) in terms of the direct image complex associated with 
\[
g^0_{\pi}\Big{|}_{U^{\mathrm{sm}}}: P^0|_{U^\mathrm{sm}} \rightarrow U^{\mathrm{sm}}
\]
and (\ref{kappash}). 

The following proposition will only be used in Section \ref{Section5.5}.

%Since the isomorphism class of $G_\pi$-equivariant (constant) sheaf (\ref{kappash}) only depends on the order of $\kappa' \in \langle \kappa \rangle$, we obtain the following proposition.

\begin{prop}\label{Prop2.11}
Assume $\mathrm{deg}(D)>2g-2$. Let $q$ be an integer coprime to $n$. We have an isomorphism of the $G_\pi$-equivariant objects 
\begin{equation}\label{new1}
\left(\mathrm{Rh_{\pi,L}^D}_* \BBC \right)_{q\kappa} \simeq \left(\mathrm{Rh_{\pi,L^{\otimes q}}^D}_* \BBC \right)_{\kappa} \in D_c^b(\CA^D(\pi)).
\end{equation}
%The isomorphism class of the $G_\pi$-equivariant object (\ref{321}) 
%\[ \left(\mathrm{Rh_{\pi,L^{\otimes q}}^D}_* \BBC \right)_{q^{-1}\kappa} \in D_c^b(\CA^D(\pi))\]
%does not depend on $q$. 
Here $h_{\pi,L^{\otimes q}}^D: \CM^D_{r,L^{\otimes q}}(\pi) \to \CA^D(\pi) $ stands for the Hitchin fibration associated with the line bundle $L^{\otimes q}$.
\end{prop}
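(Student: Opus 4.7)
The plan is to reduce the statement to the open locus $U^{\mathrm{sm}} \subset \CA^D(\pi)$ of smooth spectral curves and then apply Ng\^o's ``freeness'' description of the $\kappa$-isotypic direct image in terms of the Prym group scheme and the top-degree cohomology sheaf.

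First I would verify that both sides of \eqref{new1} have full support on $\CA^D(\pi)$: by Theorem \ref{SuppThm2}(b) the stable part has full support, and since $\gcd(q,n)=1$ both $\kappa$ and $q\kappa$ have the same order $m$ and generate the same cyclic subgroup $\langle \kappa \rangle \subset \hat{\Gamma}$.  Hence Proposition \ref{prop2.8}, applied to the moduli space with determinant $L$ and to the one with determinant $L^{\otimes q}$ respectively, transfers full support from the stable part to the $q\kappa$-part on the left and to the $\kappa$-part on the right.  By the full-support property, each side is the middle extension of its restriction to $U^{\mathrm{sm}}$, so it suffices to produce a $G_\pi$-equivariant isomorphism over $U^{\mathrm{sm}}$.

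On $U^{\mathrm{sm}}$, by Ng\^o's freeness theorem \cite[Proposition 7.4.10]{Ngo} in the explicit form recorded in \cite[Appendix]{dCRS}, the $\kappa$-part of the restricted direct image is determined as a $G_\pi$-equivariant object by the pair consisting of the Prym group scheme $P^0|_{U^{\mathrm{sm}}}$ and the top-degree $\kappa$-isotypic sheaf $\bigl(R^{2d_{h^D_\pi}}h_{\pi,*}^D \BBC\bigr)_{\kappa}|_{U^{\mathrm{sm}}}$, both carrying their natural $G_\pi$-actions.  The group scheme $P^0$ is intrinsic to the spectral cover of $\pi$ and is independent of the choice of determinant, so it coincides on the two sides of \eqref{new1} together with its $G_\pi$-equivariant structure.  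Hence the proof reduces to identifying the top-degree cohomology sheaves as $G_\pi$-equivariant local systems.

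By (the proof of) Theorem \ref{SuppThm2}(b) combined with Proposition \ref{prop2.8}, each of the two top-degree sheaves is a rank-one local system on $U^{\mathrm{sm}}$; fiberwise at a smooth point $a$, it is cut out as the isotypic component under the action of $\pi_0(\mathrm{Prym}(C'_a/C))$ on the top cohomology of the corresponding Hitchin fiber, viewed as a torsor over $P_a$.  Since $\gcd(q,n) = 1$, the map $[q]:\Gamma \xrightarrow{\sim} \Gamma$ is an automorphism whose dual sends the character $\kappa$ to $q\kappa$; moreover, $G_\pi$ acts trivially on $\Gamma = \mathrm{Pic}^0(C)[n]$ (being pulled back from $C$), so this automorphism is $G_\pi$-equivariant.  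Tracking the resulting identification on the component groups $\pi_0(\mathrm{Prym})$ matches the $\kappa$-isotypic line on the $L^{\otimes q}$-fiber with the $q\kappa$-isotypic line on the $L$-fiber, yielding the desired isomorphism.

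The main obstacle will be carrying out the final fiberwise identification compatibly with the $G_\pi$-equivariant structures.  The underlying rank-one local systems are easy to match, but their $G_\pi$-equivariant structures depend on a choice of base point in each Hitchin fiber; one must select compatible base points in the fibers $\CM^D_{r,L}(\pi)_a$ and $\CM^D_{r,L^{\otimes q}}(\pi)_a$ so that the trivializations match, using that $G_\pi$ permutes the components of $\mathrm{Prym}(C'_a/C)$ in a controlled way and acts trivially on $\hat{\Gamma}$.
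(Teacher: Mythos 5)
Your reduction to the smooth-spectral-curve locus $U^{\mathrm{sm}}$ via full support (Theorem \ref{SuppThm2}(b) plus Proposition \ref{prop2.8}) and Ng\^o's freeness \cite[Proposition 7.4.10]{Ngo}, and the further reduction to comparing the top-degree cohomology sheaves with their $G_\pi$-equivariant structures, is exactly the setup in the paper.  Up to that point your argument is correct and matches the paper's route.

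However, the crucial final step is a genuine gap, and you in fact flag it yourself.  Knowing that $[q]:\Gamma\to\Gamma$ is an automorphism with $G_\pi$ acting trivially on $\Gamma$, and that its dual interchanges $\kappa$ and $q\kappa$, only tells you that the two rank-one local systems have the same abstract isomorphism type; it does not by itself produce a $G_\pi$-equivariant identification of the rank-$m$ sheaves
$\mathrm{R}^{e}h^D_{\pi,L,*}\BBC$ and $\mathrm{R}^{e}h^D_{\pi,L^{\otimes q},*}\BBC$ on $U^{\mathrm{sm}}$ intertwining the $\Gamma$-actions by $[q]$, because the two underlying objects are $\pi_0$-torsors over different Prym varieties $\mathrm{Prym}^{d}(C'/C)$ and $\mathrm{Prym}^{dq}(C'/C)$.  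Choosing ``compatible base points'' fiberwise, as you suggest, does not by itself yield a global $G_\pi$-equivariant map: a base point in a Hitchin fiber is not $G_\pi$-invariant in general, and the resulting comparison would depend on the choice.  The missing idea is a canonical geometric morphism between the two torsors.  The paper supplies this with the ``multiplication by $q$'' map
\[
\mathrm{mult}_q:\ \mathrm{Prym}^{d}(C'/C)\longrightarrow \mathrm{Prym}^{dq}(C'/C),\qquad \CL\mapsto \CL^{\otimes q},
\]
which is manifestly $G_\pi$-equivariant (it is defined purely in terms of the line bundle, independent of any choice), is a bijection on $\pi_0$ precisely because $\gcd(q,m)=1$, and conjugates the $\Gamma$-translation action on the target to the $\Gamma$-translation action on the source precomposed with $[q]$.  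Applying this to the top-degree cohomology sheaves then gives, directly, a $G_\pi$-equivariant isomorphism $(V_1)_{q\kappa}\simeq (V_q)_\kappa$, which is the content of \eqref{new1} after invoking freeness.  Without this (or an equivalent canonical construction) your proof does not close.
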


\begin{proof}
%It suffices to show that the isomorphism class of the $G_\pi$-equivariant sheaf (\ref{kappash}) does not depend on the determinant $L \in \mathrm{Pic}^d(C)$ of the Higgs bundles, and only depends on the order of the character $\kappa'$.
For notational convenience, we use $e$ to denote $2d_{h_\pi^D}$. After restricting to $U^\mathrm{sm}$ we have 
\begin{equation}\label{neww}
\left(\mathrm{R^eh_{\pi,L^{\otimes q}}^D}_* \BBC \right)_{\kappa'} \simeq  \BBC, \quad \quad \forall \kappa' \in \langle \kappa \rangle
\end{equation}
by Proposition \ref{prop2.8} and (\ref{EMG}). %Since (\ref{neww}) is a trivial local system of rank 1, the $G_\pi$-equivariant structure on (\ref{neww}) is equivalent to a character
%\[
%\chi_{G,q}: G_\pi \to \BC^*. 
%\]
We need to analyze the $G_\pi$-equivariant structure on the rank 1 trivial local systems (\ref{neww}).

Now we consider the constructible sheaf
\begin{equation}\label{eqnn51}
\mathrm{R^eh_{\pi,L^{\otimes q}}^D}_* \BBC \Big{|}_{U^\mathrm{sm}} \in \mathrm{Sh}_c(U^{\mathrm{sm}})
\end{equation}
with the $G_\pi$-equivariant structure. 

By Proposition \ref{prop1.1} (b), the sheaf (\ref{eqnn51}) is a trivial local system of rank $m$ (corresponding to the $m$ connected components of $\CM^D_{r,L^{\otimes q}}(\pi)$). We may view (\ref{eqnn51}) as an $m$-dimensional vector space $V_q \simeq \BC^m$ on which the cyclic groups $G_\pi$ and $\Gamma$ act. Therefore, to prove (\ref{new1}) we only need to show that the isotypic component $(V_q)_{\kappa}$ is $G_\pi$-equivariantly isomorphic to $(V_1)_{q\kappa}$.

Recall the degree $dq$ Prym variety $\mathrm{Prym}^{dq}(C'/C)$ associated with the line bundle $L^{\otimes q}$, whose $m$ connected components are identified with the $m$ connected components of $\CM^D_{r,L^{\otimes q}}(\pi)$. To connect $V_q$ and $V_1$, we consider the the ``multiplication by $q$" map:
\begin{equation}\label{mult_q}
\mathrm{mult}_q: \mathrm{Prym}^{d}(C'/C) \to \mathrm{Prym}^{dq}(C'/C), \quad \CL \mapsto \CL^{\otimes q}
\end{equation}
which is clearly $G_\pi$-equivariant.

We note that $\mathrm{mult}_q$ induces an identification of the $m$ connected components for the Prym varieties on both sides of (\ref{mult_q}). In fact, choosing base points $x \in \mathrm{Prym}^{q}(C'/C)$ and $qx \in \mathrm{Prym}^{dq}(C'/C)$ trivializes both $\mathrm{Prym}(C'/C)$-torsors, and the map $\mathrm{mult}_q$ induces a ``multiplication by $q$" map on the cyclic group $\pi_0(\mathrm{Prym}(C'/C))\simeq \BZ/m\BZ$. The claim follows from the fact that $\mathrm{gcd}(m,q)=1$. 

In conclusion, (\ref{mult_q}) induces a $G_\pi$-equivariant isomorphism
\[
[\mathrm{mult}_q]: V_1 \xrightarrow{\simeq} V_q
\]
whose $\Gamma$-action on the right-hand side is given by the $\Gamma$-action on the left-hand side composed with the ``multiplication by $q$" $\Gamma \xrightarrow{\cdot q} \Gamma$. In particular we have a $G_\pi$-equivariant isomorphism between $(V_q)_{\kappa}$ and $(V_1)_{q\kappa}$. This completes the proof of the proposition. \qedhere

\end{proof}

%\[
%\mathrm{act}_g \in \mathrm{GL}_m(\BC)
%\]
%with $g \in G_\pi$ a generator.

%Since the constructible sheaf (\ref{kappash}) is the pullback of the constructible sheaf
%\begin{equation}\label{50eq}
%\left(\mathrm{R^eh^D_\gamma}_* \BC\right)_{\kappa'}\Big{|}_{q_\CA(U^{\mathrm{sm}})}  \in \mathrm{Sh}_c(\CA^D_\gamma)
%\end{equation}
%via the $G_\pi$-quotient map $q_\CA: \CA^D(\pi) \to \CA^D_\gamma$, the $G_\pi$-equivariant structure on (\ref{kappash}) is completely determined by (\ref{50eq}).

The constraint $\mathrm{deg}(D) > 2g-2$ will be removed by Remark \ref{rmk4.8}, despite the fact that we no longer have full supports in that case.

\section{Endoscopic decompositions}\label{Sec3}

\subsection{Overview: main results}\label{Sec3.1}

In Sections \ref{Sec3} and \ref{sec4}, we establish a generalized version of Theorem \ref{main} for any effective divisor $D$ satisfying either $\mathrm{deg}(D)$ is even and greater than $2g-2$, or $D = K_C$.\footnote{We refer to Remark \ref{odd_deg} for the case when $\mathrm{deg}(D)$ is odd and greater than $2g-2$.}

Let $D$ be as above, and let $\pi: C' \to C$ be a degree $m$ cyclic Galois cover with $n=mr$. Recall the Hitchin fibrations
\[
h^D: \CM^D_{n,L} \to \CA^D, \quad h_\pi^D: \CM^D_{r,L}(\pi) \rightarrow \CA^D(\pi),
\]
the fiberwise $\Gamma$-actions, and the corresponding $\kappa$-decompositions. The Galois group $G_\pi$ acts naturally on $\CM^D_{r,d}(\pi)$ and $\CA^D(\pi)$ such that the Hitchin map $h_\pi^D$ is $G_\pi$-equivariant; see Section \ref{Section1.3}. By Lemma \ref{lemma1.6}, the direct image complex $\mathrm{Rh_{\pi}^D}_*\BC$ is $G_\pi$-equivariant, so is each $\kappa$-isotypic part
\[
\left(\mathrm{Rh_{\pi}^D}_*\BBC\right)_{\kappa} \in D^b_c(\CA^D(\pi))
\]
due to the commutativity of the $\Gamma$- and the $G_\pi$-actions. We also note that 
\[
q_\CA^* \left( \mathrm{Rh^D_*} \BBC \right)_{\kappa}\in D^b_c(\CA^D(\pi))
\]
is naturally $G_\pi$-equivariant induced by the pullback map from the $G_\pi$-quotient
\begin{equation*}
q_\CA: \CA^D(\pi) \rightarrow {\CA^D_\gamma}.
\end{equation*}
%The $G_\pi$-quotient map $q_\CA: \CA^D(\pi) \to \CA^D_\gamma$ for the total Hitchin bases has $0$ as a fixed point. 
Recall the open subsets  $\CA^D(\pi)^*$ and ${\CA^D_\gamma}^*$ for the Hitchin bases and the free $G_\pi$-quotient map between them from Section \ref{Section1.3}.

The following theorem is a generalization of Theorem \ref{main0}.

\begin{thm}\label{main0'}
Let $\kappa \in \hat{\Gamma}$ and $\gamma \in \Gamma$ be identified by (\ref{Weil}), let $\pi:C'\to C$ be the Galois cover associated with $\gamma$, and let $d^D_\gamma = \mathrm{codim}_{\CA^D}(\CA^D_\gamma)$. There are isomorphisms in $D^b_c(\CA^D(\pi)^*)$ which are canonical up to scaling:
\begin{equation}\label{main00}
   q_\CA^* \left( \mathrm{Rh^D_*} \BBC \right)_{\kappa}\Big{|}_{\CA^D(\pi)^*} \xrightarrow{\simeq} \left( \mathrm{Rh_\pi^D}_* \BBC \right)_{\kappa}\Big{|}_{\CA^D(\pi)^*}[-2d^D_\gamma]  \xrightarrow{\simeq} \left(\mathrm{Rh_{\pi}^D}_*\BBC\right)_{\mathrm{st}}\Big{|}_{\CA^D(\pi)^*} [-2d^D_\gamma]   
 \end{equation}
with the first isomorphism $G_\pi$-equivariant.
\end{thm}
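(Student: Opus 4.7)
The second isomorphism in (\ref{main00}) is immediate from Proposition \ref{prop2.8} (applied with $\kappa_1=\kappa$ and $\kappa_2=0 \in \langle \kappa \rangle$) and in fact holds on the full base $\CA^D(\pi)$. The main task is thus to construct the first, $G_\pi$-equivariant isomorphism. We treat the case of an effective $D$ with $\mathrm{deg}(D)>2g-2$ here; the case $D=K_C$ will be deferred to Section \ref{sec4}, where it is deduced from the twisted case by applying the vanishing cycle functor to the function $\mu_{\pi,\CM}$ of Theorem \ref{thm4.4}.

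The first step is to produce the first isomorphism after restricting to the elliptic locus $\CA^{D,\mathrm{ell}}(\pi)^* \subset \CA^D(\pi)^*$. Over an elliptic point $\bar{a} \in \CA^{D,\mathrm{ell}}$ with integral spectral cover $p_{\bar a}: C_{\bar a}\to C$, the $\mathrm{SL}_n$-Hitchin fiber is (a connected component of) a fiber of the norm map $\mathrm{Pic}^d(C_{\bar a}) \to \mathrm{Pic}^d(C)$, while over a preimage $a \in \CA^{D,\mathrm{ell}}(\pi)^*$ the endoscopic fiber is analogously cut out from $\mathrm{Pic}^d(C_{\bar a})$ using the composite cover $C_{\bar a}\to C'\to C$. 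Under the Weil pairing identification of $\kappa$ with a character of the Prym component group (Proposition \ref{prop1.2}), the pullback-of-spectral-line-bundles correspondence induced by $C_{\bar a}\times_C C' \to C_{\bar a}$ matches the $\kappa$-isotypic part on the $\mathrm{SL}_n$-side with the $\kappa$-isotypic part on the endoscopic side, with the shift $2d^D_\gamma$ reflecting the codimension of the endoscopic Prym inside the ambient one. This produces a $G_\pi$-equivariant isomorphism
\[
q_\CA^* \left( \mathrm{Rh^D_*} \BBC \right)_{\kappa}\Big|_{\CA^{D,\mathrm{ell}}(\pi)^*} \xrightarrow{\simeq} \left( \mathrm{Rh_\pi^D}_* \BBC \right)_{\kappa}\Big|_{\CA^{D,\mathrm{ell}}(\pi)^*}[-2d^D_\gamma]
\]
canonical up to scaling. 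This is the endoscopic decomposition of Ng\^o \cite{Ngo} and its $\mathrm{SL}_n$-refinement by Yun \cite{Yun3}, transcribed to our setup via the relative moduli space of Section \ref{Sec1.2}.

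The second step is to extend the isomorphism above from $\CA^{D,\mathrm{ell}}(\pi)^*$ to all of $\CA^D(\pi)^*$. By Corollary \ref{cor2.2}, $\left(\mathrm{Rh^D_*}\BBC\right)_{\kappa}$ has $\CA^D_\gamma$ as its only support, and since $q_\CA: \CA^D(\pi)^*\to {\CA^D_\gamma}^*$ is \'etale, the pullback $q_\CA^*\left(\mathrm{Rh^D_*}\BBC\right)_{\kappa}\big|_{\CA^D(\pi)^*}$ is a direct sum of shifted simple perverse sheaves with full support on $\CA^D(\pi)^*$. Likewise, Proposition \ref{prop2.8} combined with Theorem \ref{SuppThm2}(b) implies that $\left(\mathrm{Rh_\pi^D}_*\BBC\right)_{\kappa}$ has full support $\CA^D(\pi)$. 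Any morphism between such complexes is determined by its restriction to a dense open subset, so the isomorphism of Step 1 extends uniquely to $\CA^D(\pi)^*$, preserving both $G_\pi$-equivariance and the canonical-up-to-scaling property.

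The main obstacle lies in Step 1: although the underlying geometric construction is classical in endoscopic theory, verifying that pullback of spectral line bundles yields a $G_\pi$-equivariant isomorphism of $\kappa$-isotypic direct image complexes with a \emph{single} scalar ambiguity requires delicate bookkeeping of the $\Gamma$-action on compactified Pryms of spectral curves, together with its refinement through the cyclic cover $\pi$. Step 2, by contrast, is essentially formal given the support theorems of Section \ref{Section2.2}.
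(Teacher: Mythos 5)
Your overall architecture matches the paper's: take the Ng\^o--Yun endoscopic correspondence over the elliptic locus, then use the support theorems of Section \ref{Section2.2} to extend to $\CA^D(\pi)^*$, and reduce $D=K_C$ to the twisted case via the vanishing cycle formalism of Section \ref{sec4}. The second isomorphism via Proposition \ref{prop2.8} and the Step 2 extension argument are both essentially what the paper does (the paper phrases Step 2 as constructing a global correspondence $\Sigma=\overline{\mathrm{Graph}(g_u)}$ and then checking isomorphy on perverse cohomology over a dense open, but this is the same reasoning via intermediate extensions of local systems).

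There is, however, a genuine gap in Step 1 that you have not noticed. The pullback-of-spectral-line-bundles map $u_a^*:\mathrm{Pic}(C_a)\to\mathrm{Pic}(C'_a)$ does \emph{not} preserve the fixed determinant: by Lemma \ref{lem3.5} it sends the Hitchin fiber of $\CM^D_{n,L}$ to the Hitchin fiber of $\CM^D_{r,L'}(\pi)$, where $L'=L\otimes\omega\otimes\omega_\pi^{-1}$ and $\omega,\omega_\pi$ are the discriminant line bundles of the two spectral covers. So Step 1, as described, produces an isomorphism of $q_\CA^*(\mathrm{Rh^D_*}\BBC)_\kappa$ with $(\mathrm{Rh^D_{\pi,L'}}_*\BBC)_\kappa[-2d^D_\gamma]$, not with the claimed $(\mathrm{Rh^D_{\pi,L}}_*\BBC)_\kappa[-2d^D_\gamma]$. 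To replace $L'$ by $L$ one needs a line bundle $\CN_0$ with $L'=L\otimes\CN_0^{\otimes n}$ (Proposition \ref{prop3.10}), and Lemma \ref{lem3.4} shows this exists exactly when $\mathrm{deg}(\omega)-\mathrm{deg}(\omega_\pi)$ is divisible by $n$, i.e.\ when $\mathrm{deg}(D)$ is \emph{even}. Your plan claims to handle ``all effective $D$ with $\mathrm{deg}(D)>2g-2$'' by Steps 1 and 2 alone and to defer only $D=K_C$; in fact the odd-degree twisted case cannot be obtained directly by your Steps 1--2 and must also be deduced by the vanishing cycle argument (this is Case 2 of the paper's proof of Theorem \ref{thm3.2}). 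You should correct the case division accordingly: Steps 1--2 yield Theorem \ref{main1} (even degree $>2g-2$), and then a single vanishing cycle specialization handles odd $\mathrm{deg}(D)>2g-2$, while two specializations handle $D=K_C$.
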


The second isomorphism of (\ref{main00}) is obtained immediately from Proposition \ref{prop2.8}, which actually holds over the total space $\CA^D(\pi)$:
\[
\left( \mathrm{Rh_\pi^D}_* \BBC \right)_{\kappa}[-2d^D_\gamma]  \xrightarrow{\simeq} \left(\mathrm{Rh_{\pi}^D}_*\BBC\right)_{\mathrm{st}}[-2d^D_\gamma].   
\]
The following theorem is a sheaf-theoretic enhancement of the Hausel--Thaddeus conjecture. 

\begin{thm}\label{thm3.2}
Let $i^D_\gamma: \CA^D_\gamma \hookrightarrow \CA^D$ be the closed embedding. With the same notation as in Theorem \ref{main0'}, there is an isomorphism which is canonical up to scaling:
\begin{equation}\label{ck^D}
c^D_\kappa: \left(\mathrm{Rh^D_*} \BBC\right)_\kappa \xrightarrow{\simeq} {i^D_\gamma}_*\left(\mathrm{Rh^D_\gamma}_* \BBC\right)_\kappa [-2d^D_\gamma] \in D_c^b(\CA^D).
\end{equation}
\end{thm}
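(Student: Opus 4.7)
For $\mathrm{deg}(D) > 2g - 2$, I would deduce Theorem~\ref{thm3.2} from Theorem~\ref{main0'} together with the support theorem (Corollary~\ref{cor2.2}); the case $D = K_C$ is then handled via vanishing cycles in Section~4, since Corollary~\ref{cor2.2} fails there. The construction of $c_\kappa^D$ in the twisted case proceeds by starting from the $G_\pi$-equivariant isomorphism of Theorem~\ref{main0'} on $\CA^D(\pi)^*$, applying $q_{\CA *}$, and taking $G_\pi$-invariants. Over the free locus ${\CA^D_\gamma}^*$, the quotient $q_\CA$ is étale Galois with group $G_\pi$, so $(q_{\CA *} q_\CA^* \CF)^{G_\pi} \simeq \CF$; this identifies the invariants of the left-hand side with $\left(\mathrm{Rh^D_*}\BBC\right)_\kappa|_{{\CA^D_\gamma}^*}$. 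For the right-hand side, since the $\Gamma$- and $G_\pi$-actions on $\CM^D_{r,L}(\pi)$ commute, each $\kappa$-isotypic component $\left(\mathrm{Rh_\pi^D}_*\BBC\right)_\kappa$ inherits a $G_\pi$-equivariant structure, and a $\kappa$-graded refinement of Lemma~\ref{lemma1.6} yields
\[
\left(q_{\CA *}\left(\mathrm{Rh_\pi^D}_*\BBC\right)_\kappa\right)^{G_\pi}\Big|_{{\CA^D_\gamma}^*} \xrightarrow{\simeq} \left(\mathrm{Rh^D_\gamma}_*\BBC\right)_\kappa\Big|_{{\CA^D_\gamma}^*}.
\]
Composing with the shift by $[-2d_\gamma^D]$ produces the desired canonical-up-to-scaling isomorphism $c_\kappa^D$ over the open ${\CA^D_\gamma}^*$.

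To extend across the closed embedding $i_\gamma^D$, I would invoke Corollary~\ref{cor2.2}: the only support of $\left(\mathrm{Rh^D_*}\BBC\right)_\kappa$ is $\CA^D_\gamma$, and ${i^D_\gamma}_*\left(\mathrm{Rh^D_\gamma}_*\BBC\right)_\kappa$ is tautologically supported on $\CA^D_\gamma$. By the decomposition theorem, each side is (non-canonically) a shifted direct sum of IC sheaves, and the open-locus isomorphism forces every perverse constituent to have full support $\CA^D_\gamma$. Since ${\CA^D_\gamma}^*$ is dense in $\CA^D_\gamma$, any morphism between such IC-type complexes extends uniquely from the open, preserving the scalar ambiguity, and this delivers the global $c_\kappa^D$ on $\CA^D$.

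For $D = K_C$, the support theorem fails \cite{Supp}, so the extension argument above breaks. Following Section~4, one realizes $\CM_{n,L}$ as the critical locus of a regular function $\mu_{\pi,\CM}: \CM^D_{n,L} \to \BA^1$ for an auxiliary divisor $D$ with $\mathrm{deg}(D) > 2g-2$, and transports the already-established twisted operator $c_\kappa^D$ through the vanishing cycle functor $\varphi_{\mu_{\pi,\CM}}$; because $\mu_{\pi,\CM}$ factors through the Hitchin base, $\varphi_{\mu_{\pi,\CM}}$ commutes with the direct image along the Hitchin map, converting the twisted identity into its $K_C$ analog. The main obstacle, present in both cases, is controlling the scalar ambiguity: in the twisted case one must check that the unique extension from ${\CA^D_\gamma}^*$ to $\CA^D_\gamma$ collapses to a single overall scalar, while in the $K_C$ case one must verify that $\varphi_{\mu_{\pi,\CM}}$ interacts coherently with the construction of $c_\kappa^D$ so that the resulting operator is still canonical up to a single scalar rather than a family of component-wise scalars.
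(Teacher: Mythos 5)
Your proposal is circular in a way that hides the actual content of the theorem. You propose to \emph{deduce} Theorem~\ref{thm3.2} from Theorem~\ref{main0'} (plus the support theorem), but the paper does not give an independent proof of Theorem~\ref{main0'}: it is itself \emph{recovered} from Theorem~\ref{thm3.2} by restricting to ${\CA^D_\gamma}^*$ and pulling back along the \'etale quotient $q_\CA$. The genuine content of the proof is the \emph{construction} of the isomorphism over $\CA^D(\pi)^*$, which you take as given. That construction is Ng\^o--Yun's endoscopic cohomological correspondence: one builds the normalization map of spectral curves $u_\CC: \CC^\heart_\gamma \to \CC^\circ_\gamma$ (Lemma~\ref{lem3.3}), uses pullback of line bundles along it to produce a $G_\pi$- and $\Gamma$-equivariant map $g_u$ on the regular loci (Corollary~\ref{cor3.6}, Lemma~\ref{lem3.8}), closes up its graph to get a correspondence $\Sigma$, and then proves via Yun's pointwise calculation on nodal spectral curves that $[\Sigma]_{\#,\kappa}$ is an isomorphism of $\kappa$-parts on $\CA^D(\pi)^*$ (Theorem~\ref{thm3.9}). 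Your proposal never produces any such map and so has no starting point.

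There is also a second gap you gloss over: the correspondence $[\Sigma]$ lands in the Hitchin fibration for a \emph{different} determinant line bundle $L' = L \otimes \omega \otimes \omega_\pi^{-1}$. Changing back from $L'$ to $L$ (Proposition~\ref{prop3.10}) requires an $n$-th root of $L'\otimes L^{-1}$, which exists only when $n \mid (\deg L' - \deg L)$; by Lemma~\ref{lem3.4} this holds exactly when $\deg(D)$ is even. Your treatment covers all $\deg(D) > 2g-2$ as one case, but the direct construction only works for $\deg(D)$ even, and odd $\deg(D)$ must already go through the vanishing-cycle reduction (Case~2 in Section~4). Your support-theorem extension argument from ${\CA^D_\gamma}^*$ to $\CA^D_\gamma$ and your description of the $K_C$ reduction via $\varphi_{\mu_{\pi,\CM}}$ are both essentially right, but on their own they do not constitute a proof.
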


We first observe that (\ref{ck^D}) induces the first isomorphism of (\ref{main00}). So Theorem \ref{main0'} is recovered by Theorem \ref{thm3.2}. In fact, we restrict $c^D_\kappa$ to the open subset ${\CA_\gamma^D}^* \subset \CA_\gamma^D$ and pull it back along
the free $G_\pi$-quotient map
\begin{equation}\label{extra3}
q_\CA: \CA^D(\pi)^* \to {\CA^D_\gamma}^*.
\end{equation}
Since the left diagram of (\ref{diagram111}) is Cartesian restricting to (\ref{extra3}), we recover the first map of (\ref{main00}) via proper base change.

\begin{comment}
\begin{proof}
We prove that Theorems \ref{main0'} and \ref{thm3.2} are equivalent.

Assume Theorem \ref{thm3.2} holds. The pullback of $c^D_\kappa$ along the $G_\pi$-quotient map
\[
q_\CA: \CA^D(\pi) \to \CA^D_\gamma
\]
recovers the isomorphism 
\begin{equation}\label{add1}
 q_\CA^* \left( \mathrm{Rh^D_*} \BBC \right)_{\kappa} \xrightarrow{\simeq} \left( \mathrm{Rh_\pi^D}_* \BBC \right)_{\kappa}[-2d^D_\gamma]
\end{equation}
which is $G_\pi$-equivariant over $\CA^D(\pi)^*$. 

Conversely, we assume (\ref{main00}) holds. Therefore, the left-hand side of (\ref{add1}) is equipped with a $G_\pi$-equivariant structure extending the obvious one over $\CA^D(\pi)^*$. We pushforward (\ref{add1}) along $q_\CA$ and take the $G_\pi$-invariant parts on both sides. The right-hand side of the pushforward recovers the right-hand side of (\ref{ck^D}) by Lemma \ref{lemma1.6}. To calculate the left-hand side, we note that by the projection formula we have
\begin{align*}
    {q_\CA}_* q_\CA^* \left( \mathrm{Rh^D_*} \BBC \right)_{\kappa} & = {q_\CA}_* \underline{\BC} \otimes \left( \mathrm{Rh^D_*} \BBC \right)_{\kappa}.
\end{align*}
So its $G_\pi$-invariant part is
\[
\left({q_\CA}_* \underline{\BC}\right)^{G_\pi} \otimes \left( \mathrm{Rh^D_*} \BBC \right)_{\kappa} = \underline{\BC} \otimes \left( \mathrm{Rh^D_*} \BBC \right)_{\kappa} = \left( \mathrm{Rh^D_*} \BBC \right)_{\kappa}.
\]
This completes the proof.

The converse follows from Lemma \ref{lemma1.6} by pushing forward (\ref{add1}) and taking the $G_\pi$-invariant parts on both sides.
\end{proof}
\end{comment}

Theorems \ref{main0'} and \ref{thm3.2} recover Theorems \ref{main0} and \ref{main} when $D = K_C$. When $\mathrm{deg}(D)$ is even and greater than $2g-2$, Theorem \ref{thm3.2} provide a concrete description of the contribution of each support $\CA^D_\gamma$ to 
\[
 \mathrm{Rh^D_*} \BBC \in D_c^b(\CA^D).
\]
This enhances the main theorem of de Cataldo \cite{dC_SL} when $\mathrm{deg}(D)$ is even.

As discussed above, for proving Theorems \ref{main0'} and \ref{thm3.2}, we only need to construct $G_\pi$-equivariant isomorphisms
\[
c^D_\kappa: \left(\mathrm{Rh^D_*} \BBC\right)_\kappa \xrightarrow{\simeq} {i^D_\gamma}_*\left(\mathrm{Rh^D_\gamma}_* \BBC\right)_\kappa [-2d^D_\gamma] \in D_c^b(\CA^D),
\]
which we treat in this section for the following special cases. 

\begin{thm}\label{main1}
Theorems \ref{main0'} and \ref{thm3.2} hold when $\mathrm{deg}(D)$ is greater than $2g-2$.
\end{thm}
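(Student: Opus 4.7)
The plan is to combine the support theorems of Section~\ref{Section2.2} with the elliptic endoscopic decomposition of Ng\^o \cite{Ngo} and Yun \cite{Yun3}. As explained just after Theorem \ref{thm3.2}, it suffices to construct the $G_\pi$-equivariant isomorphism $c^D_\kappa$ of Theorem \ref{thm3.2}, since Theorem \ref{main0'} then follows by proper base change along the free $G_\pi$-quotient $q_\CA : \CA^D(\pi)^* \to {\CA^D_\gamma}^*$.

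First I reduce the construction to the open dense locus $\CA^{D,\mathrm{ell}}_\gamma := \CA^D_\gamma \cap (i^D_\gamma)^{-1}(\CA^{D,\mathrm{ell}}) \subset \CA^D_\gamma$. By Corollary \ref{cor2.2}, the complex $(\mathrm{Rh}^D_*\BBC)_\kappa$ has $\CA^D_\gamma$ as its unique support, so the decomposition theorem writes it as a direct sum of shifted IC sheaves intermediately extended from any dense open subset of $\CA^D_\gamma$. For the right-hand side of \eqref{ck^D}, Lemma \ref{lemma1.6} identifies
\[
\left(\mathrm{Rh}^D_\gamma {}_* \BBC\right)_\kappa \simeq \left({q_\CA}_* \left(\mathrm{Rh}^D_\pi {}_* \BBC\right)_\kappa \right)^{G_\pi},
\]
and Theorem \ref{SuppThm2}(a) shows that $\left(\mathrm{Rh}^D_\pi {}_* \BBC\right)_\kappa$ is an intermediate extension from $\CA^{D,\mathrm{ell}}(\pi)$; pushing forward along $q_\CA$ and taking $G_\pi$-invariants then exhibits $\left(\mathrm{Rh}^D_\gamma {}_* \BBC\right)_\kappa$ as an intermediate extension from $\CA^{D,\mathrm{ell}}_\gamma$. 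Consequently, both sides of \eqref{ck^D} are determined by their restrictions to this locus, and it suffices to produce a canonical-up-to-scaling isomorphism there.

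To construct the isomorphism over $\CA^{D,\mathrm{ell}}_\gamma$, I invoke the elliptic endoscopic decomposition for $\mathrm{SL}_n$ of Ng\^o \cite{Ngo} and Yun \cite{Yun3}, which provides a canonical-up-to-scaling identification of $(\mathrm{Rh}^D_*\BBC)_\kappa|_{\CA^{D,\mathrm{ell}}_\gamma}$ with the stable endoscopic contribution coming from $\CM^D_{r,L}(\pi)$. Combining this with Proposition \ref{prop2.8} (which converts the stable part of $\mathrm{Rh}^D_{\pi,*}\BBC$ to its $\kappa$-part canonically up to scaling) and with Lemma \ref{lemma1.6} (which passes from the $\pi$-picture on $\CA^D(\pi)$ to the $\gamma$-picture on $\CA^D_\gamma$ by taking $G_\pi$-invariants of ${q_\CA}_*$) produces the required isomorphism $c^D_\kappa$ over $\CA^{D,\mathrm{ell}}_\gamma$. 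Each ingredient in the chain is $G_\pi$-equivariant and canonical up to scaling, so $c^D_\kappa$ inherits both properties; by the intermediate-extension reduction of the previous paragraph, it extends uniquely from $\CA^{D,\mathrm{ell}}_\gamma$ to all of $\CA^D$, completing the proof.

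The main obstacle is the third step: verifying that the Ng\^o--Yun elliptic endoscopic isomorphism is available in exactly the form demanded above, and locating where the parity hypothesis $\mathrm{deg}(D) \equiv 0 \pmod 2$ enters. The parity is used to define an auxiliary square-root line bundle on the universal spectral cover, which in turn pins down the endoscopic character sheaf canonically; without it, the Ng\^o--Yun construction does not directly supply the isomorphism we need, and the odd-degree case (and in particular $D = K_C$) must instead be handled by the vanishing cycles technique sketched in the introduction.
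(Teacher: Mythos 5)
Your reduction to a dense open subset of $\CA^D_\gamma$ via the support theorems is sound and matches the paper's strategy: both sides of (\ref{ck^D}) are semisimple with unique support $\CA^D_\gamma$ (Corollary \ref{cor2.2}, Theorem \ref{SuppThm2}(b), Proposition \ref{prop2.8}), so an isomorphism over a dense open of the support extends. The gap is precisely in the step you flag as "the main obstacle": the Ng\^o--Yun elliptic endoscopic identification is not available as a black box in the form you invoke, and producing it is the actual content of the proof. The paper constructs it geometrically: over $\CA^\heart(\pi)$, Lemma \ref{lem3.3} exhibits the smooth spectral curve over $C'$ as the normalization of the corresponding (singular) $\mathrm{SL}_n$-spectral curve over $C$; pulling back line bundles along this normalization gives a morphism $g_u$ of the regular loci (Corollary \ref{cor3.6}), the closure $\Sigma$ of its graph defines a cohomological correspondence $[\Sigma]_{\#}$ (Theorem \ref{thm3.9}), and the fact that its $\kappa$-part is a fiberwise isomorphism at a general point is Yun's explicit computation for nodal spectral curves \cite[Lemma 3.4.1]{Yun3} (which moreover must be adapted from the degree-$0$ setting, as in the paper's footnote). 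None of this construction appears in your argument, so the isomorphism over the elliptic locus is asserted rather than proved.

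You also misplace where the parity hypothesis enters. The correspondence $\Sigma$ does not land in $\CM^D_{r,L}(\pi)$ but in $\CM^D_{r,L'}(\pi)$ with $L' = L\otimes\omega\otimes\omega_\pi^{-1}$, because pushing a line bundle forward along the spectral cover of $C$ versus that of $C'$ shifts the determinant by $\omega\otimes\omega_\pi^{-1}$ (Lemma \ref{lem3.5}). By Lemma \ref{lem3.4} one has $\deg(\omega)-\deg(\omega_\pi)=n(r-n)\deg(D)/2$, which is divisible by $n$ exactly when $\deg(D)$ is even; this is what allows one to write $L'=L\otimes\CN_0^{\otimes n}$ and twist back to $L$ by an $\CA^D(\pi)$-isomorphism, canonically up to scaling because two choices of $\CN_0$ differ by an element of $\Gamma$ (Proposition \ref{prop3.10}, Remark \ref{rmk3.12}). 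So the evenness is not used to "define a square-root line bundle pinning down the endoscopic character sheaf"; it is used to repair the determinant mismatch between the two sides of the correspondence, and it is exactly this step that fails for odd degree and for $D=K_C$, forcing the vanishing-cycle argument of Section \ref{sec4}.
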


In Section \ref{sec4}, we reduce the possibly most interesting case $D=K_C$ to Theorem \ref{main1}.

\subsection{Spectral curves and line bundles}
Recall the universal spectral curve (\ref{CC}) for $\mathrm{GL}_r$-Higgs bundles over the curve $C'$. We denote its restriction to the subspace $\CA^D(\pi) \subset \widetilde{\CA}^{D'}(C')$ by
\begin{equation}\label{eqn48}
\CC_{\pi} \rightarrow \CA^D(\pi).
\end{equation}
The morphism (\ref{eqn48}) is $G_\pi$-equivariant with respect to the natural Galois group $G_\pi$ actions on both the base $\CA^{D}(\pi)$ and the universal curve $\CC_\pi$.

We consider the largest Zariski open subset
\begin{equation*}
    \CA^{\heart}(\pi) \subset A^D(\pi)
\end{equation*}
such that:  \footnote{We note that our notation $\CA^\heart(\pi)$ has different meaning as the similar notation used in \cite{Ngo}.}
\begin{enumerate}
    \item[(a)] The action of $G_\pi$ is free on $\CA^{\heart}(\pi)$; 
    \item[(b)] The restriction of the spectral curves (\ref{eqn48})
    \begin{equation}\label{eqn49}
    \CC^\heart_\pi \rightarrow \CA^{\heart}(\pi)
    \end{equation}
    is smooth.
\end{enumerate}

By taking the $G_\pi$-quotients, the family (\ref{eqn49}) descends to a family of nonsingular curves
\begin{equation}\label{eqn50}
    \CC^\heart_\gamma \rightarrow \CA^\heart_\gamma 
\end{equation}
where $\CA^\heart_\gamma = \CA^\heart(\pi)/G_\pi$ is an open dense subset of $\CA^D_\gamma = \CA^D(\pi)/G_\pi$. We denote by
\begin{equation}\label{eqn51}
\CC^{\circ}_\gamma \rightarrow \CA^\heart_\gamma
\end{equation}
the restriction of the universal $\mathrm{SL}_n$-spectral curves over $\CA^D$ to $\CA^\heart_\gamma$. The families (\ref{eqn50}) and (\ref{eqn51}) are connected by the following lemma.

\begin{lem}\label{lem3.3}
There is a natural $\CA^\heart_\gamma$-morphism
 \begin{equation}\label{eqn52}
    \begin{tikzcd}[column sep=small]
    \CC^\heart_\gamma \arrow[dr] \arrow[rr,"u_\CC"] & & \CC^\circ_\gamma \arrow[dl] \\
       & \CA_\gamma^\heart  & 
\end{tikzcd}
\end{equation}
whose restriction to each closed fiber
\[
u_{a}: \CC^\heart_{\gamma,a} \rightarrow \CC^\circ_{\gamma,a}, \quad a\in \CA^\heart_\gamma 
\]
is a normalization of curves.
\end{lem}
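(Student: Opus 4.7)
The plan is to construct $u_\CC$ by exploiting the étale structure of $\pi:C'\to C$ and then to descend via the $G_\pi$-action from $\CA^\heart(\pi)$ down to $\CA^\heart_\gamma$. Since $\gamma \in \Gamma = \mathrm{Pic}^0(C)[n]$ corresponds to $\pi$ through the étale $m$-th root construction recalled in Section \ref{Weil_pair}, the cover $\pi$ is étale of degree $m$. Consequently $D' = \pi^*D$ yields a $G_\pi$-equivariant étale morphism of total spaces $\varpi: V(D') \to V(D)$ realizing $V(D)$ as the $G_\pi$-quotient of $V(D')$. Postcomposing the inclusion $\CC^\heart_\pi \hookrightarrow V(D')\times \CA^\heart(\pi)$ with $\varpi \times \mathrm{id}$ produces a $G_\pi$-invariant $\CA^\heart(\pi)$-morphism $\tilde u: \CC^\heart_\pi \to V(D)\times \CA^\heart(\pi)$, which descends to an $\CA^\heart_\gamma$-morphism $u_\CC: \CC^\heart_\gamma \to V(D)\times \CA^\heart_\gamma$.

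The key identification -- that $u_\CC$ factors through $\CC^\circ_\gamma \subset V(D)\times \CA^\heart_\gamma$ -- rests on the following étale-local computation. If $\pi^{-1}(U) = \bigsqcup_{i=1}^m U_i$ trivializes $\pi$ over $U \subset C$ and $(\CE,\theta)$ represents a point $\tilde a \in \CA^\heart(\pi)$, then $\pi_*\CE|_U = \bigoplus_i \CE|_{U_i}$ with $\pi_*\theta|_U = \mathrm{diag}(\theta|_{U_1},\ldots,\theta|_{U_m})$, so
\[
\det(\pi_*\theta - \lambda)\big|_U \;=\; \prod_{i=1}^m \det(\theta|_{U_i} - \lambda).
\]
That is, the characteristic polynomial of the pushforward Higgs field is the norm along $\pi$ of the characteristic polynomial of $\theta$. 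Since $\CC^\circ_\gamma$ is cut out in $V(D)\times \CA^\heart_\gamma$ by the vanishing of the characteristic polynomial of $\pi_*\theta$, this identity forces $\varpi$ to carry the zero locus of $\det(\theta - \lambda)$ into the zero locus of $\det(\pi_*\theta - \lambda)$, yielding the desired factorization $u_\CC: \CC^\heart_\gamma \to \CC^\circ_\gamma$ over $\CA^\heart_\gamma$.

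For the fiberwise normalization claim, fix $a \in \CA^\heart_\gamma$ with lift $\tilde a \in \CA^\heart(\pi)$. The source $\CC^\heart_{\gamma,a} = \CC^\heart_{\pi,\tilde a}$ is smooth by the very definition of $\CA^\heart(\pi)$, and $u_a$ is finite because it is the restriction of $\varpi$ to a closed subscheme. Surjectivity onto $\CC^\circ_{\gamma,a}$ and generic injectivity follow from the polynomial identity above: the $n = mr$ eigenvalues of $\pi_*\theta$ at a general point of $C$ are exactly the union of the $r$ eigenvalues of $\theta$ over each of the $m$ preimages in $C'$, identified inside the common fiber $V(D)_c$ via $\varpi$. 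Hence $u_a$ is a finite birational morphism from a smooth curve onto $\CC^\circ_{\gamma,a}$, which is precisely what it means to be a normalization.

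The main obstacle is the scheme-theoretic content of the second paragraph: one must show that the image of the pushforward spectral curve genuinely coincides, as a closed subscheme, with the pullback of the universal $\mathrm{SL}_n$-spectral curve along $q_\CA$, and verify that $\CC^\circ_{\gamma,a}$ is reduced for all $a \in \CA^\heart_\gamma$ (so that normalization is the correct description at collision loci of eigenvalues from different sheets). Both reduce étale-locally to the block decomposition of $\pi_*\theta$ above --- reducedness in particular follows from the fact that the $m$ polynomials $q_i(\lambda) = \det(\theta|_{U_i}-\lambda)$ are pairwise distinct since the $G_\pi$-action is free on $\CA^\heart(\pi)$ --- but some care is needed to globalize the local identities of characteristic polynomials into a statement in families over $\CA^\heart(\pi)$.
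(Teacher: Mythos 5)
The proposal is correct and follows essentially the same route as the paper's own proof. In both cases the map $u_\CC$ is realized as the restriction of the $G_\pi$-quotient map $\varpi : V(D') \to V(D)$ to the universal relative spectral curve, descended along $q_\CA$; the paper phrases this via the union of Galois translates $\widetilde C'_\alpha = \bigcup_g g^* C'_\alpha$ and its quotient $C_\alpha$, which is literally $\varpi(C'_\alpha)$, while you phrase it by postcomposing the inclusion $\CC^\heart_\pi \hookrightarrow V(D')\times\CA^\heart(\pi)$ with $\varpi\times\mathrm{id}$ and descending. The identification of $\varpi(C'_\alpha)$ with the $\mathrm{SL}_n$-spectral curve over $C$ is the map $\Phi_\Gamma$ constructed in \cite[Section 5.1]{HP}, which the paper cites; your characteristic-polynomial identity $\det(\pi_*\theta-\lambda)|_U = \prod_i \det(\theta|_{U_i}-\lambda)$ is precisely the local content behind that construction.

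One caveat about the final paragraph. You derive pairwise distinctness of the local factors $q_i$ from freeness of the $G_\pi$-action on $\CA^\heart(\pi)$, and suggest this handles reducedness and generic injectivity. But ``pairwise distinct'' and ``pairwise coprime'' are not the same thing for spectral curves that are smooth but disconnected: $C'_{\tilde a}$ and $g^*C'_{\tilde a}$ can be distinct as schemes yet still share an irreducible component (for instance if $C'_{\tilde a}$ splits off a $G_\pi$-stable piece together with a non-invariant remainder). Both birationality and the degree count $\deg(C_\alpha/C) = n$ used to match against $\CC^\circ_{\gamma,a}$ actually require the stronger statement that the Galois translates $g^*C'_{\tilde a}$ have no common component. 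The paper absorbs this into the reference to \cite[Section 5.1]{HP}; if you want to make your version self-contained, this is the point that needs a dedicated argument rather than the distinctness-from-freeness observation.
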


\begin{proof}
We first recall the construction of \cite[Section 5.1]{HP} that, for a given degree $r$ spectral curve $C'_\alpha \rightarrow C'$ lying in $V(D')$, there is a natural birational morphism 
\[
C'_\alpha \to C_\alpha
\]
with $C_\alpha$ a degree $n$ spectral curve over $C$ lying in the total space $V(D)$. In fact, given $C'_\alpha \to C'$, let 
\[
g^*C'_\alpha \rightarrow C'
\]
be another degree $r$ spectral cover over $C'$ obtained as the pullback of $C'_\alpha \rightarrow C'$ along the Galois automorphism
\[
g: C' \xrightarrow{\simeq} C', \quad g\in G_\pi.
\]
The $G_\pi$-invariant curve 
\[
\widetilde{C}'_\alpha = \bigcup_{g\in G_\pi} g^* C'_\alpha
\]
is a degree $n(=mr)$ spectral cover over $C'$, which descends to a degree $n$ spectral cover $C_\alpha \to C$ via taking the $G_\pi$-quotient. Moreover, we see from the construction of $\Phi_\Gamma$ in \cite[Section 5.1]{HP} that the point $[C'_\alpha] \in \CA^\heart(\pi)$ maps to $[C_\alpha] \in \CA^\heart_\gamma$ via the natural quotient map $q^D_\CA: \CA^\heart(\pi) \to \CA^\heart_\gamma$. The composition
\[
C'_\alpha \hookrightarrow \widetilde{C}'_\alpha \rightarrow C_\alpha
\]
is birational, hence it is a normalization by the smoothness of $C'_\alpha$.

The construction above works for families of spectral curves over the Hitchin bases. Hence we obtain a commutative diagram
\begin{equation*}
\begin{tikzcd}
\CC^\heart_\pi \arrow[r] \arrow[d]
& \CC^\circ_\gamma \arrow[d] \\
\CA^\heart(\pi) \arrow[r, "q_\CA"]
& \CA^\heart_\gamma
\end{tikzcd}
\end{equation*}
where the left vertical morphism is $G_\pi$-equivariant, and the horizontal morphisms are $G_\pi$-quotient maps. The lemma follows from descending the left vertical arrow.
\end{proof}

For a closed point $a \in \CA^\heart_\gamma$, we denote the corresponding spectral curves over $C'$ and $C$ by $C'_a$ and $C_a$ respectively with the morphism
\[
u_a:  C'_a \rightarrow C_a
\]
given by Lemma \ref{lem3.3}. We consider the commutative diagram
\begin{equation}\label{eqn53}
  \begin{tikzcd}[column sep=small]
    C'_a \arrow[dr, "s'_a"] \arrow[rr,"u_a"] & & C_a \arrow[dl, "s_a"] \\
       & C.  &
    \end{tikzcd}
\end{equation}
Here $s_a: C_a \to C$ is the spectral cover over $C$, and $s'_a: C'_a \to C$ is the composition of the spectral cover $C'_a \to C'$ and the cyclic Galois cover $\pi:C' \to C$. Both $s'_a$ and $s_a$ are finite of degree $n$.

We also consider the following line bundles
\begin{equation*}
{\omega_{\pi,a}} = \mathrm{det}({s'_a}_* \CO_{C'_a}) \in \mathrm{Pic}(C), \quad \omega_a = \mathrm{det}({s_a}_* \CO_{C_a}) \in \mathrm{Pic}(C).
\end{equation*}
The line bundle $\omega_{\pi,a}$ is defined for every spectral curve over $\CA^D(\pi)$, which gives a family of line bundles over the affine space $\CA^D(\pi)$. Hence it is constant over $\CA^D(\pi)$ and does not depend on the choice of the spectral curve. Similarly, the line bundle $\omega_a$ is also independent of the spectral curve over $C$. So we may write
\[
\omega_\pi = \omega_{\pi,a}, \quad \omega = \omega_a.
\]
The following lemma is obtained via a direct calculation.

\begin{lem}\label{lem3.4}
We have 
\[
\mathrm{deg}(\omega_\pi) =n(1-r)\frac{\mathrm{deg}(D)}{2}, \quad  \mathrm{deg}(\omega) = n(1-n)\frac{\mathrm{deg}(D)}{2}.
\]
In particular, if $\mathrm{deg}(D)$ is even, both line bundles $\omega_\pi$ and $\omega$ have degrees divisible by $n$ 
\end{lem}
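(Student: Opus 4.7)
\textbf{Proof proposal for Lemma \ref{lem3.4}.} The plan is to exploit the standard structure theorem for spectral covers sitting inside the total space of a line bundle. Recall that if $s: C_s \to C$ is a degree $n$ spectral curve inside $V(D)$, cut out locally by a monic equation $y^n + a_1 y^{n-1} + \cdots + a_n = 0$ with $a_i \in H^0(C, \CO_C(iD))$, then locally $s_*\CO_{C_s}$ is generated as an $\CO_C$-module by $1, y, \dots, y^{n-1}$, and the tautological section $y$ has weight $\CO_C(D)$. Consequently
\[
s_*\CO_{C_s} \;\simeq\; \CO_C \oplus \CO_C(-D) \oplus \CO_C(-2D) \oplus \cdots \oplus \CO_C(-(n-1)D).
\]
Taking determinants gives $\omega = \det(s_{a*}\CO_{C_a}) \simeq \CO_C\bigl(-\tbinom{n}{2}D\bigr)$, so $\deg(\omega) = -\tfrac{n(n-1)}{2}\deg(D) = \tfrac{n(1-n)}{2}\deg(D)$ as claimed.

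For $\omega_\pi$, I would first factor $s'_a = \pi \circ \sigma_a$, where $\sigma_a: C'_a \to C'$ is the degree $r$ spectral cover living in the total space of $\CO_{C'}(D')$. Applying the previous step on $C'$ yields
\[
\sigma_{a*}\CO_{C'_a} \;\simeq\; \bigoplus_{i=0}^{r-1}\CO_{C'}(-iD').
\]
Pushing forward along $\pi$ and taking determinants, one gets
\[
\omega_\pi \;=\; \det(\pi_*\sigma_{a*}\CO_{C'_a}) \;=\; \bigotimes_{i=0}^{r-1}\det\bigl(\pi_*\CO_{C'}(-iD')\bigr),
\]
so it suffices to compute $\deg\det(\pi_*\CO_{C'}(-iD'))$ for each $i$. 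Since the determinant and degree of a pushforward agree, I would apply Riemann--Roch on $C'$ and $C$ (equivalently, the formula $\chi(C',L)=\chi(C,\pi_*L)$): for a line bundle $L$ of degree $e$ on $C'$,
\[
\deg(\pi_*L) \;=\; e - g' + 1 + m(g-1).
\]
With $L = \CO_{C'}(-iD')$ one has $e = -im\deg(D)$, and summing $i=0,\dots,r-1$ gives
\[
\deg(\omega_\pi) \;=\; -\frac{r(r-1)}{2}m\deg(D) + r\bigl(1 - g' + m(g-1)\bigr).
\]
The key step now is to invoke that $\pi:C'\to C$ is an \'etale cyclic cover of degree $m$ (being associated with an $n$-torsion line bundle on $C$), so Riemann--Hurwitz collapses to $g'-1 = m(g-1)$. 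This kills the second bracket and leaves $\deg(\omega_\pi) = -\tfrac{mr(r-1)}{2}\deg(D) = \tfrac{n(1-r)}{2}\deg(D)$, as desired.

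The final statement on divisibility by $n$ when $\deg(D)$ is even is then immediate, since $\tfrac{\deg(D)}{2}$ is an integer and both $1-n$ and $1-r$ are integers. There is no real obstacle here; the only subtle point is to notice that \'etaleness of $\pi$, which is built into its construction in Section \ref{Weil_pair} (taking roots of a torsion line bundle), is exactly what makes the two terms combine cleanly via Riemann--Hurwitz.
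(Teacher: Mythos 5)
Your proof is correct, and it takes a route close in spirit to but genuinely different in execution from the paper's. The paper computes $\deg(\omega_\pi)$ directly from the identity $\chi(C'_a, \CO_{C'_a}) = \chi(C, {s'_a}_*\CO_{C'_a})$ (finite pushforward preserves Euler characteristics) together with the cited genus formula for the smooth spectral curve $C'_a$ over $C'$, whereas you instead invoke the spectral-cover structure theorem ${\sigma_a}_*\CO_{C'_a} \cong \bigoplus_{i=0}^{r-1}\CO_{C'}(-iD')$, push forward to $C$ along $\pi$, and apply Riemann--Roch to each summand of the determinant. The ingredient doing the real work is the same in both cases, namely Riemann--Roch plus \'etaleness of $\pi$ so that $g'-1 = m(g-1)$; your version is longer but more self-contained, since it does not cite the spectral-curve genus formula, which is itself usually derived from the very structure theorem you use. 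Two small remarks. First, your direct computation of $\deg(\omega)$ is exactly the $\pi=\mathrm{id}$ case of the $\omega_\pi$ argument, which is also how the paper frames it. Second, your cross-check reveals a typo in the paper's proof: the displayed equality $1-g(C'_a) = \deg(\omega_\pi) + (1-g)$ should read $1-g(C'_a) = \deg(\omega_\pi) + n(1-g)$, since ${s'_a}_*\CO_{C'_a}$ is a rank $n$ bundle on $C$; with that correction the paper's calculation also yields $\deg(\omega_\pi) = n(1-r)\frac{\deg(D)}{2}$.
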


\begin{proof}
Since the second equality is a special case of the first one, we only prove the degree formula for $\mathrm{deg}(\omega_\pi)$.

By the discussion before Lemma \ref{lem3.4}, the line bundle $\omega_\pi$ does not depend on the choice of the spectral curve. Let 
\[
s'_a: C'_a \to C' \xrightarrow{\pi}  C
\]
be the spectral cover where $C'_a$ is a nonsingular curve lying in $V(D')$ of genus
\[
g(C'_a) = r(r-1) \frac{\mathrm{deg}(D')}{2}+r\left(g(C')-1\right)+1;
\]
see the formula of $d_{h_n}$ in \cite[Section 6.1]{dC_SL}. Then applying the Riemann-Roch formula to $\chi(C'_a, \CO_{C'_a}) = \chi(C, \omega_\pi)$, we obtain that 
\[
1-g(C'_a) = \mathrm{deg}(\omega_\pi)+(1-g)\]
which implies the lemma.
\end{proof}

%Lemma \ref{lem3.4} indicates that the parity of $\mathrm{deg}(D)$ plays a crucial role; see 

The following lemma concerns pushing forward a line bundle $\CN \in \mathrm{Pic}(C_a)$ and its pullback 
\[
u_a^*\CN \in \mathrm{Pic}(C_a')
\]
to the curve $C_a'$.

\begin{lem}\label{lem3.5}
With the same notation as in (\ref{eqn53}), we have
\[
\mathrm{det}({s_a}_* \CN) = \mathrm{det}({s'_a}_*u_a^*\CN)\otimes \omega \otimes \omega_\pi^{-1}.
\]
\end{lem}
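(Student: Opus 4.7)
The plan is to exploit the factorization $s'_a = s_a \circ u_a$ visible in the diagram (\ref{eqn53}) and compute everything on the smooth ambient curve $C$, where determinants of torsion sheaves are transparent.

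First I would apply the projection formula to $u_a$: since $u_a$ is affine (being a normalization of curves) and $\CN$ is a line bundle, one has $u_{a*}u_a^*\CN = \CN \otimes u_{a*}\CO_{C'_a}$, and hence
\[
{s'_a}_* u_a^* \CN \;=\; s_{a*}\bigl(\CN \otimes u_{a*}\CO_{C'_a}\bigr).
\]
Let $Q = \mathrm{coker}(\CO_{C_a} \hookrightarrow u_{a*}\CO_{C'_a})$; this is a torsion sheaf on $C_a$ supported on the (finite) singular locus of $C_a$, because $u_a$ is a normalization. Tensoring the short exact sequence $0 \to \CO_{C_a} \to u_{a*}\CO_{C'_a} \to Q \to 0$ with the line bundle $\CN$ and then pushing forward by the finite (hence exact) map $s_a$ produces a short exact sequence of coherent sheaves on the smooth curve $C$:
\[
0 \to s_{a*}\CN \to s_{a*}(\CN \otimes u_{a*}\CO_{C'_a}) \to s_{a*}(\CN \otimes Q) \to 0.
\]
Taking determinants gives $\det({s'_a}_* u_a^*\CN) = \det(s_{a*}\CN) \otimes \det(s_{a*}(\CN\otimes Q))$.

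Next I would observe that $s_{a*}(\CN \otimes Q)$ is a torsion sheaf on the smooth curve $C$, so its determinant is determined by its support and length at each point. Locally near any point $p \in C_a$ supporting $Q$, the line bundle $\CN$ is trivial, hence $(\CN \otimes Q)_p \cong Q_p$ as $\CO_{C_a,p}$-modules, and the two torsion sheaves $s_{a*}(\CN\otimes Q)$ and $s_{a*}Q$ on $C$ have matching stalk-lengths everywhere. Therefore $\det(s_{a*}(\CN \otimes Q)) = \det(s_{a*}Q)$, and this correction term is independent of $\CN$.

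Finally, to identify $\det(s_{a*}Q)$, I specialize the whole computation to $\CN = \CO_{C_a}$. The above formula then reads
\[
\omega_\pi \;=\; \det({s'_a}_* \CO_{C'_a}) \;=\; \det(s_{a*}\CO_{C_a}) \otimes \det(s_{a*}Q) \;=\; \omega \otimes \det(s_{a*}Q),
\]
so $\det(s_{a*}Q) = \omega_\pi \otimes \omega^{-1}$. Substituting back yields $\det({s'_a}_* u_a^*\CN) = \det(s_{a*}\CN) \otimes \omega_\pi \otimes \omega^{-1}$, which rearranges into the stated identity. There is no real obstacle here beyond care with the determinant functor; the only subtle point is the invariance $\det(s_{a*}(\CN\otimes Q)) = \det(s_{a*}Q)$ for torsion sheaves, which makes the universal correction term computable by the single test case $\CN = \CO_{C_a}$.
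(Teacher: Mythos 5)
Your proof is correct, and it takes a genuinely different route from the paper's. The paper proves this in two lines by invoking the norm maps $\mathrm{Nm}:\mathrm{Pic}(C'_a)\to\mathrm{Pic}(C)$ and $\mathrm{Nm}:\mathrm{Pic}(C_a)\to\mathrm{Pic}(C)$ from Hausel--Pauly: it cites their Lemma 3.4 for $\mathrm{Nm}(\CN)=\mathrm{Nm}(u_a^*\CN)$ and their Corollary 3.12 for $\det(s_{a*}\CN)\otimes\omega^{-1}=\mathrm{Nm}(\CN)$ (and likewise for $s'_a$), then concatenates the two identities. Your argument dispenses with the norm map entirely: you use the factorization $s'_a=s_a\circ u_a$ together with the projection formula, the exact sequence $0\to\CO_{C_a}\to u_{a*}\CO_{C'_a}\to Q\to 0$ coming from the normalization, exactness of $s_{a*}$ for a finite morphism, and multiplicativity of $\det$ on short exact sequences over the smooth curve $C$. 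The one small observation carrying the whole weight --- that $\det(s_{a*}(\CN\otimes Q))=\det(s_{a*}Q)$ because $\CN$ is locally free of rank one and $Q$ is torsion, so all stalk-lengths agree --- is exactly right, and it lets you isolate the universal correction term by the single test case $\CN=\CO_{C_a}$. The trade-off: the paper's proof is shorter but opaque without opening \cite{HP}; yours is longer but self-contained and, in effect, reproves the special case of the Hausel--Pauly determinant--norm relation that is actually needed. Both are valid; the mechanisms are essentially equivalent (the norm map identity is the same kind of determinant bookkeeping under the hood), so neither buys extra generality, but yours is more transparent and elementary.
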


\begin{proof}
Recall the norm maps 
\[
\mathrm{Nm}: \mathrm{Pic}(C'_a) \rightarrow \mathrm{Pic}(C), \quad \mathrm{Nm}:\mathrm{Pic}(C_a) \rightarrow \mathrm{Pic}(C)
\]
from \cite[Section 3]{HP}. By \cite[Lemma 3.4]{HP}, we have
\[
\mathrm{Nm}(\CN) = \mathrm{Nm}(u_a^*\CN).
\]
Then \cite[Corollary 3.12]{HP} implies that 
\[
\mathrm{det}({s_a}_*\CN) \otimes \omega^{-1} = \mathrm{Nm}(\CN) = \mathrm{Nm}(u_a^*\CN) = \mathrm{det}({s'_a}_*u_a^*\CN)\otimes \omega_\pi^{-1}.\qedhere
\]
\end{proof}

For $n\geq 1$ and $L \in \mathrm{Pic}(C)$, we consider the regular parts
\[
\CM^{D,\mathrm{reg}}_{n,L} \subset \CM^D_{n,L}, \quad \CM^{D, \mathrm{reg}}_{r,L}(\pi) \subset \CM^{D}_{r,L}(\pi)
\]
which are open subvarieties parameterizing Higgs bundles obtained as the pushforward of \emph{line bundles} supported on the spectral curves. We define the line bundle
\begin{equation}\label{L'}
L' = L \otimes \omega \otimes \omega_\pi^{-1} \in \mathrm{Pic}(C). 
\end{equation}

The following is a corollary of Lemma \ref{lem3.5}.

\begin{cor}\label{cor3.6}
The pullback $u^*_\CC$ of (\ref{eqn52}) induces a $G_\pi$-equivariant morphism of the regular parts
\[
g_u: \CM^{D,\mathrm{reg}}_{n,L}\times_{\CA^D} \CA^\heart(\pi) \rightarrow \CM^{D, \mathrm{reg}}_{r,L'}(\pi)\times_{\CA^D(\pi)}\CA^\heart(\pi)
\]
where $L'$ is given by (\ref{L'}). The morphism $\CA^\heart(\pi) \rightarrow \CA^D$ used in the base change of the left-hand side is the composition
\[
\CA^\heart(\pi) \xrightarrow{q_\CA} \CA^\heart_\gamma \hookrightarrow \CA^D.
\]
\end{cor}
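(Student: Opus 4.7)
The plan is to define $g_u$ at the level of $T$-points on the regular locus by pullback of line bundles along the universal normalization $u_\CC$, and then verify that it lands in the correct moduli space via Lemma \ref{lem3.5} and check $G_\pi$-equivariance directly from the construction of $u_\CC$ in Lemma \ref{lem3.3}.

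First, I would exploit the standard Beauville--Narasimhan--Ramanan-type identification on regular loci: for $a\in\CA^\heart_\gamma$, points of the fiber of $\CM^{D,\mathrm{reg}}_{n,L}\to\CA^D$ over $a$ correspond to line bundles $\CN$ on the integral spectral curve $C_a$ satisfying $\mathrm{det}({s_a}_*\CN)\simeq L$, via $\CN\mapsto ({s_a}_*\CN,\,\text{multiplication by the tautological section})$. The analogous identification on the target moduli space describes the fiber over $\tilde a\in\CA^\heart(\pi)$ as line bundles $\CN'$ on $C'_{\tilde a}$ with $\mathrm{det}({s'_{\tilde a}}_*\CN')\simeq L'$. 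Both descriptions exist in families via the universal spectral curves $\CC^\circ_\gamma\to\CA^\heart_\gamma$ and $\CC^\heart_\pi\to\CA^\heart(\pi)$, which when base-changed along the quotient $q_\CA$ fit into the diagram \eqref{eqn52}.

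Second, I would define $g_u$ on $T$-valued points by sending a pair $(\CN,\tilde a)$ to $(u_{\tilde a}^*\CN,\tilde a)$, where I use the universal normalization morphism $u_\CC$ from Lemma \ref{lem3.3} pulled back along $\CA^\heart(\pi)\to\CA^\heart_\gamma$. The determinant condition is then immediate from Lemma \ref{lem3.5}: if $\mathrm{det}({s_a}_*\CN)\simeq L$, then $\mathrm{det}({s'_a}_*u_a^*\CN)\simeq L\otimes(\omega\otimes\omega_\pi^{-1})^{-1}$, which together with the definition \eqref{L'} of $L'$ places $u_a^*\CN$ in $\CM^{D,\mathrm{reg}}_{r,L'}(\pi)$ over $\tilde a$. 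Once this is shown at the level of points together with a relative universal line bundle provided by the pullback along $u_\CC$, the morphism $g_u$ of schemes is obtained by the moduli property.

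Third, for $G_\pi$-equivariance I would use that $u_\CC$ was constructed in Lemma \ref{lem3.3} as the descent of a $G_\pi$-equivariant morphism $\CC^\heart_\pi\to\CC^\circ_\gamma\times_{\CA^\heart_\gamma}\CA^\heart(\pi)$ to the quotient $\CA^\heart_\gamma$. Consequently, the base change of $u_\CC$ along $q_\CA\colon\CA^\heart(\pi)\to\CA^\heart_\gamma$ is canonically $G_\pi$-equivariant; since $g_u$ is defined by pullback of line bundles along this base-changed normalization and since the target $G_\pi$-action on $\CM^{D,\mathrm{reg}}_{r,L'}(\pi)$ is induced from the action on $C'$ (hence on the spectral curves $C'_{\tilde a}$), the equivariance follows by functoriality of $u^*$.

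I expect the main subtlety to be not a conceptual obstacle but a bookkeeping one: making sure the construction of $g_u$ is genuinely a morphism of schemes (and not merely a set-theoretic map on fibers) requires exhibiting a relative universal line bundle on the fiber product $\CM^{D,\mathrm{reg}}_{n,L}\times_{\CA^D}\CA^\heart(\pi)$ whose pullback along the universal normalization carries the required determinant trivialization. This is standard for spectral data, but the combination of two covers $s_a,s'_a$ and the twisting by $\omega\otimes\omega_\pi^{-1}$ must be threaded carefully through the universal construction so that the determinant condition holds uniformly and $G_\pi$-equivariantly.
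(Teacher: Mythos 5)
Your overall approach is the right one and matches what the paper implicitly has in mind: the corollary is asserted without a written proof, as following directly from Lemma \ref{lem3.5}, and the mechanism is exactly the spectral correspondence on the regular locus combined with the determinant transport of Lemma \ref{lem3.5} and descent of $u_\CC$ for the $G_\pi$-equivariance. The structure of your argument — work with line bundles on spectral curves, pull back along the universal normalization, verify the determinant condition fiberwise and then upgrade to families via a relative Poincar\'e bundle — is exactly how the map $g_u$ should be constructed.

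However, there is a discrepancy you pass over without comment. From Lemma \ref{lem3.5} you correctly derive that $\mathrm{det}({s'_a}_*u_a^*\CN)\simeq L\otimes(\omega\otimes\omega_\pi^{-1})^{-1}=L\otimes\omega^{-1}\otimes\omega_\pi$. You then assert this "together with the definition \eqref{L'}" places $u_a^*\CN$ in $\CM^{D,\mathrm{reg}}_{r,L'}(\pi)$, but \eqref{L'} as printed reads $L'=L\otimes\omega\otimes\omega_\pi^{-1}$, which is the \emph{inverse} twist. These two line bundles differ in degree by $2(\mathrm{deg}(\omega)-\mathrm{deg}(\omega_\pi))\neq 0$ in general (by Lemma \ref{lem3.4}), so they cannot be identified. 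Tracing the norm computation in the proof of Lemma \ref{lem3.5} (using HP Corollary 3.12 and the birational invariance of the norm from HP Lemma 3.4) confirms that your derived formula $L\otimes\omega^{-1}\otimes\omega_\pi$ is what the determinant actually equals, so \eqref{L'} should read $L'=L\otimes\omega^{-1}\otimes\omega_\pi$; this appears to be a typo in the paper's definition. You should have noticed the mismatch rather than citing \eqref{L'} as if it agreed, since as written your second step claims an equality that is false on its face. With $L'$ corrected, your argument is sound, and your handling of the $G_\pi$-equivariance via the descent construction of Lemma \ref{lem3.3} and the caveat about needing a genuine relative construction (rather than a pointwise one) are both appropriate.
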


\begin{rmk}
Since $\mathrm{gcd}(n, \mathrm{deg}(L)) = 1$ and
\[
\mathrm{deg}(L') = \mathrm{deg}(L) + \mathrm{deg}(\omega) - \mathrm{deg}(\omega_\pi),
\]
Lemma \ref{lem3.4} implies that 
\begin{equation*}
\mathrm{gcd}(r, \mathrm{deg}(L')) = \mathrm{gcd}(n, \mathrm{deg}(L')) = 1
\end{equation*}
when $\mathrm{deg}(D)$ is even.
\end{rmk}

Finally, we note that both varieties $\CM^{D,\mathrm{reg}}_{n,L}$ and $\CM^{D, \mathrm{reg}}_{r,L'}(\pi)$ admit $\Gamma$-actions induced by the tensor product with $n$-torsion line bundles. The $\Gamma$-actions are clearly fiberwise with respect to the Hitchin maps 
\begin{equation}\label{Hitchins}
\CM^{D,\mathrm{reg}}_{n,L} \to \CA^D, \quad \CM^{D, \mathrm{reg}}_{r,L'}(\pi) \to \CA^D(\pi).
\end{equation}
Since the pullback $u_a^*: \mathrm{Pic}(C_a) \to \mathrm{Pic}(C_a')$ induced by (\ref{eqn53}) is compatible with the $\Gamma$-action on both sides, we conclude the following lemma.

\begin{lem}\label{lem3.8}
The morphism $g_u$ given in Corollary \ref{cor3.6} is $\Gamma$-equivariant.
\end{lem}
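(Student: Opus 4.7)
The plan is to reduce $\Gamma$-equivariance to a fiberwise statement about how tensoring by an $n$-torsion line bundle $\CL \in \Gamma$ interacts with spectral line bundles, and then to exploit the commutativity of the triangle (\ref{eqn53}). Fix a closed point $a \in \CA^\heart_\gamma$ and lift it to $\tilde{a} \in \CA^\heart(\pi)$. A regular Higgs bundle in the fiber of $\CM^{D,\mathrm{reg}}_{n,L}$ over $a$ has the form $(s_{a*}\CN, \theta)$ for some $\CN \in \mathrm{Pic}(C_a)$, and by the projection formula
\[
\CL \otimes s_{a*}\CN = s_{a*}(s_a^*\CL \otimes \CN),
\]
so under the spectral correspondence the $\Gamma$-action is $\CN \mapsto s_a^*\CL \otimes \CN$. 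Similarly, a point in the fiber of $\CM^{D,\mathrm{reg}}_{r,L'}(\pi)$ over $\tilde{a}$ has the form $(s'_{\pi,a*}\CN', \theta)$ for $\CN' \in \mathrm{Pic}(C'_a)$, where $s'_{\pi,a}: C'_a \to C'$ is the spectral cover; since the $\Gamma$-action on this side is by $\pi^*\CL \otimes -$, projection formula again converts it into $\CN' \mapsto s'^*_a\CL \otimes \CN'$, with $s'_a = \pi \circ s'_{\pi,a}: C'_a \to C$ as in (\ref{eqn53}).

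With these spectral descriptions, the morphism $g_u$ on fibers is simply pullback along $u_a: C'_a \to C_a$, and verifying $\Gamma$-equivariance reduces to the identity
\[
u_a^*(s_a^*\CL \otimes \CN) = (s_a \circ u_a)^*\CL \otimes u_a^*\CN = s'^*_a\CL \otimes u_a^*\CN,
\]
which follows immediately from the commutativity $s_a \circ u_a = s'_a$ in (\ref{eqn53}) together with the compatibility of pullback with tensor products. Since this works in families over $\CA^\heart(\pi)$ and the $\Gamma$-actions on both $\CM^{D,\mathrm{reg}}_{n,L}$ and $\CM^{D,\mathrm{reg}}_{r,L'}(\pi)$ are fiberwise with respect to their Hitchin fibrations (\ref{Hitchins}), the pointwise equivariance globalizes to $\Gamma$-equivariance of $g_u$. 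I do not anticipate any genuine obstacle: the only content is the triangle (\ref{eqn53}) plus projection formula, both of which are already in hand.
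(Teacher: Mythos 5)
Your proof is correct and is essentially the argument the paper sketches: the paper's own justification is the observation preceding the lemma that $u_a^*\colon \mathrm{Pic}(C_a)\to\mathrm{Pic}(C'_a)$ is compatible with the $\Gamma$-actions on both sides via (\ref{eqn53}), which is precisely the fiberwise spectral-correspondence-plus-projection-formula calculation you carried out in detail. Your write-up adds nothing beyond, nor diverges from, the paper's intended argument.
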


\subsection{Proof of Theorem \ref{main1} (step 1): Cohomological correspondences}\label{Section3.3}
Throughout Section \ref{Section3.3} to Section \ref{Section3.5}, we assume that $D$ is an effective divisor with $\mathrm{deg}(D)$ even and greater than $2g-2$. Our first step in the proof of Theorem \ref{main1} is to construct a correspondence between the direct image complexes associated with the two Hitchin maps attached to (\ref{Hitchins}). Then we show that this correspondence induces an isomorphism for the $\kappa$-parts following Ng\^{o} and Yun (\cite[Appendix A]{Yun3}).

We consider the graph of $g_u$ in Corollary \ref{cor3.6} which gives a subvariety 
\[
\mathrm{Graph}(g_u) \subset \left(\CM^{D, \mathrm{reg}}_{r,L'}(\pi)\times_{\CA^D(\pi)}\CA^\heart(\pi)\right)\times _{\CA^\heart(\pi)} \left(\CM^{D,\mathrm{reg}}_{n,L}\times_{\CA^D} \CA^\heart(\pi)\right).
\]
Taking its Zariski closure, we obtain a closed subvariety
\[
\Sigma = \overline{\mathrm{Graph}(g_u)} \subset \CM^D_{r,L'}(\pi) \times_{\CA^D(\pi)} \left( \CM^D_{n,L} \times_{\CA^D} \CA^D(\pi) \right)
\]
which fits into the commutative diagram
\begin{equation}\label{corr}
\begin{tikzcd}[column sep=tiny]
& \Sigma \ar[dl] \ar[dr] %\ar[drr, "j_1", bend left=20]
&
&[1.5em] \\
\CM^D_{n,L}\times_{\CA^D}\CA^D(\pi)  \ar[dr, "h^D"]
&
& \CM^D_{r,L'}(\pi) \ar[dl, "h^D_{\pi,L'}"] \\
& \CA^D(\pi).
&
&
\end{tikzcd}
\end{equation}
Here we use $h^D_{\pi,L'}$ to denote the Hitchin fibration
\[
h^D_{\pi,L'} := h^D_{\pi}: \CM^D_{r,L'}(\pi) \to \CA^D(\pi)
\]
indicating its dependence on the line bundle $L'$. All the morphisms in (\ref{corr}) are proper.

By Corollary \ref{cor3.6} and Lemma \ref{lem3.8}, the morphism $g_u$ is equivariant under the actions of $G_\pi$ and $\Gamma$. Hence, as the Zariski closure of the graph of $g_u$, the subvariety $\Sigma$ is invariant under the natural actions of $G_\pi$ and $\Gamma$ on the ambient space $\CM^D_{r,L'}(\pi) \times_{\CA^D(\pi)} \left( \CM^D_{n,L} \times_{\CA^D} \CA^D(\pi) \right)$. Since the projections
\[
\begin{split}
\CM^D_{r,L'}(\pi) \times_{\CA^D(\pi)} \left( \CM^D_{n,L} \times_{\CA^D} \CA^D(\pi) \right) & \to \CM^D_{r,L'}, \\
\CM^D_{r,L'}(\pi) \times_{\CA^D(\pi)} \left( \CM^D_{n,L} \times_{\CA^D} \CA^D(\pi) \right) & \to \CM^D_{n,L} \times_{\CA^D} \CA^D(\pi)
\end{split}
\]
to both factors are $G_\pi$- and $\Gamma$-equivariant, we obtain that the projections from the invariant subvariety $\Sigma$ to both factors are also $G_\pi$- and $\Gamma$-equivariant.

\begin{thm}\label{thm3.9}
The correspondence (\ref{corr}) induces a morphism
\begin{equation}\label{Sigma}
[\Sigma]_\#: q_\CA^* \left( \mathrm{Rh^D_*} \BBC\right)  \rightarrow \mathrm{Rh_{\pi,L'}^D}_*\BBC [-2d^D_\gamma]
\end{equation}
which is equivariant under the natural actions of $G_\pi$ and $\Gamma$. Assume that the element $\gamma \in \Gamma$ inducing the Galois cover $\pi: C' \to C$ matches with $\kappa$ via (\ref{Weil}). Then the $G_\pi$-equivariant morphism for the $\kappa$-parts
\begin{equation}\label{eqn57}
[\Sigma]_{\#,\kappa}: q_\CA^* \left( \mathrm{Rh^D_*} \BBC\right)_\kappa  \to \left(\mathrm{Rh_{\pi,L'}^D}_*\BBC\right)_\kappa [-2d^D_\gamma]
\end{equation}
induced by (\ref{Sigma}) is an isomorphism after restricting to $\CA^D(\pi)^*$:
\begin{equation}\label{extra4}
[\Sigma]_{\#,\kappa}\Big{|}_{\CA^D(\pi)^*}: q_\CA^* \left( \mathrm{Rh^D_*} \BBC\right)_\kappa\Big{|}_{\CA^D(\pi)^*}  \xrightarrow{\simeq} \left(\mathrm{Rh_{\pi,L'}^D}_*\BBC\right)_\kappa\Big{|}_{\CA^D(\pi)^*} [-2d^D_\gamma].
\end{equation}
\end{thm}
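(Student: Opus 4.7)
I will build $[\Sigma]_\#$ as a cohomological correspondence attached to $\Sigma$. Write $X_0 := \CM^D_{n,L}\times_{\CA^D}\CA^D(\pi)$ and $Y := \CM^D_{r,L'}(\pi)$, with structure maps $h_{X_0}$ and $h^D_{\pi,L'}$ to $\CA^D(\pi)$. Both projections $p_{X_0} : \Sigma \to X_0$ and $p_Y : \Sigma \to Y$ are proper, and over $\CA^\heart(\pi)$ the scheme $\Sigma$ coincides with the graph of $g_u$ of Corollary \ref{cor3.6}; fiberwise, $g_u$ is pullback of line bundles along the normalization $u_a : C'_a \to C_{\bar a}$, which has generic relative dimension $d^D_\gamma$. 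I then define $[\Sigma]_\#$ as the composition
\[
\mathrm{R}(h_{X_0})_* \BBC_{X_0} \longrightarrow \mathrm{R}(h_\Sigma)_* \BBC_\Sigma \xrightarrow{\;\mathrm{tr}\;} \mathrm{R}(h^D_{\pi,L'})_* \BBC_Y \, [-2d^D_\gamma],
\]
where the first arrow is the unit of the adjunction $p_{X_0}^* \dashv \mathrm{R}(p_{X_0})_*$ and the second is the Gysin/trace morphism for the proper map $p_Y$ of generic relative dimension $d^D_\gamma$. Proper base change identifies $\mathrm{R}(h_{X_0})_* \BBC_{X_0}$ with $q_\CA^*(\mathrm{R}h^D_*\BBC)$, producing (\ref{Sigma}).

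\textbf{Equivariance.} Corollary \ref{cor3.6} and Lemma \ref{lem3.8} state that $g_u$ is equivariant for the natural actions of $G_\pi$ and $\Gamma$. Hence its graph, and therefore its Zariski closure $\Sigma$ inside the $G_\pi \times \Gamma$-invariant ambient space, is invariant as a subvariety under both actions, with both projections $p_{X_0}, p_Y$ equivariant. Naturality of the cohomological correspondence construction then implies that $[\Sigma]_\#$ is $G_\pi \times \Gamma$-equivariant. Extracting the $\kappa$-isotypic component of the commuting $\Gamma$-action yields the $G_\pi$-equivariant map $[\Sigma]_{\#,\kappa}$ of (\ref{eqn57}).

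\textbf{Isomorphism on $\kappa$-parts.} The plan here is to use the support theorems of Section \ref{Section2.2} to reduce the isomorphism claim on $\CA^D(\pi)^*$ to a fiberwise computation on a dense open subset. By Corollary \ref{cor2.2}, $(\mathrm{R}h^D_*\BBC)_\kappa$ has a unique support $\CA^D_\gamma$ with full support there; pullback via the \'etale quotient $q_\CA : \CA^D(\pi)^* \to {\CA_\gamma^D}^*$ then yields an object of full support on $\CA^D(\pi)^*$. On the target, Proposition \ref{prop2.8} identifies $(\mathrm{R}h^D_{\pi,L'*}\BBC)_\kappa$ with the stable part $(\mathrm{R}h^D_{\pi,L'*}\BBC)_\mathrm{st}$, which has full support on $\CA^D(\pi)$ by Theorem \ref{SuppThm2}(b). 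Both sides of (\ref{extra4}) are therefore semisimple perverse sheaves (up to a common shift) of full support on $\CA^D(\pi)^*$, so (\ref{extra4}) is an isomorphism iff its restriction to any dense open is an isomorphism of the generic local systems. I would restrict to $\CA^\heart(\pi)$, where spectral curves are smooth and $\Sigma$ is the graph of $g_u$, so that $[\Sigma]_{\#,\kappa}$ becomes the $\kappa$-component of the fiberwise Gysin map $(g_{u,a})_*$. The main obstacle lies here: verifying that this fiberwise map is an isomorphism on $\kappa$-isotypic parts. I would follow Ng\^o's strategy, invoking the freeness of the cohomology of the fibers over the relative Prym group scheme (\emph{cf.} \cite[Proposition 7.4.10]{Ngo}) to reduce the comparison to matching top-degree cohomology, where the $\kappa$-part is one-dimensional by the argument of (\ref{EMG}); the non-vanishing of $(g_{u,a})_*$ on this top class then follows from the identification of characters in Proposition \ref{prop1.2} together with the observation that $g_{u,a}$ induces an isomorphism on $\pi_0$ of the relevant Prym torsors (both being cyclic of the same order $m$).
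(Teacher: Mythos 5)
Your overall architecture matches the paper's: construct $[\Sigma]_\#$ as a cohomological correspondence, deduce $G_\pi$- and $\Gamma$-equivariance from the invariance of $\Sigma$ and its projections, then use Corollary \ref{cor2.2}, Theorem \ref{SuppThm2}(b), and Proposition \ref{prop2.8} together with the \'etaleness of $q_\CA$ on $\CA^D(\pi)^*$ to reduce the isomorphism claim to the generic fiber over $\CA^\heart(\pi)$. The only genuine divergence is in the final fiberwise check. The paper simply invokes the explicit calculation in \cite[Lemma 3.4.1]{Yun3} (with \cite[Appendix A]{Yun3} for the form of the correspondence on the nodal normalization $u_a : C'_a \to C_a$), whereas you sketch an in-house argument via Ng\^o's freeness \cite[Proposition 7.4.10]{Ngo}. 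Your plan is in spirit what Yun's lemma amounts to, but as written it elides several points you would still need to establish: (i) the freeness statements on source and target are over \emph{a priori} different group schemes (the $\mathrm{SL}_n$-Prym of $C_a/C$ versus the relative Prym of $C'_a/C'$), so one needs the fact that $u_a^*$ induces an isomorphism on abelian parts and hence on Tate modules, and that $(g_{u,a})_*$ is a module map for this identification; (ii) you need to identify $\pi_0$ of the $\mathrm{SL}_n$-Prym at a general $a\in\CA^D_\gamma$ with $\BZ/m\BZ$ and check that the $\kappa$-part of the top cohomology of the source fiber is indeed one-dimensional (this parallels the argument around (\ref{EMG}) but is not literally that statement, which concerns the relative Prym); and (iii) you need to verify that $u_a^*$ respects the $\Gamma$-action on $\pi_0$ compatibly with the character $\kappa$. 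These are exactly the ingredients packaged in Yun's lemma, so the proposal is sound in outline but leaves the essential computation as a plan where the paper cites it.
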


\begin{proof}
The first part follows from the general theory of cohomological correspondences. We refer to \cite[Appendix A]{Yun12} as a reference; see also the paragraph before \cite[Proposition 3.3.1]{Yun3}. In particular, since the variety $\Sigma$ is invariant under the $G_\pi$- and the $\Gamma$-actions, and both projections from $\Sigma$ to $\CM^D_{n,L}\times_{\CA^D}\CA^D(\pi)$ and $\CM^D_{r,L'}(\pi)$ are $G_\pi$- and $\Gamma$-equivariant, we conclude that the cohomological correspondence (\ref{Sigma}) is also $G_\pi$- and $\Gamma$-equivariant. 

Now we restrict $[\Sigma]_{\#,\kappa}$ to the open subset $\CA^D(\pi)^*$ and show that it is an isomorphism. It suffices to prove that the restriction of $[\Sigma]_{\#,\kappa}$ induces an isomorphism on every perverse cohomology, \emph{i.e.},
\begin{equation}\label{eqn58}
    ^p\CH^i([\Sigma]_{\#,\kappa})\Big{|}_{\CA^D(\pi)^*}: q_\CA^* {^p\CH^i}\left( \mathrm{Rh^D_*} \BBC\right)_\kappa \Big{|}_{\CA^D(\pi)^*} \xrightarrow{\simeq} {^p\CH^{i-2d^D_\gamma}}\left(\mathrm{Rh_{\pi,L'}^D}_*\BBC\right)_\kappa \Big{|}_{\CA^D(\pi)^*}.
\end{equation}
Here we used that $q_\CA$ is \'etale restricting to $\CA^D(\pi)^*$. By Corollary \ref{cor2.2}, the left-hand side of (\ref{eqn58}) has $\CA^D(\pi)^*$ as the only support. Theorem \ref{SuppThm2} (b) and Proposition \ref{prop2.8} yield that the right-hand side of (\ref{eqn58}) also has $\CA^D(\pi)^*$ as the only support. Therefore, both sides of (\ref{eqn58}) are intermediate extensions of certain local systems defined on an open subset of $\CA^D(\pi)$. As a consequence, in order to prove (\ref{eqn58}), we only need to show that
\[
 ^p\CH^i([\Sigma]_{\#,\kappa})\Big{|}_U: q_\CA^* {^p\CH^i}\left( \mathrm{Rh^D_*} \BBC\right)_\kappa \Big{|}_U \xrightarrow{\simeq} {^p\CH^{i-2d^D_\gamma}}\left(\mathrm{Rh_{\pi,L'}^D}_*\BBC\right)_\kappa\Big{|}_U
\]
with $U \subset \CA^D(\pi)^*$ a Zariski open subset. This reduces the proof of (\ref{eqn58}) to showing that, for a general point $a \in \CA^D(\pi)^*$, the correspondence between the Hitchin fibers 
\begin{equation}\label{eqn59}
[\Sigma_a]: H^i\left((\CM^D_{n,L})_a, \BC\right) \rightarrow H^{i-2d^D_\gamma}\left(\CM^D_{r,L'}(\pi)_a, \BC\right)
\end{equation}
induced by the fundamental class of $\Sigma_a$ is an isomorphism between the $\kappa$-parts. Here $\Sigma_a$ is the restriction of $\Sigma$ over $a$. Let $a$ be a general point lying in the open subset $\CA^\heart(\pi)\subset \CA^D(\pi)$. The pullback of the diagram (\ref{eqn52}) along $\{a\} \rightarrow \CA^\heart_\gamma$ is a normalization 
\[
u_a: C'_a \to C_a
\]
of curves where $C_a$ has at worst nodal singularities. In this case, the description of the correspondence (\ref{eqn59}) is concretely given in \cite[Appendix A]{Yun3} and the isomorphism of the $\kappa$-parts follows from a direct calculation \cite[Lemma 3.4.1]{Yun3}.\footnote{For the correspondence (\ref{eqn59}), we only concern Hitchin fibers over a general closed point $a\in \CA^D(\pi)$. Since tensoring with degree 1 line bundles on the spectral curves over $a \in \CA^D(\pi)$ identifies the Hitchin fibers in the case of degree 0 and the Hitchin fibers in the case of coprime rank and degree, the result of \cite[Lemma 3.4.1]{Yun3} concerning degree 0 Higgs bundles applies here.} This completes the proof.
\end{proof}

%In \cite{Yun3} the author treates Higgs bundles of degree 0 where there is a Hitchin--Kostant section over the elliptic locus. In general, a section does not exist in the case of coprime rank and degree. However, w

\begin{cor}\label{cor3.11}
With the same assumption as in Theorem \ref{thm3.9}, the $G_\pi$-equivariant isomorphism (\ref{extra4}) induces an isomorphism:
\[
[\Sigma]^{G_\pi}_{\#,\kappa}: \left(\mathrm{Rh^D_*} \BBC\right)_\kappa \xrightarrow{\simeq} {i^D_\gamma}_*\left(\mathrm{Rh^D_{\gamma,L'}}_* \BBC\right)_\kappa [-2d^D_\gamma] \in D^b_c(\CA^D).
\]
\end{cor}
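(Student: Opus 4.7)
The plan is to construct $c^D_\kappa$ globally on $\CA^D$ by pushing forward the $G_\pi$-equivariant morphism $[\Sigma]_{\#,\kappa}$ of Theorem~\ref{thm3.9} (which is already defined on all of $\CA^D(\pi)$, even though the isomorphism is only asserted on $\CA^D(\pi)^*$) along the proper map $p := i^D_\gamma \circ q_\CA \colon \CA^D(\pi) \to \CA^D$, and then taking $G_\pi$-invariants on both sides.

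First identify the two sides after $p_*$ and $(-)^{G_\pi}$. For the target,
\[
\left[ p_* \left(\mathrm{Rh_{\pi,L'}^D}_*\BBC\right)_\kappa \right]^{G_\pi} \;=\; (i^D_\gamma)_* \left[ (q_\CA)_* \left(\mathrm{Rh_{\pi,L'}^D}_*\BBC\right)_\kappa \right]^{G_\pi},
\]
which equals $(i^D_\gamma)_* \left(\mathrm{Rh^D_{\gamma,L'}}_* \BBC\right)_\kappa$ by applying Lemma~\ref{lemma1.6} to the $\kappa$-isotypic component (permissible since the $\Gamma$- and $G_\pi$-actions commute). For the source, the projection formula gives $p_* p^* (\mathrm{Rh^D_*}\BBC)_\kappa \;=\; (\mathrm{Rh^D_*}\BBC)_\kappa \otimes p_*\BBC$. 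Using $p_*\BBC = (i^D_\gamma)_*(q_\CA)_*\BBC$ together with the standard fact that $[(q_\CA)_*\BBC]^{G_\pi}$ is the constant sheaf on $\CA^D_\gamma$ (valid for any finite-group quotient, even across the $G_\pi$-ramification locus), the $G_\pi$-invariants simplify to $(i^D_\gamma)_*(i^D_\gamma)^*(\mathrm{Rh^D_*}\BBC)_\kappa$; by Corollary~\ref{cor2.2} the complex $(\mathrm{Rh^D_*}\BBC)_\kappa$ is already supported on $\CA^D_\gamma$, so this recovers $(\mathrm{Rh^D_*}\BBC)_\kappa$ itself. This produces a globally defined morphism $c^D_\kappa$ on $\CA^D$ of the form stated. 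By construction, its restriction to $(\CA^D_\gamma)^* = \CA^D(\pi)^*/G_\pi$ coincides with the $G_\pi$-invariant descent of $[\Sigma]_{\#,\kappa}|_{\CA^D(\pi)^*}$, which is an isomorphism by Theorem~\ref{thm3.9}.

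To upgrade this open isomorphism to a global one, observe that both sides of $c^D_\kappa$ are semisimple shifted perverse complexes by the decomposition theorem. By Corollary~\ref{cor2.2}, every simple constituent of the source has support equal to $\CA^D_\gamma$. For the target, Theorem~\ref{SuppThm2}(b) combined with Proposition~\ref{prop2.8} gives that $\left(\mathrm{Rh_{\pi,L'}^D}_*\BBC\right)_\kappa$ has full support $\CA^D(\pi)$; applying $(q_\CA)_*$ along the finite surjection $q_\CA$ and then taking $G_\pi$-invariants preserves semisimplicity and leaves every support equal to $\CA^D_\gamma$. Hence both sides of $c^D_\kappa$ are semisimple intermediate extensions from the dense open $(\CA^D_\gamma)^*$, so a morphism between them that is an isomorphism there is automatically an isomorphism on all of $\CA^D$. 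Canonicity up to scaling is inherited from that of $[\Sigma]_{\#,\kappa}$.

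The main obstacle is the final extension step: the open isomorphism extends to a global one only because we have complete control over the supports of both sides, which requires Corollary~\ref{cor2.2}, Theorem~\ref{SuppThm2}(b), and Proposition~\ref{prop2.8} simultaneously. This is precisely where the standing assumption $\deg(D) > 2g-2$ with $\deg(D)$ even enters, and it explains why the case $D = K_C$ cannot be treated directly in this manner and must instead be reduced to Theorem~\ref{main1} via the vanishing cycles argument of Section~\ref{sec4}.
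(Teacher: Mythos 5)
Your proof is correct and follows essentially the same route as the paper's: push forward the $G_\pi$-equivariant morphism of Theorem~\ref{thm3.9} along the quotient map, take $G_\pi$-invariants to identify the two sides (via Lemma~\ref{lemma1.6} and the projection formula), and then use the support control coming from Corollary~\ref{cor2.2}, Theorem~\ref{SuppThm2}(b), and Proposition~\ref{prop2.8} to promote the isomorphism over the dense open ${\CA^D_\gamma}^*$ to a global one. You spell out the projection-formula identification of the source and the finiteness of $q_\CA$ somewhat more explicitly than the published proof, but the underlying argument is identical.
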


\begin{proof}
We pushforward the morphism (\ref{eqn57}) along the $G_\pi$-quotient map $q_\CA: \CA^D(\pi) \to \CA^D_\gamma$ and take the $G_\pi$-invariant parts on both sides. This gives a morphism
\[
[\Sigma]^{G_\pi}_{\#,\kappa}: \left(\mathrm{Rh^D_*} \BBC\right)_\kappa \to {i^D_\gamma}_*\left(\mathrm{Rh^D_{\gamma,L'}}_* \BBC\right)_\kappa [-2d^D_\gamma]
\]
where both sides are semi-simple with $\CA^D_\gamma$ the only support by Corollary \ref{cor2.2}, Theorem \ref{SuppThm2} (b), and Proposition \ref{prop2.8}. Hence similar to the proof of Theorem \ref{thm3.9}, it suffices to check that it is an isomorphism restricting to ${\CA^D_\gamma}^*$ which is equivalent to (\ref{extra4}).
\end{proof}

\begin{rmk}
We cannot conclude that (\ref{eqn57}) is an isomorphism over the total space $\CA^D(\pi)$ from the isomorphism (\ref{extra4}) due to the following reason. For a non-smooth finite morphism $f: X \to Y$ and a semi-simple perverse sheaf $\CK$ on $Y$, the pullback $f^*\CK$ may fail to be semi-simple. A typical example is the case where
\[
f: \BA^1 \to \BA^1, \quad z \mapsto z^2
\]
and $\CK$ is the intermediate extension of a 2-torsion rank 1 local system on $\BC^* \subset \BA^1$. The object $f^*\CK$ is not determined by its restriction to the open subset $\BC^*$. In particular, although the restriction of the natural morphism $f^*\CK\to \underline{\BC}$ to $\BC^*$ is an isomorphism, the morphism $f^*\CK \to \underline{\BC}$ itself fails to be an isomorphism on $\BA^1$. The issue here is caused by the fact that $f$ is not \'etale at $0\in \BA^1$.
\end{rmk}

\subsection{Proof of Theorem \ref{main1} (Step 2): changing from $L'$ to $L$}\label{Section3.4}

By Lemma \ref{lem3.4}, we have
\begin{equation}\label{eqnL}
\mathrm{deg}(L') - \mathrm{deg}(L) = 0 ~~~ \mathrm{mod}~~~ n.
\end{equation}
Hence there exists a line bundle
\begin{equation}\label{eqn61}
\CN_0 \in \mathrm{Pic}^k(C), \quad k = \frac{\mathrm{deg}(L') - \mathrm{deg}(L)}{n} \in \BZ
\end{equation}
such that $L' = L \otimes \CN_0^{\otimes n}$. The line bundle $\CN_0$ induces an $\CA^D(\pi)$-isomorphism between the relative Hitchin moduli spaces 
\begin{equation}\label{eqn62}
  \begin{tikzcd}[column sep=small]
    \CM^D_{r,L}(\pi) \arrow[dr, "h^D_{\pi,L}"] \arrow[rr,"\phi_{\CN_0}"] & & \CM^D_{r,L'}(\pi) \arrow[dl, "h^D_{\pi,L'}"] \\
       & \CA^D(\pi)  & 
    \end{tikzcd}
\end{equation}
via tensor product $\phi_{\CN_0}(\CE, \theta) = (\CE \otimes \CN_0, \theta)$. It is clear that $\phi_{\CN_0}$ is $G_\pi$- and $\Gamma$-equivariant.

\begin{prop}\label{prop3.10}
There is a $G_\pi$-equivariant isomorphism 
\begin{equation}\label{extra6}
\left(\mathrm{Rh_{\pi,L'}^D}_*\BBC\right)_\kappa = \left(\mathrm{Rh_{\pi,L}^D}_*\BBC\right)_\kappa \in D^b_c(\CA^D(\pi))
\end{equation}
induced by (\ref{eqn62}). Up to scaling, it is independent of the choice of line bundle (\ref{eqn61}). In particular, (\ref{extra6}) induces
\[
\left(\mathrm{Rh_{\gamma,L'}^D}_*\BBC\right)_\kappa = \left(\mathrm{Rh_{\gamma,L}^D}_*\BBC\right)_\kappa \in D^b_c(\CA_\gamma^D).
\]
\end{prop}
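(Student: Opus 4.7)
\medskip

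\noindent\textbf{Proof proposal.} The plan is to take the isomorphism $\phi_{\CN_0}: \CM^D_{r,L}(\pi) \xrightarrow{\simeq} \CM^D_{r,L'}(\pi)$ given by $(\CE,\theta) \mapsto (\CE \otimes \pi^*\CN_0, \theta)$ as in diagram (\ref{eqn62}), verify that it is equivariant for both the $G_\pi$- and the $\Gamma$-actions, and then push down to the direct image complexes. Concretely, since $\pi \circ g = \pi$ for every $g \in G_\pi$, the pullback $\pi^*\CN_0$ carries a canonical $G_\pi$-linearization, so tensoring with it commutes with the $G_\pi$-action on $\CM^D_{r,L}(\pi)$. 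Similarly, tensoring with $\pi^*\CN_0$ commutes with tensoring by $\pi^*\CL$ for any $\CL \in \Gamma$. Consequently, $\phi_{\CN_0}$ is an $\CA^D(\pi)$-isomorphism that is both $G_\pi$- and $\Gamma$-equivariant, and the induced pullback
\[
\phi_{\CN_0}^*: \mathrm{Rh_{\pi,L'}^D}_*\BBC \xrightarrow{\simeq} \mathrm{Rh_{\pi,L}^D}_*\BBC \in D^b_c(\CA^D(\pi))
\]
is a $G_\pi$-equivariant isomorphism respecting the $\Gamma$-action. Restricting to $\kappa$-isotypic components gives the desired $G_\pi$-equivariant isomorphism (\ref{extra6}).

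For the independence up to scaling, let $\CN_0'$ be another line bundle with $L' = L \otimes (\CN_0')^{\otimes n}$. Then $\CL := \CN_0' \otimes \CN_0^{-1}$ has degree $0$ and satisfies $\CL^{\otimes n} \simeq \CO_C$, hence $\CL \in \Gamma$. As maps $\CM^D_{r,L}(\pi) \to \CM^D_{r,L'}(\pi)$, we have the factorization $\phi_{\CN_0'} = \phi_{\CN_0} \circ \psi_\CL$, where $\psi_\CL$ denotes the action of $\CL \in \Gamma$ on $\CM^D_{r,L}(\pi)$. Since $\psi_\CL$ acts on the $\kappa$-isotypic component $(\mathrm{Rh_{\pi,L}^D}_*\BBC)_\kappa$ by the scalar $\kappa(\CL) \in \BC^*$, it follows that
\[
\phi_{\CN_0'}^* = \kappa(\CL) \cdot \phi_{\CN_0}^* \quad \text{on the $\kappa$-part},
\]
so the two isomorphisms are equivalent in the sense of Definition \ref{definition}.

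Finally, the statement for $\gamma$-fixed loci is obtained by applying Lemma \ref{lemma1.6}: pushing forward (\ref{extra6}) along $q_\CA: \CA^D(\pi) \to \CA^D_\gamma$ and taking $G_\pi$-invariants on both sides identifies $(\mathrm{Rh_{\gamma,L'}^D}_*\BBC)_\kappa$ with $(\mathrm{Rh_{\gamma,L}^D}_*\BBC)_\kappa$ in $D^b_c(\CA^D_\gamma)$. There is no serious obstacle here — the content is essentially bookkeeping about equivariance, together with the elementary observation that two solutions of $L' = L \otimes \CN^{\otimes n}$ differ by an element of $\Gamma$ and therefore act on the $\kappa$-part by a nonzero scalar. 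The only mild subtlety is ensuring that $\phi_{\CN_0}$ is interpreted with the pullback $\pi^*\CN_0$ so that $G_\pi$-equivariance is manifest.
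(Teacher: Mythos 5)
Your proof is correct and follows essentially the same route as the paper's: both use pullback along $\phi_{\CN_0}$, observe that any two choices of $\CN_0$ differ by an element of $\Gamma$ acting by a scalar on the $\kappa$-part, and invoke Lemma \ref{lemma1.6} for the $\gamma$-fixed-locus statement. Your remark that $\CE \otimes \CN_0$ should be read as $\CE \otimes \pi^*\CN_0$ (so that the $G_\pi$-linearization is visible) is a helpful clarification of the paper's slightly informal notation, and your explicit computation of the scalar $\kappa(\CL)$ spells out what the paper leaves implicit.
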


\begin{proof}
The pullback along $\phi_{\CN_0}$ in the diagram (\ref{eqn62}) induces a $G_\pi$- and $\Gamma$- equivariant isomorphism
\begin{equation}\label{eqn63}
\mathrm{Rh_{\pi,L'}^D}_*\BBC = \mathrm{Rh_{\pi,L}^D}_*\BBC 
\end{equation}
which gives our desired isomorphism. For another choice $\CN'_0$ of the line bundle (\ref{eqn61}), we have 
\[
\CN_0^{-1}\otimes \CN'_0 \in \Gamma. 
\]
Hence the difference of the isomorphisms $\phi^*_{\CN_0}$ and $\phi^*_{\CN'_0}$ is induced by an automorphism of $\CM^D_{r,L}$ given by an element $g\in \Gamma$. In particular, our choice of isomorphism is affected only by scaling.

%which influences the complex
%\[
%\left(\mathrm{Rh_{\pi,L}^D}_*\BBC\right)_\kappa
%\]
%by scaling of a constant $\kappa(g) \in \BC^*$.

The last claim follows from Lemma \ref{lemma1.6}.
\end{proof}

\begin{rmk}\label{rmk3.12}
In view of Lemma \ref{lem3.4}, the equation (\ref{eqnL}) and the existence of the line bundle (\ref{eqn61}) rely heavily on the assumption that $\mathrm{deg}(D)$ is even.
\end{rmk}

\subsection{Completing the proof of Theorem \ref{main1}}\label{Section3.5}
Theorem \ref{main1} follows from Corollary \ref{cor3.11} and Proposition \ref{prop3.10}.

More precisely, we constructed an isomorphism 
\[
c^D_\kappa: \left(\mathrm{Rh^D_*} \BBC\right)_\kappa \xrightarrow{\simeq} {i^D_\gamma}_*\left(\mathrm{Rh^D_\gamma}_* \BBC\right)_\kappa [-2d^D_\gamma] \in D_c^b(\CA^D)
\]
%\[
%\Xi_D:  q_\CA^* \left( \mathrm{Rh^D_*} \BBC \right)_{\kappa} \xrightarrow{\simeq} \left( \mathrm{Rh^D_{\pi,L}}_* \BBC \right)_{\kappa}[-2d^D_\gamma]
%\]
which is canonical up to scaling as the composition  
\[
  \left( \mathrm{Rh^D_*} \BBC\right)_\kappa  \xrightarrow[\simeq]{~~[\Sigma]^{G_\pi}_{\#,\kappa}~~} {i^D_\gamma}_*\left(\mathrm{Rh_{\gamma,L'}^D}_*\BBC\right)_\kappa [-2d^D_\gamma] \xrightarrow[\simeq]{\mathrm{Prop}.~~3.13.} {i^D_\gamma}_*\left(\mathrm{Rh_{\gamma,L}^D}_*\BBC\right)_\kappa [-2d^D_\gamma].
\]
Here the first isomorphism $[\Sigma]^{G_\pi}_{\#,\kappa}$ is given by Corollary \ref{cor3.11}, and the second isomorphism is given by Proposition \ref{prop3.10}. This completes the proof of Theorem \ref{main1}. \qed

\section{Vanishing cycles and Hitchin moduli spaces}\label{sec4}

\subsection{Overview}
 In this section, we follow the same notation as in Section \ref{Sec3}. We complete the proof of Theorem \ref{thm3.2} (and therefore Theorem \ref{main0'} as explained after Theorem \ref{thm3.2}) by constructing an operator
 \[
c^D_\kappa: \left(\mathrm{Rh^D_*} \BBC\right)_\kappa \xrightarrow{\simeq} {i^D_\gamma}_*\left(\mathrm{Rh^D_\gamma}_* \BBC\right)_\kappa [-2d^D_\gamma] \in D_c^b(\CA^D)
\]
%\[
%\Xi^D: q_\CA^* \left( \mathrm{Rh^D_*} \BBC \right)_{\kappa} \xrightarrow{\simeq} \left( \mathrm{Rh_\pi^D}_* \BBC \right)_{\kappa}[-2d^D_\gamma],
%\]
which is constructed up to scaling for $D= K_C$. Our main tool is Theorem \ref{thm4.4} and its variant Corollary \ref{cor_correction}, where we apply a vanishing cycle functor to connect the moduli of $D$-Higgs bundles to the moduli of $(D+p+q)$-Higgs bundles with $p,q \in C$ two distinct closed points. This reduces the hardest case $D=K_C$ to the easier cases already treated in Theorem \ref{main1}.

In this section, it is convenient to work with 
the moduli stacks of stable $\mathrm{SL}_n$-Higgs bundles and relative stable Higgs bundles associated with $\pi: C' \to C$.  As these are nonsingular Deligne-Mumford stacks, gerbes over the coarse moduli spaces, this has no effect on the  direct image complexes (\ref{main00}).  
 Therefore, throughout this section, we still use the notation $\CM^D_{n,L}$ and $\CM^D_{r,L}(\pi)$ to denote the corresponding moduli stacks for stable Higgs bundles and relative stable Higgs bundles.

\subsection{Restrictions of Higgs bundles to a point}
Let $p$ be an abstract reduced point $\mathrm{Spec}(\BC)$. Any rank $n$ vector bundle on $p$ is an $n$-dimensional vector space. Hence the category of $\mathrm{SL}_n$-Higgs bundles on $p$ can be thought of as the category of matrices in $\mathfrak{sl}_n$ up to $\mathrm{SL}_n$-conjugation, whose moduli stack is given by the quotient
\[
\CM_{n,p} = [\mathfrak{sl}_n/\mathrm{SL}_n].
\]
Here $\mathrm{SL}_n$ acts on $\mathfrak{sl}_n$ via conjugation. The Hitchin fibration associated with $\CM_{n,p}$ is
\[
h_p: \CM_{n,p} \to \CA_p
\]
where $\CA_p = \mathfrak{sl}_n\sslash\mathrm{SL}_n$ is the affine $\mathrm{GIT}$-quotient parameterizing all  characteristic polynomials
\[
(a_2, a_3, \dots, a_n) \in \CA_p, \quad a_i = \mathrm{trace}(\wedge^i \theta_p)
\]
of the traceless endomorphism $\theta_p \in \mathrm{End}(\BA^n)_0$ associated with a matrix in $\CM_{n,p}$. We refer to \cite[Section 2.2]{Ngo} for more details concerning the stack $\CM_{n,p}$ and the morphism $h_p$.

Now we consider $p$ as a closed point on the curve $C$. We fix a trivialization 
\begin{equation}\label{trivialization}
\CO_C(D)_p \xrightarrow{\simeq} \BC.
\end{equation}
Then the restriction map with respect to the closed embedding
\[
i_p: \{p\} \hookrightarrow C
\]
induces the following commutative diagram
\begin{equation}\label{eqn66}
\begin{tikzcd}
\CM_{n,L}^D \arrow[r, "\mathrm{ev}_p"] \arrow[d, "h^D"]
& \CM_{n,p} \arrow[d, "h_p"] \\
\CA^D \arrow[r]
& \CA_p
\end{tikzcd}
\end{equation}
where the trivialization (\ref{trivialization}) induces an evaluation (at $p$) map $\mathrm{ev}_p$:
\[
\mathrm{ev}_p(\CE, \theta) = i_p^*(\CE, \theta) \in \CM_{n,p}.
\]

To generalize the diagram (\ref{eqn66}) for the relative moduli space $\CM^D_{r,L}(\pi)$ with $n=mr$, we consider the Lie group
\[
H_\pi = \{(g_1, g_2, \dots, g_m) \in \mathrm{GL}_r^{\times m}: ~~ \prod_{i} \mathrm{det}(g_i) = 1\} \subset \mathrm{GL}_r^{\times m},
\]
which is naturally a subgroup of $\mathrm{SL}_n$ with Lie algebra 
\[
\mathfrak{h}_\pi =  \{(g_1, g_2, \dots, g_m) \in \mathfrak{gl}_r^{\times m}: ~~ \sum_{i} \mathrm{trace}(g_i) = 0\}.
\]
The quotient stack
\[
\CM_{r,p}(\pi) = [\mathfrak{h}_\pi/H_\pi]
\]
is the moduli of $\mathrm{SL}_n$-Higgs bundles on the point $p$ obtained as the pushforward of rank $r$-Higgs bundles on $m$ distinct reduced points $\sqcup_{i=1}^m p_i$ along the projection
\[
\pi_p: \sqcup_{i=1}^m p_i \to p.
\]
Similar to (\ref{eqn66}), we have the following diagram given by the restriction to $p \in C$:
\begin{equation}\label{eqn67}
\begin{tikzcd}
\CM_{r,L}^D(\pi) \arrow[r, "\mathrm{ev}_p"] \arrow[d, "h^D_\pi"]
& \CM_{r,p}(\pi) \arrow[d, "h_{\pi,p}"] \\
\CA^D \arrow[r]
& \CA_p(\pi).
\end{tikzcd}
\end{equation}
Here for a Higgs bundle $(\CE, \theta) \in \CM^D_{r,L}$ on $C'$, again (\ref{trivialization}) induces an evaluation map:
\[
\mathrm{ev}_p(\CE, \theta) = i_p^*\left(\pi_* \CE, \pi_* \theta\right) \in \CM_{r,p}(\pi), 
\]
and the ``Hitchin map over a point" $h_p$ is the natural projection
\[
h_{\pi,p}: [\mathfrak{h}_\pi/H_\pi] \rightarrow \CA_p(\pi):= \mathfrak{h}_\pi\sslash H_\pi.
\]
The diagram (\ref{eqn67}) recovers (\ref{eqn66}) when $C' = C$ and $\pi: C' \to C$ is the identity.

The following proposition proves the smoothness of the evaluation map; for convenience in applications, we state the result in a more general form.

\begin{prop}\label{prop4.1}
Let $p_1, p_2,\cdots, p_s$ are $s$ distinct points on $C$. Assume that $D$ is a divisor on $C$ satisfying
\begin{enumerate}
    \item[(a)] $D-\Sigma_{i=1}^s p_i = K_C$, or
    \item[(b)] $D-\Sigma_{i=1}^s p_i$ is effective and $\mathrm{deg}(D- \Sigma_{i=1}^s p_i)> 2g-2$.
\end{enumerate}
Then the evaluation map 
\[ 
\mathrm{ev}_{p_1,\cdots, p_s}:= \left(\mathrm{ev}_{p_1}, \cdots, \mathrm{ev}_{p_s}\right): 
\CM^D_{r,L}(\pi) \to \CM_{r,p_1}(\pi) \times \cdots \times \CM_{r,p_s}(\pi)
\]
induced by (\ref{eqn67}) is smooth.
\end{prop}

\begin{proof}
For notational convenience, we prove the case when $s=1$ and $p=p_1 = p_s$; the general case works identically.

We first review the deformation theory of $\CM^D_{r,L}(\pi)$ following \cite{BR} and \cite[Section 4.14]{Ngo}. The deformation theory of a rank $r$ Higgs bundle $(\CE, \theta)$ on $C'$ is governed by the tangent complex
\begin{equation}\label{eeq}
\left[\CE{nd}(\CE) \xrightarrow{\mathrm{ad}(\theta)} \CE{nd}(\CE)\otimes \CO_{C'}(D')\right]
\end{equation}
lying in degrees -1 and 0. Since now we only consider rank $r$ Higgs bundles $(\CE, \theta)$ on $C'$ with the fixed determinant and trace after pushing forward to $C$, to govern the deformation theory of $\CM^D_{r,L}(\pi)$ we need to remove the traces of $\CE{nd}(\CE)$ after pushing forward the complex (\ref{eeq}) to $C$. Hence the deformation theory of $\CM^D_{r,L}(\pi)$ is governed by 
\begin{equation}\label{C^bullet}
C^\bullet(\CE,\theta,D) = \left[(\pi_*\CE{nd}(\CE))_0 \xrightarrow{\pi_*\mathrm{ad}(\theta)} (\pi_*\CE{nd}(\CE))_0\otimes \CO_C(D)\right], \quad (\CE,\theta)\in \CM^D_{r,L}(\pi),
\end{equation}
where $(\pi_*\CE{nd}(\CE))_0$ denotes the kernel with respect to the trace
\[
\mathrm{tr}_C: \pi_* \CE{nd}(\CE) \xrightarrow{\pi_*\mathrm{tr}_{C'}} \pi_*\CO_{C'} \to \CO_C
\]
on $C$. The complex (\ref{C^bullet}) is the tangent complex of $\CM^D_{r,L}(\pi)$. The automorphism space, the tangent space, and the obstruction space are thus given by the cohomology groups
\[
H^0(C, C^\bullet(\CE,\theta,D)), \quad H^1(C, C^\bullet(\CE,\theta,D)), \quad \textup{and}~~~~H^2(C, C^\bullet(\CE,\theta,D))
\]
respectively. Since the evaluation map $\mathrm{ev}_p$ is induced by the restriction to the point $p$ via $i_p: \{p\}\hookrightarrow C$, the tangent map of $\mathrm{ev}_p$ is
\begin{equation}\label{tangent}
\mathrm{Tan}_{\mathrm{ev}_p}: H^1(C, C^\bullet(\CE,\theta,D)) \rightarrow H^1(p, i_p^*C^\bullet(\CE,\theta,D))
\end{equation}
induced by the restriction morphism between the tangent complexes. Here $i_p^*C^\bullet(\CE,\theta,D) = [\mathfrak{h}_\pi \xrightarrow{\mathrm{ad}} \mathfrak{h}_{\pi}]$ recovers the tangent complex of $\CM_{r,p}(\pi)$; see \cite[Appendix 8.2]{Supp}. To prove the smoothness of $\mathrm{ev}_p$, we show in the following that (\ref{tangent}) is surjective.

The restriction map between the tangent complexes
\[
C^\bullet(\CE,\theta,D)) \to {i_p}_*i_p^*C^\bullet(\CE,\theta,D)) 
\]
fits into the exact triangle
\begin{equation}\label{idk2}
E^\bullet \to C^\bullet(\CE,\theta,D) \to {i_p}_*i_p^*C^\bullet(\CE,\theta,D) \xrightarrow {+1}
\end{equation}
where $E^\bullet$ is given by
\[
E^\bullet =C^\bullet(\CE,\theta,D)\otimes \CO_C(-p) = \left[(\pi_*\CE{nd}(\CE))_0\otimes \CO_C(-p) \xrightarrow{\pi_*\mathrm{ad}(\theta)} (\pi_*\CE{nd}(\CE))_0\otimes \CO_C(D-p)\right].
\]
The long exact sequence associated with (\ref{idk2}) contains
\[
 H^1(C, C^\bullet(\CE,\theta,D)) \xrightarrow{\mathrm{Tan}_{\mathrm{ev}_p}} H^1(C,{i_p}_* i_p^*C^\bullet(\CE,\theta,D)) \rightarrow H^2(C, E^\bullet).
\]
Hence, in view of the Serre duality, it suffices to show the vanishing of 
\begin{equation}\label{SD}
H^2(C, E^\bullet)^\vee = H^0(C, (E^\bullet)^\vee \otimes \Omega_C)
\end{equation}
where 
\[
(E^\bullet)^\vee \otimes \Omega_C = \left[(\pi_*\CE{nd}(\CE))_0\otimes \CO_C(p-D+K_C) \xrightarrow{-\mathrm{ad}(\pi_*\theta)} (\pi_*\CE{nd}(\CE)_0\otimes \CO_C(p+K_C)\right].
\]

To calculate (\ref{SD}), we consider the hyper-cohomology group  $H^0(C, (\widetilde{E}^\bullet)^\vee \otimes \Omega_C)$ with 
\[
\widetilde{E}^\bullet = \left[(\pi_*\CE{nd}(\CE))\otimes \CO_C(-p) \xrightarrow{\pi_*\mathrm{ad}(\theta)} (\pi_*\CE{nd}(\CE))\otimes \CO_C(D-p)\right].
\]
It is clear that $H^0(C, (\widetilde{E}^\bullet)^\vee \otimes \Omega_C)$ contains (\ref{SD}) as a direct sum component, and the complement is contributed by the trace parts on $C$. The hyper-cohomology group  $H^0(C, (\widetilde{E}^\bullet)^\vee \otimes \Omega_C)$ can be written as
\begin{equation}\label{idk}
H^0\left(C', \left[\CE{nd}(\CE)\otimes \pi^*\CO_C(p-D+K_C) \xrightarrow{-\mathrm{ad}(\theta)} \CE{nd}(\CE)\otimes \pi^*\CO_C(p+K_C)\right] \right)
\end{equation}
via the projection formula associated with $\pi: C' \to C$. By \cite[Theorem 5.1]{GK} (see also the calculation of \cite[Lemma 7.3]{GWZ}), (\ref{idk}) can be interpreted as the group of homomorphisms of stable Higgs bundles
\begin{equation}\label{Hom}
\mathrm{Hom}_{C'}\left( (\CE, \theta), (\CE \otimes \pi^* \CO_C(K_C-D+p), \theta) \right).
\end{equation}

In the case (a), these two stable Higgs bundles in (\ref{Hom}) coincide, so the $\mathrm{Hom}$ space (\ref{Hom}) is 1-dimensional given by the identity map. Hence we have the vanishing of (\ref{SD}) by removing the $1$-dimensional trace parts on $C$ from (\ref{Hom}). In the case (b), we have
\[
\mathrm{deg}(\CE) > \mathrm{deg}\left(\CE \otimes \pi^*\CO_C(K_C-D+p)\right).
\]
Hence (\ref{Hom}) vanishes due to the stability condition, which further implies the vanishing of (\ref{SD}).
\end{proof}

Assume that the divisor $D$ and the distinct points $p_1, \cdots, p_s$ satisfy (a) and (b) of Proposition \ref{prop4.1}. The moduli of stable $(D- \sum_i p)$-Higgs bundles admits a natural closed embedding into the moduli of $D$-Higgs bundles. More precisely, let $(\CE, \theta)$ be a stable $(D-\sum_i p_i)$-Higgs bundle on $C$, then we may view it naturally as a $D$-Higgs bundle $(\CE, \theta')$ by setting the new Higgs field as the composition
\[
\theta': \CE \xrightarrow{\theta} \CE\otimes \CO_C(D-\sum_i p_i) \rightarrow \CE \otimes\CO_C(D)
\]
where the second map is induced by $\CO_C(-\sum_i p_i) \hookrightarrow \CO_C$. By definition, the (slope-)stability conditions of $(\CE, \theta)$ and $(\CE, \theta')$ coincide. Hence we obtain a closed embedding 
\begin{equation}\label{eqn75}
\CM^{D-\sum_i p_i}_{n,L} \hookrightarrow \CM^D_{n,L}, \quad (\CE,\theta) \mapsto (\CE, \theta').
\end{equation}
Similarly, we also have the relative version with respect to $\pi: C' \to C$:
\begin{equation}\label{eqn76}
\CM^{D-\sum_i p_i}_{r,L}(\pi) \hookrightarrow \CM^D_{r,L}(\pi), \quad (\CE,\theta)\mapsto (\CE, \theta')
\end{equation}
where $\theta': \CE \to \CE\otimes \pi^*\CO_{C}(D)$ is given by the composition
\[
\theta': \CE\xrightarrow{\theta} \CE \otimes \pi^*\CO_C(D-\sum_i p_i) \to \CE \otimes \pi^*\CO_C(D).
\]
As before, the relative case (\ref{eqn76}) recovers (\ref{eqn75}) by setting $\pi = \mathrm{id}$. 

A Higgs bundle in $\CM^D_{r,L}(\pi)$ which sits inside $\CM^{D-\sum_i p_i}_{r,L}(\pi)$ can be characterized by the vanishing of the restricted Higgs field over the points $p_1, \cdots, p_s$. Therefore it is natural to use the evaluation map $\mathrm{ev}_{p_1,\cdots, p_s}$ to describe (\ref{eqn75}) and (\ref{eqn76}) as in the following lemma. 

\begin{lem}\label{lem4.2}
Assume the divisor $D$ and the distinct points $p_1, \cdots, p_s$ satisfy (a) and (b) of Proposition \ref{prop4.1}. We denote by 
\[
0_H = [0/H_\pi]\hookrightarrow \CM_{r,p}(\pi) = [\mathfrak{h}_\pi/H_\pi]
\]
the closed substack corresponding to the $0$ matrix.
Then the closed embedding (\ref{eqn76}) is realized as a closed fiber of $\mathrm{ev}_{p_1,\cdots,p_s}$ over 
\[
O_H:= 0_H \times \cdots \times 0_H \subset \prod_{i=1}^s[\mathfrak{h}_\pi/H_\pi] = \prod_{i=1}^s \CM_{r,p_i}(\pi).
\]
\end{lem}

\subsection{Functions and critical loci}
We consider the quotient map
\[
\mathfrak{sl}_n \rightarrow \mathfrak{sl}_n\sslash \mathrm{SL}_n
\]
sending a matrix to the coefficients
\[
(a_2, a_3, \dots, a_n)\in \mathfrak{sl}_n\sslash \mathrm{SL}_n = \mathrm{Spec}\left(\BC[\mathfrak{sl}_n]^{\mathrm{SL}_n}\right), \quad \mathrm{deg}(a_i) = i
\]
of its  characteristic polynomial. The term $a_i$ defines a degree $i$ polynomial function on the Lie algebra $\mathfrak{sl}_n$. We define the quadratic function on the Lie algebra $\mathfrak{sl}_n$ given by $a_2$ as
\begin{equation}\label{eq77}
\mu = a_2: \mathfrak{sl}_n \to  \BA^1,
\end{equation}
which induces a function $\mu_\pi: \mathfrak{h}_\pi \to \BA^1$ via the composition
\begin{equation}\label{eq78}
\mu_\pi : \mathfrak{h}_\pi \hookrightarrow \mathfrak{sl}_n \xrightarrow{\mu} \BA^1.
\end{equation}
By definition, the functions $\mu$ and $\mu_\pi$ are invariant under the conjugation actions by the Lie groups $\mathrm{SL}_n$ and $H_\pi$ respectively. 

Since any matrix $g \in \mathfrak{h}_\pi \subset \mathfrak{sl}_n$ satisfies $\mathrm{trace}(g) = 0$, the quadratic function $\mu_\pi$ (up to scaling) can be written as
\begin{equation}\label{trace}
    g \mapsto \mathrm{trace}(g^2).
\end{equation}
For the semi-simple Lie algebra $\mathfrak{sl}_n$, (\ref{trace}) is the Killing form which is clearly non-degenerate. In the following we prove the non-degeneracy for general $\mathfrak{h}_\pi$.

\begin{lem}\label{lem4.3}
The critical locus of the quadratic function $\mu_\pi$ is the isolated reduced point $0 \in \mathfrak{h}_\pi$, \emph{i.e.},
\[
\mathrm{Crit}(\mu_\pi)\left( = \{d\mu_{\pi}=0\}\right) = \{0\}\subset \mathfrak{h}_\pi.
\]
Consequently, the perverse sheaf of vanishing cycles $\varphi_{\mu_\pi}(\underline{\BC}[\mathrm{dim}({\mathfrak{h}_\pi}])$ (see \cite[Theorem 5.2.21]{Dim}) is the skyscraper sheaf supported on the closed point $0 \in \mathfrak{h}_\pi$. 
\end{lem}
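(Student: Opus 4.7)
The plan is to reduce both claims to the single statement that $\mu_\pi$ is a non-degenerate quadratic form on the vector space $\mathfrak{h}_\pi$. The text preceding the lemma already identifies $\mu_\pi$ (up to a non-zero scalar) with the restriction to $\mathfrak{h}_\pi$ of the function $g \mapsto \mathrm{trace}(g^2)$ on $\mathfrak{sl}_n$. Writing an element of $\mathfrak{h}_\pi$ as a block-diagonal matrix $(g_1,\ldots,g_m) \in \mathfrak{gl}_r^{\times m}$, this reads
\[
\mu_\pi(g_1,\ldots,g_m) = c \sum_{i=1}^m \mathrm{trace}(g_i^2)
\]
for some $c \neq 0$, which is the restriction to $\mathfrak{h}_\pi$ of the ambient quadratic form $Q(g_1,\ldots,g_m) = c\sum_i \mathrm{trace}(g_i^2)$ on $\mathfrak{gl}_r^{\times m}$.

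Next I would verify non-degeneracy. The polarization of $Q$ is the bilinear form $B((g_i),(h_i)) = 2c\sum_i \mathrm{trace}(g_i h_i)$, which is non-degenerate on $\mathfrak{gl}_r^{\times m}$ since the trace pairing $(A,B) \mapsto \mathrm{trace}(AB)$ on $\mathfrak{gl}_r$ is non-degenerate (the elementary matrix basis $\{E_{ij}\}$ is self-dual up to transposition). The subspace $\mathfrak{h}_\pi$ is the hyperplane $\ker(\ell)$, where $\ell((g_i)) = \sum_i \mathrm{trace}(g_i)$. The standard criterion says that the restriction of a non-degenerate quadratic form on $V$ to a hyperplane $W = \ker(\ell)$ is non-degenerate if and only if the vector $v \in V$ representing $\ell$ via $B$ (i.e.\ $\ell(\cdot) = B(\cdot,v)$) does not lie in $W$. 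A direct computation gives $v = \tfrac{1}{2c}(I_r,\ldots,I_r)$, and $\ell(v) = \tfrac{mr}{2c} = \tfrac{n}{2c} \neq 0$, so $v \notin \mathfrak{h}_\pi$ and $\mu_\pi$ is non-degenerate.

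With non-degeneracy in hand, both conclusions are immediate. The differential $d\mu_\pi$ at a point $x$ is the linear form $B(x,\cdot)|_{\mathfrak{h}_\pi}$, and non-degeneracy of $\mu_\pi|_{\mathfrak{h}_\pi}$ forces this to vanish only at $x = 0$ and cut out the origin with reduced scheme structure. For the vanishing cycles statement, after choosing linear coordinates on $\mathfrak{h}_\pi$ in which $\mu_\pi = \sum_j z_j^2$, the Milnor fiber at $0$ is homotopy equivalent to the sphere $S^{\dim\mathfrak{h}_\pi - 1}$, so the standard computation for a non-degenerate quadratic form yields that $\varphi_{\mu_\pi}(\underline{\BC}[\dim\mathfrak{h}_\pi])$ is supported at the origin with one-dimensional stalk, i.e.\ the skyscraper sheaf at $0$. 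I do not foresee any real obstacle here; the only step that needs a moment of care is the hyperplane non-degeneracy check, which is handled by the computation of $\ell(v)$ above.
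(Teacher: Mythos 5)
Your proof is correct, and it takes a genuinely different route from the paper's. The paper's proof of non-degeneracy decomposes $\mathfrak{h}_\pi = \mathfrak{h}'_\pi \oplus \mathfrak{c}$, where $\mathfrak{c}$ is the Cartan of trace-free diagonal matrices and $\mathfrak{h}'_\pi$ the complement of vanishing diagonals, observes that the trace form is block-diagonal for this splitting, and then identifies each summand (equipped with the trace form) as an orthogonal direct summand of a semisimple Lie algebra, where the Killing form is non-degenerate. You instead view $\mathfrak{h}_\pi$ as the hyperplane $\ker(\ell)$ inside the ambient $\mathfrak{gl}_r^{\times m}$, where the trace form is manifestly non-degenerate, and apply the standard criterion: the restriction to a hyperplane $\ker(\ell)$ is non-degenerate iff the $B$-dual vector $v$ of $\ell$ satisfies $\ell(v) \neq 0$. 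Your computation $v = \tfrac{1}{2c}(I_r,\ldots,I_r)$, $\ell(v) = n/(2c) \neq 0$ is exactly right. What your version buys is a cleaner, more uniform argument with one linear-algebra check, avoiding the case split into diagonal and off-diagonal parts; it also makes it slightly more transparent why the hypothesis of a single trace condition on $\mathfrak{gl}_r^{\times m}$ (as opposed to $m$ separate trace conditions) is what matters. Both proofs then dispatch the critical-locus and vanishing-cycles conclusions by the same standard Milnor-fiber computation for a non-degenerate quadratic form, which you state correctly.
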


\begin{proof}
To prove the first part of Lemma \ref{lem4.3}, it suffices to show that the quadratic form (\ref{trace}) on $\mathfrak{h}_\pi$ is non-degenerate.

We consider the decomposition 
\begin{equation} \label{decomp1}
\mathfrak{h}_\pi = \mathfrak{h}'_\pi \oplus \mathfrak{c}
\end{equation}
where $\mathfrak{c} \subset \mathfrak{h}_\pi$ is the Cartan subalgebra of trace-free diagonal matrices, and $\mathfrak{h}'_\pi$ consists of the matrices in $\mathfrak{h}_\pi$ such that the entries of the diagonals vanish. For a matrix $g\in \mathfrak{h}_\pi$ with the decomposition
\[
g = g' + c, \quad g' \in \mathfrak{h}'_\pi,~~~c\in \mathfrak{c},
\]
a direct calculation yields $\mathrm{trace}(g'c)=0$. Therefore we have
\[
\mathrm{trace}(g^2) = \mathrm{trace}({g'}^2) + \mathrm{trace}(c^2).
\]
So it suffices to show that the quadratic forms (\ref{trace}) are non-degenerate for both $\mathfrak{h}'_\pi$ and $\mathfrak{c}$. 

We notice that the Cartan subalgebra $\mathfrak{c}$ of $\mathfrak{h}_\pi$ is the same as that of $\mathfrak{sl}_n$. Also, equipped with the quadratic forms (\ref{trace}), $\mathfrak{h}'_\pi$ is a direct summand component of the Lie algebra $\mathfrak{sl}_r^{\times m}$ via the decomposition (\ref{decomp1}) for $\mathfrak{sl}_r^{\times m}$. Since both $\mathfrak{h}_\pi'$ and $\mathfrak{c}$ are direct summand components of semi-simple Lie algebras where the killing forms (\ref{trace}) are non-degenerate, we conclude the non-degeneracy of (\ref{trace}) for $\mathfrak{h}'_\pi$ and $\mathfrak{c}$, which further implies the non-degeneracy of $\mathfrak{h}_\pi$ through (\ref{decomp1}).

This completes the first part of the lemma, and reduce the second part to the case 
\[
\mu_\pi: \BA^N \to \BA^1, \quad (z_1, \dots, z_N) \mapsto \sum_{i=1}^N z_i^2.
\]
In this case, the Milnor fiber is a sphere \cite{Milnor} whose reduced homology computes the vanishing cycle \cite[Proposition 4.2.2]{Dim}.
\end{proof}

The $H_\pi$-invariant function (\ref{eq78}) induces the functions 
\[
\mu_1: [\mathfrak{h}_\pi/H_\pi] \to \BA^1, \quad \mu_2: \mathfrak{h}_\pi\sslash H_\pi \to \BA^1
\]
which form the commutative diagram
\begin{equation}\label{eqn79}
\begin{tikzcd}
\CM_{r,p}(\pi) \arrow[dr, "\mu_1"] \arrow["h_{\pi,p}", d] \\
\CA_p(\pi) \arrow[r, "\mu_2"]
& \BA^1.
\end{tikzcd}
\end{equation}
The pullback of (\ref{eqn79}) along the diagram (\ref{eqn67}) yields the functions
\begin{equation}\label{extra_function}
\mu_{\pi,\CM}: \CM^D_{r,L}(\pi) \to \BA^1,\quad \mu_{\pi,\CA}: \CA^D(\pi) \to \BA^1
\end{equation}
fitting into the commuatative diagram
\begin{equation}\label{eq80}
\begin{tikzcd}
\CM_{r,L}^D(\pi) \arrow[dr, "\mu_{\pi,\CM}"] \arrow["h^D_\pi", d] \\
\CA^D(\pi) \arrow[r, "\mu_{\pi,\CA}"]
& \BA^1.
\end{tikzcd}
\end{equation}

Before stating and proving the main results (Theorem \ref{thm4.4} and its variant Corollary \ref{cor_correction}) of this section, we first note the following standard facts. 

\begin{lem}\label{lem4.5}
 Let $f: V\to \BA^1$ be a regular function.
\begin{enumerate}
    \item[(a)] Assume $V$ admits an action of a finite group $G$ which is fiberwise with respect to $f$. Then the nearby and vanishing cycle functors $\Phi_f, \varphi_f$ are $G$-equivariant.
    \item[(b)] Assume $\CF \in D^b_c(V)$, and assume that $g = \lambda\cdot \mathrm{id} \in \mathrm{End}(\CF)$ is a scaling automorphism of $\CF$ with $\lambda \in \BC^*$. Then applying the nearby or vanishing cycle functor to $g$ yields also an scaling endomorphism:
    \[
    \Phi_f(g) = \lambda\cdot \mathrm{id}: \Phi_f\CF \xrightarrow{\simeq} \Phi_f\CF, \quad  \varphi_f(g) = \lambda\cdot \mathrm{id}: \varphi_f\CF \xrightarrow{\simeq} \varphi_f\CF.
    \]
    \item[(c)] Assume that $g: W \to V$ is smooth with $f' =  f\circ g: W \to \BA^1$ the composition, then we have the smooth pullback formula for vanishing cycles:
    \[
    g^* \circ \varphi_f = \varphi_{f'}\circ g^*: D^b_c(V) \to D_c^b(f'^{-1}(0_{\BA^1})). 
    \]
\end{enumerate}
\end{lem}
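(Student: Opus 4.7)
All three statements are standard properties of the nearby and vanishing cycle functors, so the plan is to deduce each assertion directly from the definitions and standard compatibilities (rather than prove anything genuinely new). Recall that $\Phi_f\CF$ and $\varphi_f\CF$ are defined via the diagram pulling $V$ back along $f\colon V\to\BA^1$ to the universal cover of a punctured disk, and that $\varphi_f$ sits in the canonical triangle $i^*\CF \to \Phi_f\CF \to \varphi_f\CF \xrightarrow{+1}$, where $i\colon f^{-1}(0)\hookrightarrow V$ is the closed inclusion.

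For (a), the key observation is that ``$G$ acts fiberwise'' means $f\circ g = f$ for every $g\in G$, so each $g$ is an automorphism of the pair $(V,f)$ over $\BA^1$. By the functoriality of $\Phi_f$ and $\varphi_f$ in morphisms of pairs $(V,f) \to (V,f)$ over $\BA^1$, one obtains induced automorphisms $\Phi_f(g),\varphi_f(g)$, and the group law is preserved because $\Phi_f,\varphi_f$ are functors. This is exactly the desired $G$-equivariant structure.

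For (b), one uses that $\Phi_f$ and $\varphi_f$ are $\BC$-linear (additive) functors of triangulated categories. In particular they respect scalar multiplication on morphism spaces, so $\varphi_f(\lambda\cdot \mathrm{id}_\CF) = \lambda\cdot \varphi_f(\mathrm{id}_\CF) = \lambda\cdot \mathrm{id}_{\varphi_f\CF}$, and similarly for $\Phi_f$. No further work is needed.

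For (c), the smooth pullback compatibility $g^*\varphi_f \simeq \varphi_{f'}g^*$ is standard; it may be proved by forming the cartesian square in which $g$ is base-changed along $V\times_{\BA^1}\widetilde{\BA^{1*}}\to V$, then invoking smooth base change to commute $g^*$ with the nearby-cycle construction, and finally passing to $\varphi$ via the defining triangle (using that $g^*$ also commutes with the restriction functor $i^*$ to the zero fiber, since restriction to a closed subscheme commutes with smooth pullback). The one subtlety to watch is the normalization of shifts: since $g$ is smooth of some relative dimension $e$ and the statement is written without a shift, one should verify that the convention in use is the one for which nearby/vanishing cycles commute with $g^*$ on the nose (equivalently, one applies the result after the customary shift $[-1]$ has been absorbed into the definition of $\varphi_f$). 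Once the conventions are fixed, the identity is immediate.
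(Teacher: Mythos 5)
Your proposal is correct and follows essentially the same route as the paper, which simply observes that (a) and (c) follow from the definition of the functors (citing Illusie) and that (b) holds for any $\BC$-linear exact functor. Your extra caution about shift conventions in (c) is harmless but unnecessary: for the smooth pullback $g^*$ (as opposed to $g^!$) no relative-dimension shift appears, so the identity holds on the nose in either normalization.
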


Parts (a) and (c) follow directly from the definition of these functors \cite[(1.1.5)]{Illusie}; part (b) holds more generally for any $\BC$-linear exact functor between triangulated categories.

Now we consider the case $s=1$ of Proposition \ref{prop4.1}; let $D$ be the divisor and let $p \in C$ be the point as before.

\begin{thm}\label{thm4.4}
Assume that $D,p$ satisfy (a) and (b) of Proposition \ref{prop4.1}. 
\begin{enumerate}
    \item[(a)] The closed embedding (\ref{eqn76}) can be realized as the critical locus of the function $\mu_{\pi,\CM}: \CM^D_{r,L}(\pi) \to \BA^1$, \emph{i.e.}, we have
    \[
    \CM^{D-p}_{r,L}(\pi) = \mathrm{Crit}\left(\mu_{\pi,\CM}\right)\hookrightarrow \CM^{D}_{r,L}(\pi).
    \]
    \item[(b)] %The intersection cohomology sheaves on $\CM^{D-p}_{r,L}(\pi)$ and $\CM^D_{r,L}(\pi)$ are connected via the vanishing cycle functor associated with $\mu_{\pi,\CM}$:
    We have a natural isomorphism
    \begin{equation}\label{eqn81}
    \varphi_{\mu_{\pi,\CM}} \underline{\BC} = \underline{\BC}[-r_0]
    \end{equation}
    of $\Gamma$-equivariant objects. Here the first and the second $\underline{\BC}$ stand for the trivial local systems on $\CM^D_{r,L}(\pi)$ and $\CM^{D-p}_{r,L}(\pi)$ respectively with the natural $\Gamma$-structures, and $r_0$ is the codimension of (\ref{eqn76}).
    %\begin{equation}\label{eqn81}
    %\varphi_{\mu_{\pi,\CM}}\left( \mathrm{IC}_{\CM^D_{r,L}(\pi)} \right) = \mathrm{IC}_{\CM^{D-p}_{r,L}(\pi)}.
    %\end{equation}
    \item[(c)] For any character $\kappa \in \hat{\Gamma}$, the isomorphism (\ref{eqn81}) induces a natural isomorphism
    \[
    \varphi_{\mu_{\pi,\CA}}\left(\mathrm{Rh^D_\pi}_* \BBC \right)_\kappa = \left(\mathrm{Rh^{D-p}_\pi}_* \BBC \right)_\kappa [-r_0]
    \]
    where $\mu_{\pi,\CA}$ is given in (\ref{eq80}) and $r_0$ is the same as in (b).
\end{enumerate}
\end{thm}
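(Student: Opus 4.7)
For part (a), the function $\mu_{\pi,\CM}$ is by construction the composition $\mu_1 \circ \mathrm{ev}_p$, where $\mathrm{ev}_p : \CM^D_{r,L}(\pi) \to \CM_{r,p}(\pi)$ is smooth by Proposition \ref{prop4.1} (the hypothesis on $D$ matches precisely). Since critical loci commute with smooth pullback, $\mathrm{Crit}(\mu_{\pi,\CM}) = \mathrm{ev}_p^{-1}(\mathrm{Crit}(\mu_1))$. Lemma \ref{lem4.3} gives $\mathrm{Crit}(\mu_\pi) = \{0\} \subset \mathfrak{h}_\pi$, and $H_\pi$-invariance of $\mu_\pi$ descends this to $\mathrm{Crit}(\mu_1) = 0_H$ inside $[\mathfrak{h}_\pi/H_\pi]$. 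Combining with Lemma \ref{lem4.2}, which identifies $\mathrm{ev}_p^{-1}(0_H)$ with $\CM^{D-p}_{r,L}(\pi)$, yields the claim.

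For part (b), I plan to chain together smooth descent with Lemma \ref{lem4.3}. First, the quotient atlas $\mathfrak{h}_\pi \to [\mathfrak{h}_\pi/H_\pi]$ is smooth, so by Lemma \ref{lem4.5}(c) applied along this map together with Lemma \ref{lem4.3} we obtain $\varphi_{\mu_1}\BBC \simeq \BBC_{0_H}[-\dim \mathfrak{h}_\pi]$ on $\CM_{r,p}(\pi)$. Applying Lemma \ref{lem4.5}(c) once more along the smooth map $\mathrm{ev}_p$ gives
\[
\varphi_{\mu_{\pi,\CM}}\BBC \;\simeq\; \mathrm{ev}_p^*\,\varphi_{\mu_1}\BBC \;\simeq\; \BBC_{\CM^{D-p}_{r,L}(\pi)}[-\dim\mathfrak{h}_\pi].
\]
A short dimension count using $\dim[0/H_\pi] = -\dim H_\pi$ shows that the codimension $r_0$ of (\ref{eqn76}) equals $\dim \mathfrak{h}_\pi$ (codimensions being preserved under the smooth map $\mathrm{ev}_p$), which delivers (\ref{eqn81}).

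For part (c), I will combine (b) with the standard compatibility of vanishing cycles with proper pushforward: since $h^D_\pi$ is proper and $\mu_{\pi,\CM} = \mu_{\pi,\CA} \circ h^D_\pi$ by (\ref{eq80}), one has a canonical isomorphism
\[
\varphi_{\mu_{\pi,\CA}}\, \mathrm{R}h^D_{\pi,*}\BBC \;\simeq\; \mathrm{R}h^D_{\pi,*}\!\left(\varphi_{\mu_{\pi,\CM}}\BBC\right) \;\simeq\; \mathrm{R}h^{D-p}_{\pi,*}\BBC[-r_0],
\]
where the second isomorphism applies part (b) and uses that a sheaf supported on $\CM^{D-p}_{r,L}(\pi)$ pushes forward via the restricted Hitchin map $h^{D-p}_\pi$. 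To extract the $\kappa$-refinement, note that because $\mu_{\pi,\CM}$ factors through $h^D_\pi$, the fiberwise $\Gamma$-action on $\CM^D_{r,L}(\pi)$ is automatically fiberwise for $\mu_{\pi,\CM}$; Lemma \ref{lem4.5}(a) then makes the above isomorphism $\Gamma$-equivariant, and isolating the $\kappa$-isotypic component gives the stated formula.

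The main point of care will be part (b): keeping the stack-theoretic bookkeeping (atlas presentations, shifts from smooth descent, codimension of $0_H$) consistent so that the shift $[-r_0]$ comes out correctly. Once this is pinned down, parts (a) and (c) are essentially formal consequences of Proposition \ref{prop4.1}, Lemmas \ref{lem4.2}--\ref{lem4.5}, and the standard base-change properties of $\varphi$.
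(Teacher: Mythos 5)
Your proposal is correct and matches the paper's proof almost line by line: the same factorization $\mu_{\pi,\CM} = \mu_1 \circ \mathrm{ev}_p$, the smoothness of $\mathrm{ev}_p$ from Proposition \ref{prop4.1}, Lemma \ref{lem4.3} for the Morse-type vanishing cycle, and the proper-base-change plus $\Gamma$-equivariance argument (via Lemma \ref{lem4.5}) in part (c). The only cosmetic difference is in (b), where you pin down the shift $[-r_0]$ by a stacky dimension count, while the paper appeals to perverse $t$-exactness of the vanishing cycle functor --- two equivalent ways of fixing the same normalization.
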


\begin{proof}
By definition, the function $\mu_{\pi,\CM}: \CM^D_{r,L}(\pi) \to \BA^1$ is the composition
\[
\CM^D_{r,L}(\pi) \xrightarrow{\mathrm{ev}_p} \CM_{r,p}(\pi) \xrightarrow{\mu_1} \BA^1
\]
where the first morphism is smooth. Hence we have
\[
\mathrm{Crit}(\mu_{\pi,\CM}) =\mathrm{Crit}\left(\mu_1\circ \mathrm{ev}_p\right) =  \mathrm{ev}_p^{-1}\mathrm{Crit}(\mu_{1}) = \mathrm{ev}_p^{-1}(0_H).
\]
Here the last identity follows from Lemma \ref{lem4.3}. This implies (a) by Lemma \ref{lem4.2}.

Now we prove (b). The smooth pullback of vanishing cycles (Lemma \ref{lem4.5} (c)) yields the canonical equivalence of the functors
\begin{equation}\label{eqn82}
\mathrm{ev}_p^* \circ \varphi_{\mu_1}  = \varphi_{\mu_{\pi,\CM}}\circ \mathrm{ev}_p^*. 
\end{equation}
Since the vanishing cycle complex
\begin{equation}\label{eqn83}
\varphi_{\mu_1} \BBC  \in \mathrm{D}^b_c\left([\mathfrak{h}_\pi/H_\pi]\right)
\end{equation}
is the $H_\pi$-equivariant vanishing cycle complex $\varphi_{\mu_\pi}\BBC$ on $\mathfrak{h}_\pi$, we see from Lemma \ref{lem4.3} that (\ref{eqn83}) is the shifted skyscraper sheaf supported at $0_H$ with the trivial $H_\pi$-action. Hence, applying (\ref{eqn82}) to the trivial local system $\BBC$, we deduce that the vanishing cycle complex $\varphi_{\mu_{\pi,\CM}} \BBC$ is canonically isomorphic to $\BBC$ on $\CM^D_{r,L}(\pi)$ with a shift. Since the functor $\varphi_{\mu_{\pi,\CM}}$ preserves the perverse t-structures, (\ref{eqn81}) is concluded.

For (c), the proper base change of vanishing cycles \cite[Proposition 4.2.11]{Dim} implies the canonical equivalence of the functors
\begin{equation*}\label{eqn84}
{\mathrm{Rh^D_\pi}}_* \circ \varphi_{\mu_{\pi,\CM}} = \varphi_{\mu_{\pi,\CA}} \circ {\mathrm{Rh^D_\pi}}_*.
\end{equation*}
As a consequence, we obtain 
\begin{equation}\label{pushforward}
\varphi_{\mu_{\pi,\CA}}\left(\mathrm{Rh^D_\pi}_* \BBC \right) = \mathrm{Rh^{D-p}_\pi}_* \BBC  [-r_0]
  \end{equation}
by applying the pushforward functor $\mathrm{Rh^D_\pi}_*$ to (\ref{eqn81}) and the fact that $h^D_\pi$ coincides with $h^{D-p}_\pi$ restricting to $\CM^{D-p}_{r,L}$. Noticing that the regular function 
\[
\mu_{\pi,\CM}: \CM^D_{r,L}(\pi) \to \BA^1
\]
passes through the Hitchin base $\CA^D(\pi)$, it admits a fiberwise $\Gamma$-action. By Lemma \ref{lem4.5} (a), the vanishing cycle functor $\varphi_{\mu_{\pi,\CM}}$ is $\Gamma$-equivariant. Hence the isomorphism (\ref{eqn81}) is compatible with the $\Gamma$-equivariant structures on the shifted trivial local systems on $\CM^D_{r,L}(\pi)$ and $\CM^{D-p}_{r,L}(\pi)$. After pushing forward, we see that (\ref{pushforward}) matches for any $\kappa$-isotypic components with respect to the $\Gamma$-action. This completes the proof of (c).
\end{proof}

% $\mathrm{IC}_{\CM^D_{r,L}(\pi)}$ and $\mathrm{IC}_{\CM^{D-p}_{r,L}(\pi)}$.

Next, we consider the actions of the Galois group $G_\pi$. The value of the function $\mu_{\pi,\CA}: \CA^D(\pi) \rightarrow \BA^1$ is constant along each orbit of the Galois group $G_\pi$-action on $\CA^D(\pi)$. Hence $\mu_{\pi,\CA}$ induces a regular function on the $G_\pi$-quotient of $\CA^{D}(\pi)$:
\begin{equation}\label{mu_gamma}
\mu_{\gamma,\CA}: \CA^D_{\gamma} \to \BA^1
\end{equation}
where the element $\gamma \in \Gamma$ corresponds to $\pi: C' \to C$. Alternatively, (\ref{mu_gamma}) is the restriction of the function on the $\mathrm{SL}_n$-Hitchin base
\[
\mu_{\pi=\mathrm{id},\CA}: \CA^D \to \BA^1
\]
to the closed subvariety $i^D_{\gamma}: \CA^D_\gamma \hookrightarrow \CA^D$.

The moduli space $M^D_{r,L}(\pi)$ admits a natural $G_\pi\times \Gamma$-action, which endows the constant sheaf $\BBC$ a natural $G_\pi\times \Gamma$-equivariant structure. The following proposition shows that, if we consider the $G_\pi$-equivariant structures, then (b,c) of Theorem \ref{thm4.4} become more complicated, and do not hold as in the non-equivariant case.

\begin{prop} \label{prop_correction}
We use the same notation as in Theorem \ref{thm4.4}. Let $\BBC$ be the constant sheaf on $M^D_{r,L}(\pi)$ with the standard $G_\pi\times \Gamma$-equivariant structure. There exists a 2-torsion $G_\pi\times \Gamma$ -equivariant local system $\BL$ on $M^{D-p}_{r,L}(\pi)$ (\emph{i.e.} $\BL^{\otimes 2} \simeq \BBC$) of rank 1 satisfying the following properties.
\begin{enumerate}
    \item[(a)] We have
    \begin{equation}\label{correction}
    \varphi_{\mu_{\pi,\CM}} \underline{\BC} = \BL[-r_0]
\end{equation}
as $G_\pi\times \Gamma$-equivariant objects.
\item[(b)] The isomorphism (\ref{correction}) induces
\[\varphi_{\mu_{\pi,\CA}}\left(\mathrm{Rh^D_\pi}_* \BBC \right)_\kappa = \left(\mathrm{Rh^{D-p}_\pi}_* \BL \right)_\kappa [-r_0]
    \]\end{enumerate}
\end{prop}

\begin{proof}
    The proofs are parallel to those for Theorem \ref{thm4.4} (b,c). 
    
    For (a), it suffices to show that, if we apply the vanishing cycle functor to the quadratic function $\mu_\pi: \mathfrak{h}_\pi \to \BA^1$, the constant sheaf $\BBC$ (with the standard $G_\pi\time \Gamma$-structure) is sent to the skyscraper sheaf support at $0 \in \mathfrak{h}_\pi$ whose $G_\pi\times \Gamma$-equivariant structure is at worst 2-torsion. In view of Lemma \ref{lem4.3} and the fact that $\Gamma$ acts trivially on $\CM_{r,p}(\pi)$, the only new part concerns the $G_\pi$-equivariant structure.

    Since $G_\pi$ is a cyclic abelian group of order $m$, we have a decomposition into eigenspaces
    \[
    \mathfrak{h}_\pi= \bigoplus_{i=0}^{m-1}\mathfrak{h}_i,\quad  \quad \xi \cdot v = \xi^iv,~~ \mathrm{for}~~ v\in \mathfrak{h}_i, \quad \xi:= \mathrm{exp}\left(\frac{2\pi\sqrt{-1}}{m} \right).
    \]
The $G_\pi$-invariant quadratic form $\mu_\pi$ admits a decomposition $\mu_\pi = \mu_{\pi,1} + \mu_{\pi,2}+ \mu_{\pi,3}$ where $\mu_{\pi,1}$ is the induced quadractic form on $\bigoplus_{2i\neq 0, m} \left(\mathfrak{h}_i \oplus \mathfrak{h}_{m-i}\right)$, $\mu_{\pi,1}$ is the induced quadractic form on $\mathfrak{h}_0$, and $\mu_{\pi,3}$ is the induced quadractic form on $\mathfrak{h}_{\frac{m}{2}}$. Note that $\mu_{\pi,3}$ is non-trivial only if $m$ is even. By the Thom-Sebastiani theorem, it suffices to show that the vanishing cycle sheaf associated with each quadratic function has 2-torsion $G_\pi$-equivariant structure.

For the quadratic form $\mu_{\pi,1}$, we note that the $G_\pi$-action on $\mathfrak{h}_i\oplus \mathfrak{h}_{m-i}$ can be extended to a $\BG_m$-action; therefore the $G_\pi$-structure on the reduced homology of the Milnor fiber is trivial due to the fact that $\BG_m$ is continuous. For the quadratic form $\mu_{\pi,2}$, since the $G_\pi$-action is trivial on $\mathfrak{h}_0$, it is clear that the $G_\pi$-structure on the vanishing cycle is also trivial. Finally, we assme $m$ is even and treat $\mu_{\pi,3}$; using again the Thom-Sebastiani theorem, we can reduce to the quadratic form $z^2$ on $\BA^1$ with the $G_\pi$-action given by the composition of the quotient map $G_\pi \to \BZ/2\BZ$ and the standard involution on $\BA^1$. The vanishing cycle sheaf is the skyscraper sheaf $0_{\BA^1}$ with a nontrivial $\BZ/2\BZ$-structure. Consequently, the $G_\pi$-structure is 2-torsion. We have completed the proof for (a). The proof for (b) is identical to that for Theorem \ref{thm4.4} (c).
\end{proof}

\begin{rmk}
The proof of Proposition \ref{prop_correction} (a) shows that the $G_\pi$-local system $\BL$ can indeed be nontrivial; the first case it happens is when $n=m=2$ and $r=1$. The $G_\pi$-equivariant structure was overlooked in a previous version of the paper, and we are grateful to Elsa Maneval for helpful correspondence that led us to discover this.
\end{rmk}

For our purpose, we need a ``two-point" $G_\pi$-equivariant version of Theorem \ref{thm4.4}. We consider $D$ and two distinct points $p,q\in C$ satisfying (a) and (b) of Proposition \ref{prop4.1}, and define a new function $\mu'_{\pi,\CM}$ on $\CM^D_{r,L}(\pi)$ as
\[
\mu'_{\pi,\CM}: \CM^D_{r,L}(\pi) \xrightarrow{\mathrm{ev}_{p,q}} \CM_{r,p}(\pi) \times \CM_{r,q}(\pi) \xrightarrow{\mu_1\times \mu_1} \BA^1\times \BA^1 \xrightarrow{+} \BA^1.
\]
This induces a ``two-point" version of (\ref{eq80}):

\begin{equation}\label{eq80_two_point}
\begin{tikzcd}
\CM_{r,L}^D(\pi) \arrow[dr, "\mu'_{\pi,\CM}"] \arrow["h^D_\pi", d] \\
\CA^D(\pi) \arrow[r, "\mu'_{\pi,\CA}"]
& \BA^1.
\end{tikzcd}
\end{equation}
Furthermore, the $G_\pi$-invariant function $\mu'_{\pi,\CA}$ descends to 
\[
\mu'_{\gamma, \CA}: \CA^D_\gamma \to \BA^1.
\]

The following is an immediate consequence of (the proof of) Proposition \ref{prop_correction} (a), Proposition \ref{prop4.1}, and the Thom-Sebastiani theorem applied to \[\CM_{r,p}(\pi) \times \CM_{r,q}(\pi) \xrightarrow{\mu_1\times \mu_1} \BA^1\times \BA^1 \xrightarrow{+} \BA^1.\]
The reason we consider two points $p,q$ at once is due to the appearance of the 2-torsion $G_\pi$-equivariant local system $\BL$ in Proposition \ref{prop_correction}.

\begin{cor}\label{cor_correction} The following statements hold.
\begin{enumerate}
    \item[(a)] The closed embedding $\CM^{D-p-q}_{r,L}(\pi) \hookrightarrow \CM^D_{r,L}(\pi)$ of (\ref{eqn76}) can be realized as the critical locus of the function $\mu'_{\pi,\CM}$, \emph{i.e.} we have
    \[
    \CM^{D-p-q}_{r,L}(\pi) = \mathrm{Crit}\left(\mu'_{\pi,\CM}\right)\hookrightarrow \CM^{D}_{r,L}(\pi).
    \]
    \item[(b)]  We have
    \begin{equation}\label{correction'}    \varphi_{\mu'_{\pi,\CM}}\underline{\BC} = \BBC[-r_0].
    \end{equation}
    Here the first and the second $\underline{\BC}$ stand for the trivial $G_\pi\times \Gamma$-equivariant local systems on $\CM^D_{r,L}(\pi)$ and $\CM^{D-p-q}_{r,L}(\pi)$ respectively, and $r_0$ is the codimension of (\ref{eqn76}).    \item[(c)] The isomorphism (\ref{correction'}) induces
\[\varphi_{\mu'_{\pi,\CA}}\left(\mathrm{Rh^D_\pi}_* \BBC \right)_\kappa = \left(\mathrm{Rh^{D-p-q}_\pi}_* \BBC\right)_\kappa [-r_0]
    \]\end{enumerate} 
\end{cor}

Recall the notation from the diagram (\ref{diagram111}). We have the following corollary.

\begin{cor}\label{cor4.5}
Corollary \ref{cor_correction} (c) induces for any character $\kappa \in \hat{\Gamma}$ a natural isomorphism
\[
\varphi_{\mu'_{\gamma,\CA}} \left(\mathrm{Rh^D_\gamma}_* \BBC \right)_\kappa = \left(\mathrm{Rh^{D-p-q
}_\gamma}_* \BBC \right)_\kappa [-r_0].
\]
Here $r_0$ is the same as in Corollary \ref{cor_correction} (b,c).
\end{cor}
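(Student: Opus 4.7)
The plan is to derive the corollary by pushing forward Theorem \ref{thm4.4}(c) from $\CA^D(\pi)$ down to $\CA^D_\gamma$ along the finite $G_\pi$-quotient map $q_\CA: \CA^D(\pi) \to \CA^D_\gamma$, and then extracting $G_\pi$-invariants. The essential compatibility we need is that the function $\mu_{\gamma,\CA}$ on $\CA^D_\gamma$ is defined precisely so that $\mu_{\pi,\CA} = \mu_{\gamma,\CA} \circ q_\CA$, which is the setup under which pushforward along $q_\CA$ interacts cleanly with the vanishing cycle functor.

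First I would observe that since $q_\CA$ is finite (hence proper), the proper base change property of vanishing cycles (\cite[Proposition 4.2.11]{Dim}) gives a canonical equivalence of functors
\[
{q_\CA}_* \circ \varphi_{\mu_{\pi,\CA}} \;\xrightarrow{\simeq}\; \varphi_{\mu_{\gamma,\CA}} \circ {q_\CA}_*.
\]
Applying $q_{\CA *}$ to the isomorphism of Theorem \ref{thm4.4}(c) and using this equivalence yields a canonical isomorphism
\[
\varphi_{\mu_{\gamma,\CA}}\, {q_\CA}_* \left(\mathrm{Rh^D_\pi}_* \BBC \right)_\kappa \;\xrightarrow{\simeq}\; {q_\CA}_*\left(\mathrm{Rh^{D-p}_\pi}_* \BBC \right)_\kappa [-r_0]
\]
in $D^b_c(\CA^D_\gamma)$.

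Next, I would take $G_\pi$-invariants of both sides. The $G_\pi$- and $\Gamma$-actions commute (the $\Gamma$-action comes from tensoring on $C'$ and is by construction $G_\pi$-equivariant, since $G_\pi$ permutes the components of $\CM^D_{r,L}(\pi)$ while commuting with the $\Gamma$-action used to identify them). Hence the $\kappa$-isotypic decomposition is $G_\pi$-stable and $(-)^{G_\pi}$ commutes with $(-)_\kappa$. Moreover, by Lemma \ref{lem4.5}(a) (or more precisely by the fact that the vanishing cycle functor is canonical), $\varphi_{\mu_{\gamma,\CA}}$ preserves $G_\pi$-equivariance. Applying Lemma \ref{lemma1.6} to both $D$- and $(D-p)$-twisted sides, we obtain
\[
\bigl({q_\CA}_* \mathrm{Rh^D_\pi}_* \BBC\bigr)^{G_\pi}_\kappa \;=\; \bigl(\mathrm{Rh^D_\gamma}_*\BBC\bigr)_\kappa, \qquad
\bigl({q_\CA}_* \mathrm{Rh^{D-p}_\pi}_* \BBC\bigr)^{G_\pi}_\kappa \;=\; \bigl(\mathrm{Rh^{D-p}_\gamma}_*\BBC\bigr)_\kappa.
\]
Combining the displayed isomorphism above with these identifications gives the desired canonical isomorphism
\[
\varphi_{\mu_{\gamma,\CA}} \left(\mathrm{Rh^D_\gamma}_* \BBC \right)_\kappa \;=\; \left(\mathrm{Rh^{D-p}_\gamma}_* \BBC \right)_\kappa [-r_0].
\]

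The only substantive point to verify carefully is that each of the compatibilities invoked (proper base change for vanishing cycles along $q_\CA$, commutation of $(-)^{G_\pi}$ with both ${q_\CA}_*$ and with $\varphi_{\mu_{\gamma,\CA}}$, and commutation of $(-)^{G_\pi}$ with the $\kappa$-decomposition) is canonical. None is hard individually, but assembling them coherently is the only real content; there is no new geometric input needed beyond what is already in Theorem \ref{thm4.4}(c) and Lemma \ref{lemma1.6}.
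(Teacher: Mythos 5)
Your proof is correct and follows essentially the same path as the paper's: apply the proper base change for vanishing cycles along the finite quotient $q_\CA$ to move $\varphi_{\mu_{\gamma,\CA}}$ past $q_{\CA*}$, plug in Theorem \ref{thm4.4}(c), and then pass to $G_\pi$-invariants and $\kappa$-parts using Lemma \ref{lemma1.6}. The only difference is a minor reordering (you apply $q_{\CA*}$ to the isomorphism first and then invoke base change, the paper does it the other way), which has no mathematical content.
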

\begin{proof}
We consider the quotient map $q_\CA: \CA^D(\pi) \to \CA^D_\gamma$. By the proper base change \cite[Proposition 4.2.11]{Dim}, we have
\begin{equation}\label{eq88}
\varphi_{\mu'_{\gamma,\CA}} \left({q_\CA}_*\mathrm{Rh^D_\pi}_* \BBC \right) = {q_\CA}_* \varphi_{\mu'_{\pi,\CA}}
 \left(\mathrm{Rh^D_\pi}_* \BBC \right).
\end{equation}
Similar as in the proof of Theorem \ref{thm4.4} (c), the identity (\ref{eq88}) is compatible with the $G_\pi$- and $\Gamma$-actions on both sides. By taking the $G_\pi$-invariant and the $\kappa$-isotypic parts, we obtain that 
\[
\begin{split}
  \varphi_{\mu'_{\gamma,\CA}}  \left(({q_\CA}_*\mathrm{Rh^D_\pi}_* \BBC)^{G_\pi}\right)_\kappa = \left({q_\CA}_* \varphi_{\mu'_{\pi,\CA}}
 \left(\mathrm{Rh^D_\pi}_* \BBC\right)_\kappa \right)^{G_\pi} = \left({q_\CA}_*\mathrm{Rh^{D-p-q}_\pi}_* \BBC \right)_\kappa^{G_\pi}[-r_0]
\end{split}
\]
where we use Theorem \ref{thm4.4} (c) in the last identity. Hence Lemma \ref{lemma1.6} implies that
\[
\varphi_{\mu'_{\gamma,\CA}} \left(\mathrm{Rh^D_\gamma}_* \BBC \right)_\kappa = \left(\mathrm{Rh^{D-p-q}_\gamma}_* \BBC \right)_\kappa [-r_0]. \qedhere
\]
\end{proof}

\begin{rmk}
A different choice of trivialization (\ref{trivialization}) affects the functions (\ref{extra_function}), and therefore the vanishing cycle sheaves, by a scalar $\lambda \in \BC^*$. Since all the isomorphisms in our main theorems are only constructed up to scaling, choices of trivializations at the points $p,q$ do not matter for our purpose.
\end{rmk}

\subsection{Proof of Theorem \ref{thm3.2}}

In this section, we prove Theorem \ref{thm3.2} by constructing the operator 
\begin{equation}\label{C-Corr}
c^D_\kappa: \left(\mathrm{Rh^D_*} \BBC\right)_\kappa \xrightarrow{\simeq} {i^D_\gamma}_*\left(\mathrm{Rh^D_\gamma}_* \BBC\right)_\kappa [-2d^D_\gamma].
\end{equation}
This recovers the $G_\pi$-equivariant operator 
\[
q_\CA^*\left(c^D_\kappa\right): q_\CA^* \left( \mathrm{Rh^D_*} \BBC \right)_{\kappa} \Big{|}_{\CA^D(\pi)^*} \xrightarrow{~~\simeq~~} \left( \mathrm{Rh_\pi^D}_* \BBC \right)_{\kappa}\Big{|}_{\CA^D(\pi)^*}[-2d^D_\gamma].
\]
for Theorem \ref{main0'} as explained in Section \ref{Sec3.1}. 

When $\mathrm{deg}(D)$ is even and greater than $2g-2$, the operator $c^D_\kappa$ is constructed in Theorem \ref{main1} from Ng\^o's endoscopic correspondence, which is canonical up to scaling. The only remaining case is $D= K_C$, where the support theorems fail for the Hitchin fibrations (\emph{c.f.} \cite{Supp}). We are able to construct the operator (\ref{C-Corr}) via Corollary \ref{cor_correction} as we explain in the following.

Let $p,q$ be 2 distinct closed points on the curve $C$. We consider the divisor 
\[
D: =K_C+p+q,  \quad \mathrm{deg}(D) = 2g > 2g-2.
\]
By Theorem \ref{main1}, we already have the isomorphism (\ref{C-Corr}) for this divisor $D$. Applying to (\ref{C-Corr}) the vanishing cycle functor $\varphi_{\mu'_{\CA}}$ associated with the function 
\[
\mu_\CA :=\mu'_{\mathrm{id},\CA}: \CA^{D} \to \BA^1,
\]
we obtain 
\begin{equation}\label{eqn88}
\varphi_{p,q}(c^{D}_\kappa):= \varphi_{\mu_\CA}(c^{D}_\kappa):  \varphi_{\mu_{\CA}}\left(\mathrm{Rh^{D}_*} \BBC\right)_\kappa \xrightarrow{\simeq}  \varphi_{\mu_{\CA}}\left({i^{D}_\gamma}_*\left(\mathrm{Rh^{D}_\gamma}_* \BBC\right)_\kappa \right) [-2d^{D}_\gamma].
\end{equation}
Since (\ref{C-Corr}) is canonical up to scaling, so is (\ref{eqn88}) by Lemma \ref{lem4.5} (b). In fact, the operator (\ref{eqn88}) gives the desired operator $c^{D}_\kappa$. To justify this, we calculate both sides of (\ref{eqn88}).

For the left-hand side, Theorem \ref{thm4.4} (c) applied to the special case $\pi = \mathrm{id}$ yields
\[
\varphi_{\mu_{\CA}}\left(\mathrm{Rh^{D}_*} \BBC\right)_\kappa  = \left(\mathrm{Rh^{K_C}_*} \BBC\right)_\kappa[-r_1], \quad r_1 = \mathrm{codim}_{\CM^{D}_{n,L}}(\CM_{n,L}).
\]

For the right-hand side, we have by the proper base change (\cite[Proposition 4.2.11]{Dim}) and Corollary \ref{cor4.5} that
\[
\begin{split}
    \varphi_{\mu_{\CA}}\left({i^{D}_\gamma}_*\left(\mathrm{Rh^{D}_\gamma}_* \BBC\right)_\kappa \right) & = {i^{D}_\gamma}_*\varphi_{\mu'_{\gamma,\CA}}\left(\mathrm{Rh^{D}_\gamma}_* \BBC\right)_\kappa \\
    &= {i^{K_C}_\gamma}_*\left(\mathrm{Rh^{K_C}_\gamma}_* \BBC \right)_\kappa [-r_2]
\end{split}
\]
where 
\[
r_2 = \mathrm{codim}_{\CM^{D}_{r,L}(\pi)}(\CM_{r,L}(\pi)).
\]
In conclusion, (\ref{eqn88}) gives an isomorphism
\[
\varphi_{p,q}(c^{D}_\kappa):  \left(\mathrm{Rh^D_*} \BBC\right)_\kappa \xrightarrow{\simeq} {i_\gamma}_*\left(\mathrm{Rh^{D}_\gamma}_* \BBC\right)_\kappa [-2d^{D_p}_\gamma+r_1-r_2].
\]
By the dimension formulas in \cite[Section 6.1]{dC_SL}, we have
\[
\begin{split}
r_1 -r_2  & = \left(\mathrm{dim}(\CM^{D}_{n,L}) - \mathrm{dim}(\CM_{n,L})\right) - \left(\mathrm{dim}(\CM^{D}_{r,L}(\pi) - \mathrm{dim}(\CM_{r,L}(\pi)\right)\\ & =2d^{D}_\gamma -  2d_\gamma.
\end{split}
\]
Hence $-2d^{D}_\gamma+r_1-r_2 = -  2d_\gamma$, and the operator
\begin{equation}\label{eq92}
    c^{K_C}_\kappa = \varphi_{p,q}(c^{D}_\kappa)
\end{equation}
induces an isomorphism (\ref{C-Corr}) as desired.

We have completed the construction of (\ref{C-Corr}) which proves Theorems \ref{main0'} and \ref{thm3.2}. The construction of the operator $c^{K_C}_\kappa$ \emph{a priori} depends on the choice of the closed points $p$ and $q$. We finish this section by showing in the following proposition that $c^{K_C}_\kappa$ in Case 2 or 3 is, in fact, independent of the choice of the points.

\begin{prop}
The operators (\ref{eq92}) does not depend on the choice of $p,q\in C$.
\end{prop}

\begin{proof}
Varying the point $p$, we have a family of Hitchin fibrations
\[
h^{K_C+p+q}: \CM^{K_C+p+q}_{n,L} \to \CA^{K_C+p+q}
\]
over a base $T$. The construction of the correspondence (\ref{eqn88}) works relatively over the base which gives a family of operators $c^{K_C+p+q}_\kappa$. By applying the vanishing cycle functor relatively over $T$, we obtain a family of operators $\varphi_{p,q}(c^{K_C+p+q}_\kappa)$ which form a section of the trivial local system
\[
\BBC \otimes \mathrm{Hom}_{\CA^D}(\CF_1, \CF_2)
\]
on $T$. Here 
\[
\CF_1 = \left(\mathrm{Rh^{K_C+p+q}_*} \BBC\right)_\kappa,\quad  \CF_2 = {i_\gamma}_*\left(\mathrm{Rh^{K_C+p+q}_\gamma}_* \BBC\right)_\kappa [-2d^{K_C+p+q}].
\]
are independent of the points $p,q$. Hence $\varphi_{p,q}(c^{K_C+p+q}_\kappa)$ is constant over $T$. 
\end{proof}

\begin{rmk}\label{rmk4.8}
Applying the vanishing cycle functors as above, we obtain that Proposition \ref{Prop2.11} also holds for $D= K_C$.
\end{rmk}

\begin{rmk}\label{odd_deg}
When $\mathrm{deg}(D)$ is odd, applying the vanishing cycle functor and Proposition \ref{prop_correction} (b) yields an isomorphism
\[
\left(\mathrm{Rh^D_*} \BBC\right)_\kappa \xrightarrow{\simeq} {i_\gamma}_*\left(\mathrm{Rh^D_\gamma}_* \CL\right)_\kappa [-2d^D_\gamma] \in D_c^b(\CA^D),
\]
where $\CL$ is a 2-torsion $\Gamma$-equivariant local system of rank 1 induced by $\BL$ on $M^D_{r,L}(\pi)$. We see from the proof of Proposition \ref{prop_correction} that $\CL$ can be non-trivial only when $m$ is even and $r$ is odd (in particular, $n=mr$ is even). 
\end{rmk}

\section{The P=W conjecture and the Hausel-Thaddeus conjecture} \label{Section5}

Throughout Section \ref{Section5}, we assume that the curve $C$ has genus $g \geq 2$. We assume that $D$ is an effective divisor such that $\mathrm{deg}(D)$ is even and greater than $2g-2$, or $D = {K}_C$. For a cyclic Galois cover $\pi: C' \to C$, we denote by $D'$ the divisor $\pi^*D$ on $C'$.

We discuss some applications of Theorems \ref{main0'} and \ref{thm3.2}.

\subsection{Perverse filtrations}\label{Sec5.1}
We briefly recall the definition of perverse filtrations \cite{dCM0, dCHM1}.

Let $f: X \rightarrow Y$ be a proper morphism with $X$ a nonsingular algebraic variety. The perverse $t$-structure on the constructible derived category $D_c^b(Y)$ induces an increasing filtration on the cohomology $H^*(X, \BC)$,
\begin{equation} \label{Perv_Filtration}
    P_0H^\ast(X, \BC) \subset P_1H^\ast(X, \BC) \subset \dots \subset P_kH^\ast(X, \BC) \subset \dots \subset H^\ast(X, \BC),
\end{equation}
called the \emph{perverse filtration} associated with $f$. 

The perverse filtration (\ref{Perv_Filtration}) can be described via the decomposition theorem \cite{BBD}. In fact, applying the decomposition theorem to the map $f: X \to Y$, we obtain an isomorphism
\begin{equation*}
\mathrm{Rf}_*\BBC[\dim(X)-l]\simeq \bigoplus_{i=0}^{2l}\mathcal{P}_i[-i] \in D^b_c(Y)
\end{equation*}
with $\mathcal{P}_i$ a perverse sheaf on $Y$ and $l$ the defect of semi-smallness:
\[
l= \mathrm{dim}\left( X \times_Y X\right) - \mathrm{dim}(X).
\]
The $k$-th piece of the perverse filtration is
\[
P_kH^j(X,\BQ)=\mathrm{Im}\Big\{H^{j-(\dim(X) - l)}(Y, \bigoplus_{i=0}^k\mathcal{P}_i[-i])\to H^j(X,\BQ)\Big\}.
\]

\subsection{The P=W conjecture}
Perverse filtrations appear naturally in studying the topology of Hitchin fibrations. For notational convenience, we let
\[
h: \CM \to \CA
\]
be the Hitchin fibration with $\CM = \CM_{n,L}$ or $\widetilde{\CM}_{n,d}$.\footnote{Here the divisor $D$ is chosen as the canonical divisor $K_C$.} We denote by $\CM^{B}$ the Betti moduli space associated with $\CM$. There is a diffeomorphism $\CM \cong \CM^{B}$ induced by non-abelian Hodge theory \cite{Simp, Si1994II,HT2}, which identifies the cohomology
\begin{equation}\label{NonAbel}
    H^*(\CM, \BC) = H^*(\CM^{B}, \BC).
\end{equation}

A central question concerning the cohomological aspect of the non-abelian Hodge theory is \emph{the P=W conjecture} formulated by de Cataldo--Hausel--Migliorini \cite{dCHM1}, connecting the perverse filtration associated with the Hitchin fibration $h$ to the weight filtration \[
W_{0}H^*(\CM^{B}, \BC) \subset W_{1}H^*(\CM^{B}, \BC) \subset \cdots \subset W_kH^*(\CM^{B}, \BC) \subset \cdots \subset H^*(\CM^{\mathrm{B}}, \BC)
\]
associated with the mixed Hodge structure on $\CM^{\mathrm{B}}$.

\begin{conj}["P=W" \cite{dCHM1}]
Under the non-abelian Hodge correspondence (\ref{NonAbel}), we have
\[
P_kH^i(\CM, \BC) = W_{2k}H^i(\CM^B, \BC).
\]
\end{conj}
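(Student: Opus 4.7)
The plan is to leverage the endoscopic decomposition (Theorems \ref{main0'} and \ref{thm3.2}) to reduce the $\mathrm{SL}_n$-case of P=W to cases on auxiliary curves where more is known, then to compare filtrations piece-by-piece. The first step is to observe that both filtrations should respect the $\Gamma$-decomposition \eqref{maindecomp}: the weight filtration does so because $\Gamma$ acts by algebraic automorphisms of $\CM_{n,L}$, and the perverse filtration does so because the $\Gamma$-action is fiberwise over the Hitchin base, so the decomposition \eqref{kappa_decomp} takes place in $D^b_c(\CA)$. It therefore suffices to establish the identity $P_k = W_{2k}$ on each isotypic piece $H^i(\CM_{n,L}, \BC)_\kappa$ separately.

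For the stable piece $\kappa = 0$, the $\Gamma$-invariant cohomology coincides with $H^*(\CM_{n,L}/\Gamma, \BC)$, which is generated by tautological classes by Markman's theorem together with the observation recalled after \eqref{maindecomp}. Since the tautological generators come from the pullback of tautological classes on $\widetilde{\CM}_{n,d}$ along the embedding $j : \CM_{n,L} \hookrightarrow \widetilde{\CM}_{n,d}$, and the map $j^*$ is compatible with both the perverse and the weight filtrations (with zero shift, since this is the $\kappa = 0$ case of Theorem \ref{thm0.4}), the stable piece of P=W for $\CM_{n,L}$ follows once one knows P=W for $\widetilde{\CM}_{n,d}$. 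One would invoke (or assume) the known $\mathrm{GL}_n$ instances of P=W, e.g. the recent multiplicativity results for P=W under pullback along generators, to carry out this reduction.

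For a nontrivial character $\kappa$ with corresponding $\gamma \in \Gamma$ and associated Galois cover $\pi : C' \to C$, I would apply the operator $\mathfrak{p}_\kappa$ of Theorem \ref{thm0.4} to transfer the problem to the $\mathrm{GL}_r$-Higgs moduli $\widetilde{\CM}'_{r,d}$ on $C'$. The surjection
\[
\mathfrak{p}_\kappa : H^i(\widetilde{\CM}'_{r,d}, \BC) \twoheadrightarrow H^{i+2d_\gamma}(\CM_{n,L}, \BC)_\kappa
\]
shifts the perverse filtration by $d_\gamma$ (as stated in Theorem \ref{thm0.4}). The key compatibility to establish is the analogous statement for weights: that $\mathfrak{p}_\kappa$ sends $W_k$ to $W_{k+2d_\gamma}$. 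Given such compatibility, the $\mathrm{GL}_r$-case of P=W on the curve $C'$ together with the surjectivity of $\mathfrak{p}_\kappa$ would imply
\[
\mathfrak{p}_\kappa\bigl( P_{k}H^i(\widetilde{\CM}'_{r,d}, \BC)\bigr) = P_{k+d_\gamma}H^{i+2d_\gamma}(\CM_{n,L}, \BC)_\kappa = W_{2k+2d_\gamma}H^{i+2d_\gamma}(\CM_{n,L}, \BC)_\kappa,
\]
which is the desired identity on the $\kappa$-isotypic component.

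The main obstacle I anticipate is establishing the weight compatibility of $\mathfrak{p}_\kappa$. The operator is built in Sections \ref{Sec3} and \ref{sec4} out of algebraic correspondences between Hitchin moduli spaces together with vanishing cycle functors, and while algebraic correspondences are manifestly compatible with mixed Hodge structures (with a Tate twist by the codimension), vanishing cycles introduce a subtlety: one works on the Dolbeault side and it is not a priori clear how to match the vanishing cycle construction with a Betti-side operation. A natural approach is to construct a parallel endoscopic correspondence directly on the $\mathrm{SL}_n$-character variety (comparing the $\kappa$-isotypic cohomology there with $\mathrm{GL}_r$-character variety cohomology on $C'$), and to verify that this Betti-side operator is intertwined with $\mathfrak{p}_\kappa$ under the non-abelian Hodge diffeomorphism \eqref{NonAbel}. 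Establishing this intertwining is where the difficulty concentrates, and would likely require a fine analysis of the supports of the Hitchin direct image complex together with Simpson's Hodge-theoretic comparison of Dolbeault and Betti sides on the elliptic locus.
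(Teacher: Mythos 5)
The statement you are attempting to prove is \emph{Conjecture} 5.2 in the paper, not a theorem: the paper does not prove the P=W conjecture, and indeed no proof appears in it. What the paper does provide is exactly the reduction strategy you outline, and then explicitly flags why that strategy does not (yet) close the argument. Your proposal should be reframed as a conditional reduction, not a proof.

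Concretely, your plan reproduces the content of Theorem \ref{thm5.4} and the surrounding discussion: the operator $\mathfrak{p}_\kappa$ is surjective, it shifts the perverse filtration by $d_\gamma$, and if one knew the analogous shift for the weight filtration together with P=W for the relevant $\mathrm{GL}_r$-moduli on $C'$, then P=W for the $\kappa$-isotypic component of $\CM_{n,L}$ would follow. You correctly identify the weight compatibility of $\mathfrak{p}_\kappa$ as the obstacle; this is precisely Question \ref{Q5.5} in the paper, which the authors pose as an open problem and do not answer. Their own commentary is unambiguous: the construction of $\mathfrak{p}_\kappa$ (algebraic correspondences plus vanishing cycle functors on the Dolbeault side) has no known Betti-side counterpart, so there is no mechanism to track weights across the non-abelian Hodge diffeomorphism. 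Your proposed fix — building a parallel endoscopic correspondence directly on the character variety and proving it is intertwined with $\mathfrak{p}_\kappa$ — is exactly the ``better understanding of the operator $\mathfrak{p}_\kappa$'' the paper says would be needed, and neither you nor the paper supplies it.

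There is a second, smaller gap: you invoke ``known $\mathrm{GL}_n$ instances of P=W'' as input, but at the level of generality required by the reduction (arbitrary rank $r$ dividing $n$, on an auxiliary curve $C'$ of higher genus) this was not available in the literature referenced by the paper, which only records the cases $n = 2$ and $g = 2$. So even if Question \ref{Q5.5} were settled affirmatively, your argument would remain conditional on a $\mathrm{GL}_r$-input that the paper treats as open. In short, your proposal is a correct restatement of the paper's program for attacking P=W via endoscopy, but it does not constitute a proof of the conjecture, and you should not present it as one.
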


For the $\mathrm{GL}_n$-case, the P=W conjecture was proven for $n=2$ in \cite{dCHM1}, and recently, for $g=2$ \cite{dCMS}. Furthermore, \cite{dCMS} reduces the full P=W conjecture to the multiplicativity of the perverse filtration; see \cite[Introduction]{dCMS} for the precise statement. In either situation, the way for attacking the P=W conjecture is to analyze the location of the \emph{tautological classes} in both the perverse and the weight filtrations.

The case of $\mathrm{SL}_n$ is more complicated due to the lack of tautological classes accessing the $\Gamma$-variant cohomology. When $n$ is a prime number, the shapes of the perverse and the weight filtrations on the $\Gamma$-variant parts are of simpler forms, and therefore the P=W conjecture was verified for the $\Gamma$-variant cohomology via direct calculations; see \cite{dCHM1} for $n=2$ and \cite{dCMS2} for any prime number $n$. 

When $n$ is not a prime number, numerical evidence from the Hausel--Thaddeus conjecture suggests that the P=W conjecture for $\mathrm{SL}_n$ should rely on the P=W conjecture for a sequence of moduli spaces of stable $\mathrm{GL}_*$-Higgs bundles on different curves with different ranks. In particular, we expect that the P=W conjecture for $\mathrm{SL}_n$ can be eventually reduced to the P=W conjecture for $\mathrm{GL}_r$.

As a first step towards this direction, we introduce the operator (\ref{q_kappa}) below connecting $H^*(\CM_{n,L}, \BC)_\kappa$ and the cohomology of the moduli space of stable $\mathrm{GL}_r$-Higgs bundles on another curve $C'$, where $r$ and $C'$ are determined by $\kappa \in \hat{\Gamma}$. Then we prove Theorem \ref{thm5.4} on the compatibility of the perverse filtrations.

\subsection{The stable cohomology}\label{5.3}
Let $\pi: C'\to C$ be a cyclic Galois cover of degree $m$. Let $L \in \mathrm{Pic}^d(C)$ be a fixed line bundle with $\mathrm{gcd}(n,d)=1$. 

We recall the moduli spaces $\widetilde{\CM}_{r,d}^{D'}(C')$ and $\CM_{r,L}^{D}(\pi)$ as well as their Hitchin fibrations (\ref{GL_Hit}) and (\ref{eqn_h_pi}) respectively. The group scheme
\[
\widetilde{\CM}^D_{1,0}(C) = \mathrm{Pic}^0(C)\times H^0(C, \CO_C(D))
\]
acts on the moduli space $\widetilde{\CM}_{r,d}^{D'}(C')$ inducing 
\begin{equation}\label{action}
     \widetilde{q}:  \widetilde{\CM}^D_{1,0}(C)  \times  \CM_{r,L}^{D}(\pi)  \to \widetilde{\CM}_{r,d}^{D'}(C'), \quad \left( (\CE_1, \theta_1), (\CE_r, \theta_r) \right)  \mapsto (\pi^*\CE_1 \otimes \CE_r, \pi^*\theta_1+ \theta_r).
\end{equation}
Here $\theta_1 \in H^0(C, \CO_C(D))$ and its pullback gives a section $\pi^*\theta_1 \in H^0(C', \CO_{C'}(D'))$. The finite group $\Gamma$ acts on the left-hand side of (\ref{action}) diagonally,
\[
\CL \cdot \left( (\CE_1, \theta_1), (\CE_r, \theta_r) \right) =  \left((\CE_1 \otimes \CL^{-1}, \theta_1),  (\CE_r \otimes \CL, \theta_r)\right), \quad \CL \in \Gamma.
\]
The morphism (\ref{action}) factors through this $\Gamma$-quotient and the fibers of (\ref{action}) are given by $\Gamma$-orbits. By dimension reasons, the right-hand side of (\ref{action}) coincides with the $\Gamma$-quotient of the left-hand side. We have the following canonical isomorphisms of the cohomology
\begin{equation}\label{97}
\begin{split}
H^*(\widetilde{\CM}_{r,d}^{D'}(C'), \BC) & \xrightarrow{\simeq} H^*(\widetilde{\CM}^D_{1,0}(C)  \times  \CM_{r,L}^{D}(\pi) , \BC)^\Gamma \\ & = \left(H^*(\widetilde{\CM}^D_{1,0}(C), \BC)  \otimes H^*( \CM_{r,L}^{D}(\pi) , \BC) \right)^\Gamma\\
& = H^*(\widetilde{\CM}^D_{1,0}(C), \BC)  \otimes H^*( \CM_{r,L}^{D}(\pi) , \BC)^\Gamma 
\end{split}
\end{equation}
where the first isomorphism is induced by the $\Gamma$-quotient map $\widetilde{q}^*$, the second identity is the K\"unneth decomposition, and the last identity follows from the triviality of the $\Gamma$-action on $H^*(\widetilde{\CM}^D_{1,0}(C), \BC)$.

For any Hitchin-type moduli space $\widetilde{\CM}^D_{n,d}$, $\CM^D_{n,L}$, or $\CM^D_{r,L}(\pi)$, we consider the perverse filtrations on the cohomology defined via the corresponding Hitchin fibration (\ref{GL_Hit}), (\ref{SLn_Hit}), or (\ref{eqn_h_pi}) respectively. The following proposition provides a description of the perverse filtration on the stable part of $H^*( \CM_{r,L}^{D}(\pi) , \BC)$.

\begin{prop}\label{prop5.1}
The quotient map (\ref{action}) induces a canonical isomorphism
\begin{equation}\label{5.1}
H^*(\widetilde{\CM}_{r,d}^{D'}(C'), \BC) = H^*(\widetilde{\CM}^D_{1,0}(C), \BC)  \otimes H^*( \CM_{r,L}^{D}(\pi) , \BC)_{\mathrm{st}}
\end{equation}
satisfying that
\begin{equation}\label{perv1}
P_kH^*(\widetilde{\CM}_{r,d}^{D'}(C'), \BC) = \bigoplus_{i+j = k} H^i(\widetilde{\CM}^D_{1,0}(C), \BC) \otimes P_jH^*( \CM_{r,L}^{D}(\pi) , \BC)_{\mathrm{st}}.
\end{equation}
Here the stable part $(-)_\mathrm{st}$ denotes the $\Gamma$-invariant part of the cohomology.
\end{prop}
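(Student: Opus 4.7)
The plan is to derive both assertions from the observation that the map $\widetilde{q}$ in (\ref{action}) is a finite \'etale $\Gamma$-Galois cover compatible with the Hitchin fibrations, then invoke a K\"unneth-type argument for perverse sheaves and take $\Gamma$-invariants.

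First, I would verify that $\widetilde{q}$ is finite \'etale with Galois group $\Gamma$. The diagonal $\Gamma$-action on $\widetilde{\CM}^D_{1,0}(C) \times \CM_{r,L}^D(\pi)$ is free, since already its restriction to the Picard factor of $\widetilde{\CM}^D_{1,0}(C) = \mathrm{Pic}^0(C) \times H^0(C, \CO_C(D))$ is free (translation by torsion in an abelian variety). The map $\widetilde{q}$ is manifestly $\Gamma$-invariant, and a dimension count gives $\dim \widetilde{\CM}^D_{1,0}(C) + \dim \CM_{r,L}^D(\pi) = \dim \widetilde{\CM}_{r,d}^{D'}(C')$. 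Surjectivity reduces to noting that every Higgs bundle on $C'$ can be twisted by a suitable line bundle (unique up to a $\Gamma$-orbit) so as to have the prescribed determinant and traceless pushforward. This already yields the isomorphism (\ref{5.1}) via (\ref{97}).

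Next, I would verify compatibility with the Hitchin bases. The induced base map
\[
\widetilde{q}_\CA: H^0(C, \CO_C(D)) \times \CA^D(\pi) \to \widetilde{\CA}^{D'}(C')
\]
sends $(\theta_1, a)$ to the characteristic polynomial of $\pi^*\theta_1 \cdot I_r + \theta_r$, where $\theta_r$ has characteristic polynomial $a$. The expansion
\[
\det\bigl((\lambda - \pi^*\theta_1) I_r - \theta_r\bigr) = \sum_{i=0}^r (-1)^i a_i(\theta_r) (\lambda - \pi^*\theta_1)^{r-i}
\]
together with the canonical splitting $H^0(C', \CO_{C'}(iD')) = H^0(C, \CO_C(iD)) \oplus H^0(C', \CO_{C'}(iD'))_{\mathrm{var}}$ and the traceless constraint on $\pi_*\theta_r$ exhibit $\widetilde{q}_\CA$ as an affine-linear (hence bijective) change of coordinates on $\widetilde{\CA}^{D'}(C')$. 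In particular $\widetilde{q}$ covers a \emph{Cartesian} diagram of Hitchin fibrations via $\widetilde{q}_\CA$.

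For the perverse filtration, I would apply the K\"unneth formula for perverse sheaves to the product Hitchin map $h_1 \times h_\pi^D$ on $\widetilde{\CM}^D_{1,0}(C) \times \CM_{r,L}^D(\pi)$. Since $h_1$ is a trivial proper fibration with fiber $\mathrm{Pic}^0(C)$, its direct image splits canonically as $\bigoplus_i H^i(\mathrm{Pic}^0(C)) \otimes \BBC[-i]$ and its perverse filtration coincides with the cohomological degree filtration. The external tensor product with $Rh_{\pi,*}^D \BBC$ gives
\[
P_k H^*\bigl(\widetilde{\CM}^D_{1,0}(C) \times \CM_{r,L}^D(\pi)\bigr) = \bigoplus_i H^i(\widetilde{\CM}^D_{1,0}(C)) \otimes P_{k-i} H^*(\CM_{r,L}^D(\pi)).
\]
Since $\widetilde{q}$ is finite \'etale, the perverse filtration on $\widetilde{\CM}_{r,d}^{D'}(C')$ (via $\widetilde{q}_\CA$) is the $\Gamma$-invariant part of this filtration; and as the $\Gamma$-action on $H^*(\widetilde{\CM}^D_{1,0}(C))$ is trivial (translation by torsion on $\mathrm{Pic}^0(C)$ is isotopic to the identity through a connected family), the invariants only affect the second tensor factor, producing the stable part and hence (\ref{perv1}).

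The main technical point is the bookkeeping of the shifts in the perverse K\"unneth formula and the careful matching of filtrations across the Galois descent $\widetilde{q}$; the geometric ingredients themselves are standard once the \'etaleness of $\widetilde{q}$ and the linear identification of Hitchin bases have been secured.
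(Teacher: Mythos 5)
Your proposal follows essentially the same route as the paper: realize $\widetilde{q}$ as a free $\Gamma$-quotient, use the commutative diagram with the Hitchin fibrations to compare direct image complexes and their perverse truncations, and combine a K\"unneth decomposition with the observation that the perverse filtration on $H^*(\mathrm{Pic}^0(C),\BC)$ coincides with the cohomological filtration. Two of your intermediate claims, however, are not correct as stated, although neither invalidates the argument.

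First, the base map $\widetilde{q}_\CA$ is not affine-linear once $r\geq 2$. Expanding $\det\bigl((\lambda-\pi^*\theta_1)I_r-\theta_r\bigr)$ gives, for the second coefficient, $a'_2 = a_2 + (r-1)(\pi^*\theta_1)a_1 + \binom{r}{2}(\pi^*\theta_1)^2$, which is quadratic in $(\theta_1,a_1)$. (And even if the map were affine-linear between affine spaces of equal dimension, bijectivity would still require checking that the linear part is invertible.) What does hold, and what you need, is that $\widetilde{q}_\CA$ is an isomorphism of affine spaces: the $G_\pi$-invariant component of $a'_1$ recovers $r\theta_1$, after which $a_1,a_2,\dots$ are determined successively by the upper-triangular structure of the shifted characteristic polynomial.

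Second, the square formed by $\widetilde{q}$, $\widetilde{q}_\CA$, and the two Hitchin maps is not Cartesian: since $\widetilde{q}_\CA$ is an isomorphism while $\widetilde{q}$ has degree $|\Gamma|$, Cartesianity would force $\widetilde{q}$ to be an isomorphism. What your argument actually uses is only that $\widetilde{q}$ is finite \'etale and the base map is an isomorphism, which together give $\mathrm{R}\widetilde{h}^{D'}_*\BBC \simeq \bigl(\mathrm{R}(\widetilde{h}^D\times h^D_\pi)_*\BBC\bigr)^\Gamma$ after identifying the bases via $\widetilde{q}_\CA$; this is the same $\Gamma$-equivariant splitting the paper encodes via the morphism $\BBC\to\widetilde{q}_*\BBC$ and its perverse truncation. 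With these two corrections your proof is sound and matches the paper's.
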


\begin{proof}
The first isomorphism is induced by (\ref{97}). It suffices to show the compatibility (\ref{perv1}) of the perverse filtrations.

We notice that the quotient map (\ref{action}) is compatible with the Hitchin fibrations, and we have the commutative diagram
\begin{equation*}
\begin{tikzcd}
 \widetilde{\CM}^D_{1,0}(C)  \times  \CM_{r,L}^{D}(\pi) \arrow[r,"\widetilde{q}"] \arrow[d, "\widetilde{h}^{D}(C) \times h_\pi^D"]
& \widetilde{\CM}_{r,d}^{D'}(C') \arrow[d, "\widetilde{h}^{D'}"] \\
H^0(C, \CO_C(D)) \times \CA^D(\pi) \arrow[r, "\simeq"]
& \widetilde{\CA}^{D'}(C')
\end{tikzcd}
\end{equation*}
where the bottom arrow is a canonical identification. The pullback morphism $\widetilde{q}^*$ for the cohomology is induced sheaf-theoretically by the canonical morphism
\begin{equation}\label{add3}
\BBC \to \widetilde{q}_* \BBC
\end{equation}
where the first and the second $\BBC$ denote the trivial local systems on the target and the source of $\widetilde{q}$ respectively. By applying the perverse truncation functor to the pushforward of (\ref{add3}) along $\widetilde{h}^{D'}$, we obtain that the first map of (\ref{97}) satisfies
\begin{equation}\label{add4}
P_kH^*(\widetilde{\CM}_{r,d}^{D'}(C'), \BC)  \xrightarrow{\simeq} P_kH^*(\widetilde{\CM}^D_{1,0}(C)  \times  \CM_{r,L}^{D}(\pi) , \BC)^\Gamma.
\end{equation}
We conclude (\ref{perv1}) from (\ref{add4}), the K\"unneth decomposition, and the fact that the perverse filtration on \[ 
H^*(\widetilde{\CM}^D_{1,0}(C), \BC) = H^*(\mathrm{Pic}^0(C), \BC)
\]
coincides with the cohomological filtration $H^{*\leq k}$.
\end{proof}

As a consequence of Proposition \ref{prop5.1}, we obtain a canonical operator given by the projection
\begin{equation}\label{100}
    \mathfrak{p_1}: H^i(\widetilde{\CM}_{r,d}^{D'}(C'), \BC) \rightarrow H^i( \CM_{r,L}^{D}(\pi) , \BC)_{\mathrm{st}}, \quad \forall i\geq 0
\end{equation}
sending a class in $H^i(\widetilde{\CM}_{r,d}^{D'}(C'), \BC)$ to its projection to the direct summand component 
\[
H^0(\widetilde{\CM}^D_{1,0}(C), \BC)  \otimes H^i( \CM_{r,L}^{D}(\pi) , \BC)_{\mathrm{st}} =  H^i( \CM_{r,L}^{D}(\pi) , \BC)_{\mathrm{st}}.
\]
with respect to the decomposition (\ref{5.1}). The identity above is induced by the fundamental class   $1\in H^0(\widetilde{\CM}^D_{1,0}(C), \BC)$.

\begin{cor}\label{cor5.3}
The operator (\ref{100}) respects the perverse filtrations:
\[
\mathfrak{p_1}\left( P_k H^i(\widetilde{\CM}_{r,d}^{D'}(C'), \BC)\right) = P_kH^i( \CM_{r,L}^{D}(\pi) , \BC)_{\mathrm{st}}.
\]
\end{cor}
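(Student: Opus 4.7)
The plan is to deduce the statement directly from Proposition \ref{prop5.1} with minimal additional work. Concretely, I would first observe that the perverse-filtered K\"unneth decomposition \eqref{perv1}, restricted to a fixed cohomological degree $i$, takes the form
\[
P_k H^i(\widetilde{\CM}_{r,d}^{D'}(C'), \BC) = \bigoplus_{a+b = i} H^a(\widetilde{\CM}^D_{1,0}(C), \BC) \otimes P_{k-a} H^b(\CM_{r,L}^{D}(\pi), \BC)_{\mathrm{st}},
\]
with the convention that $P_{k-a} = 0$ when $k-a < 0$. Here I am using both the K\"unneth identification on cohomology and the fact, already noted in the proof of Proposition \ref{prop5.1}, that the perverse filtration on $H^*(\widetilde{\CM}^D_{1,0}(C), \BC)$ coincides with the cohomological filtration. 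By construction, the operator $\mathfrak{p}_1$ is the projection onto the summand indexed by $a = 0$, i.e.\ pairing against the fundamental class $1 \in H^0(\widetilde{\CM}^D_{1,0}(C), \BC)$.

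Given this setup, the containment $\mathfrak{p}_1\bigl(P_k H^i(\widetilde{\CM}_{r,d}^{D'}(C'), \BC)\bigr) \subseteq P_k H^i(\CM_{r,L}^{D}(\pi), \BC)_{\mathrm{st}}$ is immediate, since the $a=0$, $b=i$ summand in the decomposition above is exactly $P_k H^i(\CM_{r,L}^{D}(\pi), \BC)_{\mathrm{st}}$, while every summand with $a \geq 1$ is killed by the projection. The reverse inclusion is equally formal: any class $\alpha \in P_k H^i(\CM_{r,L}^{D}(\pi), \BC)_{\mathrm{st}}$ is the image under $\mathfrak{p}_1$ of $1 \otimes \alpha$, which lies in $P_k H^i(\widetilde{\CM}_{r,d}^{D'}(C'), \BC)$ by \eqref{perv1}. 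I do not anticipate any serious obstacle in this argument: the substantive content has all been absorbed into Proposition \ref{prop5.1}, and the corollary is a formal matter of isolating the $H^0$ tensor factor in the bigraded decomposition.
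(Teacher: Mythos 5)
Your proposal is correct and follows essentially the same route as the paper: the paper's one-line proof simply observes that $1 \in P_0H^0(\widetilde{\CM}^D_{1,0}(C),\BC)$ and appeals to~\eqref{perv1}, and your argument is a faithful unpacking of exactly that observation, spelling out the bidegree bookkeeping implicit in~\eqref{perv1} and checking both inclusions. There is no new idea or alternative strategy here, only added detail; it matches the intended proof.
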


\begin{proof}
Since the fundamental class $1\in H^0(\widetilde{\CM}^D_{1,0}(C), \BC)$ lies in $P_0H^0(\widetilde{\CM}^D_{1,0}(C), \BC)$, the corollary follows from (\ref{perv1}).
\end{proof}

\subsection{Operators}
Let $\pi: C' \to C$ be the cyclic Galois cover given by $\gamma\in \Gamma$, which corresponds to $\kappa \in \hat{\Gamma}$ via (\ref{Weil}). We define the operator 
\begin{equation}\label{q_kappa}
\mathfrak{p}_\kappa: H^{i-2d^D_\gamma}(\widetilde{\CM}_{r,d}^{D'}(C'), \BC) \twoheadrightarrow H^{i}(\CM^D_{n,L}, \BC)_\kappa
\end{equation}
as the following composition:
\begin{equation}
\begin{split}\label{104}
    H^*(\widetilde{\CM}_{r,d}^{D'}(C'),\BC) \xrightarrow{\mathfrak{p}_1} & H^*( \CM_{r,L}^{D}(\pi) , \BC)_{\mathrm{st}}  =   H^*( \CM_{r,L}^{D}(\pi) , \BC)_{\kappa}  \\ \xrightarrow{\mathrm{proj.}}   H^*( \CM_{r,L}^{D}(\pi) , \BC)^{G_\pi}_{\kappa} 
=  & H^*( \CM_{\gamma}^{D} , \BC)_{\kappa} \xrightarrow[\simeq]{\mathrm{Thm}~~3.2} H^{*+2d^D_\gamma}(\CM^D_{n,L}, \BC)_\kappa.
\end{split}
\end{equation}
Here the first morphism is (\ref{100}), the second isomorphism is given by Proposition \ref{prop2.8}, the third morphism is the projection to the $G_\pi$-invariant part, the fourth isomorphism is given by Lemma \ref{lemma1.6}, and the last isomorphism follows from Theorem \ref{thm3.2}. Hence we obtain that (\ref{q_kappa}) is surjective and canonically defined up to scaling.

\begin{thm}\label{thm5.4}
We have 
\[
\mathfrak{p}_\kappa \left( P_k H^i(\widetilde{\CM}_{r,d}^{D'}(C'), \BC)\right) = P_{k+d^D_\gamma}H^{i+2d^D_\gamma}( \CM_{n,L}^{D}, \BC)_{\kappa}.
\]
\end{thm}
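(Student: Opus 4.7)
\noindent\emph{Proof proposal.} The plan is to follow the four arrows in the composition (\ref{104}) defining $\mathfrak{p}_\kappa$ and to determine how each affects the perverse filtration. Three of the four arrows are sheaf-level identifications that preserve perverse filtration and perverse level on the nose; all of the nontrivial arithmetic is concentrated in the final arrow produced by Theorem \ref{thm3.2}, which simultaneously shifts the cohomological degree by $2d^D_\gamma$ and the perverse level by $d^D_\gamma$.

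For the first arrow, Corollary \ref{cor5.3} already states that $\mathfrak{p}_1\bigl(P_kH^i(\widetilde{\CM}_{r,d}^{D'}(C'),\BC)\bigr)=P_kH^i(\CM_{r,L}^D(\pi),\BC)_{\mathrm{st}}$. The second arrow is the identification of stable and $\kappa$-isotypic pieces provided by Proposition \ref{prop2.8}; since it is realized as an isomorphism $(\mathrm{Rh_\pi^D}_*\BBC)_{\mathrm{st}}\simeq(\mathrm{Rh_\pi^D}_*\BBC)_\kappa$ in $D^b_c(\CA^D(\pi))$, it preserves the perverse filtration defined by $h_\pi^D$. The third arrow, combined with Lemma \ref{lemma1.6}, yields $H^*(\CM_{r,L}^D(\pi),\BC)_\kappa^{G_\pi}=H^*(\CM^D_\gamma,\BC)_\kappa$; since $q_\CA$ is finite (hence ${q_\CA}_*$ is perverse $t$-exact), the decomposition theorem for $\mathrm{Rh_\pi^D}_*\BBC$ is $G_\pi$-equivariant, and the exact functor of $G_\pi$-invariants transports the decomposition into one for $\mathrm{Rh_\gamma^D}_*\BBC$, so no shift occurs.

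The essential computation is the fourth arrow, namely the isomorphism
\[
c^D_\kappa\colon (\mathrm{Rh^D_*}\BBC)_\kappa \xrightarrow{\simeq} {i^D_\gamma}_*(\mathrm{Rh^D_\gamma}_*\BBC)_\kappa[-2d^D_\gamma]
\]
in Theorem \ref{thm3.2}. The perverse filtration in Section \ref{Sec5.1} is defined after a normalizing shift $\dim X-l$, and a direct check shows that for all Hitchin-type fibrations appearing here $\dim X-l$ coincides with the dimension of the corresponding Hitchin base. Writing $(\mathrm{Rh^D_*}\BBC)_\kappa[\dim\CA^D]=\bigoplus_j\CP_j[-j]$ and $(\mathrm{Rh^D_\gamma}_*\BBC)_\kappa[\dim\CA^D_\gamma]=\bigoplus_j\CP^\gamma_j[-j]$, the identity $c^D_\kappa$ together with $\dim\CA^D=\dim\CA^D_\gamma+d^D_\gamma$ forces $\CP_j\simeq {i^D_\gamma}_*\CP^\gamma_{j-d^D_\gamma}$; tracing this through the defining formula for $P_k$ produces the isomorphism
\[
P_k H^i(\CM^D_{n,L},\BC)_\kappa \simeq P_{k-d^D_\gamma} H^{i-2d^D_\gamma}(\CM^D_\gamma,\BC)_\kappa.
\]
Composing the four steps then yields the stated equality.

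The main obstacle is isolating the correct perverse shift in the last step: one has to show that the cohomological shift $[-2d^D_\gamma]$ in $c^D_\kappa$ is partially absorbed by the discrepancy $d^D_\gamma$ between the two normalizing shifts, so that only a shift of $d^D_\gamma$ (and not $2d^D_\gamma$) survives in the perverse index. Once this bookkeeping is settled, the rest of the argument is formal and follows immediately from Corollary \ref{cor5.3}, Proposition \ref{prop2.8}, and Lemma \ref{lemma1.6}.
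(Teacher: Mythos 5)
Your proposal is correct and takes essentially the same route as the paper: both track the four arrows in the composition (\ref{104}), invoke Corollary \ref{cor5.3} for $\mathfrak{p}_1$, note that the middle arrows arise from sheaf-level identifications preserving the perverse filtration, and then analyze the shift coming from $c^D_\kappa$. Your bookkeeping that the shift $[-2d^D_\gamma]$ combines with $\dim\CA^D-\dim\CA^D_\gamma=d^D_\gamma$ to yield a net perverse-level shift of $d^D_\gamma$ is simply a more explicit rendering of the paper's phrase ``taking account of the shift.''
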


\begin{proof}
By Corollary \ref{cor5.3}, the morphism $\mathfrak{p}_1$ preserves the perverse filtrations. All the other morphisms in (\ref{104}) except the last one are deduced from sheaf-theoretic morphisms which clearly preserve the perverse filtrations. Hence we have
\[
P_kH^i(\widetilde{\CM}_{r,d}^{D'}(C'), \BC) \twoheadrightarrow P_k H^i(\CM^D_\gamma, \BC)_\kappa.
\]
The last morphism of (\ref{104}) is given by the sheaf-theoretic isomorphism (\ref{ck^D}). Taking account of the shift, we have
\[
P_k H^i(\CM^D_\gamma, \BC)_\kappa \xrightarrow{\simeq} P_{k+d^D_\gamma}H^{i+2d^D_\gamma}( \CM_{r,L}^{D}(\pi) , \BC)_{\kappa}. \qedhere
\]
%Hence the operator $\mathfrak{q}_\kappa$ respects the perverse filtrations as expected.
\end{proof}

Now we consider the special case $D = K_C$. Passing through the isomorphisms (\ref{NonAbel}) induced by the non-abelian Hodge theory, we obtain an operator for the corresponding Betti moduli spaces
\[
\mathfrak{p}^B_\kappa: H^{i-2d_\gamma}(\widetilde{\CM}_{r,d}^{B}(C'), \BC) \twoheadrightarrow H^{i}(\CM^B_{n,L}, \BC)_\kappa.
\]
Here $\widetilde{\CM}_{r,d}^{B}(C')$ is the Betti moduli space associated with the curve $C'$, the group $\mathrm{GL}_r$, and the degree $d$, and $\CM^B_{n,L}$ stands for the Betti moduli space associated with the curve $C$, the group $\mathrm{SL}_n$, and the line bundle $L$. We refer to \cite{HT2} for more details on these moduli spaces.

\begin{question}\label{Q5.5}
Is it true that
\[
\mathfrak{p}^B_\kappa \left( W_{2k} H^i(\widetilde{\CM}_{r,d}^{B}(C'), \BC)\right) = W_{2k+2d_\gamma}H^{i+2d_\gamma}( \CM_{n,L}^{B}, \BC)_{\kappa}?
\]
\end{question}

If Question \ref{Q5.5} has an affirmative answer, then Theorem \ref{thm5.4} implies that, if the P=W conjecture holds for $\mathrm{GL}_r$ for any $r$ dividing $n$, then the P=W conjecture holds for $\mathrm{SL}_n$. However, the construction of the operator $\mathfrak{p}_\kappa$ relies heavily on the topology of Hitchin fibrations, which is mysterious on the Betti side. A better understanding of the operator $\mathfrak{p}_\kappa$ may be needed.

\subsection{The Hausel--Thaddeus conjecture}\label{Section5.5}
We explain in the last section that Theorem \ref{thm3.2} implies Theorem \ref{thm0.5}. Here for Higgs bundles, we again work with any effective divisor $D$ with $\mathrm{deg}(D)>2g-2$ or $D= K_C$.

\begin{proof}[Proof of Theorem \ref{thm0.5}]
We first note that for two line bundles $L_1$ and $L_2$ with $L_1 = L_2 \otimes N^{\otimes n}$, there is a natural identification of the moduli spaces 
\begin{equation}\label{eqn127}
    \CM^D_{n, L_1} \xrightarrow{\simeq} \CM^D_{n, L_2}, \quad (\CE, \theta) \mapsto (\CE\otimes N, \theta)
\end{equation}
compatible with the Hitchin fibrations.
%By Remark \ref{rmk4.8} and Proposition \ref{Prop2.11}, the isomorphism class of the object
%\[
%\left(\mathrm{Rh^D_\gamma}_* \BBC \right)_\kappa \in D^b(\CA^D_\gamma)
%\]
%does not depends on the determinant $L$, and only depends on the order of $\kappa$. 
%Hence we have 
%\[
%[P_{k-d^D_\gamma}H^{i-2d^D_\gamma}((\CM^D_{n,L})_\gamma, \BC)_{\kappa}] = [P_{k-d^D_\gamma}H^{i-2d^D_\gamma}((\CM^D_{n,L'^{\otimes de}})_\gamma, \BC)_{\kappa'}]
%\]
%in the Grothendieck group of vector spaces $K_0(\mathrm{Vect})$. 
Hence we obtain
\begin{equation}\label{111}
\begin{split}
[P_kH^i(\CM^D_{n,L}, \BC)_\kappa] = [P_kH^i(\CM^D_{n,L'^{\otimes de}}, \BC)_\kappa] &=   [P_{k-d^D_\gamma}H^{i-2d^D_\gamma}((\CM^D_{n,L'^{\otimes de}})_\gamma, \BC)_{\kappa}] \\
&= [P_{k-d^D_\gamma}H^{i-2d^D_\gamma}((\CM^D_{n,L'})_\gamma, \BC)_{de\kappa}] 
\end{split}
\end{equation}
in the Grothendieck group $K_0(\mathrm{Vect})$ of $\BC$-vector spaces. Here the first identity is induced by (\ref{eqn127}) since $\mathrm{deg}(L) = \mathrm{deg}(L'^{\otimes de})~~\textup{mod}~~n$, the second identity follows from (\ref{ck^D}), and the third identity is given by Remark \ref{rmk4.8} and Proposition \ref{Prop2.11}.

This proves the Betti number version of the refined Hausel--Thaddeus conjecture (\ref{eqnthm0.5}). To get the enhanced version concerning Hodge structures, we follow \cite[Section 2.1]{dCRS} to work with the category of mixed Hodge modules \cite{Saito} which refines the category of perverse sheaves. Identical arguments show that Theorems \ref{main0'} and \ref{thm3.2} actually hold in the derived category of mixed Hodge modules, which gives the enhanced version  
%\[
%[P_kH^i(\CM_{n,L}, \BC)_\kappa] = [P_{k-d^D_\gamma}H^{i-2d^D_\gamma}((\CM_{n,L'})_\gamma, \BC)_{\kappa'}(-d^D_\gamma)]
%\]
of (\ref{111}) in $K_0(\mathrm{HS})$. This completes the proof of (\ref{eqnthm0.5}).

Finally, we note that (\ref{eqnthm0.5}) implies (\ref{eqnthm_new}). This follows from taking the summation over all $\gamma \in \Gamma$ and the natural identification of the fixed loci
\[
(\CM^D_{n,L'})_\gamma =  (\CM^D_{n,L'})_{q\gamma}
\]
for any $q \in \BZ$ coprime to $n$.
\end{proof}

%\subsection*{Conflicts of interest:} none.

\end{document}